\documentclass[a4paper,reqno]{amsart}
\usepackage[utf8]{inputenc}
\usepackage{graphicx}
\usepackage{amsmath}
\usepackage{amssymb,amsthm}
\usepackage{mathtools}
\usepackage{etoolbox}
\usepackage[T1]{fontenc}
\usepackage{pgfplots}
\usepackage{hyperref,enumerate}
\usepackage{enumitem}
\usepackage{mathabx}

\setlist[enumerate]{label=\alph*),leftmargin=2\parindent}

\theoremstyle{plain}
\newtheorem{theorem}{Theorem}
\numberwithin{theorem}{section}
\newtheorem{prop}[theorem]{Proposition}
\newtheorem{lemma}[theorem]{Lemma}
\newtheorem{cor}[theorem]{Corollary}

\theoremstyle{definition}

\newtheorem{rem}[theorem]{Remark}

\allowdisplaybreaks

\newcommand{\N}{\mathbb{N}}
\newcommand{\Z}{\mathbb{Z}}

\newcommand{\R}{\mathbb{R}}
\newcommand{\C}{\mathbb{C}}
\newcommand{\T}{\mathbb{T}}
\newcommand{\RT}{{\R\times\T}}

\newcommand{\vertiii}[1]{{\left\vert\kern-0.25ex\left\vert\kern-0.25ex\left\vert #1 
    \right\vert\kern-0.25ex\right\vert\kern-0.25ex\right\vert}}

\renewcommand{\phi}{\varphi}
\renewcommand{\epsilon}{\varepsilon}
\newcommand{\eps}{\epsilon}

\DeclarePairedDelimiterX\norm[1]\lVert\rVert{\ifblank{#1}{\cdot}{#1}}
\DeclarePairedDelimiterX\abs[1]\lvert\rvert{\ifblank{#1}{\cdot}{#1}}

\DeclarePairedDelimiter\set\{\}
\DeclarePairedDelimiter\br()
\DeclarePairedDelimiterX\intcc[1][]{\ifblank{#1}{0,1}{#1}}
\DeclarePairedDelimiterX\intco[1][){\ifblank{#1}{0,\infty}{#1}}
\DeclarePairedDelimiterX\intoc[1](]{\ifblank{#1}{-\infty,0}{#1}}
\DeclarePairedDelimiterX\intoo[1](){\ifblank{#1}{0,\infty}{#1}}

\newcommand{\absbig}[1]{\abs[\big]{#1}}
\newcommand{\absBig}[1]{\abs[\Big]{#1}}

\newcommand{\setBig}[1]{\set[\Big]{#1}}

\newcommand{\Fcal}{\mathcal{F}}

\newcommand{\Ucal}{\mathcal{U}}

\newcommand{\ew}{\newpage\noindent}

\newcommand{\les}{\lesssim}
\newcommand{\gts}{\gtrsim}

\begin{document}

\title[GWP and polynomial growth for GZK on a cylinder]
{Low-regularity global well-posedness theory for the generalized Zakharov-Kuznetsov equation on $\R \times \T$ and polynomial growth of higher Sobolev norms}
\author{Jakob Nowicki-Koth}

\address{J.~Nowicki-Koth: Mathematisches Institut der
Heinrich-Heine-Universit{\"a}t D{\"u}sseldorf, Universit{\"a}tsstr. 1,
40225 D{\"u}sseldorf, Germany}
\email{jakob.nowicki-koth@hhu.de}

\begin{abstract}
We address the Cauchy problem for the $k$-generalized Zakharov-Kuznetsov equation ($k$-gZK) posed on $\R^2$ and on $\R \times \T$. By applying established and recently developed linear and bilinear Strichartz-type estimates within the framework of the $I$-method, we obtain the following results:

\begin{itemize}
\item The Zakharov-Kuznetsov equation is globally well-posed in $H^s(\R \times \T)$ for every $s>\frac{11}{13}$.
\item The modified Zakharov-Kuznetsov equation is globally well-posed in $H^s(\R^2)$ for every $s>\frac{2}{3}$ and in $H^s(\R \times \T)$ for every $s>\frac{36}{49}$.
\end{itemize}
Moreover, we show that the $H^s(\R \times \T)$-norm of smooth global real-valued solutions of $k$-gZK grows at most polynomially in time for every $k\geq 1$.
\end{abstract}

\maketitle

\section{Introduction}

For $k \in \N$, $s \in \R$, and $X \in \set{\R \times \T, \R^2}$, we consider the Cauchy problem

\begin{equation}
\partial_t u + \partial_x \Delta_{xy}u = \pm \partial_x \left( u^{k+1} \right), \quad u(t=0) = u_0 \in H^s(X) \tag{CP$_{k, X}$}
\end{equation}

for the $k$-generalized Zakharov-Kuznetsov equation ($k$-gZK), where $u=u(t,x,y)$ is a real-valued function. As the name suggests, the $k$-gZK equation is a generalization of the Zakharov-Kuznetsov equation (ZK), which was originally introduced by Zakharov and Kuznetsov \cite{Zakharov1974} to model the propagation of ion-acoustic waves in a magnetized plasma. \\
In the non-periodic setting, the well-posedness theory of $k$-gZK has attracted significant interest for more than two decades and continues to do so today. We provide an overview of the chronological development of well-posedness results for the modified equation ($k=2$), which is of particular interest here: \\
Biagioni and Linares \cite{Biagioni2003} first established local and global well-posedness in $H^1(\R^2)$, with the global result being valid for $H^1_{xy}$-data of sufficiently small $L^2_{xy}$-norm. Linares and Pastor \cite{Linares2009, Linares2011} were later the first to go below the $H^1_{xy}$ level, proving local well-posedness in $H^s(\R^2)$ for $s > \frac{3}{4}$ and global well-posedness for $s> \frac{53}{63}$ under an additional smallness assumption on the $L^2_{xy}$-norm of the initial data in the focusing case (i.e., the case where a negative sign appears before the nonlinearity).
Afterwards, the local result was improved by Ribaud and Vento \cite{Ribaud2012} to the threshold $s> \frac{1}{4}$ by exploiting the interplay between the Kato smoothing effect and a local-in-time $L^2_xL_{yT}^\infty$ maximal function estimate due to Faminskii \cite{Faminskii}.
Subsequently, Bhattacharya, Farah and Roudenko \cite{Bhattacharya2020} improved the global well-posedness result to $s>\frac{3}{4}$ (again under a similar smallness assumption in the focusing case) by means of the $I$-method, and this was followed by Kinoshita \cite{Kinoshita2019mZK}, who further refined the local theory by establishing local well-posedness at the level of $s=\frac{1}{4}$. Moreover, in the same work he proved that the data-to-solution map fails to be $C^3$ for $s< \frac{1}{4}$, thereby highlighting the critical nature of the threshold $s=\frac{1}{4}$ in the perturbative sense. Most recently, Correia and Kinoshita \cite{Correia2025} complemented the global well-posedness result of Bhattacharya, Farah, and Roudenko by establishing global well-posedness at the scaling-critical regularity $s_c=0$ for small initial data. Their result is proved for initial data in function spaces that scale like $H^s(\R^2)$ and are  specifically tailored to interact well with a $L^4_{txy}$-based smoothing estimate for the symmetrized equation. At this point, it is important to note that their argument applies only to non-quantifiably small initial data. Consequently, the question of global well-posedness for arbitrary initial data in $H^s(\R^2)$ for $s \leq \frac{3}{4}$ remains open. \\
For further results concerning $k$-gZK in the non-periodic setting, we refer the reader to \cite{Biagioni2003, Faminskii, Farah2012, GrünrockHerr2014, Kinoshita2021, Linares2009, Linares2011, Pilod2015, Shan2023} in two spatial dimensions, to \cite{Gruenrock2014, Gruenrock2015, Herr, Kato2018, Linares20093D, Pilod2015, Ribaud20123D} in three spatial dimensions, and to \cite{HerrKino2021, Herr, Kato2018} in dimensions $n\geq 4$. We emphasize that these lists are by no means exhaustive. \\
In contrast to the abundance of results in the non-periodic setting, the semiperiodic case has only begun to receive increased attention in recent years. \\
For the original equation ($k=1$), Linares, Pastor, and Saut \cite{Saut2010} initiated the study by proving local well-posedness in $H^s(\R \times \T)$ for all $s > \frac{3}{2}$. By developing a bilinear smoothing estimate for widely separated frequencies, Molinet and Pilod \cite{Pilod2015} later established global well-posedness for all $s\geq 1$, and Osawa \cite{Osawa2022} then further improved the underlying local result to $s > \frac{9}{10}$. Shortly thereafter, Osawa and Takaoka \cite{Osawa2024} proved global well-posedness for all $s > \frac{29}{31}$ by employing the $I$-method, and most recently Cao-Labora \cite{CaoLabora2025} lowered the local threshold to $s>\frac{3}{4}$ ($s > \frac{1}{2}$ under an additional low frequency condition), showing in the same work that these results are optimal up to the endpoint, in the sense that the associated data-to-solution map fails to be $C^2$ for $s< \frac{3}{4}$ and $s<\frac{1}{2}$, respectively. \\
For all higher-order nonlinearities ($k\geq2$), Farah and Molinet \cite{Farah2024} established local well-posedness in $H^s(\R \times \T)$ for all $s>1-$ (and $s > \frac{5}{6}$ for the modified equation), and also proved global well-posedness for all $s \geq 1$ under certain smallness assumptions on the $L^2_{xy}$- or $H^1_{xy}$-norm of the initial data. By developing new linear and bilinear Strichartz-type estimates, the local well-posedness results were recently improved to $s>\frac{11}{24} $ ($k=2$), $s > \frac{1}{2}$ ($k=3$), and $s> 1-\frac{16}{9k}$ ($k \geq 4$) \cite{JNK2026, JNK2025}, while the question of optimality remains open for all $k \geq 2$. \\
A common feature of most of the global well-posedness results mentioned above is the use of the $I$-method developed by Colliander, Keel, Staffilani, Takaoka, and Tao (see, e.g., \cite{Colliander2001, Keel2003}). The core idea of this method is to regularize solutions corresponding to low-regularity data by means of a smoothing operator, thereby lifting them to the regularity level of a conserved quantity, and to establish an almost conservation law for the resulting modified energy. The existence of suitable conservation laws is therefore essential, and for $k$-gZK, we have the conservation of the mass
\[ M[u](t) \coloneqq \int_{X} (u(t,x,y))^2 \ \mathrm{d}(x,y)  \]
at the $L^2_{xy}$ level, while the conservation of the energy
\[ E_{\pm}[u](t) \coloneqq \frac{1}{2} \int_{X} \abs{\nabla u (t,x,y)}_2^2 \pm \frac{2}{k+2} (u(t,x,y))^{k+2} \ \mathrm{d}(x,y) \]
holds at the level of $H^1_{xy}$. \\
The present article is essentially divided into two parts. In the first part, we revisit the $I$-method described above: \\
The global well-posedness results established in \cite{Osawa2024} and \cite{Bhattacharya2020} are based on Strichartz estimates that are suited to obtaining local well-posedness in $H^s_{xy}$ for $s>\frac{9}{10}$ (ZK on $\R \times \T$) and $s> \frac{1}{2}$ (modified ZK on $\R^2$). Since, within the local theory, it has been possible to push these thresholds down to $s> \frac{3}{4}$ \cite{CaoLabora2025} (ZK on $\R \times \T$), $s \geq \frac{1}{4}$ \cite{Kinoshita2019mZK} (modified ZK on $\R^2$), and more recently to $s> \frac{11}{24}$ \cite{JNK2025} (modified ZK on $\R \times \T$), it is natural to expect that the global results for ZK on $\RT$ and modified ZK on $\R^2$ can be improved, and that a global result below $H^1_{xy}$ may be achieved for modified ZK on $\RT$.
To this end, in the case of mZK on $\R^2$, we will make use of a linear $L^4_{txy}$-estimate which allows for a gain of $\frac{1}{4}$ of a derivative in certain frequency regimes (see \cite{Pilod2015}), whereas for the treatment of ZK and modified ZK on $\RT$, we will make use of the Strichartz-type estimates recently developed in \cite{JNK2025}.
More precisely, in the semiperiodic setting, a bilinear refinement of a linear $L^4_{txy}$-estimate (both obtained in \cite{JNK2025}) will be decisive for the strength of the global results, and the targeted activation of this bilinear smoothing estimate within the framework of the $I$-method will account for a substantial part of our efforts in this article. \\
The precise statements of our global well-posedness results are given in

\begin{theorem} \label{GWPZK}
The Cauchy problem $\mathrm{(CP}_{1, \R \times \T} \mathrm{)}$ is globally well-posed for every $s > \frac{11}{13}$. That is, for every $s > \frac{11}{13}$, every $u_0 \in H^s(\R \times \T)$, and every prescribed lifespan $T > 0$, there exists a unique solution
\[ u \in X_{s,\frac{1}{2}+}^T \]
of $\mathrm{(CP}_{1, \R \times \T} \mathrm{)}$. This solution belongs to $C(\intcc{-T,T},H^s(\R \times \T))$, and for every $T' \in \intoo{0,T}$, there exists a neighborhood $\Ucal \subseteq H^s(\R \times \T)$ of $u_0$ such that the data-to-solution map
\[ S: H^s(\R \times \T) \supseteq \Ucal \rightarrow X_{s,\frac{1}{2}+}^{T'}, \quad v_0 \mapsto S(v_0) \coloneqq v \]
is smooth. Moreover, if $s \leq 1$, we have the following bound on the growth of the $H^s(\R \times \T)$-norm of $u$:
\begin{equation} \label{polygrowthZKbelowH^1} \sup_{t \in \intcc{-T,T}} \norm{u(t)}_{H^s(\R \times \T)} \les (1+T)^{\frac{4(1-s^2)}{13s-11}+}. \end{equation}

\end{theorem}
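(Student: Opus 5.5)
The proof follows the $I$-method. Fix $\frac{11}{13}<s<1$; the case $s\geq 1$ follows from the known $H^1$ theory of Molinet and Pilod together with persistence of regularity. For $N\gg1$, let $I=I_N$ be the Fourier multiplier with symbol $m(\xi,q)$. It equals $1$ for $\abs{(\xi,q)}\leq N$ and $(N/\abs{(\xi,q)})^{1-s}$ for $\abs{(\xi,q)}\geq 2N$. Then $\norm{Iu}_{H^1}\les N^{1-s}\norm{u}_{H^s}$ and $\norm{u}_{H^s}\les\norm{Iu}_{H^1}$. By the scaling $u_\lambda(t,x,y)=\lambda^{2}u(\lambda^3t,\lambda x,\lambda y)$, we pass to the rescaled cylinder $\R\times\lambda^{-1}\T$. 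There the constants in the estimates of \cite{JNK2025} must be tracked, with at most $\lambda^{0+}$ losses for $\lambda\ge1$. Choosing $\lambda\sim N^{\frac{1-s}{s}}$ makes $E[Iu_{0,\lambda}]\les N^{2(1-s)}\lambda^{-2s}\norm{u_0}_{H^s}^2\ll1$ and $M[u_{0,\lambda}]$ small. For ZK the energy is not coercive, but on small mass the cubic term is controlled by Gagliardo--Nirenberg. Hence $\norm{Iu_\lambda(t)}_{H^1}\les E^{1/2}+M^{1/2}$ as long as the energy stays small.

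\textbf{Step 1 (modified local theory).} I would show that $Iu$ satisfies a local well-posedness statement in $X^{\delta}_{1,\frac12+}$ on a time interval of length $\delta\sim 1$, depending only on $\norm{Iu_0}_{H^1}$, uniformly in $N$. This requires the bilinear estimate $\norm{I\partial_x(uv)}_{X_{1,-\frac12+}}\les\norm{Iu}_{X_{1,\frac12+}}\norm{Iv}_{X_{1,\frac12+}}$. I would derive it from the $s>\frac34$ bilinear estimate of \cite{CaoLabora2025}, or from the estimates of \cite{JNK2025}, via the standard interpolation lemma: for $s'>\frac34$ the $I$-operator transfers $H^{s'}$ bounds to $H^1$ bounds with $N$-independent constants.

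\textbf{Step 2 (almost conservation), the main obstacle.} I would compute $\frac{d}{dt}E[Iu]$ and write the increment on $[0,\delta]$ as a sum of two terms. The first is a trilinear term $\int\!\!\int (1-\frac{m(\xi_1+\xi_2)}{m(\xi_1)m(\xi_2)})\,\cdots$ with symbol $\xi\abs{\zeta}^2$. The second is a quartic term coming from $\partial_x(Iu)^2-I\partial_x(u^2)$ paired with $(Iu)^2$. The target is the bound
\[
\abs{E[Iu(\delta)]-E[Iu(0)]}\les N^{-\alpha+}\,\norm{Iu}_{X_{1,\frac12+}}^{3}\bigl(1+\norm{Iu}_{X_{1,\frac12+}}\bigr).
\]
I would decompose dyadically in frequencies $N_1\ge N_2\ge N_3$ and require $N_1\gtrsim N$, since otherwise the symbol vanishes. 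The mean value theorem gives $\abs{1-\frac{m(\xi_2+\xi_3)}{m(\xi_2)m(\xi_3)}}\les N_3/N_2$ when $N_3\ll N_2$. In the high-low interaction, the bilinear refined $L^4$ estimate of \cite{JNK2025} for separated frequencies recovers the derivative. In the comparable-frequency region, the linear $L^4_{txy}$ estimate is used instead. Activating the bilinear estimate requires splitting according to whether the $x$-frequency, rather than the full frequency, is transversal; this is where the exponent is decided. I expect the optimization to yield the decay $N^{-\alpha}$ with the exponent matching the threshold, something like $\alpha$ satisfying $\frac{1-s}{s}\cdot\ldots$. The final exponent $\frac{4(1-s^2)}{13s-11}$ suggests that the effective decay, combined with the scaling, is $N^{-\frac{13s-11}{\ldots}}$ after accounting for $\lambda$. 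The quartic term is easier and is handled by Hölder together with the $L^4$ Strichartz estimates.

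\textbf{Step 3 (iteration and growth).} Iterating Step 1 on intervals of length $\delta$, the energy stays $\le 2E[Iu_{0,\lambda}]$ for about $N^{\alpha-}E[Iu_{0,\lambda}]\cdot$(smallness) steps. This reaches rescaled time $\lambda^3T$ provided $N^{\alpha-}\gtrsim\lambda^3T$, i.e. $N^{\alpha-\frac{3(1-s)}{s}}\gtrsim T$. This is possible for any $T$ exactly when the exponent is positive, which should be $s>\frac{11}{13}$. Undoing the scaling gives $\norm{u(T)}_{H^s}\les\lambda^{s}\norm{Iu_\lambda}_{H^1}\les N^{1-s}$. Inserting $N\sim T^{1/(\alpha-3(1-s)/s)}$ gives the polynomial bound \eqref{polygrowthZKbelowH^1}; the stated exponent fixes $\alpha$ a posteriori. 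Uniqueness, continuity and smoothness of the flow map then follow from the local theory of \cite{CaoLabora2025} on each subinterval, since $s>\frac{11}{13}>\frac34$.
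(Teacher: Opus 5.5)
Your outline has the right architecture (rescaling, a modified local theory, almost conservation, iteration), but it has two genuine gaps, and together they mean the threshold $\frac{11}{13}$ is never derived.

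\textbf{First gap: you use the mZK scaling.} For ZK the invariance is $u_\lambda(t,x,y)=\lambda^{-2}u(\lambda^{-3}t,\lambda^{-1}x,\lambda^{-1}y)$, posed on $\R\times\lambda\T$ with $\lambda\ge1$. Your $\lambda^{2}u(\lambda^{3}t,\lambda x,\lambda y)$ on $\R\times\lambda^{-1}\T$ goes the wrong way. It inflates mass and energy, and it shrinks the period, where the constants of the periodic estimates such as \eqref{MPlambda} grow with $\lambda$ because the zero mode dominates the $q$-sum.

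With the correct scaling, $\|u_{0,\lambda}\|_{L^2}=\lambda^{-1}\|u_0\|_{L^2}$ and $\|\nabla I_Nu_{0,\lambda}\|_{L^2}\les N^{1-s}\lambda^{-1-s}\|u_0\|_{H^s}$. So the right choice is $\lambda\sim N^{\frac{1-s}{1+s}}$, and scaling back costs $\lambda^{1+s}$, not $\lambda^{s}$. Your $N^{2(1-s)}\lambda^{-2s}$, $\lambda\sim N^{\frac{1-s}{s}}$ and $\lambda^{s}$ are the $k=2$ exponents.

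This changes the result. Your iteration condition $\alpha>\frac{3(1-s)}{s}$ would need $\alpha=\frac{6}{11}$ to reach $\frac{11}{13}$. With the decay $N^{-\frac14+}$ that is actually available, it gives only $s>\frac{12}{13}$. Also, no constant $\alpha$ makes your growth exponent $\frac{s(1-s)}{(\alpha+3)s-3}$ agree with $\frac{4(1-s^2)}{13s-11}$, so fitting $\alpha$ a posteriori cannot work.

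\textbf{Second gap: the decay exponent is never established.} This exponent is the whole content of the theorem. The paper shows that the cubic term of $E[I_Nu_\lambda]$ gains $N^{-\frac14+}$ and the quartic term $N^{-1+}$, uniformly in $\lambda\ge1$. The condition $\frac14>\frac{3(1-s)}{1+s}$ is then exactly $s>\frac{11}{13}$.

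Your Step 2 would not produce this decay, because the tools are misassigned.
\begin{itemize}
\item The gain for separated frequencies comes from the Molinet--Pilod estimate \eqref{MPlambda}/\eqref{MPlambdadual}, which gives $N^{-\frac12+}$ in the high--low regions. The estimate \eqref{Bilinreflambda} is not a separated-frequency estimate; it bounds products projected away from the cone $|3\xi^2-q^2|\lesssim|\xi|$.
\item The bottleneck is the fully comparable region $|(\xi_0,q_0)|\sim|(\xi_1,q_1)|\sim|(\xi_2,q_2)|\sim N_1\gtrsim N$. There the linear $L^4$ estimate alone cannot close. The multiplier is $\les N^{s-1}N_1^{3-s}$, and two $L^4$ factors in $X_{1,\frac12+}$ leave $N^{s-1}N_1^{1-s+}$, which is not even bounded.
\item The paper writes this multiplier as $N^{s-1}N_1^{\frac34-s+}\,|\xi_0|\langle(\xi_0,q_0)\rangle^{\frac34+}\langle(\xi_1,q_1)\rangle^{\frac14-}\langle(\xi_2,q_2)\rangle^{\frac14-}$.
\item It then applies the full $s>\frac34$ bilinear estimate to $J^{\frac14-}I_Nu_1$ and $J^{\frac14-}I_Nu_2$. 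This is Proposition~\ref{bilinestimateZKlambda}, whose proof combines the resonance relation with \eqref{MPlambda}, \eqref{L^4lambda} and \eqref{Bilinreflambda}.
\item The result is the commutator bound \eqref{nonlinearsmoothingZKprecise}, with $N^{s-1}N_1^{\frac34-s+}\les N^{-\frac14+}$.
\end{itemize}

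Your Step 1 via the interpolation lemma is a legitimate alternative for the modified local theory. However, it gives no decay in $N$, so it cannot replace this commutator estimate.
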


and

\begin{theorem} \label{GWPmZK}
Let $X \in \set{\R \times \T, \R^2}$ and define
\[ s(X) \coloneqq \begin{cases} \frac{36}{49}, & \text{if} \quad X = \R \times \T, \\ \frac{2}{3}, & \text{if} \quad X= \R^2. \end{cases} \]
Then the Cauchy problem $\mathrm{(CP}_{2, X} \mathrm{)}$ is globally well-posed for all regularities $s > s(X)$. More precisely, let $s > s(X)$ and $u_0 \in H^s(X)$ be given. In the focusing case, assume in addition that
\[ \norm{u_0}_{L^2(X)} < \norm{Q_2}_{L^2(\R^2)}, \]
where $Q_2$ denotes the unique positive radial solution of
\[ \Delta_{xy}Q_2 - Q_2 + Q_2^{3} = 0. \]
Then, for every prescribed lifespan $T>0$, there exists a unique solution
\[ u \in X_{s,\frac{1}{2}+}^T \]
of $\mathrm{(CP}_{2, X} \mathrm{)}$. This solution belongs to $C(\intcc{-T,T},H^s(X))$, and for every $T' \in \intoo{0,T}$, there exists a neighborhood $\Ucal \subseteq H^s(X)$ of $u_0$ such that the data-to-solution map
\[ S: H^s(X) \supseteq \Ucal \rightarrow X_{s,\frac{1}{2}+}^{T'}, \quad v_0 \mapsto S(v_0) \coloneqq v \]
is smooth.
Moreover, if $s \leq 1$, the solutions satisfy the following growth bounds for their respective $H^s(X)$-norms:
\begin{equation} \label{polygrowthmZKbelowH^1}
\sup_{t \in \intcc{-T,T}} \norm{u(t)}_{H^s(X)} \les \begin{cases} (1+T)^{\frac{12s(1-s)}{49s-36}+},  & \text{if} \quad X = \R \times \T, \\ (1+T)^{\frac{2s(1-s)}{3(3s-2)}+}, & \text{if} \quad X = \R^2. \end{cases}
\end{equation}

\end{theorem}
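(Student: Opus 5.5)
The proof of Theorem \ref{GWPmZK} will follow the $I$-method. The basic object is the Fourier multiplier $I=I_N$ with symbol $m_N(\xi,\eta)$, which equals $1$ for $\abs{(\xi,\eta)}\le N$ and equals $(N/\abs{(\xi,\eta)})^{1-s}$ for $\abs{(\xi,\eta)}\ge 2N$. Then $\norm{Iu}_{H^1}\les N^{1-s}\norm{u}_{H^s}$ and $\norm{u}_{H^s}\les \norm{Iu}_{H^1}$.

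\textbf{Step 1: modified local theory.} Rerun the local well-posedness argument from the known Strichartz estimates for the modified equation $\partial_t Iu+\partial_x\Delta Iu=\pm\partial_x I(u^3)$ in the space $X_{1,\frac12+}$. This requires an interpolation lemma: any multilinear estimate valid at regularity $s$ transfers to $I$ with constants uniform in $N$. The inputs are the linear $L^4_{txy}$-estimate with the $\frac14$-derivative gain from \cite{Pilod2015} when $X=\R^2$, and the linear and bilinear estimates of \cite{JNK2025} when $X=\RT$. The output is a local existence time $\delta\sim 1$, provided $\norm{Iu_0}_{H^1}\les 1$, together with $\norm{Iu}_{X^\delta_{1,\frac12+}}\les\norm{Iu_0}_{H^1}$.

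\textbf{Step 2: almost conservation law.} This is the main step. Write
\[ E(Iu)(\delta)-E(Iu)(0)=\int_0^\delta\!\!\int \partial_x\Delta Iu\,\bigl(I(u^3)-(Iu)^3\bigr)+\partial_x(Iu)^3\bigl(I(u^3)-(Iu)^3\bigr). \]
The first term is a $4$-linear expression with symbol involving $1-\frac{m(\zeta_2+\zeta_3+\zeta_4)}{m(\zeta_2)m(\zeta_3)m(\zeta_4)}$; the second is a $6$-linear expression. Decompose dyadically and use the mean value theorem on $m$ when the highest frequency is much larger than the others. This gains a factor $N_2^{-1}\cdot N_1$-type smallness, and in that regime the $\frac14$ gain (on $\R^2$) or the bilinear smoothing for separated frequencies (on $\RT$) applies. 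In the comparable-frequency regimes, the symbol is bounded and smallness must come purely from the linear $L^4$ estimates.

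The goal is a bound
\[ \abs{E(Iu)(\delta)-E(Iu)(0)}\les N^{-\alpha+}\norm{Iu}_{X_{1,\frac12+}}^4+N^{-\beta+}\norm{Iu}^6_{X_{1,\frac12+}}, \]
with $\alpha=\frac32$ on $\R^2$. On $\RT$ the bilinear estimate of \cite{JNK2025} has to be activated carefully, and I expect it to yield $\alpha$ with the exponent $49s-36$ appearing in the final count. I expect the hardest part to be exactly this case analysis on $\RT$, where the torus restricts the bilinear gain and the frequency interactions have to be split according to whether the $x$-frequencies or the $y$-frequencies are separated.

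\textbf{Step 3: iteration.} Control of $\norm{\nabla Iu}_{L^2}$ by $E(Iu)$ uses Gagliardo--Nirenberg in the defocusing case. In the focusing case it uses the sharp Gagliardo--Nirenberg inequality with $Q_2$, together with mass conservation $\norm{Iu}_{L^2}\le\norm{u_0}_{L^2}$; this works on $\RT$ as well, since the sharp constant is the same for the cylinder at the $H^1$ level. Use the scaling $u_\lambda=\lambda u(\lambda^3 t,\lambda x,\lambda y)$ on $\R^2$, which gives $E(Iu_{0,\lambda})\les N^{2(1-s)}\lambda^{2s}$. On $\RT$, where no scaling is available, I would instead iterate directly with $E\sim N^{2(1-s)}$ and a local time depending on $N$. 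Iterate over $T/\delta$ steps, requiring $T\,N^{-\alpha}\ll$ the size of the energy. Choosing $N$ as a power of $T$ gives the polynomial bounds \eqref{polygrowthmZKbelowH^1}, and the admissible range is $s>\frac23$, respectively $s>\frac{36}{49}$, exactly where the exponent stays positive. Uniqueness, continuity, and smoothness of the data-to-solution map are inherited from the local theory.
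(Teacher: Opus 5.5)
Your plan has the right skeleton for $X=\R^2$. On $\R\times\T$, however, Step~3 has a genuine gap. You assert that no scaling is available on the cylinder, and you propose to iterate with $E\sim N^{2(1-s)}$ and an $N$-dependent local time. The paper does rescale on the cylinder. The function $u_\lambda(t,x,y)=\lambda^{-1}u(\lambda^{-3}t,\lambda^{-1}x,\lambda^{-1}y)$ solves mZK on $\R\times\T_\lambda$. Section~3 re-proves the needed estimates with constants uniform in $\lambda\ge 1$: the bilinear smoothing estimate \eqref{MPlambda}, the $L^4$ estimate and its bilinear refinement, and the Airy/Schr\"odinger $L^6$ estimates. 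Only the periodic Schr\"odinger $L^6$ bound loses $\lambda^{0+}$, which is harmless because $\lambda\le N^{100}$. A rescaled Gagliardo--Nirenberg inequality is also proved.

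Choosing $\lambda\sim N^{(1-s)/s}$ gives $\|I_Nu_{0,\lambda}\|_{H^1_\lambda}\lesssim\|u_0\|_{L^2}$, so every iteration step has length $\sim 1$. The closing condition $N^{13/12-}\gtrsim\lambda^3T=N^{3(1-s)/s}T$ is then exactly $s>\frac{36}{49}$. The choice $N\sim T^{12s/(49s-36)+}$ produces the exponent in \eqref{polygrowthmZKbelowH^1}.

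Your unscaled bookkeeping does not reach this. With energy $N^{2(1-s)}$, increments $N^{-\alpha}\|Iu\|^4\sim N^{-\alpha+4(1-s)}$ and $\delta\sim N^{-\gamma(1-s)}$, you need $\alpha>(2+\gamma)(1-s)$. Even with $\alpha=\frac{13}{12}$ and the scaling-natural value $\gamma=3$, this only gives $s>\frac{47}{60}$. The $X_{s,b}$ fixed point does not provide a small $\gamma$ at all. So neither the threshold nor the growth exponent follows from your Step~3 on the cylinder.

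The second gap is the decay rate itself. You never determine $\alpha$ on the cylinder; saying you expect $49s-36$ to appear is reverse-engineering the answer. Moreover, the mechanism you describe for comparable frequencies would not produce the required rate. There the symbol is not bounded: it is of size $N^{2s-2}\abs{(\xi_1,q_1)}^{2-2s}$. Closing with linear $L^4$ estimates only yields $N^{2s-2}\abs{(\xi_1,q_1)}^{1-2s+}\lesssim N^{-1+}$, hence, even with scaling, only $s>\frac34$.

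The paper obtains $N^{-13/12+}$ in the fully comparable regime $\abs{(\xi_0,q_0)}\sim\abs{(\xi_1,q_1)}\sim\abs{(\xi_2,q_2)}\sim\abs{(\xi_3,q_3)}\gtrsim N$ differently. It feeds this term into the full trilinear local estimate at $s>\frac{11}{24}$, in its rescaled form \eqref{LWPmZKinH^slambda11/24}, which gives $N^{2s-2}\abs{(\xi_1,q_1)}^{11/12-2s+}\lesssim N^{-13/12+}$. The remaining regimes are shown to give at least $N^{-5/4+}$, via $MP_\lambda$, \eqref{Bilinreflambda}, the resonance function, and the exclusion of certain configurations.

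Likewise on $\R^2$, the rate $N^{-3/2+}$ in the comparable regime comes from the Ribaud--Vento estimate \eqref{LWPmZKinH^s1/4} at $s>\frac14$. It does not come from the $\abs{K(D)}^{1/8}$-gain $L^4$ estimate, which degenerates near $\abs{\eta}=\sqrt3\abs{\xi}$. Only with $\alpha=\frac{13}{12}$ and $\alpha=\frac32$ does the condition $s>\frac{3}{3+\alpha}$ give $\frac{36}{49}$ and $\frac23$.
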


In the second part of the article, we change perspective and consider global solutions in the semiperiodic setting corresponding to initial data of higher regularity $s \gg 1$. Since the global existence of the solution essentially relies on the uniform boundedness of the $H^1_{xy}$-norm, it is natural to ask how the $H^s_{xy}$-norm of the solution behaves for $s \gg 1$. It turns out that, for all $k \geq 1$, the associated $H^s_{xy}$-norm grows at most polynomially in time, and this is made precise in 

\begin{theorem} \label{GrowthgZK}
Let $k \in \N$, $s \in 2\N$,
\[ \alpha (k,s) \coloneqq \begin{cases} 4(s-1), & \text{if} \quad k=1, \\ s-1, & \text{if} \quad k\geq 2, \end{cases} \]
and $u_0 \in H^s(\R \times \T)$. Assume in addition that $u_0$ satisfies the smallness assumption specified in Theorem 1.2 of \ \cite{Farah2024}. Then the $H^s(\R \times \T)$-norm of the associated unique global solution $u \in C(\R,H^s(\R \times \T)) \cap L^\infty(\R, H^1(\R \times \T)) \cap \left( \bigcap_{T > 0} X_{s,\frac{1}{2}+}^T \right)$ to $\mathrm{(CP}_{k, \R \times \T} \mathrm{)}$ grows at most polynomially in time. More precisely, for every $\alpha > \alpha (k,s)$, there exists a constant $C=C(k,\alpha,\sup_{t \in \R} \norm{u(t)}_{H^1}) > 0$ such that
\begin{equation} \label{polygrowthgZKaboveH^1}
\norm{u(t)}_{H^s(\R \times \T)} \leq C (1+\abs{t})^{\alpha}(1+\norm{u_0}_{H^s(\R \times \T)})
\end{equation}
holds for all $t \in \R$.
\end{theorem}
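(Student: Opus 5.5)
We prove Theorem \ref{GrowthgZK} with the standard modified-energy strategy for polynomial growth of higher Sobolev norms, in the spirit of Bourgain and Staffilani. The engine is a local-in-time smoothing estimate for the increment of $\norm{u(t)}_{H^s}^2$, iterated over unit-size intervals.

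\textbf{Step 1 (uniform local theory).} By Theorem 1.2 of \cite{Farah2024} and the smallness assumption, $\sup_{t\in\R}\norm{u(t)}_{H^1}\le A<\infty$. Running the local well-posedness argument of \cite{Pilod2015} for $k=1$ (resp. \cite{Farah2024,JNK2025} for $k\ge2$) at regularity $1$, we obtain a time $\delta=\delta(A,k)>0$, independent of $t_0$, with
\[ \norm{u}_{X^{[t_0,t_0+\delta]}_{1,\frac12+}}\les A . \]
Persistence of regularity at level $s$ on the same interval then gives
\[ \norm{u}_{X^{[t_0,t_0+\delta]}_{s,\frac12+}}\les \norm{u(t_0)}_{H^s}. \]
This works because the $H^s$ estimate is linear in the top-order factor, with the remaining factors controlled in $X_{1,\frac12+}$.

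\textbf{Step 2 (main obstacle: the increment estimate).} Since $s\in2\N$, we may use the exactly computable quantity
\[ \norm{u}_{\dot H^s}^2=\int \abs{\Delta^{s/2}u}^2 . \]
Differentiating in time, the linear part drops out, leaving
\[ \frac{d}{dt}\norm{u}_{\dot H^s}^2=\pm 2\int \Delta^{s/2}u\,\Delta^{s/2}\partial_x(u^{k+1}). \]
Expand by Leibniz and integrate by parts so that the worst term $\int \Delta^{s/2}u\,u^k\,\partial_x\Delta^{s/2}u$ becomes $-\frac12\int (\partial_x u^k)\abs{\Delta^{s/2}u}^2$. The terms in which $s$ derivatives split as $s-1$ and $1$ are also handled by symmetrization. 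If needed, we add lower-order correction terms to the energy so that every remaining multilinear form has at most $s$ derivatives on each of the two highest-frequency factors, with the derivative loss absorbed by one low-frequency factor.

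Integrating over $[t_0,t_0+\delta]$ and localizing in frequency, the goal is the smoothing bound
\[ \absBig{\norm{u(t_0+\delta)}_{\dot H^s}^2-\norm{u(t_0)}_{\dot H^s}^2}\les_A \norm{u(t_0)}_{H^s}^{2-\theta} \]
for some $\theta>0$, i.e.\ a gain of $\theta$ derivatives from the interaction. Place the two high-frequency factors $N_1\sim N_2$ in $X_{s,\frac12+}$ and the others in $X_{1,\frac12+}$, and interpolate the top-order norms between $H^1$ and $H^s$ to trade the derivative gain into a gain in norm.
\begin{itemize}
\item For $k\ge2$: the linear $L^4_{txy}$ Strichartz estimate of \cite{JNK2025}, together with Sobolev in the low factors, should yield essentially one derivative of gain in the high–high–low–low interaction, i.e.\ $\theta\approx\frac{1}{s-1}$.
\item For $k=1$: only a trilinear interaction is available, and the bilinear estimate of \cite{Pilod2015,JNK2025} for separated frequencies gives a weaker gain, $\theta\approx\frac{1}{4(s-1)}$.
\end{itemize}
This is exactly the dichotomy in $\alpha(k,s)$. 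The delicate case is the resonant high–high–low interaction with the low frequency nearly zero in $x$, where the bilinear gain degenerates. There I would first try to exploit the $\partial_x$ landing on the low factor (after symmetrization) to compensate.

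\textbf{Step 3 (iteration).} Set $y_n=\norm{u(n\delta)}_{H^s}^2$. Step 2 gives $y_{n+1}\le y_n+C y_n^{1-\theta/2}$. A standard induction, comparing with the ODE $y'=Cy^{1-\theta/2}$, then yields
\[ y_n\les (1+n)^{2/\theta}(1+y_0). \]
Hence
\[ \norm{u(t)}_{H^s}\les (1+\abs t)^{1/\theta+}(1+\norm{u_0}_{H^s}), \]
and with $1/\theta=\alpha(k,s)+$ this is \eqref{polygrowthgZKaboveH^1}. The arbitrarily small loss covers the $\epsilon$-losses in the Strichartz estimates, and the constant depends only on $k$, $\alpha$ and $A$.
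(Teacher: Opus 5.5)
Your Steps 1 and 3 reproduce the architecture of the paper's proof: a uniform lifespan from the $H^1$ bound, a tame local estimate $\norm{u}_{X_{\sigma,\frac12+}}\les\norm{u(t_0)}_{H^\sigma}$ for $1\le\sigma\le s$, an increment bound of the form $\norm{u(t_0)}_{H^s}^{2-\theta}$, and a Gr\"onwall-type iteration producing the exponent $1/\theta$. The gap is Step 2, which is the whole content of the theorem. You only assert it, and the mechanisms you name would not give the claimed $\theta$.

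For $k\ge2$, the linear estimate \eqref{L^4lambda} gains no derivative (it loses $\epsilon$), and Sobolev embedding on the low factors only loses more. Take the worst form, with multiplier $\abs{(\xi_0,q_0)}^s\abs{(\xi_1,q_1)}^s\abs{(\xi_2,q_2)}$, in the regime $\abs{(\xi_0,q_0)}\sim\abs{(\xi_1,q_1)}\gg\abs{(\xi_2,q_2)},\abs{(\xi_3,q_3)}$. Putting the two high factors in $X_{s,\frac12+}$ then yields at best $\norm{u(t_0)}_{H^s}^2$, i.e.\ $\theta=0$. In the paper the full derivative comes from the Molinet--Pilod estimate \eqref{MPlambda}, applied to the pair (highest-frequency factor, a low factor). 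Since $\abs{\abs{(\xi_1,q_1)}^2-\abs{(\xi_3,q_3)}^2}^{1/2}\sim\abs{(\xi_1,q_1)}$ when $\abs{(\xi_1,q_1)}\gg\abs{(\xi_3,q_3)}$, one can put $u_1$ in $X_{s-1+,\frac12+}$ at the price of $\frac12+$ derivatives on the low factor. This gives $\les\norm{u(t_0)}_{H^s}\norm{u(t_0)}_{H^{s-1+}}$. In the complementary regime, where the top frequencies among the $u_i$ are all comparable, no smoothing is needed. The extra derivative is simply moved onto another high factor, and $L^4$ (for $k\ge3$ also the $L^6$ estimate \eqref{optimizedL^plambda}) closes the estimate.

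For $k=1$ you place the weak gain in the wrong regime. For separated frequencies, \eqref{MPlambda} again gains at least half a derivative, provided $\partial_x$ is kept off the high factors. That means on the low factor in the high--low regime, and on the product in the high--high$\to$low regime, which is exactly what the projector splitting in Lemma \ref{equalityfordifferencek=1} arranges. Your remark that only three factors are present is the right reason for the loss. But it bites in the fully comparable regime $\abs{(\xi_0,q_0)}\sim\abs{(\xi_1,q_1)}\sim\abs{(\xi_2,q_2)}$, where there is no fourth factor to absorb a derivative and no separation for \eqref{MPlambda}. The paper treats this regime with the sharp bilinear estimate \eqref{lwpZKinH^slambda} (local well-posedness for $s>\frac34$). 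Its $\frac14$-derivative nonlinear smoothing gives $\les\norm{u(t_0)}_{H^s}\norm{u(t_0)}_{H^{s-\frac14+}}$, hence $\theta=\frac{1}{4(s-1)}-$. This regime, and the tool it needs, are missing from your plan. Conversely, the case you flag as delicate is harmless. The weight in \eqref{MPlambda} is comparable to the larger frequency whenever the frequencies are separated, independently of their $x$-components, and a small $x$-frequency on the factor carrying $\partial_x$ only helps.
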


Our considerations on the growth of higher $H^s_{xy}$-norms for $k$-gZK in the semiperiodic setting were inspired by an analogous result of Côte and Valet \cite{Valet2021} for the original ZK equation on $\R^2$, and the proof of Theorem \ref{GrowthgZK} will proceed as follows: Following the ideas of Bourgain \cite{Bourgain1993, Bourgain19932} and Staffilani \cite{Staffilani1997}, we fix $t \in \R$ and derive integral representations for differences of the form
\[ \norm{u(t)}_{\dot{H}^s}^2 - \norm{u(0)}_{\dot{H}^s}^2. \]
With the aid of the most recent local well-posedness results from \cite{CaoLabora2025} and \cite{JNK2025}, we then estimate the resulting integral expressions to obtain
\[ \norm{u(t)}_{H^s} \leq \norm{u(0)}_{H^s} + C \norm{u(0)}_{H^s}^{1-\eps}, \]
and a Grönwall-type iteration argument will ultimately provide polynomial growth bounds depending on $\eps \in \intoo{0,1}$. \\
At this point, we state without proof that the same approach allows one to establish polynomial growth bounds in the non-periodic setting for all higher-oder nonlinearities as well. The general arguments presented in Section 5 also apply on $\R^2$ and one only needs to prove an analogue of Proposition \ref{keypropgrowthk>1}, which can be accomplished by exploiting the interplay between the Kato smoothing estimate, Faminskii's \cite{Faminskii} $L_x^2L_{yT}^\infty$ maximal function estimate, and the $L_x^4L_{yT}^\infty$ maximal function estimate proved in \cite{Linares2009}. Proceeding as described, one can achieve any exponent $\alpha > \frac{4(s-1)}{5}$ in the corresponding growth estimate for all $k\geq2$, which, as one would expect, is better than $\alpha > s-1$ in the semiperiodic case.

\subsection*{Acknowledgments}

This work will be part of the author's PhD thesis, written under the supervision of Axel Grünrock. The author gratefully acknowledges his ongoing guidance and insightful feedback throughout the course of this research.

\section{Preliminaries}

We begin by introducing some notational conventions and defining the function spaces that will be used in this article.

\subsection{Notation}

For $x = (x_1,...,x_n) \in \R^n$, its Euclidean norm will be denoted by $\abs{x}_2^2 \coloneqq x_1^2+...+x_n^2$, so that the Japanese brackets take the form $\langle x \rangle \coloneqq \br{1+\abs{x}_2^2}^\frac{1}{2}$.
If $a \in \R$ is an arbitrary real number, we denote its absolute value by $\abs{a}$, and we use the notation $a+$ to denote a number slightly larger than $a$, and $a-$ to denote a number slightly smaller than $a$.
Within this notation, we also permit $\infty -$, to denote a very large, yet still finite, positive real number. 
Given two real numbers $a$ and $b$, we denote their maximum by $a \lor b$ and their minimum by $a \land b$, for brevity. 
Furthermore, if $a$ and $b$ are positive, we will often write $a \les b$ to indicate that there exists some constant $c>0$ such that $a \leq cb$. 
If the constant $c$ in this inequality can be chosen particularly close to $0$, we write $a \ll b$ to express that $b$ is much larger than $a$. 
Moreover, if both $a \les b$ and $b \les a$ hold, we will use the notation $a \sim b$.
In addition, for later use in Section 5, we denote the length of a multi-index $\alpha = (\alpha_1,\alpha_2) \in \N_0^2$ by $\abs{\alpha} = \alpha_1+\alpha_2$, and we use
\[ D^\alpha \coloneqq \frac{\partial^{\alpha_1} \partial^{\alpha_2}}{\partial x^{\alpha_1} \partial y^{\alpha_2}} \]
to simplify the notation for derivatives.
For a parameter $\lambda \in \intoo{0,\infty}$, we define $\T_\lambda \coloneqq \lambda \T = \R/\br{2\pi \lambda \Z}$ and also allow $\lambda = \infty$, in which case we set $\T_\infty \coloneqq \R$ to maintain uniform notation.
If $f: \T_\lambda \rightarrow \C$ and $g: \Z/\lambda \rightarrow \C$ are admissible functions, we define the Fourier transform of a $2\pi \lambda$-periodic function by
\[ \Fcal^\lambda f(q) \coloneqq \int_{-\pi \lambda}^{\pi \lambda} e^{-ixq} f(x) \ \mathrm{d}x, \quad q \in \Z/\lambda, \]
and the corresponding inverse transform by
\[ \Fcal^{-1, \lambda}g(x) \coloneqq \frac{1}{2\pi \lambda} \sum_{q \in \Z/\lambda} e^{ixq} g(q), \quad x \in \T_\lambda. \]
In the special case $\lambda = \infty$, we choose the constants to match those in the periodic setting and define
\[ \Fcal^\infty f (\xi) \coloneqq \int_\R  e^{-ix\xi} f(x) \ \mathrm{d}x, \quad \xi \in \R, \]
as well as
\[ \Fcal^{-1,\infty} f (x) \coloneqq \frac{1}{2\pi} \int_\R  e^{ix\xi} f(\xi) \ \mathrm{d}\xi, \quad x \in \R.  \]
As we will primarily be working with functions $f: \R \times \R \times \T_\lambda \rightarrow \C$, $g: \R \times \R \times \Z/\lambda \rightarrow \C$, we introduce the following shorthand notation for the mixed Fourier transform and its inverse, respectively:
\[ \widehat{\!f}^\lambda (\tau,\xi,q) \coloneqq \int_{\R^2} \int_{-\pi \lambda}^{\pi \lambda} e^{-i(t \tau + x \xi + y q)} f(t,x,y) \ \mathrm{d}(t,x,y), \quad (\tau,\xi,q) \in \R^2 \times \Z/\lambda, \]
and
\[ \widecheck{\!g}^\lambda(t,x,y) \coloneqq \frac{1}{(2\pi)^3\lambda} \int_{\R^2} \sum_{q \in \Z/\lambda} e^{i(t\tau+x\xi+yq)} g(\tau,\xi,q) \ \mathrm{d}(\tau,\xi), \quad (t,x,y) \in \R^2 \times \T_\lambda, \]
with the obvious modifications in the case $\lambda = \infty$.
When employing partial Fourier transforms, we indicate this by using subscripts, as in $\Fcal_x$, $\Fcal_{ty}^\lambda$, or $\Fcal_{xy}^\lambda$. 
The parameter $\lambda$ is included in the notation whenever $\lambda \notin \set{1, \infty}$, and omitted otherwise when clear from the context.
With the (parametrized) Fourier transform in hand, we define the Riesz and Bessel potential operators of order $-s$, where $s$ is an arbitrary real number:
\[ I^s \coloneqq \Fcal_{xy}^{-1,\lambda} \abs{(\xi,q)}_2^s \Fcal_{xy}^{\lambda}, \qquad J^s \coloneqq \Fcal_{xy}^{-1,\lambda} \langle (\xi,q) \rangle^s \Fcal_{xy}^{\lambda}. \]
As we will often apply these operators with respect to just one spatial variable, we will include subscripts when necessary to avoid confusion.
In addition to that, we make use of the Fourier transform to introduce the following family of unitary operators, associated with the linear part of $k$-gZK:
\[ e^{-t\partial_x \Delta_{xy}} \coloneqq \Fcal_{xy}^{-1,\lambda} e^{it\xi(\xi^2+q^2)} \Fcal_{xy}^{\lambda}, \quad t \in \R. \]
As usual, we denote the occurring phase function by $\phi(\xi,q) \coloneqq \xi (\xi^2+q^2)$, and use it to define the resonance functions $R_{\text{ZK}}$ and $R_{\text{mZK}}$, corresponding to the original and modified ZK equations, respectively:
\[ R_{\text{ZK}}(\xi_1,q_1,\xi,q) \coloneqq \phi(\xi,q) - \phi(\xi_1,q_1) - \phi(\xi-\xi_1,q-q_1), \]
\[ R_{\text{mZK}}(\xi_1,q_1,\xi_2,q_2,\xi,q) \coloneqq \phi(\xi,q) - \phi(\xi_1,q_1) - \phi(\xi_2,q_2) - \phi(\xi-\xi_1-\xi_2,q-q_1-q_2). \]
Lastly, we fix the notation $\abs{(\xi,q)}^2 \coloneqq 3\xi^2+q^2$, as this dilated quantity arises naturally from differentiating $R_{\text{ZK}}$ in the context of a bilinear smoothing estimate, which will be used frequently throughout this article.

\subsection{Function spaces}

Let $\lambda \in \intoc{0,\infty}$ and $X \in \set{ \T_\lambda, \R \times \T_\lambda}$.
For each $s \in \R$, we equip the Sobolev space $H^s(X)$ with the norm 
\[ \norm{f}_{H^s(X)} \coloneqq \norm{J^s f}_{L^2(X)}, \]
while we adopt the convention 
\[ \norm{f}_{\dot{H}^s(X)} \coloneqq \norm{I^s f}_{L^2(X)} \]
for its homogeneous variant.
To reduce notational overhead, we omit the underlying domain in the notation or use subscripts, as in $H_x^s = H^s(\R)$, $H_{\lambda}^s = H^s(\R \times \T_\lambda)$, or $H_{y,\lambda}^s = H^s(\T_\lambda)$.
Moreover, in Section 5, we will dispense entirely with the parameter $\lambda =1$ in the notation and simply write $H^s$ for $H^s(\R \times \T)$ and $\dot{H}^s$ for $\dot{H}^s(\R \times \T)$.
For $\lambda \neq \infty$, the convolution theorem and Parseval's identity take the forms
\begin{align*} \Fcal_{xy}^\lambda (fg)(\xi,q) &= (\Fcal_{xy}^\lambda f \ast \Fcal_{xy}^\lambda g)(\xi,q) \\ & \coloneqq \frac{1}{(2\pi)^2\lambda} \int_\R \sum_{q_1 \in \Z/\lambda} \Fcal_{xy}^\lambda f(\xi-\xi_1,q-q_1) \Fcal_{xy}^\lambda g (\xi_1,q_1) \ \mathrm{d}\xi_1 \end{align*}
and
\[ \int_\R \int_{-\pi \lambda}^{\pi \lambda} f(x,y) \overline{g(x,y)} \ \mathrm{d}(x,y) = \frac{1}{(2\pi)^2\lambda} \int_\R \sum_{q \in \Z/\lambda} \Fcal_{xy}^\lambda f(\xi,q) \overline{\Fcal_{xy}^\lambda g(\xi,q)} \ \mathrm{d}\xi, \]
respectively, allowing us to express the Sobolev norms in terms of Fourier transforms, as in
\[ \norm{f}_{H^s(\R \times \T_\lambda)}^2 = \frac{1}{(2\pi)^2\lambda} \int_\R \sum_{q \in \Z/\lambda} \langle (\xi,q) \rangle^{2s} \abs{\Fcal_{xy}^\lambda f(\xi,q)}^2 \ \mathrm{d} \xi. \]
Now let $T >0$ and $p,q \in \intcc{1,\infty}$. We equip the mixed $L^pL^q$ spaces in time and space with norms defined, for instance, by
\[ \norm{f}_{L_{t}^pL_{xy,\lambda}^q} \coloneqq \left( \int_\R \norm{f(t,\cdot,\cdot)}_{L^q(\R \times \T_\lambda)}^p \ \mathrm{d}t \right)^\frac{1}{p} \]
and
\[ \norm{f}_{L_{Tx}^pL_{y,\lambda}^q} \coloneqq \left( \int_{\intcc{-T,T} \times \R} \norm{f(t,x,\cdot)}_{L^q(\T_\lambda)}^p \ \mathrm{d}(t,x) \right)^\frac{1}{p}, \]
with the usual modifications in the cases $p= \infty$ or $q = \infty$.
We want to emphasize that this list of examples in not intended to be exhaustive, but merely to clarify the notation we adopt for such mixed-norm spaces.
Moreover, in analogy with the notation for the Fourier transform, we often omit the parameter $\lambda$ in the notation of the norm whenever $\lambda \in \set{1,\infty}$, provided there is no possibility of confusion.
To conclude this section, we introduce solution spaces that are specifically adapted to the phase function $\phi$:
Given $s,b \in \R$ and $\lambda \in \intoo{0,\infty}$, we endow the rescaled semiperiodic Bourgain space $X_{s,b,\lambda}$ with the norm
\[ \norm{f}_{X_{s,b,\lambda}}^2 \coloneqq \frac{1}{(2\pi)^3\lambda} \int_{\R^2} \sum_{q \in \Z/\lambda} \langle (\xi,q) \rangle^{2s} \langle \tau - \phi(\xi,q) \rangle^{2b} \abs{\widehat{\! f}^\lambda(\tau,\xi,q)}^2 \ \mathrm{d}(\tau,\xi), \]
while in the non-periodic case, we define the norm via
\[ \norm{f}_{X_{s,b,\infty}}^2 \coloneqq \frac{1}{(2\pi)^3} \int_{\R^3} \langle (\xi,\eta) \rangle^{2s} \langle \tau - \phi(\xi,\eta) \rangle^{2b} \abs{\widehat{\! f}^\infty ( \tau,\xi,\eta)}^2 \ \mathrm{d}(\tau,\xi,\eta). \]
For any $T > 0$, we further define $X_{s,b,\lambda}^T$ to be the space of all restrictions of $X_{s,b,\lambda}$-functions to $\intcc{-T,T} \times \R \times \T_\lambda$, and equip it with the norm
\[ \norm{f}_{X_{s,b,\lambda}^T} \coloneqq \inf \setBig{\norm{\widetilde{f}}_{X_{s,b,\lambda}} \ \Big| \ \widetilde{f} |_{\intcc{-T,T} \times \R \times \T_\lambda} = f \ \text{and} \ \widetilde{f} \in X_{s,b,\lambda}}. \]
Provided there is no risk of ambiguity, we will also suppress the parameter $\lambda$ in the notation for $X_{s,b,\lambda}$ and the associated time-restricted spaces in the special cases $\lambda \in \set{1,\infty}$.

\section{Rescaled Strichartz-type estimates}

As a final preparatory step, we revisit several well-established linear and bilinear estimates of Strichartz type and reformulate them within the framework of the $\lambda$-dependent function spaces introduced above. In doing so, we follow the approach of Osawa and Takaoka \cite{Osawa2024} in spirit, and our starting point is a slightly refined version of a bilinear smoothing estimate, originally due to Molinet and Pilod (see Proposition 3.6 in \cite{Pilod2015}).

\begin{prop} \label{PropMPlambda}
Let $\epsilon > 0$ and $b_1,b_2 > \frac{1}{2}$ be arbitrary. For $\lambda \in \intcc{1,\infty}$, we define the bilinear convolution operator $MP_\lambda(\cdot,\cdot)$ via its Fourier transform by
\begin{align*} &\widehat{\! MP_\lambda(u,v)}^\lambda \left(\tau,\xi,\begin{cases} q, & \text{if} \ \lambda \neq \infty \\ \eta, & \text{if} \ \lambda = \infty \end{cases} \right) \coloneqq \\  & \begin{cases} \frac{1}{\lambda} \displaystyle\int_{\R^2} \displaystyle\sum_{\substack{q_1 \in \Z/\lambda \\ \ast}} \abs{\abs{(\xi_1,q_1)}^2 - \abs{(\xi_2,q_2)}^2}^\frac{1}{2} \widehat{\! u}^\lambda(\tau_1,\xi_1,q_1) \widehat{\! v}^\lambda(\tau_2,\xi_2,q_2) \ \mathrm{d}(\tau_1,\xi_1), & \text{if} \ \lambda \neq \infty \\ \displaystyle\int_{\substack{\R^3 \\ \ast}} \abs{\abs{(\xi_1,\eta_1)}^2 - \abs{(\xi_2,\eta_2)}^2}^\frac{1}{2} \widehat{\! u}^\infty(\tau_1,\xi_1,\eta_1) \widehat{\! v}^\infty(\tau_2,\xi_2,\eta_2) \ \mathrm{d}(\tau_1,\xi_1,\eta_1), & \text{if} \ \lambda = \infty   \end{cases} \end{align*}
where $\ast$ refers to the convolution constraint $(\tau,\xi,q) = (\tau_1+\tau_2,\xi_1+\xi_2,q_1+q_2)$, with an analogous formulation for the real Fourier variables $\eta$, $\eta_1$, and $\eta_2$, in place of $q$, $q_1$, and $q_2$.
Then, the estimate

\begin{equation} \label{MPlambda}
\norm{MP_\lambda(u,v)}_{L^2_{txy,\lambda}} \les_{\epsilon,b_1,b_2} \norm{J_y^{\frac{1}{2}+\epsilon}u}_{X_{0,b_1,\lambda}} \norm{v}_{X_{0,b_2,\lambda}} 
\end{equation}

holds for all $u,v$ with $J_y^{\frac{1}{2}+\epsilon}u \in X_{0,b_1,\lambda}$ and $v \in X_{0,b_2,\lambda}$. Moreover, the implicit constant is independent of the parameter $\lambda$.

\end{prop}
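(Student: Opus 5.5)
We prove \eqref{MPlambda} by duality and the standard reduction to free solutions, following the scheme of Proposition 3.6 in \cite{Pilod2015}, while keeping track of every $\lambda$-dependence. First, by the transference principle, it suffices to consider $u = e^{-t\partial_x\Delta_{xy}}u_0$ and $v = e^{-t\partial_x\Delta_{xy}}v_0$. In that case the right-hand side is replaced by $\norm{J_y^{\frac12+\epsilon}u_0}_{L^2_{xy,\lambda}}\norm{v_0}_{L^2_{xy,\lambda}}$. Concretely, we write $\widehat u^\lambda(\tau,\xi,q) = \int \langle\tau_0\rangle^{-b_1}F(\tau_0,\xi,q)\,\delta(\tau-\phi(\xi,q)-\tau_0)\,d\tau_0$ and argue similarly for $v$. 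Minkowski's inequality and $\int\langle\tau_0\rangle^{-b_i}\cdot\ \lesssim_{b_i}$ Cauchy--Schwarz, using $b_i>\frac12$, then give the reduction with constants independent of $\lambda$.

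For free solutions, the Fourier transform of $MP_\lambda(u,v)$ at $(\tau,\xi,q)$ is
\[
\frac1\lambda\sum_{q_1\in\Z/\lambda}\int_\R \abs{\abs{(\xi_1,q_1)}^2-\abs{(\xi_2,q_2)}^2}^{\frac12}\,\widehat{u_0}(\xi_1,q_1)\,\widehat{v_0}(\xi_2,q_2)\,\delta\bigl(\tau-\phi(\xi_1,q_1)-\phi(\xi-\xi_1,q-q_1)\bigr)\,d\xi_1 .
\]
By Cauchy--Schwarz in $(\xi_1,q_1)$, after inserting the weight $\langle q_1\rangle^{1+2\epsilon}$, the $L^2_{\tau\xi}\ell^2_q$ norm is bounded by $\sup_{\tau,\xi,q}A(\tau,\xi,q)^{1/2}$ times the desired product of norms. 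Here
\[
A(\tau,\xi,q)=\frac1\lambda\sum_{q_1\in\Z/\lambda}\langle q_1\rangle^{-1-2\epsilon}\int_\R\abs{\abs{(\xi_1,q_1)}^2-\abs{(\xi_2,q_2)}^2}\,\delta\bigl(\tau-\phi(\xi_1,q_1)-\phi(\xi-\xi_1,q-q_1)\bigr)\,d\xi_1 .
\]
The normalisation constants $(2\pi)^{-3}\lambda^{-1}$ match on both sides, so no stray power of $\lambda$ appears.

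The key computation is that, for fixed $(\xi,q,q_1)$, one has $\partial_{\xi_1}R$, where $R:=\phi(\xi_1,q_1)+\phi(\xi-\xi_1,q-q_1)$, equal to $(3\xi_1^2+q_1^2)-(3\xi_2^2+q_2^2)=\abs{(\xi_1,q_1)}^2-\abs{(\xi_2,q_2)}^2$. This is exactly the symbol in $MP_\lambda$. The $\xi_1$-integral is therefore a change of variables $\mu=R(\xi_1)$. Since $R$ is a cubic-type polynomial in $\xi_1$ (in fact quadratic, since the cubic terms $\xi_1^3+(\xi-\xi_1)^3$ reduce to degree two in $\xi_1$), each level set has at most two preimages. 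This gives
\[
\int_\R\abs{\partial_{\xi_1}R}\,\delta(\tau-R(\xi_1))\,d\xi_1\le 2
\]
uniformly in all parameters. Then $A\le \frac{2}{\lambda}\sum_{q_1\in\Z/\lambda}\langle q_1\rangle^{-1-2\epsilon}\lesssim_\epsilon 1$ uniformly for $\lambda\ge1$, because the sum is a Riemann sum with mesh $1/\lambda\le1$ of the integrable function $\langle y\rangle^{-1-2\epsilon}$. For $\lambda=\infty$ the sum becomes $\int_\R\langle\eta_1\rangle^{-1-2\epsilon}\,d\eta_1$ and the argument is the same.

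The main obstacle is the bookkeeping that guarantees $\lambda$-independence. The Riemann-sum bound must hold uniformly for mesh $\le1$; this is where $\lambda\ge1$ is needed, with the elementary estimate $\frac1\lambda\sum\langle q_1\rangle^{-1-2\epsilon}\le 1+\int\langle y\rangle^{-1-2\epsilon}\,dy$. The transference step must also be carried out with the rescaled Fourier normalisations. Both are routine once checked carefully, and the rest is the standard Molinet--Pilod argument.
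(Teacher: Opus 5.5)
Your proof is correct. Its core is the same as the paper's: Cauchy--Schwarz with the weight $\langle q_1\rangle^{1+2\epsilon}$ on the factor $u$; the identity $\partial_{\xi_1}\bigl(\phi(\xi_1,q_1)+\phi(\xi-\xi_1,q-q_1)\bigr)=\abs{(\xi_1,q_1)}^2-\abs{(\xi_2,q_2)}^2$, which lets the Jacobian cancel the symbol of $MP_\lambda$; and the Riemann-sum bound $\frac1\lambda\sum_{q_1\in\Z/\lambda}\langle q_1\rangle^{-1-2\epsilon}\lesssim_\epsilon 1$ for $\lambda\geq 1$.

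The routes differ in how the time frequency is handled. The paper never passes to free solutions. It applies Cauchy--Schwarz directly to the $X_{0,b_i,\lambda}$-weighted integrand and integrates out $\tau_1$ with Lemma 4.2 of \cite{Ginibre1997}, which produces $\langle\tau-\phi(\xi_1,q_1)-\phi(\xi-\xi_1,q-q_1)\rangle^{-1-2(\epsilon_1\land\epsilon_2)}$. It then substitutes $\widetilde{\xi_1}=\tau-\phi(\xi_1,q_1)-\phi(\xi-\xi_1,q-q_1)$. You instead absorb the modulation weights by transference, using Minkowski and Cauchy--Schwarz in the modulation variable; this is where $b_i>\frac12$ enters for you. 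The $\tau$-integral then collapses to a delta, and the $\xi_1$-integral becomes a preimage count.

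Your version is more modular, and it makes explicit a point the paper's quoted substitution passes over. The map $\xi_1\mapsto\phi(\xi_1,q_1)+\phi(\xi-\xi_1,q-q_1)$ is quadratic, hence not injective, and must be split into at most two monotone pieces. In the degenerate case $\xi=0$ it may even be constant, but then the symbol vanishes identically, so there is no contribution.

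The paper's direct computation has its own advantage: it does not structurally require $b_i>\frac12$. Changing the exponents fed into Lemma 4.2 gives the $X_{1+,\frac14+,\lambda}$ variant in Remark \ref{RemPropMP}(iii), which is then interpolated to obtain \eqref{MPlambdadual}. Transference alone cannot produce that variant, since it needs $b>\frac12$ on both factors.
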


\begin{proof}
For $i \in \set{1,2}$ and arbitrary $\epsilon_i > 0$, we write $b_i = \frac{1}{2} + \epsilon_i$, and fix $\lambda \in \intco{1,\infty}$.
An application of Parseval's identity, followed by the Cauchy-Schwarz inequality and a subsequent application of Fubini's theorem, yields the estimate
\[ \norm{MP_\lambda(u,v)}_{L_{txy,\lambda}^2} \les \left( \sup_{(\tau,\xi,q) \in \R \times \R \times \Z/\lambda} \frac{I_{\tau,\xi,q,\lambda}}{\lambda} \right)^\frac{1}{2} \norm{J_y^{\frac{1}{2}+\epsilon}u}_{X_{0,b_1,\lambda}} \norm{v}_{X_{0,b_2,\lambda}} \]
with
\[ I_{\tau,\xi,q,\lambda} = \int_{\R^2} \sum_{\substack{q_1 \in \Z/\lambda \\ \ast}} \langle q_1 \rangle^{-1-2\epsilon} \abs{\abs{(\xi_1,q_1)}^2 - \abs{(\xi_2,q_2)}^2} \prod_{i=1}^2 \langle \tau_i - \phi(\xi_i,q_i) \rangle^{-1-2\epsilon_i} \ \mathrm{d}(\tau_1,\xi_1). \]
It therefore suffices to show that there exists some constant $C = C(\epsilon,\epsilon_1,\epsilon_2) > 0$ such that
\[ \frac{I_{\tau,\xi,q,\lambda}}{\lambda} \leq C \]
holds uniformly with respect to $\tau$, $\xi$, $q$, and $\lambda$.
We proceed from the innermost to the outermost integration, beginning with the integration with respect to $\tau_1$: 
A direct application of Lemma 4.2 in \cite{Ginibre1997} yields the bound
\begin{align*} &\int_\R \langle \tau_1-\phi(\xi_1,q_1) \rangle^{-1-2\epsilon_1} \langle \tau-\tau_1 - \phi(\xi-\xi_1,q-q_1) \rangle^{-1-2\epsilon_2} \ \mathrm{d}\tau_1 \\ &\les \langle \tau - \phi(\xi_1,q_1) - \phi(\xi-\xi_1,q-q_1) \rangle^{-1-2(\epsilon_1 \land \epsilon_2)}, \end{align*}
and consequently
\begin{align*} \frac{I_{\tau,\xi,q,\lambda}}{\lambda} &\les \frac{1}{\lambda} \sum_{q_1 \in \Z/\lambda} \langle q_1 \rangle^{-1-2\epsilon} \int_\R \abs{\abs{(\xi_1,q_1)}^2-\abs{(\xi-\xi_1,q-q_1)}^2} \\ & \ \ \ \cdot \langle \tau - \phi(\xi_1,q_1) - \phi(\xi-\xi_1,q-q_1) \rangle^{-1-2(\epsilon_1 \land \epsilon_2)} \ \mathrm{d}\xi_1. \end{align*}
The integration with respect to $\xi_1$ can be handled by substituting
\[ \widetilde{\xi_1} = \tau - \phi(\xi_1,q_1) - \phi(\xi-\xi_1,q-q_1) \curvearrowright "\mathrm{d}\widetilde{\xi_1} = (\abs{(\xi-\xi_1,q-q_1)}^2 - \abs{(\xi_1,q_1)}^2) \ \mathrm{d}\xi_1", \]
which results in the cancellation of the Fourier multiplier appearing in the definition of the operator $MP_\lambda(\cdot,\cdot)$.
We thus obtain
\begin{align*} \frac{I_{\tau,\xi,q,\lambda}}{\lambda} &\les \frac{1}{\lambda} \sum_{q_1 \in \Z/\lambda} \langle q_1 \rangle^{-1-2\epsilon} \int_\R \langle \widetilde{\xi_1} \rangle^{-1-2(\epsilon_1 \land \epsilon_2)} \ \mathrm{d}\widetilde{\xi_1} \\ & \les_{\epsilon_1,\epsilon_2} \frac{1}{\lambda} \sum_{\widetilde{q_1} \in \Z} \langle \frac{\widetilde{q_1}}{\lambda} \rangle^{-1-2\epsilon} \end{align*}
and an integral comparison leads us to
\begin{align*} &\leq \frac{1}{\lambda}\left( 1 + \int_\R \langle \frac{y}{\lambda} \rangle^{-1-2\epsilon} \ \mathrm{d}y \right) = \frac{1}{\lambda} \left( 1 + \lambda \int_\R \langle \widetilde{y} \rangle^{-1-2\epsilon} \ \mathrm{d}\widetilde{y} \right) \les_{\epsilon} 1,
\end{align*}
where, in the final step, we made use of the restriction $\lambda \geq 1$ to ensure the uniformity of the estimate with respect to $\lambda$.
This concludes the proof for $\lambda \in \intco{1,\infty}$. The case $\lambda = \infty$ can be treated in a completely analogous manner, with the parameter $\lambda$ not appearing in the estimates at all, and no final comparison with an integral being required.
\end{proof}

\begin{rem} \label{RemPropMP}
\begin{enumerate} \item[(i)] Estimate \eqref{MPlambda} can be understood as a smoothing estimate in the sense that it yields a gain of nearly half a derivative for widely separated wave numbers $(\xi_1,q_1)$ and $(\xi_2,q_2)$.

\item[(ii)] The proof shows that the loss of derivatives on the right-hand side of the inequality can always be shifted onto the low-frequency factor.

\item[(iii)] An alternative application of Lemma 4.2 from \cite{Ginibre1997}, as described in Remark 3.2 of \cite{JNK2025}, also yields the estimate
\[ \norm{MP_\lambda(u,v)}_{L_{txy,\lambda}^2} \les \norm{u}_{X_{1+,\frac{1}{4}+,\lambda}} \norm{v}_{X_{1+,\frac{1}{4}+,\lambda}} \]
in the rescaled setting. This can be combined with \eqref{MPlambda} by means of bilinear interpolation to obtain the following statement, which is intended for use in estimates involving duality: For every $\epsilon > 0$, there exists an $\widetilde{\epsilon} > 0$ such that
\begin{equation} \label{MPlambdadual}
\norm{MP_\lambda(u,v)}_{L_{txy,\lambda}^2} \les \norm{u}_{X_{\frac{1}{2}+\epsilon,\frac{1}{2}-\widetilde{\epsilon},\lambda}} \norm{v}_{X_{\epsilon,\frac{1}{2}-\widetilde{\epsilon},\lambda}}.
\end{equation}
\end{enumerate}

\end{rem}

As key ingredients for multilinear estimates in the case of closely spaced wave numbers, we also want to recall the (nearly optimal) linear $L^4$-estimate from \cite{JNK2025}, along with an associated family of bilinear refinements likewise established there.

\begin{prop} \label{PropL^4lambda}
Let $\lambda \in \intco{1,\infty}$, $\epsilon > 0$, and $b > \frac{1}{2}$ be arbitrary. Moreover, for $\alpha \in \intcc{0,1}$, let $P_\lambda^\alpha$ denote the Fourier projector defined by
\[ P_\lambda^\alpha u \coloneqq \Fcal_{xy}^{-1,\lambda} \chi_{\set{\abs{3\xi^2-q^2} \gts \abs{\xi}^\alpha, \ \abs{\xi} \gts 1}} \Fcal_{xy}^{\lambda}u. \]
Under these assumptions, the estimates
\begin{equation} \label{L^4lambda}
\norm{u}_{L_{txy,\lambda}^4} \les_{\epsilon,b} \norm{u}_{X_{\epsilon,b,\lambda}}
\end{equation}
and
\begin{equation} \label{Bilinreflambda}
\norm{I_x^\frac{\alpha}{4}P_\lambda^\alpha(uv)}_{L_{txy,\lambda}^2} \les_{\epsilon,b} \norm{u}_{X_{\epsilon,b,\lambda}} \norm{v}_{X_{\epsilon,b,\lambda}}
\end{equation}
hold for all $u,v \in X_{\epsilon,b,\lambda}$, with implicit constants independent of the scaling parameter $\lambda$.
\end{prop}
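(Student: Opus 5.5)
The natural route is to reduce the rescaled estimates to their $\lambda=1$ counterparts from \cite{JNK2025} by scaling. For $\lambda=1$, both \eqref{L^4lambda} and \eqref{Bilinreflambda} are exactly the linear $L^4$-estimate and its bilinear refinement proved in \cite{JNK2025}. So the task is to transfer them to $\R\times\T_\lambda$ with $\lambda$-independent constants.

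\textbf{Anisotropic rescaling.} A rescaling $u_\lambda(t,x,y)=u(\lambda^{-3}t,\lambda^{-1}x,\lambda^{-1}y)$ maps $\R\times\T$ to $\R\times\T_\lambda$ and preserves the symbol $\phi$. It also carries $\chi_{\set{\abs{3\xi^2-q^2}\gts\abs{\xi}^\alpha}}$ to a $\lambda$-dependent region. However, the weights $\langle(\xi,q)\rangle^\epsilon$ and $\langle\tau-\phi\rangle^b$ do not scale homogeneously, so this approach produces powers of $\lambda$. Rather than rescaling, I would therefore \emph{reprove} the estimates directly on $\R\times\T_\lambda$, following the counting/measure arguments of \cite{JNK2025}. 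This mirrors the proof of Proposition \ref{PropMPlambda}, where the only $\lambda$-dependence was a sum $\frac1\lambda\sum_{q_1\in\Z/\lambda}$ compared to an integral.

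\textbf{Steps for \eqref{L^4lambda}.}
\begin{enumerate}
\item Decompose $u$ dyadically in frequency and write $\norm{u}_{L^4}^2=\norm{u\bar u}_{L^2}$ (or $\norm{uu}_{L^2}$).
\item Apply Parseval and Cauchy--Schwarz as in Proposition \ref{PropMPlambda}. Integrate out $\tau_1$ via Lemma 4.2 of \cite{Ginibre1997}.
\item This reduces the estimate to bounding, uniformly in $(\tau,\xi,q)$,
\[ \frac1\lambda\sum_{q_1\in\Z/\lambda}\int_\R \langle\tau-\phi(\xi_1,q_1)-\phi(\xi-\xi_1,q-q_1)\rangle^{-1-}\,\mathrm d\xi_1 \]
(with frequency localizations), up to an $N^{0+}$ loss.
\item Handle the $\xi_1$-integral as in \cite{JNK2025}. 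Split according to whether $\abs{\partial_{\xi_1}R}\sim\abs{\abs{(\xi_1,q_1)}^2-\abs{(\xi_2,q_2)}^2}$ is large or small. Where it is small, use the second derivative, a sublevel set estimate in $\xi_1$ giving measure $\les\langle\cdot\rangle^{-1/2}$-type bounds.
\end{enumerate}

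\textbf{Steps for \eqref{Bilinreflambda}.} The argument is the same, but with the extra weight $\abs{\xi}^{\alpha/2}$ in the supremum. The restriction $\abs{3\xi^2-q^2}\gts\abs{\xi}^\alpha$ yields a lower bound on the relevant derivative of the resonance function at the symmetric point $\xi_1=\xi/2$, $q_1=q/2$. That lower bound absorbs the weight after the change of variables.

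\textbf{Main obstacle: uniformity in $\lambda$.} The $\xi_1$-integrals are $\lambda$-independent. The $q_1$-sums, however, run over the finer lattice $\Z/\lambda$, and for $\lambda\ge1$ the normalized sum $\frac1\lambda\sum_{q_1}$ should be compared with $\int\mathrm dq_1$ plus one boundary term $\frac1\lambda\le1$. The danger is that the counting in \cite{JNK2025} uses divisor-type or lattice-point bounds for $q_1$ lying in sets where a quadratic expression in $q_1$ is small. For those I would bound the number of $q_1\in\Z/\lambda$ in an interval of length $L$ by $\lambda L+1$, giving $\frac1\lambda(\lambda L+1)\le L+1$. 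This matches the $\lambda=1$ count, so constants stay uniform. This counting step is where I expect the real work: checking that every set counted in \cite{JNK2025} is a union of boundedly many intervals in $q_1$, and that the "$+1$" terms are harmless given $\lambda\ge1$.
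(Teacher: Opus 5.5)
Your proposal is correct and follows the paper's route. Rather than rescaling, the paper reruns the $\lambda=1$ arguments of \cite{JNK2025} directly on $\R\times\T_\lambda$ and isolates exactly your two points: the single summands left over in sum-versus-integral comparisons are harmless because $\frac{1}{\lambda}\le 1$, and the substitution $\widetilde{y}=y/\lambda$ (your normalization $\frac{1}{\lambda}\sum_{q_1\in\Z/\lambda}\approx\int\mathrm{d}q_1$) cancels the remaining $\lambda$-dependence. Your concern about divisor-type lattice counts is the right thing to check; the paper asserts that only such interval and integral comparisons occur in those proofs, which is why no $\lambda^{0+}$ loss appears here, unlike for the rescaled periodic Schr\"odinger $L^6$-estimate.
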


\begin{proof}
The proof of the estimates proceeds in direct analogy with the arguments given in the proofs of Lemma 3.8, Theorem 1.1, and Proposition 3.11 in \cite{JNK2025}, where the estimates to be established were proven in the special case $\lambda = 1$. Only two points require particular attention, both of which have already been illustrated in detail in the proof of Proposition \ref{PropMPlambda}: First, it must be ensured that individual summands arising in the context of integral comparisons do not cause any issues - this is guaranteed by the assumption $\lambda \geq 1$, since it implies $\frac{1}{\lambda} \leq 1$. Second, after carrying out the integral comparisons, an additional substitution $\widetilde{y} = \frac{y}{\lambda}$ must be performed. This change of variables has the effect of cancelling the remaining $\lambda$-dependent parameters that appear in the estimates.
\end{proof}

\begin{rem}
\begin{enumerate} \item[(i)] In addition to the trivial estimate
\begin{equation} \label{trivialL^2}
\norm{u}_{L_{txy,\lambda}^2} = \norm{u}_{X_{0,0,\lambda}},
\end{equation}
the Sobolev embedding theorem\footnote{The Sobolev embedding theorem also holds in the rescaled setting, with the constants in the estimates remaining independent of $\lambda$, provided that $\lambda \geq 1$.} also provides us with the estimate
\begin{equation} \label{trivialLinfty}
\norm{u}_{L_{txy,\lambda}^\infty} \les_{\epsilon,b} \norm{u}_{X_{1+\epsilon,b,\lambda}}, \quad \epsilon > 0, \ b > \frac{1}{2}.
\end{equation}
These two estimates will serve as recurring anchor points in the context of interpolation, as they allow us to obtain versions of known Strichartz estimates that are suitable for use in multilinear estimates by duality.
As a first example, the two estimates
\begin{equation} \label{L^4+lambda}
\norm{u}_{L_{txy,\lambda}^{4+}} \les \norm{u}_{X_{0+,\frac{1}{2}+,\lambda}}
\end{equation}
and
\begin{equation} \label{L^4-lambda}
\norm{u}_{L_{txy,\lambda}^{4-}} \les \norm{u}_{X_{0+,\frac{1}{2}-,\lambda}}
\end{equation}
arise from interpolating \eqref{L^4lambda} with \eqref{trivialLinfty}, and \eqref{L^4lambda} with \eqref{trivialL^2}, respectively.
\item[(ii)] Following a similar approach as in Remark \ref{RemPropMP} (iii), and making use of Lemma 4.2 from \cite{Ginibre1997}, one obtains the trivial estimate
\[ \norm{I_x^\frac{\alpha}{4}P_\lambda^\alpha(uv)}_{L_{txy,\lambda}^2} \les \norm{J_x^\frac{\alpha}{4}u}_{X_{1+,\frac{1}{4}+,\lambda}} \norm{J_x^\frac{\alpha}{4}v}_{X_{0,\frac{1}{4}+,\lambda}}, \]
which - after bilinear interpolation with \eqref{Bilinreflambda} - leads us to the alternate version
\begin{equation} \label{Bilinreflambdadual}
\norm{I_x^\frac{\alpha}{4}P_\lambda^\alpha(uv)}_{L_{txy,\lambda}^2} \les \norm{u}_{X_{0+,\frac{1}{2}-,\lambda}} \norm{v}_{X_{0+,\frac{1}{2}-,\lambda}}
\end{equation}
of \eqref{Bilinreflambda}, suitable for further use.
\item[(iii)] In the context of the linear $L^4$-estimate, the situation is considerably more favorable in the non-periodic setting ($\lambda = \infty$).
In particular, one unconditionally obtains the estimate
\begin{equation} \label{L^4non-periodic}
\norm{u}_{L_{txy}^4} \les \norm{u}_{X_{0,\frac{5}{12}+}}
\end{equation}
without any loss of derivatives, which follows from the Strichartz estimates for free solutions of $k$-gZK established by Linares and Pastor (see Proposition 2.3 in \cite{Linares2009}), in conjunction with the transfer principle (see Lemma 2.3 in \cite{Ginibre1997}) and a subsequent interpolation with \eqref{trivialL^2}.
Moreover, as a consequence of the optimal $L^4$-restriction theorem for polynomial hypersurfaces in $\R^3$ established by Carbery, Kenig, and Ziesler (see Theorem 1.1 in \cite{Carbery2012}), one obtains the estimate
\begin{equation} \label{L^4derivativegain}
\norm{\abs{K(D)}^\frac{1}{8}u}_{L_{txy}^4} \les \norm{u}_{X_{0,\frac{1}{2}+}},
\end{equation}
which can be found in Proposition 3.5 of \cite{Pilod2015}.
Here, the Fourier multiplier $\abs{K(D)}^\frac{1}{8}$ is defined by
\[ \abs{K(D)}^\frac{1}{8} \coloneqq \Fcal_{xy}^{-1} \abs{3\xi^2-\eta^2}^\frac{1}{8} \Fcal_{xy}, \]
so that \eqref{L^4derivativegain} yields a gain of up to $\frac{1}{4}$ of a derivative, provided one stays sufficiently far from the lines $\abs{\eta} = \sqrt{3} \abs{\xi}$ in Fourier space.
Once again, interpolation with \eqref{trivialLinfty} and \eqref{trivialL^2}, respectively, yields useful variants of \eqref{L^4derivativegain}, namely
\begin{equation} \label{L^4derivativegain+}
\norm{\abs{K(D)}^\frac{1}{8}u}_{L_{txy}^{4+}} \les \norm{u}_{X_{0+,\frac{1}{2}+}}
\end{equation}
and
\begin{equation}\label{L^4derivativegain-}
\norm{\abs{K(D)}^\frac{1}{8}u}_{L_{txy}^{4-}} \les \norm{u}_{X_{0+,\frac{1}{2}-}}.
\end{equation}
\end{enumerate}
\end{rem}

To wrap up this preparatory section, we want to transfer some other supplementary linear estimates from \cite{JNK2025} into the rescaled framework. Given that these estimates are derived from well-known one-dimensional results for the non-periodic Airy and periodic Schrödinger propagators, it becomes essential to analyze the behaviour of the Schrödinger estimate under rescaling. It turns out that the rescaled estimate necessarily involves a (harmless) dependence of the constant on the scaling parameter $\lambda$, which we make precise in the proposition that follows (cf. Proposition 2.36 in \cite{Bourgain1993} and Lemma 3.5 in \cite{Megretski2025}).

\begin{prop} \label{Prop1drescaledSchrödingerL^6}
Let $\lambda \in \intco{1,\infty}$ and $\epsilon > 0$ be arbitrary. Then, the following estimate holds for all $u_0 \in H^\epsilon_x(\T_\lambda)$:
\begin{equation} \label{1drescaledSchrödingerL^6}
\norm{e^{it\partial_x^2}u_0}_{L_{tx}^6(\T_{\lambda^2} \times \T_\lambda)} \les_\epsilon \lambda^\epsilon \norm{u_0}_{H^\epsilon_x(\T_\lambda)}.
\end{equation}
\end{prop}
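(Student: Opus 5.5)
The plan is to reduce the rescaled estimate to Bourgain's $\lambda=1$ periodic $L^6$ Strichartz estimate on $\T\times\T$ by a parabolic scaling, keeping exact track of how the Sobolev weight and the normalisations of $\Fcal^\lambda$ transform. Write
\[ u_0(x)=\frac{1}{2\pi\lambda}\sum_{q\in\Z/\lambda}e^{ixq}\,\Fcal^\lambda u_0(q), \]
and set $v_0(x')\coloneqq u_0(\lambda x')$, which is $2\pi$-periodic with frequencies $n=\lambda q\in\Z$. Since $e^{it\partial_x^2}$ multiplies the mode $q$ by $e^{-itq^2}=e^{-i(t/\lambda^2)n^2}$, we have
\[ e^{it\partial_x^2}u_0(x)=\big(e^{it'\partial_{x'}^2}v_0\big)(x'), \qquad t'=t/\lambda^2,\ x'=x/\lambda . \]
This is exactly why the time domain in \eqref{1drescaledSchrödingerL^6} is $\T_{\lambda^2}$: one period $2\pi\lambda^2$ in $t$ corresponds to one period $2\pi$ in $t'$. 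The change of variables then yields
\[ \norm{e^{it\partial_x^2}u_0}_{L^6_{tx}(\T_{\lambda^2}\times\T_\lambda)}=\lambda^{1/2}\norm{e^{it'\partial_{x'}^2}v_0}_{L^6_{t'x'}(\T\times\T)}, \]
because $(\lambda^3)^{1/6}=\lambda^{1/2}$.

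Next we apply Bourgain's estimate (Proposition 2.36 in \cite{Bourgain1993}), which gives
\[ \norm{e^{it\partial_x^2}v_0}_{L^6(\T^2)}\les_\epsilon\norm{v_0}_{H^\epsilon(\T)}. \]
In fact we would use the dyadic form: for $v_0$ with frequencies in a block $\abs{n}\le N$, the bound is $N^{\epsilon}\norm{v_0}_{L^2}$. This comes from the divisor bound $\#\set{(n_1,n_2,n_3): n_1+n_2+n_3=a,\ n_1^2+n_2^2+n_3^2=b,\ \abs{n_i}\le N}\les N^{\epsilon}$.

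It remains to transfer the Sobolev weight back to the original scale. We have $\norm{v_0}_{L^2(\T)}=\lambda^{-1/2}\norm{u_0}_{L^2(\T_\lambda)}$, so the powers of $\lambda^{1/2}$ cancel at the $L^2$ level. The weight, however, changes from $\langle n\rangle^\epsilon=\langle\lambda q\rangle^\epsilon$ to $\langle q\rangle^\epsilon$, and this is the only genuinely delicate point. Since $\langle\lambda q\rangle\le\lambda\langle q\rangle$ for $\lambda\ge1$, we get
\[ \norm{v_0}_{H^\epsilon(\T)}\le\lambda^{\epsilon}\lambda^{-1/2}\norm{u_0}_{H^\epsilon(\T_\lambda)}, \]
and combining the three steps yields \eqref{1drescaledSchrödingerL^6} with the factor $\lambda^\epsilon$.

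The main obstacle is only bookkeeping: the normalisation constants in $\Fcal^\lambda$ and $\Fcal^{-1,\lambda}$ must be tracked so that the $\lambda^{1/2}$ factors cancel exactly. Alternatively, one can bypass the scaling entirely and argue directly on $\T_{\lambda^2}\times\T_\lambda$, expanding $\norm{\cdot}_{L^6}^6=\norm{(\cdot)^3}_{L^2}^2$ by Parseval. The counting problem becomes: for $q_i\in\Z/\lambda$ with $\abs{q_i}\le N$, count triples with prescribed $\sum q_i$ and $\sum q_i^2\in\Z/\lambda^2$. After multiplying by $\lambda$, this is the integer problem at frequency scale $\lambda N$, with count $\les(\lambda N)^{\epsilon}$. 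Cauchy–Schwarz then gives the bound $(\lambda N)^{\epsilon}$ per dyadic block, and summing over blocks with $\langle q\rangle^\epsilon$ weights yields the claim. Either route shows that the $\lambda^\epsilon$ loss is exactly the divisor-bound loss at the enlarged lattice scale.
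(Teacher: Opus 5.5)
Your proof is correct, and your primary route differs from the paper's; the alternative in your last paragraph is exactly what the paper does. The paper never leaves $\T_{\lambda^2}\times\T_\lambda$. It localizes dyadically in the rescaled frequency $\xi\in\Z/\lambda$ and writes the sixth power of the $L^6$ norm as $\norm{(e^{it\partial_x^2}P_Nu_0)^3}_{L^2}^2$. It then applies Parseval with the $\Fcal^{\lambda^2,\lambda}$ normalizations and Cauchy--Schwarz, which reduces everything to counting $(\xi_1,\xi_2)\in(\Z/\lambda)^2$ on the quadric $\xi_1^2+\xi_2^2+(\xi-\xi_1-\xi_2)^2=\tau$. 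After multiplication by $\lambda$ this is Bourgain's integer count at scale $\lambda N$, giving a loss $(\lambda N)^{0+}$ per block before dyadic summation.

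Your parabolic rescaling $t'=t/\lambda^2$, $x'=x/\lambda$ instead treats the $\lambda=1$ estimate as a black box. The Jacobian factor $\lambda^{1/2}$ cancels exactly against $\norm{v_0}_{L^2(\T)}=\lambda^{-1/2}\norm{u_0}_{L^2(\T_\lambda)}$, equivalently $\Fcal^1v_0(n)=\lambda^{-1}\Fcal^\lambda u_0(n/\lambda)$. The entire loss then sits in the pointwise inequality $\langle\lambda q\rangle^\epsilon\le\lambda^\epsilon\langle q\rangle^\epsilon$, valid for $\lambda\ge1$.

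What your route buys:
\begin{itemize}
\item It is shorter and needs no bookkeeping of Fourier normalizations on the rescaled torus.
\item It needs no dyadic decomposition if Bourgain's estimate is quoted in its $H^\epsilon$ form, so your remark about the dyadic form is unnecessary.
\item It shows that $\lambda^\epsilon$ is simply Bourgain's $N^\epsilon$ loss at the integer frequencies $\abs{n}\sim\lambda$ onto which $\abs{q}\sim1$ is mapped.
\end{itemize}

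What the paper's direct computation buys is uniformity of method with the other rescaled estimates of Section 3, which use sums over $\Z/\lambda$ followed by the substitution $\widetilde{q}=\lambda q$. It also produces the frequency-localized bound directly. Both routes reach the same integer count and the same loss.
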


\begin{proof} For dyadic numbers $N \in 2^{\N_0}$, we define the Fourier projectors $P_N$ by setting
\[ P_N \coloneqq \begin{cases} \Fcal_x^{-1,\lambda} \chi_{\set{\abs{\xi} \les 1}} \Fcal_x^{\lambda}, & \text{if} \ N=1 \\ \Fcal_x^{-1,\lambda} \chi_{\set{\abs{\xi} \sim N}} \Fcal_x^{\lambda}, & \textrm{if} \ N \neq 1. \end{cases} \]
It is then sufficient to show that
\[ \norm{e^{it\partial_x^2}P_Nu_0}_{L_{tx}^6(\T_{\lambda^2} \times \T_\lambda)} \les_\epsilon \lambda^\epsilon N^\epsilon \norm{P_Nu_0}_{L_x^2(\T_\lambda)}, \]
as estimate \eqref{1drescaledSchrödingerL^6} follows from this by a subsequent dyadic summation.
We have
\begin{align*} \norm{e^{it\partial_x^2}P_Nu_0}_{L_{tx}^6(\T_{\lambda^2} \times \T_\lambda)}^6 &= \norm{(e^{it\partial_x^2}P_Nu_0)^3}_{L_{tx}^2(\T_{\lambda^2} \times \T_\lambda)}^2 \\ & \sim \frac{1}{\lambda^3} \norm{\Fcal_{tx}^{\lambda^2, \lambda}(e^{it\partial_x^2}P_Nu_0)^3}_{\ell_{\tau \xi}^2(\Z/\lambda^2 \times \Z/\lambda)}^2 \eqqcolon (\ast), \end{align*}
where
\[ \Fcal_{tx}^{\lambda^2, \lambda}f(\tau,\xi) \coloneqq \int_{-\pi \lambda^2}^{\pi \lambda^2} \int_{-\pi \lambda}^{\pi \lambda} e^{-i(t \tau + x \xi)} f(t,x) \ \mathrm{d}(t,x), \]
and taking into account that
\begin{align*} &\Fcal_{tx}^{\lambda^2, \lambda}(e^{it\partial_x^2}P_Nu_0)^3(\tau,\xi) = \\ & \frac{1}{(2\pi\lambda)^2} \sum_{\substack{\xi_1,\xi_2 \in \Z/\lambda \\ \xi_1+\xi_2+\xi_3=\xi}} 2\pi\lambda^2 \delta_{-\tau,\xi_1^2+\xi_2^2+\xi_3^2} \Fcal_x^\lambda(P_Nu_0)(\xi_1)\Fcal_x^\lambda(P_Nu_0)(\xi_2)\Fcal_x^\lambda(P_Nu_0)(\xi_3),   \end{align*}
allows us to further conclude
\[ (\ast) \les \sup_{\substack{(\tau,\xi) \in \Z/\lambda^2 \times \Z/\lambda \\ \abs{\tau} \les N^2, \ \abs{\xi} \les N}} \left( \sum_{\substack{\xi_1,\xi_2 \in \Z/\lambda \\ \xi_1^2+\xi_2^2+(\xi-\xi_1-\xi_2)^2=\tau}} 1 \right) \norm{P_Nu_0}_{L_{x}^2(\T_\lambda)}^6, \]
for which we have invoked the Cauchy-Schwarz inequality and Fubini's theorem.
At this point, we note that
\begin{align*} \sup_{\substack{(\tau,\xi) \in \Z/\lambda^2 \times \Z/\lambda \\ \abs{\tau} \les N^2, \ \abs{\xi} \les N}} \sum_{\substack{\xi_1,\xi_2 \in \Z/\lambda \\ \xi_1^2+\xi_2^2+(\xi-\xi_1-\xi_2)^2=\tau}} 1 &= \sup_{\substack{(\widetilde{\tau},\widetilde{\xi}) \in \Z \times \Z \\ \abs{\widetilde{\tau}} \les \lambda^2 N^2, \ \abs{\widetilde{\xi}} \les \lambda N}} \sum_{\substack{\widetilde{\xi_1},\widetilde{\xi_2} \in \Z \\ \widetilde{\xi_1}^2+\widetilde{\xi_2}^2+(\widetilde{\xi}-\widetilde{\xi_1}-\widetilde{\xi_2})^2=\widetilde{\tau}}} 1 \\ & \les_\epsilon \lambda^\epsilon N^\epsilon, \end{align*}
where the final inequality has been established in the proof of Proposition 2.36 in \cite{Bourgain1993}. This completes the proof.
\end{proof}
With this, we are now in a position to formulate the corresponding estimates for the semiperiodic ZK propagator in the rescaled setting.
\begin{prop} \label{PropASlambda}
Let $\epsilon, T > 0$ be arbitrary, and let $\lambda \in \intco{1,\infty}$. Then, for all $u_0,v_0$ with $J_y^\frac{1}{3}u_0 \in L_{xy,\lambda}^2$ and $J_x^\frac{1}{3}J_y^\epsilon v_0 \in L_{xy,\lambda}^2$, the following two estimates hold:
\begin{equation} \label{freeAiryL^6lambda}
\norm{I_x^\frac{1}{6} e^{-t\partial_x \Delta_{xy}}u_0}_{L_{txy,\lambda}^6} \les \norm{J_y^\frac{1}{3}u_0}_{L_{xy,\lambda}^2}
\end{equation}
and
\begin{equation} \label{freeSchrödingerL^6lambda}
\norm{e^{-t\partial_x\Delta_{xy}}v_0}_{L_{Txy,\lambda}^6} \les_{\epsilon, T} \lambda^\epsilon \norm{J_x^\frac{1}{3}J_y^\epsilon v_0}_{L_{xy,\lambda}^2}.
\end{equation}
Furthermore, let $p \in \intcc{2,6}$ and $b > \frac{1}{2}$ be arbitrary. Then, for all $u$ with $J_y^{\frac{1}{2}-\frac{1}{p}}u \in X_{0,(\frac{3}{2}-\frac{3}{p})b,\lambda}$, the estimate
\begin{equation} \label{AiryL^plambda}
\norm{I_x^{\frac{1}{4}-\frac{1}{2p}}u}_{L_{txy,\lambda}^p} \les_{b} \norm{J_y^{\frac{1}{2}-\frac{1}{p}}u}_{X_{0,(\frac{3}{2}-\frac{3}{p})b,\lambda}}
\end{equation}
holds, and for all time-localized functions $v \in X_{\frac{1}{3}-\frac{2}{3p}+(\frac{1}{2}-\frac{1}{p})\epsilon,(\frac{3}{2}-\frac{3}{p})b,\lambda}$, we find
\begin{equation} \label{optimizedL^plambda}
\norm{v}_{L_{Txy,\lambda}^p} \les_{\epsilon, T, b} \lambda^{(\frac{3}{2}-\frac{3}{p})\epsilon} \norm{v}_{X_{\frac{1}{3}-\frac{2}{3p}+(\frac{1}{2}-\frac{1}{p})\epsilon,(\frac{3}{2}-\frac{3}{p})b,\lambda}}
\end{equation}
to be true.
\end{prop}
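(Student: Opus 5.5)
The plan is to reduce all four estimates to one-dimensional results for each fixed Fourier variable of the other coordinate, and then to pass to the $X_{s,b}$-versions by the transfer principle and interpolation. This parallels the approach in \cite{JNK2025} for $\lambda=1$; the new work is tracking the $\lambda$-dependence.

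For \eqref{freeAiryL^6lambda}, expand $u_0$ in the $y$-Fourier variable: $e^{-t\partial_x\Delta_{xy}}u_0=\frac{1}{2\pi\lambda}\sum_{q\in\Z/\lambda}e^{iyq}\,e^{-t\partial_x^3}\big(e^{-tq^2\partial_x}\mathcal F_y^\lambda u_0(\cdot,q)\big)$. For fixed $q$, the factor $e^{-tq^2\partial_x}$ is a Galilean translation $x\mapsto x+tq^2$. Hence the one-dimensional Airy Strichartz estimate $\|I_x^{1/6}e^{-t\partial_x^3}f\|_{L^6_{tx}}\les\|f\|_{L^2_x}$ (Kenig--Ponce--Vega) holds uniformly in $q$. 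I then take the $L^6_y$-norm first and use Sobolev in $y$ on $\T_\lambda$, in the form $\|\cdot\|_{L^6_{y,\lambda}}\les\|J_y^{1/3}\cdot\|_{L^2_{y,\lambda}}$, which is uniform for $\lambda\ge1$ by the footnote. This requires commuting $L^6_{tx}$ and $L^6_y$ (Fubini) and using Minkowski to put $L^2_y$ outside $L^6_{tx}$. Together with the uniform one-dimensional bound and Parseval in $y$, this gives the claim.

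For \eqref{freeSchrödingerL^6lambda} the roles are swapped. For fixed $\xi$, the $y$-evolution is $e^{it\xi\partial_y^2}$ up to the phase $e^{it\xi^3}$, which is unimodular and harmless. I write $\|\cdot\|_{L^6_{Txy}}$ with the $x$-integral outermost in spirit: by Sobolev in $x$ (losing $J_x^{1/3}$) and Minkowski, it suffices to bound $\|e^{it\xi\partial_y^2}g\|_{L^6_{T y}(\T_\lambda)}$ uniformly in $\xi$.

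The main obstacle is this uniformity in $\xi$. The substitution $t'=\xi t$ turns $[-T,T]$ into an interval of length $\sim T|\xi|$, which then has to be covered by $\sim 1+T|\xi|\lambda^{-2}$ periods $\T_{\lambda^2}$. On each period, Proposition \ref{Prop1drescaledSchrödingerL^6} applies and costs $\lambda^\epsilon\|g\|_{H^\epsilon_y}$. The Jacobian $|\xi|^{-1/6}$ from the substitution compensates the period count, since $(|\xi|)^{1/6}\cdot|\xi|^{-1/6}\les 1$ for $|\xi|\ge1$. For $|\xi|\les1$, one period suffices, and the Jacobian is absorbed by $T$ after first enlarging the time interval. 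I would carry out this bookkeeping carefully, possibly splitting $|\xi|\le1$ from $|\xi|>1$, to confirm that only the factor $\lambda^\epsilon$ survives and $T$ enters only through the constant.

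For \eqref{AiryL^plambda} and \eqref{optimizedL^plambda}, first use the transfer principle (Lemma 2.3 in \cite{Ginibre1997}) to turn \eqref{freeAiryL^6lambda} and \eqref{freeSchrödingerL^6lambda} into $X_{s,b}$-estimates with $b>\frac12$. For the local estimate, insert a time cutoff adapted to $T$. Then interpolate complex-wise with the trivial identity $\|u\|_{L^2}=\|u\|_{X_{0,0,\lambda}}$, with parameter $\theta=\frac32-\frac3p$ so that $\frac1p=\frac{1-\theta}{2}+\frac{\theta}{6}$. This produces the exponents $\theta/6=\frac14-\frac1{2p}$, $\theta/3=\frac12-\frac1p$, $\theta b$, $\frac{2\theta}{9}=\frac13-\frac{2}{3p}$, and $\lambda^{\theta\epsilon}$ with $\theta\epsilon$ weight on $J_y$. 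Finally, the mixed weight $J_x^{1/3}J_y^\epsilon$ is dominated by $J^{1/3+\epsilon}$; reparametrizing $\epsilon$ gives the stated regularity $(\frac12-\frac1p)\epsilon$.
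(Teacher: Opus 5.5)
Your architecture matches the paper's: fiberwise one-dimensional estimates plus Sobolev in the other variable, then transfer and interpolation with $L^2$. The Airy step and \eqref{AiryL^plambda} are fine, provided you apply Parseval in $y$ pointwise in $(t,x)$ before Minkowski, so that it is $\ell^2_q$ rather than $L^2_y$ that you move outside $L^6_{tx}$. There are, however, two genuine gaps.

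First, the Schr\"odinger step reduces to a false statement. A bound $\norm{e^{it\xi\partial_y^2}g}_{L^6_{Ty}(\T_\lambda)} \lesssim_T \lambda^\epsilon \norm{g}_{H^\epsilon_y}$ uniform in $\xi$ cannot hold. For a trigonometric polynomial $g$ the left-hand side tends to $(2T)^{1/6}\norm{g}_{L^6_y}$ as $\xi \to 0$, and $H^\epsilon(\T_\lambda)$ does not embed into $L^6(\T_\lambda)$ for $\epsilon<\frac13$. Your own count gives the factor $\abs{\xi}^{-1/6}(1+T\abs{\xi})^{1/6}\lambda^\epsilon$, and no enlargement of the time interval absorbs $\abs{\xi}^{-1/6}$ into $T$. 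What saves the estimate is the low-frequency vanishing you discarded when you bounded the $x$-Sobolev loss by $J_x^{1/3}$. Use the homogeneous Sobolev embedding of $\dot H^{1/3}(\R)$ into $L^6(\R)$ in $x$ (or Bernstein on dyadic $\xi$-blocks), so that the weight produced is $\abs{\xi}^{1/3}$. Then $\abs{\xi}^{1/3}\abs{\xi}^{-1/6}(1+T\abs{\xi})^{1/6} \lesssim_T \langle \xi \rangle^{1/3}$ for all $\xi$, and the argument closes.

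Second, nothing you describe produces the exponent $\frac{2\theta}{9}$ in \eqref{optimizedL^plambda}. Transferring the Schr\"odinger bound, dominating $J_x^{1/3}J_y^\epsilon$ by $J^{1/3+\epsilon}$, and interpolating with $L^2$ yields regularity $\theta(\frac13+\epsilon)$. Its main part $\frac12-\frac1p$ exceeds $\frac13-\frac{2}{3p}$ for every $p>2$, and no reparametrization of $\epsilon$ repairs this. The missing idea, which is the paper's route, is to optimize between the two transferred $L^6$ estimates before interpolating:
\begin{itemize}
\item Split $v$ with the Fourier projector onto $\set{\abs{\xi} \geq \langle q \rangle^{\frac23-2\epsilon}}$ and its complement. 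These projectors commute with the free evolution and with the $X_{s,b,\lambda}$ weights.
\item On the first piece, apply the Airy estimate to $I_x^{-1/6}$ of it (harmless, since $\abs{\xi}\geq 1$ there). It costs $\langle q \rangle^{1/3}\abs{\xi}^{-1/6} \leq \langle q \rangle^{\frac29+\frac{\epsilon}{3}}$.
\item On the second piece, the Schr\"odinger estimate costs $\lambda^\epsilon \langle \xi \rangle^{1/3}\langle q \rangle^{\epsilon} \lesssim \lambda^\epsilon \langle q \rangle^{\frac29+\frac{\epsilon}{3}}$.
\end{itemize}
Hence $\norm{v}_{L^6_{Txy,\lambda}} \lesssim \lambda^\epsilon \norm{v}_{X_{\frac29+\frac{\epsilon}{3},b,\lambda}}$. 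Interpolating this optimized bound, not the raw Schr\"odinger bound, with $L^2$ at $\theta=\frac32-\frac3p$ gives exactly the regularity $\frac13-\frac{2}{3p}+(\frac12-\frac1p)\epsilon$ and the factor $\lambda^{(\frac32-\frac3p)\epsilon}$ of the statement.
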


\begin{proof}
The proof of \eqref{freeAiryL^6lambda} proceeds in complete analogy to the proof of the corresponding statement in the special case $\lambda = 1$ (see Proposition 3.4 in \cite{JNK2025}), taking into account the fact that the constant in the Sobolev embedding theorem, in the rescaled setting, is independent of $\lambda$ for $\lambda \geq 1$.
The proof of \eqref{freeSchrödingerL^6lambda} follows similarly, by replicating the argument used in the case $\lambda = 1$ (see Proposition 3.3 in \cite{JNK2025}), except that the one-dimensional Schrödinger $L^6$-estimate on $\T$ is replaced by the estimate established in Proposition \ref{Prop1drescaledSchrödingerL^6}.
Estimate \eqref{AiryL^plambda} then follows from \eqref{freeAiryL^6lambda} after an application of the transfer principle and a subsequent interpolation with \eqref{trivialL^2}, while \eqref{optimizedL^plambda} is obtained by minimizing the resulting derivative loss between the transferred $X_{s,b,\lambda}$-versions of \eqref{freeAiryL^6lambda} and \eqref{freeSchrödingerL^6lambda}, yielding an optimized $L^6$-estimate, which is then interpolated with \eqref{trivialL^2} (see Corollary 3.6 in \cite{JNK2025}).
\end{proof}

\begin{rem}

\begin{enumerate} \item[(i)] For later estimates involving duality, we fix
\begin{equation} \label{AiryL^6+lambda}
\norm{I_x^\frac{1}{6}u}_{L_{txy,\lambda}^{6+}} \les \norm{u}_{X_{\frac{1}{3}+,\frac{1}{2}+,\lambda}}
\end{equation}
and
\begin{equation} \label{AiryL^6-lambda}
\norm{I_x^\frac{1}{6}u}_{L_{txy,\lambda}^{6-}} \les \norm{u}_{X_{\frac{1}{3},\frac{1}{2}-,\lambda}},
\end{equation}
as well as
\begin{equation} \label{optimizedL^6+}
\norm{v}_{L_{Txy,\lambda}^{6+}} \les \lambda^{0+} \norm{v}_{X_{\frac{2}{9}+,\frac{1}{2}+,\lambda}}
\end{equation}
and
\begin{equation} \label{optimizedL^6-}
\norm{v}_{L_{Txy,\lambda}^{6-}} \les \lambda^{0+} \norm{v}_{X_{\frac{2}{9}+,\frac{1}{2}-,\lambda}},
\end{equation}
all of which follow from \eqref{AiryL^plambda} and \eqref{optimizedL^plambda} via interpolation with suitably adapted variants of \eqref{trivialLinfty} and \eqref{trivialL^2}.

\item[(ii)] As described in Remark 3.5 of \cite{JNK2025}, the estimate
\begin{equation} \label{AiryL^4L^2}
\norm{I_x^\frac{1}{8}u}_{L_{tx}^4L_{y,\lambda}^2} \les \norm{u}_{X_{0,\frac{3}{8}+,\lambda}}
\end{equation}
emerges naturally as a byproduct of the proof of Proposition \ref{PropASlambda}.
As this variant of \eqref{AiryL^plambda} found several niche applications in the proof of the improved local well-posedness result for mZK in \cite{JNK2025}, it is important that we also have this estimate available in the rescaled framework.
\end{enumerate}

\end{rem}

\section{global well-posedness below $H^1$}

\subsection{The $I$-method: setup and general arguments}

This subsection is devoted to setting up the $I$-method and some associated preliminary considerations that can be carried out simultaneously for both the ZK and mZK equations.
We begin by outlining the approach in its most basic form, as developed by Colliander, Keel, Staffilani, Takaoka, and Tao \cite{Colliander2001}. This blueprint will serve as a guiding framework throughout the remainder of Section 4: \\
Since we do not have direct access to the (conserved) energy $E_{\pm}[u](t)$ for rough local solutions $u(t) \in H^s$ with $s<1$, we introduce a smoothing operator $I_N$, which acts like the identity on low frequencies $\les N$ while damping high frequencies $\gg N$.
For $I_Nu(t)$, the modified energy $E_{\pm}[I_Nu](t)$ is then well-defined, and although it is not conserved in general, we will be able to show that its increment over each time step remains sufficiently small to allow for repeated iteration of the local theory, enabling us to reach  any prescribed lifespan $T > 0$.
We thus aim to establish global existence based on an almost conservation law for the modified energy. \\
The quality of the global result obtained through the method essentially depends on the degree of nonlinear smoothing provided by the (currently) best known thresholds for local well-posedness, while the computational effort associated with the method is closely linked to the question of whether the scaling symmetry of $k$-gZK can be meaningfully exploited or not - more on this will follow later. \\
For $\lambda \in \intcc{1,\infty}$, $N \gg 1$, and $s < 1$, let $m: \R^2 \rightarrow \R^{>0}$ be a smooth, radially decreasing function with
\[ m(\xi,\eta) = \begin{cases} 1, & \text{if} \ \abs{(\xi,\eta)} \leq N \\ \frac{\abs{(\xi,\eta)}^{s-1}}{N^{s-1}}, & \text{if} \ \abs{(\xi,\eta)} \geq 2N. \end{cases} \]
Then, the previously introduced smoothing operator $I_N$ is defined by
\[ I_Nf \coloneqq \Fcal_{xy}^{-1,\lambda} m(\xi,q) \Fcal_{xy}^{\lambda}f, \]
and a straightforward computation yields
\[ \norm{I_Nf}_{H_{\lambda}^1} \les N^{1-s} \norm{f}_{H_{\lambda}^s}, \]
which shows that $I_N: H^s_\lambda \rightarrow H^1_\lambda$ indeed lives up to its designation as a smoothing operator.
At this point, we note the following drawback of the $I_N$ operator: As the cutoff point $\sim N$ moves farther from the origin, the constant appearing in the continuity estimate grows without bound, which would lead to technical difficulties at a later stage in the argument. To circumvent these issues, we now make use of the scaling symmetry of $k$-gZK. \\
For $\lambda \in \intco{1,\infty}$, $k \in \N$, $s < 1$, and $\widetilde{X} \in \set{\R \times \T_\lambda, \R^2}$, we consider the Cauchy problem
\begin{equation} \tag{$\mathrm{CP}_{k,\widetilde{X}}$}
\partial_t u_\lambda + \partial_x \Delta_{xy} u_\lambda = \pm \partial_x \left( u_\lambda^{k+1} \right), \quad u_\lambda(t=0) = u_{0,\lambda} \in H^s(\widetilde{X}),
\end{equation}
where solutions $u$ of $\mathrm{(CP}_{k,X} \mathrm{)}$ on $\intcc{-T,T}$ correspond one-to-one, via the transformation 
\[ u \mapsto u_\lambda, \quad u_\lambda(t,x,y) \coloneqq \lambda^{-\frac{2}{k}} u(\frac{t}{\lambda^3},\frac{x}{\lambda},\frac{y}{\lambda}), \] 
to solutions $u_\lambda$ of $\mathrm{(CP}_{k,\widetilde{X}} \mathrm{)}$ on the interval $\intcc{-\lambda^3T,\lambda^3T}$. The precise advantage of passing to this rescaled problem is the subject of the following

\begin{lemma} \label{lemmarescaledH^1lambda} For $\lambda \in \intco{1,\infty}$, $k \in \N$, $0\leq s <1$, and $u_0 \in H^s_1$, we define $u_{0,\lambda}(x,y) \coloneqq \lambda^{-\frac{2}{k}} u_0(\frac{x}{\lambda},\frac{y}{\lambda})$. Then, the estimates
\begin{equation} \label{lambdaL^2estimate}
\norm{I_Nu_{0,\lambda}}_{L^2_\lambda} \les \lambda^{1-\frac{2}{k}} \norm{u_0}_{L^2_1}
\end{equation}
and
\begin{equation} \label{lambdagradientestimate}
\norm{\nabla I_N u_{0,\lambda}}_{L^2_\lambda} \les N^{1-s} \lambda^{1-\frac{2}{k}-s} \norm{u_0}_{H^s_1}
\end{equation}
hold.
\end{lemma}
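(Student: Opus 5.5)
The plan is a direct Fourier-side computation: first express the rescaled Fourier transform through the original one, then use only the pointwise bounds on the multiplier $m$.

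\emph{Step 1: rescaling identity.} Substituting $x = \lambda x'$, $y = \lambda y'$ in the definition of $\Fcal_{xy}^\lambda$ gives, for $(\xi,q) \in \R \times \Z/\lambda$,
\[ \Fcal_{xy}^\lambda u_{0,\lambda}(\xi,q) = \lambda^{2-\frac{2}{k}} \Fcal_{xy}^1 u_0(\lambda\xi,\lambda q), \]
where $(\lambda\xi,\lambda q) \in \R \times \Z$. By Parseval's identity in the rescaled form, and after the change of variables $\widetilde{\xi} = \lambda\xi$, $\widetilde{q} = \lambda q \in \Z$, for any multiplier $w$ we get
\[ \norm{\Fcal_{xy}^{-1,\lambda} w \Fcal_{xy}^\lambda u_{0,\lambda}}_{L^2_\lambda}^2 = \frac{\lambda^{4-\frac{4}{k}}}{(2\pi)^2\lambda} \int_\R \sum_{q \in \Z/\lambda} \abs{w(\xi,q)}^2 \abs{\Fcal^1_{xy}u_0(\lambda\xi,\lambda q)}^2 \,\mathrm{d}\xi = \frac{\lambda^{2-\frac{4}{k}}}{(2\pi)^2} \int_\R \sum_{\widetilde{q} \in \Z} \abs{w(\widetilde{\xi}/\lambda,\widetilde{q}/\lambda)}^2 \abs{\Fcal^1_{xy}u_0(\widetilde{\xi},\widetilde{q})}^2 \,\mathrm{d}\widetilde{\xi}. \]
Here the factor $\frac{1}{\lambda}$ from Parseval and the $\frac{1}{\lambda}$ from $\mathrm{d}\xi$ combine with $\lambda^{4-\frac{4}{k}}$ to give $\lambda^{2-\frac{4}{k}}$.

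\emph{Step 2: the $L^2$ estimate.} Take $w = m$ and use $0 < m \le 1$, since $m$ is radially decreasing with $m = 1$ near the origin. This gives $\norm{I_N u_{0,\lambda}}_{L^2_\lambda} \le \lambda^{1-\frac{2}{k}} \norm{u_0}_{L^2_1}$, which is \eqref{lambdaL^2estimate}.

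\emph{Step 3: the gradient estimate.} Take $w(\xi,q) = \abs{(\xi,q)}_2\, m(\xi,q)$. The claim reduces to the pointwise bound
\[ \abs{\zeta}_2\, m(\zeta) \les N^{1-s} \abs{\zeta}_2^s \quad \text{for all } \zeta \in \R^2, \]
together with $\abs{\widetilde{\zeta}/\lambda}_2^s = \lambda^{-s}\abs{\widetilde{\zeta}}_2^s \le \lambda^{-s}\langle \widetilde{\zeta}\rangle^s$, where $\widetilde{\zeta} = (\widetilde{\xi},\widetilde{q})$. Since $s \ge 0$, this last step is legitimate. Inserting both into the identity from Step 1 yields $\norm{\nabla I_N u_{0,\lambda}}_{L^2_\lambda} \les N^{1-s}\lambda^{1-\frac{2}{k}-s}\norm{u_0}_{H^s_1}$, which is \eqref{lambdagradientestimate}.

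The pointwise bound splits into three regimes. Note that the dilated norm $\abs{(\xi,q)}$ used in the definition of $m$ is comparable to $\abs{\cdot}_2$.
\begin{itemize}
\item In the region $\abs{\zeta} \le N$, we have $m = 1$, and $\abs{\zeta}_2 = \abs{\zeta}_2^{1-s}\abs{\zeta}_2^s \les N^{1-s}\abs{\zeta}_2^s$ because $1-s > 0$.
\item In the region $\abs{\zeta} \ge 2N$, $m(\zeta)\abs{\zeta}_2 \sim N^{1-s}\abs{\zeta}^{s}$.
\item In the transition zone $N \le \abs{\zeta} \le 2N$, $m \le 1$ and $\abs{\zeta}_2 \sim N$, so the bound is $N \sim N^{1-s}\abs{\zeta}_2^s$.
\end{itemize}

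The only point requiring care is keeping track of the powers of $\lambda$ in Step 1. This means the Jacobian of the spatial dilation, the $\frac{1}{\lambda}$ normalization in the semiperiodic Parseval identity, and the reindexing of $\Z/\lambda$ onto $\Z$. No deeper obstacle is expected.
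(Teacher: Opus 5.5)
Your proposal is correct and follows essentially the same route as the paper. Both arguments use the rescaling identity $\Fcal_{xy}^\lambda u_{0,\lambda}(\xi,q) = \lambda^{2-\frac{2}{k}}\Fcal_{xy}^1 u_0(\lambda\xi,\lambda q)$, then Parseval with the $\frac{1}{\lambda}$ normalization followed by the substitution $(\widetilde{\xi},\widetilde{q}) = (\lambda\xi,\lambda q)$. Both then apply the pointwise bound $\abs{(\xi,q)}\, m(\xi,q) \les N^{1-s}\abs{(\xi,q)}^{s}$; the only difference is that the paper splits frequency space at $2N$ into two regions instead of your three.
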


\begin{proof} We begin by computing the Fourier transform of $u_{0,\lambda}$. The substitution
\[ (\widetilde{x},\widetilde{y}) = \left( \frac{x}{\lambda},\frac{y}{\lambda} \right) \curvearrowright "\mathrm{d}(\widetilde{x},\widetilde{y}) = \frac{1}{\lambda^2} \mathrm{d}(x,y)" \]
yields
\begin{align*} \Fcal_{xy}^\lambda u_{0,\lambda}(\xi,q) & = \lambda^{-\frac{2}{k}} \int_\R \int_{-\pi \lambda}^{\pi \lambda} e^{-i(\xi x + q y)} u_0 \left( \frac{x}{\lambda},\frac{y}{\lambda} \right) \ \mathrm{d}(x,y) \\ & = \lambda^{2-\frac{2}{k}} \int_\R \int_{-\pi}^{\pi} e^{-i(\lambda \xi \widetilde{x} + \lambda q \widetilde{y})} u_0(\widetilde{x},\widetilde{y}) \ \mathrm{d}(\widetilde{x},\widetilde{y}) \\ &= \lambda^{2-\frac{2}{k}} \Fcal_{xy}^1u_0(\lambda \xi,\lambda q),
\end{align*}
which allows us to conclude
\begin{align*} \norm{I_Nu_{0,\lambda}}_{L^2_\lambda}^2 & \leq \norm{u_{0,\lambda}}_{L^2_\lambda}^2 \\ &\sim \frac{1}{\lambda} \int_\R \sum_{q \in \Z/\lambda} \abs{\Fcal_{xy}^\lambda u_{0,\lambda}(\xi,q)}^2 \ \mathrm{d}\xi \\ & = \lambda^{3-\frac{4}{k}} \int_\R \sum_{q \in \Z/\lambda} \abs{\Fcal_{xy}^1u_{0}(\lambda \xi, \lambda q)}^2 \ \mathrm{d}\xi \\ & = \lambda^{2-\frac{4}{k}} \int_\R \sum_{\widetilde{q} \in \Z} \abs{\Fcal_{xy}^1u_0(\widetilde{\xi},\widetilde{q})}^2 \ \mathrm{d}\widetilde{\xi} \sim \lambda^{2-\frac{4}{k}} \norm{u_0}_{L^2_1}^2,
\end{align*}
where in the penultimate step, we made the substitution $(\widetilde{\xi},\widetilde{q}) = (\lambda \xi, \lambda q)$. This completes the proof of \eqref{lambdaL^2estimate}.
To prove \eqref{lambdagradientestimate}, we invoke Parseval's identity, taking into account the definition of $I_N$, and subsequently insert the expression we previously computed for the Fourier transform of $u_{0,\lambda}$. This leads us to
\begin{align*} \norm{\nabla I_N u_{0,\lambda}}_{L^2_\lambda}^2 & \les \frac{1}{\lambda} \int_\R \sum_{q \in \Z/\lambda} \chi_{\set{\abs{(\xi,q)} \leq 2N}} \abs{(\xi,q)}^2 \abs{\Fcal_{xy}^\lambda u_{0,\lambda}(\xi,q)}^2  \\ & \ \ \ + \chi_{\set{\abs{(\xi,q)} > 2N}} N^{2-2s} \abs{(\xi,q)}^{2s-2} \abs{(\xi,q)}^2 \abs{\Fcal_{xy}^\lambda u_{0,\lambda}(\xi,q)}^2 \ \mathrm{d}\xi \\ & \les \frac{1}{\lambda} N^{2-2s} \int_\R \sum_{q \in \Z/\lambda} \abs{(\xi,q)}^{2s} \abs{\Fcal_{xy}^\lambda u_{0,\lambda}(\xi,q)}^2 \ \mathrm{d}\xi \\ & = \lambda^{3-\frac{4}{k}} N^{2-2s} \int_\R \sum_{q \in \Z/\lambda} \abs{(\xi,q)}^{2s} \abs{\Fcal_{xy}^1u_0(\lambda \xi,\lambda q)}^2 \ \mathrm{d}\xi \\ & = \lambda^{2-\frac{4}{k}-2s} N^{2-2s} \int_\R \sum_{\widetilde{q} \in \Z} \abs{(\widetilde{\xi},\widetilde{q})}^{2s} \abs{\Fcal_{xy}^1u_0(\widetilde{\xi},\widetilde{q})}^2 \ \mathrm{d}\widetilde{\xi} \\ & \les \lambda^{2-\frac{4}{k}-2s} N^{2-2s} \norm{u_0}_{H^s_1}^2,
\end{align*}
and taking the square root on both sides of the inequality finally yields \eqref{lambdagradientestimate}.
\end{proof}

\begin{rem} \label{RemarklambdaZKandlambdamZK} \begin{enumerate} \item[(i)] An analogous proof yields both \eqref{lambdaL^2estimate} and \eqref{lambdagradientestimate} in the non-periodic setting as well. Thus, for $k \in \N$, $\lambda \in \intco{1,\infty}$, and $0 \leq s <1$, we may also rely on the estimates
\[ \norm{I_Nu_{0,\lambda}}_{L_{\infty}^2} \les \lambda^{1-\frac{2}{k}} \norm{u_0}_{L_{\infty}^2} \]
and
\[ \norm{\nabla I_Nu_{0, \lambda}}_{L^2_{\infty}} \les N^{1-s} \lambda^{1-\frac{2}{k}-s} \norm{u_0}_{H^s_{\infty}} \]
in what follows.

\item[(ii)] The estimates obtained in the cases $k=1$ and $k=2$ are of particular relevance, as they correspond to the scaling symmetries of ZK and mZK, respectively.
In the case $k=1$, choosing $\lambda_{\text{ZK}} \coloneqq C_{\text{ZK}} N^{\frac{1-s}{1+s}}$ ($C_{\text{ZK}} \gg 1$) and taking into account \eqref{lambdaL^2estimate} and \eqref{lambdagradientestimate}, we obtain the bound
\begin{equation*} 
\norm{I_Nu_{0,\lambda_{\text{ZK}}}}_{H^1_{\lambda_{\text{ZK}}}} \ll 1,
\end{equation*}
which holds uniformly in $N$.
An analogous conclusion can be enforced in the case $k=2$, where the choice $\lambda_{\text{mZK}} \coloneqq C_{\text{mZK}} N^{\frac{1-s}{s}}$ ($C_{\text{mZK}} \gg 1$) yields both
\begin{equation*} 
\norm{I_Nu_{0,\lambda_{\text{mZK}}}}_{H^1_{\lambda_{\text{mZK}}}} \les \norm{u_0}_{L^2_1}
\end{equation*}
and
\begin{equation*} 
\norm{I_Nu_{0,\lambda_{\text{mZK}}}}_{H^1_\infty} \les \norm{u_0}_{L^2_\infty},
\end{equation*}
both of which also hold uniformly in $N$.

\end{enumerate}
\end{rem}

Next, we aim to transfer the Gagliardo-Nirenberg-type estimate proven by Farah and Molinet on $\R \times \T$ to the rescaled $\R \times \T_\lambda$ setting, as this will help in estimating the potential part of the modified energy.

\begin{lemma} Let $\lambda \in \intco{1,\infty}$, $k \in \N$, $u_0 \in H^1_1$, and set $u_{0,\lambda}(x,y) \coloneqq \lambda^{-\frac{2}{k}} u_0(\frac{x}{\lambda},\frac{y}{\lambda})$. Then, the estimate
\begin{equation} \label{GNRTlambda}
\norm{u_{0,\lambda}}_{L^{k+2}_\lambda}^{k+2} \leq C_{k,\R} \norm{u_{0,\lambda}}_{L^2_\lambda}^2\left(\norm{\nabla u_{0,\lambda}}_{L^2_\lambda}^2 + C_{k,\T}\lambda^{-2} \norm{u_{0,\lambda}}_{L^2_\lambda}^2\right)^{\frac{k}{2}}
\end{equation}
holds true. In this context, $C_{k,\T}$ is a universal positive constant, while $C_{k,\R}$ represents the sharp constant for the inequality
\begin{equation} \label{GNR^2}
\norm{f}_{L^{k+2}_\infty} \leq C_{k,\R} \norm{f}_{L^2_\infty}^2 \norm{\nabla f}_{L^2_\infty}^k, \quad f \in H^1_\infty.
\end{equation}

\end{lemma}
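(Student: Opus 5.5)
The estimate \eqref{GNRTlambda} is a rescaled form of the Gagliardo–Nirenberg inequality of Farah and Molinet on $\R \times \T$. That inequality reads
\[ \norm{f}_{L^{k+2}_1}^{k+2} \leq C_{k,\R} \norm{f}_{L^2_1}^2\left(\norm{\nabla f}_{L^2_1}^2 + C_{k,\T} \norm{f}_{L^2_1}^2\right)^{\frac{k}{2}}, \quad f \in H^1_1, \]
with the sharp planar constant $C_{k,\R}$ in front. Here \eqref{GNR^2} is understood with $\norm{f}_{L^{k+2}_\infty}^{k+2}$ on the left. My plan is to take this as the starting point, which the lemma is clearly meant to transport to $\R\times\T_\lambda$, apply it to $f = u_0$, and then rewrite every norm in terms of $u_{0,\lambda}$ by a change of variables.

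The first step is the scaling of each norm. The substitution $(\widetilde x,\widetilde y) = (x/\lambda, y/\lambda)$ maps $\R\times\T_\lambda$ onto $\R\times\T$ with $\mathrm{d}(x,y) = \lambda^2\,\mathrm{d}(\widetilde x,\widetilde y)$. This gives
\[ \norm{u_{0,\lambda}}_{L^p_\lambda}^p = \lambda^{2-\frac{2p}{k}} \norm{u_0}_{L^p_1}^p, \qquad \norm{\nabla u_{0,\lambda}}_{L^2_\lambda}^2 = \lambda^{-\frac{4}{k}} \norm{\nabla u_0}_{L^2_1}^2. \]
The gradient picks up the extra factor $\lambda^{-2}$ because $\nabla u_{0,\lambda} = \lambda^{-\frac{2}{k}-1}(\nabla u_0)(\cdot/\lambda)$. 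This is consistent with the Fourier computation in the proof of Lemma \ref{lemmarescaledH^1lambda}.

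With $p=k+2$ the left-hand side becomes $\norm{u_{0,\lambda}}_{L^{k+2}_\lambda}^{k+2} = \lambda^{-2-\frac{4}{k}} \norm{u_0}_{L^{k+2}_1}^{k+2}$. On the right, $\norm{u_0}_{L^2_1}^2 = \lambda^{\frac{4}{k}-2}\norm{u_{0,\lambda}}_{L^2_\lambda}^2$. Inside the bracket, $\norm{\nabla u_0}_{L^2_1}^2 + C_{k,\T}\norm{u_0}_{L^2_1}^2$ equals $\lambda^{\frac{4}{k}}\bigl(\norm{\nabla u_{0,\lambda}}_{L^2_\lambda}^2 + C_{k,\T}\lambda^{-2}\norm{u_{0,\lambda}}_{L^2_\lambda}^2\bigr)$. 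Raising the bracket to the power $\frac{k}{2}$ produces $\lambda^{2}$.

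Collecting exponents, the right-hand side carries $\lambda^{\frac{4}{k}-2+2} = \lambda^{\frac{4}{k}}$. Multiplying by $\lambda^{-2-\frac{4}{k}}$ leaves a net factor $\lambda^{-2}$ on both sides, since the left-hand side also carries $\lambda^{-2-\frac{4}{k}}$. So the powers of $\lambda$ cancel exactly and \eqref{GNRTlambda} follows with the same constants $C_{k,\R}$ and $C_{k,\T}$.

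The only genuine obstacle is making sure the Farah–Molinet inequality is available in exactly this form: sharp constant $C_{k,\R}$ in front, and a lower-order term $C_{k,\T}\norm{f}_{L^2}^2$ added inside the $\frac{k}{2}$-power. If their version instead has a multiplicative correction, or the constant is placed differently, I would re-derive it. For that I would decompose $f = P_0 f + (f - P_0 f)$ into the $y$-mean and the remainder, handle the mean with the one-dimensional Gagliardo–Nirenberg inequality, and treat the mean-free part through its periodic extension in $y$ to large periods together with \eqref{GNR^2}. Once the unscaled inequality is secured in this form, the rest is the bookkeeping above.
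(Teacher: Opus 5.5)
Your route is the paper's: apply Lemma 1.1 of Farah and Molinet to $u_0$ on $\R\times\T$, then transport every norm through $(\widetilde x,\widetilde y)=(x/\lambda,y/\lambda)$. That lemma is available in exactly the form you state, so the fallback re-derivation is not needed. Your reading of the planar inequality with the $(k+2)$-nd power of $\norm{f}_{L^{k+2}_\infty}$ on the left is also the intended one.

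There is one slip in the bookkeeping. For $p=k+2$, your own formula $\lambda^{2-\frac{2p}{k}}$ gives
\[ \norm{u_{0,\lambda}}_{L^{k+2}_\lambda}^{k+2} = \lambda^{-\frac{4}{k}} \norm{u_0}_{L^{k+2}_1}^{k+2}, \]
not $\lambda^{-2-\frac{4}{k}}$: you dropped the Jacobian $\lambda^2$ in that line. With the wrong exponent, a stray factor $\lambda^{-2}$ would remain on the right-hand side. That is why your sentence about a ``net factor $\lambda^{-2}$ on both sides'' does not hold together.

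With the correct exponent, both sides are $\lambda^{\frac{4}{k}}$ times the corresponding $u_{0,\lambda}$-quantities. You computed this factor correctly for the right-hand side, and it now holds for the left-hand side too. So the powers cancel exactly, and \eqref{GNRTlambda} follows with unchanged constants, as in the paper.
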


\begin{proof} We have
\begin{align*}
\norm{u_{0,\lambda}}_{L_\lambda^{k+2}}^{k+2} & = \lambda^{-2-\frac{4}{k}} \int_\R \int_{-\pi \lambda}^{\pi \lambda} \absBig{u_0\left(\frac{x}{\lambda}, \frac{y}{\lambda} \right)}^{k+2} \ \mathrm{d}(x,y) \\ & = \lambda^{-\frac{4}{k}} \int_\R \int_{- \pi}^{\pi} \abs{u_0(\widetilde{x},\widetilde{y})}^{k+2} \ \mathrm{d}(\widetilde{x},\widetilde{y}) = \lambda^{-\frac{4}{k}} \norm{u_0}_{L_1^{k+2}}^{k+2},
\end{align*}
and at this point we can employ the estimate developed by Farah and Molinet (see Lemma 1.1 in \cite{Farah2024}) to further obtain
\[ \leq \lambda^{-\frac{4}{k}} C_{k,\R} \norm{u_0}_{L_1^2}^2(\norm{\nabla u_0}_{L_1^2}^2 + C_{k,\T} \norm{u_0}_{L_1^2}^2)^\frac{k}{2}. \]
Now, observing that
\[ \norm{u_{0,\lambda}}_{L_\lambda^2}^2 = \lambda^{2-\frac{4}{k}} \norm{u_0}_{L_1^2}^2 \quad \text{and} \quad \norm{\nabla u_{0,\lambda}}_{L_\lambda^2}^2 = \lambda^{-\frac{4}{k}} \norm{\nabla u_0}_{L_1^2}^2, \]
we conclude that
\[ \norm{u_{0,\lambda}}_{L_\lambda^{k+2}}^{k+2} \leq C_{k,\R} \norm{u_{0,\lambda}}_{L^2_\lambda}^2\left(\norm{\nabla u_{0,\lambda}}_{L^2_\lambda}^2 + C_{k,\T}\lambda^{-2} \norm{u_{0,\lambda}}_{L^2_\lambda}^2\right)^{\frac{k}{2}}, \]
which is exactly what we wanted to show.
\end{proof}

\begin{rem} \label{RemarkonmodifiedH^1normforZKandmZK} In what follows, we denote by $u_{\lambda_\text{ZK}}(t)$ the solution to $\mathrm{(CP}_{1,\R \times \T_{\lambda_{\text{ZK}}}} \mathrm{)}$, and by $u_{\lambda_{\text{mZK}}}(t)$ the solution to $\mathrm{(CP}_{2,\widetilde{X}} \mathrm{)}$ on $\R \times \T_{\lambda_{\text{mZK}}}$ or on $\R^2$, respectively.
Throughout the subsequent computations, constants may change from line to line.

\begin{enumerate} \item[(i)] The sharp constant $C_{k,\R}$ in \eqref{GNR^2} was determined by Weinstein \cite{Weinstein1983} to be
\[ C_{k,\R} = \frac{2^{\frac{k-2}{2}}(k+2)}{k^{\frac{k}{2}} \norm{Q_k}_{L_{\infty}^2}^k}, \]
wherein $Q_k = Q_k(x,y)$, $(x,y) \in \R^2$, denotes the unique positive radial solution of
\[ \Delta_{xy} Q_k - Q_k + Q_k^{k+1} = 0. \]

\item[(ii)] An application of \eqref{GNRTlambda} in the case $k=1$ leads us to
\begin{align*} \norm{\nabla I_N u_{\lambda_{\text{ZK}}}(t)}_{L_{\lambda_\text{ZK}}^2}^2  &\leq 2 E_{\pm}[I_N u_{\lambda_\text{ZK}}](t) + \frac{2}{3} \norm{I_N u_{\lambda_\text{ZK}}(t)}_{L_{\lambda_\text{ZK}}^3}^3 \\ & \leq 2E_{\pm}[I_N u_{\lambda_\text{ZK}}](t) + \frac{2}{3} C_{1,\R} \norm{I_N u_{\lambda_\text{ZK}}(t)}_{L_{\lambda_\text{ZK}}^2}^2 \\ & \ \ \ \cdot \left(\norm{\nabla I_N u_{\lambda_\text{ZK}}(t)}_{L_{\lambda_\text{ZK}}^2}^2 + C_{1,\T} \lambda_\text{ZK}^{-2} \norm{I_N u_{\lambda_\text{ZK}}(t)}_{L_{\lambda_\text{ZK}}^2}^2\right)^\frac{1}{2} \\ & \leq 2E_{\pm}[I_N u_{\lambda_\text{ZK}}](t) + \frac{2}{3} C_{1,\R} \norm{u_{0,\lambda_\text{ZK}}}_{L_{\lambda_{\text{ZK}}}^2}^2 \norm{ \nabla I_N u_{\lambda_\text{ZK}}(t)}_{L_{\lambda_\text{ZK}}^2} \\ & \ \ \ + \frac{2}{3} C_{1,\R}C_{1,\T}\lambda_\text{ZK}^{-2} \norm{u_{0,\lambda_{\text{ZK}}}}_{L_{\lambda_\text{ZK}}^2}^3,
\end{align*}
where in the last step we made use of the conservation of the $L^2_{\lambda_\text{ZK}}$-norm.
Since $\norm{u_{0,\lambda_{\text{ZK}}}}_{L_{\lambda_{\text{ZK}}}^2} = \lambda_{\text{ZK}}^{-1} \norm{u_{0}}_{L_1^2} \ll 1$ and, without loss of generality, \\
$\norm{ \nabla I_N u_{\lambda_{\text{ZK}}}(t)}_{L_{\lambda_\text{ZK}}^2} \gtrsim 1$, we further obtain
\[ \norm{\nabla I_N u_{\lambda_{\text{ZK}}}(t)}_{L_{\lambda_\text{ZK}}^2}^2 \leq C_0\left(\norm{u_{0,\lambda_{\text{ZK}}}}_{L_{\lambda_{\text{ZK}}}^2}\right) + C_1E_{\pm}[I_N u_{\lambda_{\text{ZK}}}](t), \]
and another application of the mass conservation law gives
\begin{equation} \label{controloverH^1ZK}
\norm{I_N u_{\lambda_{\text{ZK}}}(t)}_{H^1_{\lambda_{\text{ZK}}}}^2 \leq C_0 + C_1 \absbig{E_{\pm}[I_N u_{\lambda_{\text{ZK}}}](t) - E_{\pm}[I_N u_{\lambda_{\text{ZK}}}](0)},
\end{equation}
with $C_0 = C_0\left(\norm{u_{0,\lambda_{\text{ZK}}}}_{L_{\lambda_\text{ZK}}^2}, E_{\pm}[I_N u_{\lambda_{\text{ZK}}}](0)\right)$. At this stage, we observe that the constant $C_0 > 0$ can be made arbitrarily small by enlarging $C_{\text{ZK}}$ ($E_\pm[I_Nu_{\lambda_{\text{ZK}}}](0)$ can essentially be controlled by $\norm{I_Nu_{0,\lambda_{\text{ZK}}}}_{H^1_{\lambda_{\text{ZK}}}}$, which becomes small for large $C_{\text{ZK}}$).

\item[(iii)] For the further analysis of mZK, we employ \eqref{GNRTlambda} in the case $k=2$ to obtain
\begin{align*} \norm{\nabla I_N u_{\lambda_{\text{mZK}}}(t)}_{L_{\lambda_{\text{mZK}}}^2}^2 & = 2 E_{\pm}[I_N u_{\lambda_{\text{mZK}}}](t) \mp \frac{1}{2} \norm{I_N u_{\lambda_{\text{mZK}}}(t)}_{L_{\lambda_{\text{mZK}}}^4}^4 \\ & \leq 2 E_{\pm}[I_Nu_{\lambda_{\text{mZK}}}](t) + \frac{1}{2}C_{2,\R} \norm{I_Nu_{\lambda_{\text{mZK}}}(t)}_{L_{\lambda_{\text{mZK}}}^2}^2 \\ & \ \ \ \cdot \left( \norm{\nabla I_N u_{\lambda_{\text{mZK}}}(t)}_{L_{\lambda_{\text{mZK}}}^2}^2 + C_{2,\T} \lambda_{\text{mZK}}^{-2} \norm{I_N u_{\lambda_{\text{mZK}}}(t)}_{L_{\lambda_{\text{mZK}}}^2}^2 \right) \\ & \leq 2 E_{\pm}[I_Nu_{\lambda_{\text{mZK}}}](t) + \frac{1}{2} C_{2,\R} \norm{u_{0}}_{L^2_1}^2 \norm{\nabla I_N u_{\lambda_{\text{mZK}}}(t)}_{L_{\lambda_{\text{mZK}}}^2}^2 \\ & \ \ \ + \frac{1}{2}C_{2,\R}C_{2,\T} \lambda_{\text{mZK}}^{-2} \norm{u_{0}}_{L^2_1}^4,
\end{align*}
where in the final step we used the $L^2_{\lambda_{\text{mZK}}}$-norm conservation together with the relation $\norm{u_{0,\lambda_{\text{mZK}}}}_{L^2_{\lambda_{\text{mZK}}}} = \norm{u_0}_{L_1^2}$.
We now need\footnote{In the defocusing case, the smallness assumption can be omitted, as the modified energy already controls the $L^2_{\lambda_\text{mZK}}$-norm of the gradient.} precisely
\[ 1 - \frac{C_{2,\R}}{2} \norm{u_0}_{L_1^2}^2 > 0 \Leftrightarrow \norm{u_0}_{L_1^2} < \sqrt{\frac{2}{C_{2,\R}}} = \norm{Q_2}_{L_\infty^2} \]
in order to deduce
\[ \norm{\nabla I_N u_{\lambda_{\text{mZK}}}(t)}_{L_{\lambda_{\text{mZK}}}^2}^2 \leq C_0\left(\norm{u_0}_{L_1^2}\right) + C_1 E_{\pm}[I_Nu_{\lambda_{\text{mZK}}}](t), \]
and by once again invoking the mass conservation law, we finally arrive at
\begin{equation} \label{controloverH^1mZKsp}
\norm{I_Nu_{\lambda_{\text{mZK}}}(t)}_{H^1_{\lambda_{\text{mZK}}}}^2 \leq C_0 + C_1\absbig{E_{\pm}[I_N u_{\lambda_{\text{mZK}}}](t) - E_{\pm}[I_N u_{\lambda_{\text{mZK}}}](0)},
\end{equation}
with $C_0 = C_0\left(\norm{u_0}_{L_1^2}, E_\pm[I_N u_{\lambda_{\text{mZK}}}](0)\right).$
In the non-periodic case, the computation proceeds in exactly the same way, and using \eqref{GNR^2} instead of \eqref{GNRTlambda}, one obtains
\begin{equation} \label{controloverH^1mZKnp}
\norm{I_Nu_{\lambda_{\text{mZK}}}(t)}_{H^1_\infty}^2 \leq C_0 + C_1\absbig{E_{\pm}[I_N u_{\lambda_{\text{mZK}}}](t) - E_{\pm}[I_N u_{\lambda_{\text{mZK}}}](0)},
\end{equation}
with $C_0 = C_0\left(\norm{u_0}_{L_\infty^2}, E_\pm[I_N u_{\lambda_{\text{mZK}}}](0)\right).$
\end{enumerate}

\end{rem}

We now turn to the difference $E_\pm[I_Nu_\lambda](t) - E_\pm[I_N u_\lambda](0)$ and derive an identity that renders it more amenable to further estimates (cf. \cite{Osawa2024} for the special case $k=1$); this is the content of the following

\begin{lemma} \label{modifiedenergylemma}
Let $\lambda \in \intco{1,\infty}$, $k \in \N$, $\widetilde{X} \in \set{\R \times \T_\lambda, \R^2}$, and let $u_\lambda = u_\lambda(t,x,y)$ denote the solution of $\mathrm{(CP}_{k,\widetilde{X}} \mathrm{)}$ on $\widetilde{X}$ with initial data $u_{0,\lambda} \in H^s(\widetilde{X})$, where $s<1$ is such that the solution exists.
Then, for every $\delta \in \R$ for which the solution exists, it holds that
\begin{align}
&E_{\pm}[I_Nu_\lambda](\delta) - E_{\pm}[I_Nu_\lambda](0) = \notag \\ & \mp \int_{0}^{\delta} \int_{\widetilde{X}} \Delta I_N u_\lambda \partial_x \left( I_N(u_\lambda^{k+1}) - (I_Nu_\lambda)^{k+1} \right) \ \mathrm{d}(x,y) \mathrm{d}t \label{modifiedenergydifference} \\ & + \int_{0}^{\delta} \int_{\widetilde{X}} I_N(u_\lambda^{k+1}) \partial_x \left( I_N(u_\lambda^{k+1}) - (I_Nu_\lambda)^{k+1} \right) \ \mathrm{d}(x,y) \mathrm{d}t. \notag
\end{align}
\end{lemma}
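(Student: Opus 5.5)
The plan is to differentiate the modified energy in time along the smoothed flow $v \coloneqq I_N u_\lambda$ and then use the fundamental theorem of calculus on $\intcc{0,\delta}$. Applying $I_N$ (a Fourier multiplier commuting with $\partial_t$, $\partial_x$ and $\Delta$) to the equation in $\mathrm{(CP}_{k,\widetilde{X}}\mathrm{)}$ gives
\[ \partial_t v = -\partial_x \Delta v \pm \partial_x F, \qquad F \coloneqq I_N(u_\lambda^{k+1}). \]
The energy $E_\pm$ is normalized so that it is conserved by $k$-gZK. Hence its variational derivative is $-\Delta v \pm v^{k+1}$, up to the normalization of the potential term, which must be the one compatible with conservation; I will check this constant against the exact conservation law rather than taking it from the displayed formula. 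Formally, integrating by parts in $(x,y)$ then yields
\[ \frac{\mathrm{d}}{\mathrm{d}t} E_\pm[v] = \int_{\widetilde{X}} \left(-\Delta v \pm v^{k+1}\right)\left(-\partial_x\Delta v \pm \partial_x F\right)\, \mathrm{d}(x,y). \]

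Expanding the product gives four terms, and the key step is to identify which ones vanish. First, $\int \Delta v\,\partial_x \Delta v = \frac12\int \partial_x (\Delta v)^2 = 0$. Second, $\int F\,\partial_x F = 0$, and likewise $\int v^{k+1}\partial_x v^{k+1}=0$. The remaining terms are
\[ \mp\int \Delta v\,\partial_x F \;\mp\; \int v^{k+1}\partial_x\Delta v \;+\; \int v^{k+1}\partial_x F . \]
Integrating by parts in $x$ turns the middle term into $\pm\int \Delta v\,\partial_x(v^{k+1})$, so the first two combine to $\mp\int \Delta v\,\partial_x\big(F - v^{k+1}\big)$. In the third term I write $\int v^{k+1}\partial_x F = -\int F\,\partial_x v^{k+1}$, and then add the vanishing quantity $\int F\,\partial_x F$ to obtain $\int F\,\partial_x(F - v^{k+1})$. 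Integrating in time from $0$ to $\delta$ gives exactly \eqref{modifiedenergydifference}. Observe that if $I_N$ were the identity, then $F = v^{k+1}$ and both integrands vanish; this is the conservation of $E_\pm$, which serves as a consistency check on the constants.

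The only genuine obstacle is justifying these manipulations for rough solutions, since $u_\lambda(t)\in H^s$ with $s<1$. The multiplier $m$ behaves like $|(\xi,q)|^{s-1}$ at high frequencies, so $I_N u_\lambda \in H^1$ only, and the integrands contain $\Delta v$ and $\partial_x\Delta v$. I would first prove the identity for smooth, decaying (Schwartz in $x$) data, for which the solution is smooth and all integrations by parts are legitimate. I would then approximate $u_{0,\lambda}$ in $H^s(\widetilde{X})$ by such data and use the continuity of the data-to-solution map into $X^{\delta}_{s,\frac12+}\subset C(\intcc{-\delta,\delta},H^s)$ from the local theory. Both sides of \eqref{modifiedenergydifference} are continuous with respect to this convergence. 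For the left side this follows from $I_N: H^s\to H^1$ and the Gagliardo–Nirenberg bound \eqref{GNRTlambda}. For the right side it follows from the multilinear estimates in Bourgain spaces that are used anyway to bound these integrals (one reads them as space-time integrals of products of $X_{s,\frac12+}$-functions). I therefore expect the passage to the limit to be routine, but it is the step that requires care in stating exactly which multilinear bounds are available.
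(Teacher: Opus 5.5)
Your proposal is correct and follows essentially the same route as the paper: you differentiate $E_\pm[I_Nu_\lambda]$ in time, substitute the $I_N$-smoothed equation, discard $\int \Delta v\,\partial_x\Delta v$, integrate the mixed term by parts, and insert the vanishing term $\int F\,\partial_x F$ to obtain the two commutator-type integrands. The justification is also the same (prove the identity for smooth data decaying in the non-periodic direction, then pass to the limit via continuous dependence), and your discussion of why both sides are continuous is more explicit than the paper's one-line remark.
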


\begin{proof} For the following calculations, we assume that $u_\lambda$ is smooth for each fixed $t \in \intcc{-\abs{\delta}, \abs{\delta}}$ and rapidly decreasing in the non-periodic directions of propagation. The general case then follows by approximation, taking into account the continuous dependence of the solution on the initial data. \\
An application of the fundamental theorem of calculus yields
\begin{equation} \label{fundamentalcalc}
E_\pm[I_Nu_\lambda](\delta) - E_\pm[I_N u_\lambda](0) = \int_{0}^{\delta} \frac{\mathrm{d}}{\mathrm{d}t} E_\pm[I_N u_\lambda](t) \ \mathrm{d}t,
\end{equation}
where
\[ \frac{\mathrm{d}}{\mathrm{d}t} E_\pm[I_Nu_\lambda](t) = \int_{\widetilde{X}} \langle \nabla I_N \frac{\mathrm{d}}{\mathrm{d}t} u_\lambda, \nabla I_N u_\lambda \rangle_2 \pm (I_N u_\lambda)^{k+1} I_N \frac{\mathrm{d}}{\mathrm{d}t}u_\lambda \ \mathrm{d}(x,y), \]
and an integration by parts applied to the first term gives
\[ = \int_{\widetilde{X}} \left( \pm (I_N u_\lambda)^{k+1} - \Delta I_N u_\lambda \right) I_N \frac{\mathrm{d}}{\mathrm{d}t} u_\lambda \ \mathrm{d}(x,y). \]
Using that $u_\lambda$ solves $\mathrm{(CP}_{k,\widetilde{X}} \mathrm{)}$, we further obtain
\begin{align*}
& = \int_{\widetilde{X}} \left( \Delta I_N u_\lambda \mp (I_N u_\lambda)^{k+1} \right) \partial_x \left( \Delta I_N u_\lambda \mp I_N (u_\lambda^{k+1}) \right) \ \mathrm{d}(x,y) \\ & = \int_{\widetilde{X}} \partial_x \Delta I_N u_\lambda \Delta I_N u_\lambda \ \mathrm{d}(x,y) \\ & \ \ \ + \int_{\widetilde{X}} \mp(I_N u_\lambda)^{k+1} \partial_x \Delta I_N u_\lambda \mp \partial_x I_N (u_\lambda^{k+1}) \Delta I_N u_\lambda \ \mathrm{d}(x,y) \\ & \ \ \ + \int_{\widetilde{X}} (I_N u_\lambda)^{k+1} \partial_x I_N(u_\lambda^{k+1}) \ \mathrm{d}(x,y) \\ & \eqqcolon (I) + (II) + (III),
\end{align*}
and these three integrals are now to be treated individually.
For the first integral, the fundamental theorem of calculus immediately gives
\[ (I) = \int_{\widetilde{X}} \frac{1}{2} \partial_x(\Delta I_N u_\lambda)^2 \ \mathrm{d}(x,y) = 0,  \]
and the second integral takes the form
\[ (II) = \mp \int_{\widetilde{X}} \Delta I_N u_\lambda \partial_x \left( I_N (u_\lambda^{k+1}) - (I_Nu_\lambda)^{k+1} \right) \ \mathrm{d}(x,y) \]
after an integration by parts.
For the third integral, another integration by parts, followed by the addition of a harmless zero term, yields
\begin{align*} (III) &= \int_{\widetilde{X}} I_N(u_\lambda^{k+1}) \partial_x \left( I_N(u_\lambda^{k+1}) - (I_Nu_\lambda)^{k+1} \right) \ \mathrm{d}(x,y) \\ & \ \ \ - \frac{1}{2}  \int_{\widetilde{X}} \partial_x \left(I_N(u_\lambda^{k+1})\right)^2 \ \mathrm{d}(x,y),
\end{align*}
and the last term vanishes yet again by the fundamental theorem of calculus.
Inserting these partial results into \eqref{fundamentalcalc} finally completes the proof.
\end{proof}

\begin{rem}
The proof of Lemma \ref{modifiedenergylemma} implicitly relies on the assumption that $\mathrm{(CP}_{k,\widetilde{X}} \mathrm{)}$ is locally well-posed for each $k \in \N$ at least for some $s = s(k) < 1$. On $\R^2$, this follows immediately from the well-posedness results in the non-rescaled setting. On $\R \times \T_\lambda$, however, a more careful analysis is required, since the underlying function spaces change under the rescaling in the periodic component. In the cases $k=1$ and $k=2$, we will explicitly work out the local well-posedness in the rescaled framework below (see Propositions \ref{bilinestimateZKlambda} and \ref{trilinestimatemZKlambda}, as well as Remark \ref{RemarkonLWPformZK}); for $k \geq 3$, we limit ourselves to the claim that the local well-posedness results of Theorem 1.2 in \cite{JNK2025} and Theorem 1.1 in \cite{JNK2026} can be transferred to the rescaled setting.
\end{rem}

With this, we now have all the necessary general tools at our disposal and can proceed to the equation-specific arguments.

\subsection{Improved GWP for ZK on $\mathbb{R} \times \mathbb{T}$}

We start by presenting an alternative proof of the bilinear $X_{s,b,1}$-estimate by Cao-Labora (see the proof of Theorem 1.3 in \cite{CaoLabora2025}), which yields local well-posedness in $H^s_1$ for all $s > \frac{3}{4}$. This approach allows us to directly reformulate it within the rescaled framework.

\begin{prop} \label{bilinestimateZKlambda} Let $\lambda \in \intco{1,\infty}$. Then, for every $s > \frac{3}{4}$, there exists an $\epsilon > 0$ such that
\begin{equation} \label{lwpZKinH^slambda}
\norm{\partial_x(u_1 u_2)}_{X_{s,-\frac{1}{2}+2\epsilon,\lambda}} \les \norm{u_1}_{X_{s,\frac{1}{2}+\epsilon,\lambda}} \norm{u_2}_{X_{s,\frac{1}{2}+\epsilon,\lambda}}
\end{equation}
holds for all $u_1,u_2 \in X_{s,\frac{1}{2}+\epsilon,\lambda}$.
\end{prop}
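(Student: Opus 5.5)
By duality, \eqref{lwpZKinH^slambda} is equivalent to the trilinear estimate
\[ \absBig{\int \langle (\xi,q)\rangle^{s}\,\abs{\xi}\, \widehat{u_1}\ast\widehat{u_2}\;\overline{\widehat{u_3}}} \les \prod_{i=1}^2\norm{u_i}_{X_{s,\frac12+\epsilon,\lambda}}\,\norm{u_3}_{X_{0,\frac12-2\epsilon,\lambda}} . \]
I would decompose all three functions dyadically in frequency ($\abs{(\xi_i,q_i)}\sim N_i$) and in modulation ($\langle\tau_i-\phi(\xi_i,q_i)\rangle\sim L_i$). By symmetry I assume $N_1\ge N_2$, so that $N_3\les N_1$. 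The only tools I will use are the $\lambda$-uniform estimates of Section 3: \eqref{MPlambda}, \eqref{MPlambdadual}, \eqref{L^4lambda} with its variants \eqref{L^4+lambda} and \eqref{L^4-lambda}, and the bilinear refinement \eqref{Bilinreflambda} and \eqref{Bilinreflambdadual}. This makes the implicit constant independent of $\lambda$ automatically. I will avoid \eqref{optimizedL^plambda}, which carries a factor $\lambda^{0+}$.

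\textbf{Low-frequency and low--high interactions.} If $N_1\les 1$, the estimate follows trivially from \eqref{L^4lambda} and \eqref{trivialL^2}. When $N_1\gg N_2$ (high--low), the wave numbers $(\xi_1,q_1)$ and $(\xi_2,q_2)$ are widely separated, so $\abs{\abs{(\xi_1,q_1)}^2-\abs{(\xi_2,q_2)}^2}^{\frac12}\sim N_1$. Then $\abs{\xi}\les N_1$ is absorbed by $MP_\lambda(u_1,u_2)$ at the cost of $N_2^{\frac12+}$ derivatives on the low-frequency factor, using Remark \ref{RemPropMP}(ii). The frequency weights then give $N_1^{s}N_1\cdot N_1^{-1}N_2^{\frac12+}N_1^{-s}N_2^{-s}$, which is summable for $s>\frac12$. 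If the high frequency sits on $u_3$ instead (i.e.\ $N_1\sim N_3\gg N_2$ with the dual pairing), I pair $u_2$ with $u_3$ and use \eqref{MPlambdadual}. The $\frac12-$ modulation regularity of that version is exactly what the dual function allows.

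\textbf{High--high interactions.} The remaining case $N_1\sim N_2\gtrsim N_3$ splits into two subcases.

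In the first subcase, $\abs{\abs{(\xi_1,q_1)}^2-\abs{(\xi_2,q_2)}^2}\gtrsim N_1^{2-}$. Here I again use \eqref{MPlambda}, obtaining almost a full derivative. The loss is $N_3\cdot N_1^{-1+}N_1^{\frac12+}$ against the weight $N_1^{-2s}N_3^{s}$, which is fine for $s>\frac34$ after balancing.

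In the second subcase, $\abs{(\xi_1,q_1)}\approx\abs{(\xi_2,q_2)}$. I then use the resonance function $R_{\text{ZK}}$. On the support, $3\xi_1\xi_2\xi$ plus the $q$-terms is controlled by $\max L_i$. I further split according to whether the output frequency satisfies $\abs{3\xi^2-q^2}\gtrsim\abs{\xi}^\alpha$. If it does, \eqref{Bilinreflambdadual} applied to $P^\alpha_\lambda(u_1u_2)$ gains $\abs{\xi}^{\alpha/4}$. If it does not, the output lies in a thin neighbourhood of the lines $\abs{q}=\sqrt3\abs{\xi}$. In that region I would either extract a large modulation from $R_{\text{ZK}}$, giving $\max L_i\gtrsim N_1^2\abs{\xi}$, or use \eqref{L^4lambda} three times via Hölder, $L^4\cdot L^4\cdot L^2$.

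\textbf{Main obstacle.} I expect the main difficulty to be exactly this near-resonant high--high case with $\abs{\xi}\ll N_3$ close to the cone $\abs{3\xi^2-q^2}\les\abs{\xi}^\alpha$. There the derivative $\abs{\xi}$ must be recovered entirely from modulation, together with a choice of $\alpha$. My plan is to take $\alpha=1$ and bound $\abs{\xi}\les(\max L_i)^{\frac12}N_1^{-1}\cdot(\ldots)$. Then $\max L_i^{\frac12-2\epsilon}$ is absorbed into the dual function $u_3$, and the remaining estimate is closed by \eqref{L^4-lambda}. I would track the exponents so that everything closes precisely when $s>\frac34$, and choose $\epsilon$ small depending on $s-\frac34$.
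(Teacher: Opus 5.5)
Your proposal has a genuine gap in the high--high case $N_1\sim N_2$. There you split according to $\abs{\abs{(\xi_1,q_1)}^2-\abs{(\xi_2,q_2)}^2}\gtrsim N_1^{2-}$, a threshold measured against $N_1$. The complementary region is therefore not near-resonant, and none of your remaining tools closes there.

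Here is a concrete configuration. Take $\xi_2=a$, $q_2=-\sqrt3 a$ with $\abs{a}\sim N_1$, and output $\xi_0=b$, $q_0=\sqrt3 b$ with $1\ll\abs{b}\ll N_1^{1-}$. Then $\xi_1=b-a$ and $q_1=\sqrt3(a+b)$.
\begin{itemize}
\item A direct computation gives $\phi(\xi_0,q_0)=4b^3$, $\phi(\xi_2,q_2)=4a^3$ and $\phi(\xi_1,q_1)=4(b^3-a^3)$, so $R_{\text{ZK}}=0$ exactly. Your claimed bound $\max L_i\gtrsim N_1^2\abs{\xi}$ is therefore false here.
\item The output lies exactly on the line $q_0=\sqrt3\xi_0$, so \eqref{Bilinreflambda} gives nothing.
\item Since $\abs{(\xi_1,q_1)}^2-\abs{(\xi_2,q_2)}^2=6b^2\ll N_1^{2-}$, this configuration falls into your second subcase.
\item Your fallback $L^4\cdot L^4\cdot L^2$ gains no derivative. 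It needs $\abs{b}^{1+s}\les N_1^{2s-}$, which fails for $N_1^{2s/(1+s)}\ll\abs{b}$ whenever $s<1$.
\end{itemize}
You have also misplaced the obstruction. Near the cone one has $\abs{(\xi_0,q_0)}\sim\abs{\xi_0}$, so the regime ``$\abs{\xi}\ll N_3$ close to the cone'' that you single out is essentially empty.

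What is missing is that the non-resonance thresholds must be measured against the output frequency $\abs{\xi_0}$ and applied to all three pairs. The paper splits $N_1\sim N_2$ according to whether $\abs{\abs{(\xi_i,q_i)}^2-\abs{(\xi_j,q_j)}^2}\gtrsim\abs{\xi_0}^{3/2}$ for some $(i,j)\in\{(0,1),(0,2),(1,2)\}$.
\begin{itemize}
\item For $(1,2)$, one writes $\abs{\xi_0}\le\abs{\abs{(\xi_1,q_1)}^2-\abs{(\xi_2,q_2)}^2}^{1/2}\abs{\xi_0}^{1/4}$ and applies \eqref{MPlambda}. This is exactly where $s>\frac34$ enters.
\item For $(0,1)$ and $(0,2)$, one applies \eqref{MPlambdadual} to the pair $(f,\widetilde u_1)$. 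You never use this pairing in the high--high case.
\item Either route handles the example above, since there $6b^2\gtrsim\abs{b}^{3/2}$ and $\abs{(\xi_1,q_1)}^2-\abs{(\xi_0,q_0)}^2=6a^2$.
\end{itemize}
Only when all three differences are $\ll\abs{\xi_0}^{3/2}$ does the paper turn to the other tools; this condition forces $\abs{(\xi_0,q_0)}\sim\abs{(\xi_1,q_1)}\sim\abs{(\xi_2,q_2)}$.
\begin{itemize}
\item If $\abs{\xi_1}\ll\abs{\xi_2}$, it uses the bilinear refinement, either on the output cone or via \eqref{Bilinreflambdadual} on $(f,\widetilde u_1)$, whose output frequency is $\xi_2$. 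If both are near the cone, this contradicts the smallness of the differences.
\item If $\abs{\xi_1}\sim\abs{\xi_2}\sim\abs{(\xi_1,q_1)}$, it uses the identity $R_{\text{ZK}}=-6\xi_0\xi_1\xi_2+\sum_{i=1}^2\xi_i(\abs{(\xi_0,q_0)}^2-\abs{(\xi_i,q_i)}^2)$. This yields $\abs{R_{\text{ZK}}}\gtrsim\abs{\xi_0}\abs{\xi_1}\abs{\xi_2}$ only because all three differences are small.
\end{itemize}
Your low-frequency step, your high--low step, and your use of \eqref{Bilinreflambda} away from the output cone are fine.
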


\begin{proof}
By the definition of the $X_{s,b,\lambda}$-norms, we may, without loss of generality, assume that $\widehat{\! u_i}^\lambda \geq 0$, and, by symmetry, restrict our considerations to the frequency configuration $\abs{(\xi_1,q_1)} \geq \abs{(\xi_2,q_2)}$. Here, $(\xi_i,q_i)$ denotes the wave number associated with the factor $u_i$, and we use $\ast$ to indicate the underlying convolution constraint $(\tau_0,\xi_0,q_0) = (\tau_1+\tau_2,\xi_1+\xi_2,q_1+q_2)$.
Furthermore, by duality and an application of Parseval's identity, it suffices to verify
\begin{align*} I_f &\coloneqq \frac{1}{\lambda^2} \int_{\R^2} \sum_{q_0 \in \Z/\lambda} \int_{\R^2} \sum_{\substack{q_1 \in \Z/\lambda \\ \ast}} \abs{\xi_0} \langle (\xi_0,q_0) \rangle^{s} \widehat{\! f}^\lambda(\tau_0,\xi_0,q_0) \\ & \ \ \ \cdot  \prod_{i=1}^{2} \widehat{\! u_i}^\lambda(\tau_i,\xi_i,q_i) \ \mathrm{d}(\tau_1,\xi_1) \mathrm{d}(\tau_0,\xi_0) \\ & \les \norm{f}_{X_{0,\frac{1}{2}-2\epsilon,\lambda}} \prod_{i=1}^{2} \norm{u_i}_{X_{s,\frac{1}{2}+\epsilon,\lambda}}
\end{align*}
for every $f \in X_{0,\frac{1}{2}-2\epsilon,\lambda}$ with $\widehat{\! f}^\lambda \geq 0$ and $\norm{f}_{X_{0, \frac{1}{2}-2\epsilon,\lambda}} \leq 1$, which we will now do by means of a case-by-case analysis. \\ \\
(i) \underline{$\abs{(\xi_1,q_1)} \gg \abs{(\xi_2,q_2)}$}: \\
In this case, we have
\[ \abs{\xi_0}  \langle (\xi_0,q_0) \rangle^s \les \abs{\abs{(\xi_1,q_1)}^2-\abs{(\xi_2,q_2)}^2}^\frac{1}{2} \langle (\xi_1,q_1) \rangle^s,  \]
so that, after undoing Plancherel and applying Hölder's inequality, it follows that
\[ I_f \les \norm{MP_\lambda(J^s u_1, u_2)}_{L_{txy,\lambda}^2} \norm{f}_{L_{txy,\lambda}^2}. \]
By using \eqref{MPlambda} for the first factor, we further obtain
\[ \les \norm{u_1}_{X_{s,\frac{1}{2}+\epsilon,\lambda}} \norm{u_2}_{X_{\frac{1}{2}+,\frac{1}{2}+\epsilon,\lambda}} \norm{f}_{X_{0,\frac{1}{2}-2\epsilon,\lambda}} \les \norm{f}_{X_{0,\frac{1}{2}-2\epsilon,\lambda}} \prod_{i=1}^{2} \norm{u_i}_{X_{s,\frac{1}{2}+\epsilon,\lambda}}, \]
and this works for every $s > \frac{1}{2}$, provided that $0 < \epsilon \ll 1$ is chosen small enough to ensure $ 0 \leq \frac{1}{2}-2\epsilon$. \\
(ii) \underline{$\abs{(\xi_1,q_1)} \sim \abs{(\xi_2,q_2)}$}: \\
(ii.1) \underline{$\abs{\abs{(\xi_i,q_i)}^2-\abs{(\xi_j,q_j)}^2} \gtrsim \abs{\xi_0}^{\frac{3}{2}}$ for some tuple $(i,j) \in \set{(0,1),(0,2),(1,2)}$}: \\
In the case $(i,j) = (1,2)$, one argues exactly as in case (i), taking into account assumption (ii) and the resulting pointwise estimate
\[ \abs{\xi_0} \langle (\xi_0,q_0) \rangle^s \lesssim \abs{\abs{(\xi_1,q_1)}^2 - \abs{(\xi_2,q_2)}^2}^{\frac{1}{2}} \langle (\xi_1,q_1) \rangle^s \langle (\xi_2,q_2) \rangle^{\frac{1}{4}}, \] which leads to $s > \frac{3}{4}$ instead of $s > \frac{1}{2}$. For symmetry reasons, it remains only to consider the case $(i,j) = (0,1)$. By assumption (ii), it follows that
\[ \abs{\xi_0} \langle (\xi_0,q_0) \rangle^s \les \abs{\abs{(\xi_0,q_0)}^2-\abs{(\xi_1,q_1)}^2}^\frac{1}{2} \langle (\xi_1,q_1) \rangle^{s-} \langle (\xi_0,q_0) \rangle^{-\frac{1}{2}-} \langle (\xi_2,q_2) \rangle^{\frac{3}{4}+},  \]
and an application of Plancherel's theorem, followed by Hölder's inequality, then gives
\[ I_f \les \norm{MP_\lambda(J^{s-}\widetilde{u}_1,J^{-\frac{1}{2}-}f)}_{L_{txy,\lambda}^2} \norm{J^{\frac{3}{4}+}u_2}_{L_{txy,\lambda}^2}, \]
with $\widetilde{u}_1(t,x,y) \coloneqq u_1(-t,-x,-y)$. For the first factor, we can now employ \eqref{MPlambdadual}, which yields
\[ \les \norm{u_1}_{X_{s,\frac{1}{2}+\epsilon,\lambda}} \norm{f}_{X_{0,\frac{1}{2}-2\epsilon,\lambda}} \norm{u_2}_{X_{\frac{3}{4}+,\frac{1}{2}+\epsilon,\lambda}} \les \norm{f}_{X_{0,\frac{1}{2}-2\epsilon,\lambda}} \prod_{i=1}^{2} \norm{u_i}_{X_{s,\frac{1}{2}+\epsilon,\lambda}}, \]
and the last inequality works for every $s>\frac{3}{4}$, provided that $0< \epsilon \ll 1$ is chosen sufficiently small. \\
(ii.2) \underline{$\abs{\abs{(\xi_i,q_i)}^2-\abs{(\xi_j,q_j)}^2} \ll \abs{\xi_0}^{\frac{3}{2}}$ for all tuples $(i,j) \in \set{(0,1),(0,2),(1,2)}$}: \\
(ii.2.1) \underline{$\abs{\xi_0} \les 1$}:
Due to the active assumptions (ii) and (ii.2.1), we can infer the pointwise bound
\[ \abs{\xi_0} \langle (\xi_0,q_0) \rangle^{s} \les \langle (\xi_1,q_1) \rangle^{\frac{s}{2}} \langle (\xi_2,q_2) \rangle^{\frac{s}{2}}, \]
which, after an application of Plancherel's theorem and Hölder's inequality, leads us to
\[ I_f \les \norm{J^{\frac{s}{2}}u_1J^{\frac{s}{2}}u_2}_{L_{txy,\lambda}^2} \norm{f}_{L_{txy,\lambda}^2} \leq \norm{J^{\frac{s}{2}}u_1}_{L_{txy,\lambda}^4} \norm{J^{\frac{s}{2}}u_2}_{L_{txy,\lambda}^4} \norm{f}_{L_{txy,\lambda}^2}. \]
For the first two factors, we can now invoke \eqref{L^4lambda}, which yields
\[ \les \norm{u_1}_{X_{\frac{s}{2}+,\frac{1}{2}+\epsilon,\lambda}} \norm{u_2}_{X_{\frac{s}{2}+,\frac{1}{2}+\epsilon,\lambda}} \norm{f}_{X_{0,\frac{1}{2}-2\epsilon,\lambda}} \les \norm{f}_{X_{0,\frac{1}{2}-2\epsilon,\lambda}} \prod_{i=1}^{2} \norm{u_i}_{X_{s,\frac{1}{2}+\epsilon,\lambda}}, \]
and this works for all $s > 0$, provided that $0 \leq \frac{1}{2}-2\epsilon$. \\
(ii.2.2) \underline{$\abs{\xi_0} \gg 1$}: \\
(ii.2.2.1) \underline{$\abs{\xi_{1}} \ll \abs{\xi_2}$ or $\abs{\xi_{2}} \ll \abs{\xi_1}$ }: \\
We may, without loss of generality, assume that $\abs{\xi_1} \ll \abs{\xi_2}$. Then we have $1 \ll \abs{\xi_0} = \abs{\xi_1+\xi_2}$ and $1 \ll \abs{\xi_0} \les \abs{\xi_0+(-\xi_1)}$, so that, if atleast one of the two inequalities
\begin{equation} \label{bilinworksZK1}
\begin{aligned}
\abs{3\xi_0^2-q_0^2} &\gtrsim \abs{\xi_0}, \\
\text{or} \quad \abs{3\xi_2^2-q_2^2} &\gtrsim \abs{\xi_2}
\end{aligned}
\end{equation}
holds, the bilinear refinement of the linear $L^4_{txy,\lambda}$-estimate \eqref{L^4lambda} can be used in the form \eqref{Bilinreflambda} or \eqref{Bilinreflambdadual} to obtain the desired estimate for all $s > \frac{3}{4}$. We illustrate this by working out the first case; the second case follows analogously to the first, taking into account the pointwise estimate
\[ \abs{\xi_0} \langle (\xi_0,q_0) \rangle^s \lesssim \abs{\xi_0+(-\xi_1)}^{\frac{1}{4}} \langle (\xi_0,q_0) \rangle^{0-} \langle (-\xi_1,-q_1) \rangle^{\frac{3}{4}+} \langle (\xi_2,q_2) \rangle^s \]
and then applying \eqref{Bilinreflambdadual} in place of \eqref{Bilinreflambda}.
Thus, let us assume that $\abs{3\xi_0^2-q_0^2} \gtrsim \abs{\xi_0}$. Then we have
\[ \abs{\xi_0} \langle (\xi_0,q_0) \rangle^s \les \abs{\xi_1+\xi_2}^\frac{1}{4} \langle (\xi_1,q_1) \rangle^{\frac{s}{2}+\frac{3}{8}} \langle (\xi_2,q_2) \rangle^{\frac{s}{2}+\frac{3}{8}}, \]
and undoing Plancherel, followed by Hölder's inequality, yields
\[ I_f \les \norm{I_x^\frac{1}{4}P_\lambda^{1}(J^{\frac{s}{2}+\frac{3}{8}}u_1 J^{\frac{s}{2}+\frac{3}{8}}u_2)}_{L_{txy,\lambda}^2} \norm{f}_{L_{txy,\lambda}^2}. \]
For the first factor, we can now invoke \eqref{Bilinreflambda} in the case $\alpha = 1$ to further obtain
\[ \les \norm{u_1}_{X_{\frac{s}{2}+\frac{3}{8}+,\frac{1}{2}+\epsilon,\lambda}} \norm{u_2}_{X_{\frac{s}{2}+\frac{3}{8}+,\frac{1}{2}+\epsilon,\lambda}} \norm{f}_{X_{0,\frac{1}{2}-2\epsilon,\lambda}} \les \norm{f}_{X_{0,\frac{1}{2}-2\epsilon,\lambda}} \prod_{i=1}^{2} \norm{u_i}_{X_{s,\frac{1}{2}+\epsilon,\lambda}}, \]
with the last estimate being valid for all $s > \frac{3}{4}$ and $0 \leq \frac{1}{2} - 2\epsilon$.
If, on the other hand, "$\ll$" held in both relations listed in \eqref{bilinworksZK1}, then using (ii.2) together with $\abs{\xi_0-\xi_2} = \abs{\xi_1} \ll \abs{\xi_2}$ and $\abs{q_1} \sim \abs{(\xi_1,q_1)}$ would give $\abs{2q_2+q_1} \ll \abs{\xi_0}$, and hence
\[ \abs{\xi_0}^{\frac{3}{2}} \gg \abs{\abs{(\xi_1,q_1)}^2 - \abs{(\xi_2,q_2)}^2} \sim \abs{\sqrt{2}q_2+q_1} \abs{\sqrt{2}q_2-q_1} \sim \abs{(\xi_1,q_1)}^2 \]
- a contradiction. Thus, the discussion of this subcase is concluded. \\
(ii.2.2.2) \underline{$\abs{\xi_1} \sim \abs{\xi_2}$}: \\
First, we note that the assumption $\abs{\xi_2} \ll \abs{(\xi_1,q_1)}$, together with (ii.2) would lead to
\[ \abs{q_1} \sim \abs{q_2} \sim \abs{q_0} \sim \abs{(\xi_1,q_1)} \ \text{with} \ \abs{\abs{q_i}-\abs{q_j}} \ll \abs{(\xi_1,q_1)} \ \forall i,j \in \set{0,1,2}, \]
which is incompatible with the convolution constraint $q_0 = q_1+q_2$. Hence, we must have $\abs{\xi_1} \sim \abs{\xi_2} \sim \abs{(\xi_1,q_1)}$. We now consider the resonance function $R_{\text{ZK}}$: A straightforward computation shows that
\[ R_{\text{ZK}} = -6\xi_0\xi_1\xi_2 + \sum_{i=1}^{2} \xi_i (\abs{(\xi_0,q_0)}^2-\abs{(\xi_i,q_i)}^2), \]
and taking (ii.2), (ii.2.2), and (ii.2.2.2) into account, we obtain
\[ \abs{R_{\text{ZK}}} \gtrsim \abs{\xi_0} \abs{\xi_1} \abs{\xi_2}. \]
Furthermore, we have
\[ \abs{R_{\text{ZK}}} \les \max_{i=0}^{2} \langle \tau_i - \phi(\xi_i,q_i) \rangle \eqqcolon \max_{i=0}^{3} \langle \sigma_i \rangle, \]
so overall
\[ \abs{\xi_0} \abs{\xi_1} \abs{\xi_2} \les \max_{i=0}^{2} \langle \sigma_i \rangle,  \]
and we are now led to distinguish two cases:
In the case $\max_{i=0}^{2} \langle \sigma_i \rangle = \langle \sigma_0 \rangle$, we obtain
\[ \abs{\xi_0} \langle (\xi_0,q_0) \rangle^s \langle \sigma_0 \rangle^{-\frac{1}{2}+2\epsilon} \les \langle (\xi_1,q_1) \rangle^{\frac{s}{2}-\frac{1}{4}+} \langle (\xi_2,q_2) \rangle^{\frac{s}{2}-\frac{1}{4}+}, \]
and undoing Plancherel, followed by Hölder's inequality gives
\begin{align*} I_f &\les \norm{J^{\frac{s}{2}-\frac{1}{4}+}u_1J^{\frac{s}{2}-\frac{1}{4}+}u_2}_{L_{txy,\lambda}^2} \norm{f}_{X_{0,\frac{1}{2}-2\epsilon}} \\ & \leq \norm{J^{\frac{s}{2}-\frac{1}{4}+}u_1}_{L_{txy,\lambda}^4} \norm{J^{\frac{s}{2}-\frac{1}{4}+}u_2}_{L_{txy,\lambda}^4} \norm{f}_{X_{0,\frac{1}{2}-2\epsilon,\lambda}}. \end{align*}
An application of \eqref{L^4lambda} to each of the first two factors then yields
\begin{align*} ... & \les \norm{u_1}_{X_{\frac{s}{2}-\frac{1}{4}+,\frac{1}{2}+\epsilon,\lambda}} \norm{u_2}_{X_{\frac{s}{2}-\frac{1}{4}+,\frac{1}{2}+\epsilon,\lambda}} \norm{f}_{X_{0,\frac{1}{2}-2\epsilon,\lambda}} \\ & \les \norm{f}_{X_{0,\frac{1}{2}-2\epsilon,\lambda}} \prod_{i=1}^2 \norm{u_i}_{X_{s,\frac{1}{2}+\epsilon,\lambda}},
\end{align*}
and the last step works as long as
\[ \frac{s}{2}-\frac{1}{4}+ \leq s \Leftrightarrow -\frac{1}{2} + \leq s \]
is satisfied. By choosing $0 < \epsilon \ll 1$ sufficiently small, any $s > -\frac{1}{2}$ can be realized within this estimate, and this concludes the discussion of this first case.
On the other hand, if, without loss of generality, $\max_{i=0}^{2} \langle \sigma_i \rangle = \langle \sigma_1 \rangle$, we obtain pointwise
\[ \abs{\xi_0} \langle (\xi_0,q_0) \rangle^s \langle \sigma_1 \rangle^{-\frac{1}{2}-\epsilon} \les \langle (\xi_1,q_1) \rangle^s \langle (\xi_2,q_2) \rangle^{-\frac{1}{2}+} \langle (\xi_0,q_0) \rangle^{0-}, \]
and applying Plancherel's theorem and Hölder's inequality leads to
\begin{align*} I_f &\les \norm{J^{0-}fJ^{-\frac{1}{2}+}\widetilde{u}_2}_{L_{txy,\lambda}^2} \norm{u_1}_{X_{s,\frac{1}{2}+\epsilon,\lambda}} \\ & \leq \norm{J^{0-}f}_{L_{txy,\lambda}^{4-}} \norm{J^{-\frac{1}{2}+}\widetilde{u}_2}_{L_{txy,\lambda}^{4+}} \norm{u_1}_{X_{s,\frac{1}{2}+\epsilon,\lambda}}.  \end{align*}
For the first factor, we now use \eqref{L^4-lambda}, and for the second, \eqref{L^4+lambda}, which together give
\begin{align*}
... & \les \norm{f}_{X_{0,\frac{1}{2}-2\epsilon,\lambda}} \norm{u_2}_{X_{-\frac{1}{2}+,\frac{1}{2}+\epsilon,\lambda}} \norm{u_1}_{X_{s,\frac{1}{2}+\epsilon, \lambda}} \\ & \les \norm{f}_{X_{0,\frac{1}{2}-2\epsilon,\lambda}} \prod_{i=1}^2 \norm{u_i}_{X_{s,\frac{1}{2}+\epsilon, \lambda}},
\end{align*}
and the last step again works for any $s > -\frac{1}{2}$, provided $\epsilon > 0$ is chosen sufficiently small. Thus, all cases have now been adressed, and the proof is complete.
\end{proof}

\begin{rem}
The proof shows that estimate \eqref{lwpZKinH^slambda} remains valid, up to changes in the constants, for every $0< \epsilon' \leq \epsilon$.
\end{rem}

Using Proposition \ref{bilinestimateZKlambda}, we can now establish the following variant of local well-posedness for $\mathrm{(CP}_{1,\R \times \T_\lambda} \mathrm{)}$ in $I_N^{-1}H^1_\lambda$.

\begin{prop} \label{variantLWPZK}
Let $\lambda \in \intco{1,\infty}$. Then $\mathrm{(CP}_{1,\R \times \T_\lambda} \mathrm{)}$ is locally well-posed in \\ $I_N^{-1}H^1_\lambda \coloneqq \set{f \in H^s_\lambda \ | \ \norm{I_Nf}_{H^1_\lambda} < \infty}$ for every $s \in \intoo{\frac{3}{4},1}$. Moreover, for the lifespan $\delta > 0$ of the solution $u_\lambda$ with initial data $u_{0,\lambda}$, one has

\[ \delta \gtrsim \norm{I_Nu_{0,\lambda}}_{H^1_\lambda}^{-\gamma} \]

for some $\gamma > 0$, and the estimate

\begin{equation} \label{contdepZKlambda} \norm{I_Nu_\lambda}_{X_{1,\frac{1}{2}+,\lambda}^\delta} \lesssim \norm{I_Nu_{0,\lambda}}_{H^1_\lambda} \end{equation}

holds.

\end{prop}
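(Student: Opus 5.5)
The plan is to reduce the statement to an \emph{interpolation lemma} for the multiplier $m$. Once that lemma is available, the rest is the usual contraction argument in the rescaled Bourgain space.

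\textbf{Step 1: the modified bilinear estimate.} We show that, for $s\in\intoo{\frac{3}{4},1}$ and $\epsilon>0$ as in Proposition \ref{bilinestimateZKlambda},
\begin{equation*}
\norm{I_N\partial_x(u_1u_2)}_{X_{1,-\frac{1}{2}+2\epsilon,\lambda}} \les \norm{I_Nu_1}_{X_{1,\frac{1}{2}+\epsilon,\lambda}}\norm{I_Nu_2}_{X_{1,\frac{1}{2}+\epsilon,\lambda}},
\end{equation*}
with an implicit constant independent of $N$ and $\lambda$. The mechanism is the same as in \cite{Colliander2001}. Write $w_i=I_Nu_i$ and reduce by duality to a trilinear form in which the weight $\langle(\xi_0,q_0)\rangle^s$ is replaced by
\[
\frac{m(\xi_0,q_0)\langle(\xi_0,q_0)\rangle}{m(\xi_1,q_1)m(\xi_2,q_2)}.
\]
Because $m(\xi)\langle\xi\rangle^{1-s}$ is nondecreasing and $m$ is nonincreasing, this weight is bounded by a constant times
\[
\langle(\xi_0,q_0)\rangle^{s}\,\frac{\langle(\xi_1,q_1)\rangle^{1-s}\langle(\xi_2,q_2)\rangle^{1-s}}{\langle(\xi_0,q_0)\rangle^{1-s}}\cdot\frac{\langle(\xi_0,q_0)\rangle^{1-s}m(\xi_0,q_0)}{\prod_i\langle(\xi_i,q_i)\rangle^{1-s}m(\xi_i,q_i)}.
\]
The last quotient is bounded because $\langle\cdot\rangle^{1-s}m$ is increasing and $\abs{(\xi_0,q_0)}\les\max_i\abs{(\xi_i,q_i)}$. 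Consequently the whole weight is dominated by the one treated in Proposition \ref{bilinestimateZKlambda}, applied to functions whose Fourier transforms are $\langle\cdot\rangle^{1-s}m\,\widehat{u_i}$. Equivalently, one applies \eqref{lwpZKinH^slambda} to $\widetilde{u}_i$ with $\widehat{\widetilde{u}_i}=\langle\cdot\rangle^{1-s}m\,\widehat{u_i}$, so that $\norm{\widetilde{u}_i}_{X_{s,b}}\sim\norm{I_Nu_i}_{X_{1,b}}$. Since all Fourier transforms may be taken nonnegative, a pointwise domination of multipliers suffices, and the case analysis of Proposition \ref{bilinestimateZKlambda} does not have to be redone.

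\textbf{Step 2: the fixed point in $I_N^{-1}H^1_\lambda$.} Consider the Duhamel map
\[
\Phi(u)=\psi(t)e^{-t\partial_x\Delta}u_{0,\lambda}\mp\psi_\delta(t)\int_0^te^{-(t-t')\partial_x\Delta}\partial_x(u^2)(t')\,dt'
\]
on the ball $\set{\norm{I_Nu}_{X_{1,\frac12+\epsilon,\lambda}}\le 2C\norm{I_Nu_{0,\lambda}}_{H^1_\lambda}}$. Since $I_N$ commutes with the propagator, the standard linear estimates in $X_{s,b,\lambda}$ hold uniformly in $\lambda$; they are Fourier-side computations in $\tau$ only. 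Combining them with the time-localization gain $\delta^{\epsilon}$ (from $b'=-\frac12+2\epsilon$ versus $b-1=-\frac12+\epsilon$) and Step 1 gives
\[
\norm{I_N\Phi(u)}_{X^\delta_{1,\frac12+\epsilon,\lambda}}\le C\norm{I_Nu_{0,\lambda}}_{H^1_\lambda}+C\delta^{\epsilon}\norm{I_Nu}_{X_{1,\frac12+\epsilon,\lambda}}^2,
\]
together with the analogous difference estimate. Choosing $\delta\sim\norm{I_Nu_{0,\lambda}}_{H^1_\lambda}^{-1/\epsilon}$ makes $\Phi$ a contraction, so $\gamma=1/\epsilon$ works, and \eqref{contdepZKlambda} follows directly from the fixed-point bound. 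Uniqueness, continuity in time, and smooth dependence on the data follow in the standard way, since $\Phi$ is polynomial in $u$.

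\textbf{Main obstacle.} The main obstacle is Step 1: verifying that the pointwise multiplier comparison is legitimate uniformly in $N$ in every frequency regime. The delicate point is that Proposition \ref{bilinestimateZKlambda} uses regularity $s$ on each factor, while the transfer moves a factor $\langle\cdot\rangle^{1-s}m$, which is of size $\sim N^{1-s}$ at high frequency, onto each input. The comparison must therefore be checked carefully, in particular in the high--high $\to$ low interaction, where $m(\xi_0)=1$ but $m(\xi_1)m(\xi_2)\ll1$. There the bound $\langle\xi_1\rangle^{1-s}\langle\xi_2\rangle^{1-s}m_1m_2\gtrsim N^{2-2s}\gtrsim\langle\xi_0\rangle^{1-s}$ is exactly what is needed. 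If this monotonicity argument turned out to lose something, the fallback is to rerun the case distinctions of Proposition \ref{bilinestimateZKlambda} directly with the $m$-weights.
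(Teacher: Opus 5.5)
Your argument is correct, but it takes a different route from the paper.

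\textbf{The paper's route.} It splits
$\partial_x I_N(u_1u_2)=\partial_x(I_Nu_1\,I_Nu_2)+\partial_x\bigl(I_N(u_1u_2)-I_Nu_1\,I_Nu_2\bigr)$.
The first piece is handled by Proposition \ref{bilinestimateZKlambda} at regularity $1$. The commutator is treated by a separate case analysis: the mean value theorem for $m$, the smoothing estimates \eqref{MPlambda} and \eqref{MPlambdadual}, and Proposition \ref{bilinestimateZKlambda} again in the regime where all frequencies are comparable.

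\textbf{Your route.} You use the interpolation lemma of Colliander--Keel--Staffilani--Takaoka--Tao. Set $g=\langle\cdot\rangle^{1-s}m$. This function is nondecreasing up to constants: for $\abs{(\xi,q)}\geq 2N$ it is $N^{1-s}$ times a slightly decreasing factor bounded above and below, which suffices. It also satisfies $g\gtrsim 1$. Together these give $g(\xi_0,q_0)\lesssim g(\xi_1,q_1)\,g(\xi_2,q_2)$. Hence the $I_N$-weight is pointwise dominated by the weight of Proposition \ref{bilinestimateZKlambda} at level $s$, applied to $\widehat{\widetilde u_i}=\langle\cdot\rangle^{1-s}\widehat{I_Nu_i}$, and nonnegativity of the Fourier transforms yields the estimate uniformly in $N$ and $\lambda$. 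Your Step 2 is the standard contraction, which the paper also takes for granted, including $\gamma=1/\epsilon$ from the $\delta^{\epsilon}$ gain.

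\textbf{A slip in your display.} Writing $\langle\xi_i\rangle$, $m_i$, $g_i$ for the values at $(\xi_i,q_i)$, the correct factorization is
$\frac{m_0\langle\xi_0\rangle}{m_1m_2}=\langle\xi_0\rangle^{s}\langle\xi_1\rangle^{1-s}\langle\xi_2\rangle^{1-s}\cdot\frac{g_0}{g_1g_2}$.
The extra factor $\langle\xi_0\rangle^{-(1-s)}$ in your middle term makes the displayed product smaller than the weight by exactly that factor. As written, the domination therefore goes the wrong way. Your verbal argument and your check of the high--high$\to$low interaction are the right ones.

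\textbf{What each route buys.} Yours is shorter, needs no new frequency analysis, and fully proves the proposition as stated. The paper's case analysis is organized the way it is because it also produces the commutator bound \eqref{nonlinearsmoothingZKprecise} with decay $N^{-\frac{1}{4}+}$. That decay is what controls the cubic term \eqref{cubictermZK} of the modified energy and ultimately gives the threshold $s>\frac{11}{13}$. The interpolation lemma gives no decay in $N$ for the commutator, so for the global argument you would still need the paper's case analysis or an equivalent commutator estimate.
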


\begin{proof}

It is well known that, for any given $s \in \intoo{\frac{3}{4},1}$, it suffices to find $0< \epsilon \ll 1$ such that the bilinear estimate

\begin{equation} \label{variantbilinZK}
\norm{\partial_xI_N(u_1 u_2)}_{X_{1,-\frac{1}{2}+2\epsilon,\lambda}} \lesssim \norm{I_Nu_1}_{X_{1,\frac{1}{2}+\epsilon,\lambda}} \norm{I_Nu_2}_{X_{1,\frac{1}{2}+\epsilon,\lambda}}
\end{equation}

holds for all functions $u_1,u_2$ with $I_Nu_1,I_Nu_2 \in X_{1,\frac{1}{2}+\epsilon,\lambda}$. To this end, we write

\begin{align*} \norm{\partial_xI_N(u_1 u_2)}_{X_{1,-\frac{1}{2}+2\epsilon,\lambda}} &\leq \norm{\partial_x(I_Nu_1 I_Nu_2)}_{X_{1,-\frac{1}{2}+2\epsilon,\lambda}} \\ & \ \ \ + \norm{\partial_x(I_N(u_1 u_2) - I_Nu_1 I_Nu_2)}_{X_{1,-\frac{1}{2}+2\epsilon,\lambda}}, \end{align*}

and for the first term, Proposition \ref{bilinestimateZKlambda} immediately gives

\[ \norm{\partial_x(I_Nu_1 I_Nu_2)}_{X_{1,-\frac{1}{2}+2\epsilon,\lambda}} \les \norm{I_Nu_1}_{X_{1,\frac{1}{2}+\epsilon,\lambda}} \norm{I_Nu_2}_{X_{1,\frac{1}{2}+\epsilon,\lambda}}  \]

for all $s<1$, provided $\epsilon > 0$ is chosen sufficiently small. We are therefore left to consider the second term. By duality and an application of Parseval's identity, we obtain

\begin{align*} &\norm{\partial_x(I_N(u_1 u_2) - I_Nu_1 I_Nu_2)}_{X_{1,-\frac{1}{2}+2\epsilon,\lambda}} \sim \\ & \sup_{\norm{f}_{X_{0,\frac{1}{2}-2\epsilon,\lambda}} \leq 1} \frac{1}{\lambda^2} \left| \int_{\R^2} \sum_{q_0 \in \Z/\lambda} \int_{\R^2} \sum_{\substack{q_1 \in \Z/\lambda \\ \ast}} \xi_0 \langle (\xi_0,q_0) \rangle \frac{m(\xi_0,q_0) - m(\xi_1,q_1)m(\xi_2,q_2)}{m(\xi_1,q_1)m(\xi_2,q_2)} \right. \\ & \cdot \left. \hat{\! f}^\lambda(\tau_0,\xi_0,q_0) \prod_{i=1}^{2} \widehat{\! I_Nu_i}^\lambda (\tau_i,\xi_i,q_i) \ \mathrm{d}(\tau_1,\xi_1) \mathrm{d}(\tau_0, \xi_0) \right| \eqqcolon \sup_{\norm{f}_{X_{0,\frac{1}{2}-2\epsilon,\lambda}} \leq 1} I_f \end{align*}

with the convolution constraint $(\tau_0,\xi_0,q_0) = (\tau_1+\tau_2,\xi_1+\xi_2,q_1+q_2)$, and we now proceed to a detailed case-by-case analysis: Due to the definition of the $X_{s,b,\lambda}$-spaces, we may assume without loss of generality that $\widehat{\! f}^\lambda, \widehat{\! I_Nu_1}^\lambda, \widehat{\! I_Nu_1}^\lambda \geq 0$, and, for symmetry reasons, we can further restrict our attention to the frequency configuration $\abs{(\xi_1,q_1)} \geq \abs{(\xi_2,q_2)}$. \\ \\
(i) \underline{$\abs{(\xi_1,q_1)} \ll N$}: \\
Since
\[ m(\xi_0,q_0) - m(\xi_1,q_1)m(\xi_2,q_2) = 1 - 1 \cdot 1 = 0 \]
in this case, it follows that
\[ I_f = 0, \]
and thus nothing more needs to be done. \\
(ii) \underline{$\abs{(\xi_1,q_1)} \gtrsim N$}: \\
(ii.1) \underline{$\abs{(\xi_1,q_1)} \gg \abs{(\xi_2,q_2)}$}: \\
(ii.1.1) \underline{$\abs{(\xi_2,q_2)} \ll N$}: \\
In this situation, the mean value theorem yields
\begin{align*} Diff_{\text{ZK}_1} &\coloneqq \frac{\abs{m(\xi_0,q_0) - m(\xi_1,q_1)m(\xi_2,q_2)}}{m(\xi_1,q_1)m(\xi_2,q_2)} = \frac{\abs{m(\xi_0,q_0) - m(\xi_1,q_1)}}{m(\xi_1,q_1)} \\  &\les \abs{(\xi_1,q_1)}^{-1} \abs{(\xi_2,q_2)}, \end{align*}
which allows us to pass pointwise to
\[ \abs{\xi_0} \langle (\xi_0,q_0) \rangle Diff_{\text{ZK}_1} \les \abs{(\xi_1,q_1)}^{-\frac{1}{2}+} \abs{\abs{(\xi_1,q_1)}^2-\abs{(\xi_2,q_2)}^2}^\frac{1}{2} \langle (\xi_1,q_1) \rangle \langle (\xi_2,q_2) \rangle^{\frac{1}{2}-}.  \]
If we also take $\abs{(\xi_1,q_1)}^{-\frac{1}{2}+} \les N^{-\frac{1}{2}+}$ into account, then after undoing Plancherel and applying Hölder's inequality we obtain
\[ I_f \les N^{-\frac{1}{2}+} \norm{MP_\lambda(J^1I_Nu_1,J^{\frac{1}{2}-}I_Nu_2)}_{L_{txy,\lambda}^2} \norm{f}_{L_{txy,\lambda}^2}, \]
and a subsequent application of \eqref{MPlambda} to the first norm finally leads to
\begin{align*} ... &\les N^{-\frac{1}{2}+} \norm{I_Nu_1}_{X_{1,\frac{1}{2}+\epsilon,\lambda}} \norm{I_Nu_2}_{X_{1,\frac{1}{2}+\epsilon,\lambda}} \norm{f}_{L_{txy,\lambda}^2} \\ & \les N^{-\frac{1}{2}+} \norm{I_Nu_1}_{X_{1,\frac{1}{2}+\epsilon,\lambda}} \norm{I_Nu_2}_{X_{1,\frac{1}{2}+\epsilon,\lambda}} \norm{f}_{X_{0,\frac{1}{2}-2\epsilon,\lambda}}. \end{align*}
In the last step, we made use of $\norm{f}_{L_{txy,\lambda}^2} \les \norm{f}_{X_{0,\frac{1}{2}-2\epsilon,\lambda}}$, which is justified as long as $ 0\leq \frac{1}{2}-2\epsilon$ is satisfied. \\
(ii.1.2) \underline{$\abs{(\xi_2,q_2)} \gtrsim N$}: \\
Due to the active assumption (ii.1), we still have $\abs{(\xi_0,q_0)} \sim \abs{(\xi_1,q_1)}$, which allows us to conclude
\[ Diff_{\text{ZK}_1} \les \frac{1}{m(\xi_2,q_2)} \sim N^{s-1} \abs{(\xi_2,q_2)}^{1-s}. \]
Therefore we obtain the pointwise bound
\begin{align*} \abs{\xi_0} \langle (\xi_0,q_0) \rangle Diff_{\text{ZK}_1} &\les N^{s-1} \abs{(\xi_2,q_2)}^{\frac{1}{2}-s+} \abs{\abs{(\xi_1,q_1)}^2 - \abs{(\xi_2,q_2)}^2}^\frac{1}{2} \langle (\xi_1,q_1) \rangle \\ & \ \ \ \cdot \langle (\xi_2,q_2) \rangle^{\frac{1}{2}-}, \end{align*}
and in the case $s > \frac{1}{2}$, taking (ii.1.2) into account, we have $N^{s-1} \abs{(\xi_2,q_2)}^{\frac{1}{2}-s+} \les N^{-\frac{1}{2}+}$. Thus, as far as the pointwise estimates are concerned (with the additional constraint $s > \frac{1}{2}$), we are now precisely in the situation of case (ii.1.1). Consequently, we may again conclude in complete analogy that
\[ I_f \les N^{-\frac{1}{2}+} \norm{I_Nu_1}_{X_{1,\frac{1}{2}+\epsilon,\lambda}} \norm{I_Nu_2}_{X_{1,\frac{1}{2}+\epsilon,\lambda}} \norm{f}_{X_{0,\frac{1}{2}-2\epsilon,\lambda}},   \]
as long as $0 \leq \frac{1}{2}-2\epsilon$ is satisfied. \\
(ii.2) \underline{$\abs{(\xi_1,q_1)} \sim \abs{(\xi_2,q_2)}$}: \\
(ii.2.1) \underline{$\abs{(\xi_1,q_1)} \gg \abs{(\xi_0,q_0)}$}: \\
In this case,
\[ Diff_{\text{ZK}_1} \les \frac{1}{m(\xi_1,q_1)m(\xi_2,q_2)} \sim N^{2s-2} \abs{(\xi_2,q_2)}^{2-2s}, \]
so we can pass pointwise to
\begin{align*} \abs{\xi_0} \langle (\xi_0,q_0) \rangle Diff_{\text{ZK}_1} &\les N^{2s-2} \abs{(\xi_2,q_2)}^{\frac{3}{2}-2s+} \abs{\abs{(\xi_1,q_1)}^2-\abs{(\xi_0,q_0)}^2}^\frac{1}{2} \langle (\xi_0,q_0) \rangle^{0-} \\ & \ \ \ \cdot \langle (\xi_1,q_1) \rangle^{\frac{1}{2}-} \langle (\xi_2,q_2) \rangle.  \end{align*}
Now, for every $s > \frac{3}{4}$, taking into account (ii) and (ii.2), we have \\ $N^{2s-2}\abs{(\xi_2,q_2)}^{\frac{3}{2}-2s+} \les N^{-\frac{1}{2}+}$, and after undoing Plancherel, followed by Hölder's inequality, we obtain
\[ I_f \les N^{-\frac{1}{2}+} \norm{MP_\lambda(J^{0-}f,J^{\frac{1}{2}-}I_N\widetilde{u}_1)}_{L_{txy,\lambda}^2} \norm{J^1I_Nu_2}_{L_{txy,\lambda}^2}. \]
A subsequent application of variant \eqref{MPlambdadual} of the bilinear smoothing estimate \eqref{MPlambda} to the first norm ultimately gives
\[ ... \les N^{-\frac{1}{2}+} \norm{f}_{X_{0,\frac{1}{2}-2\epsilon,\lambda}} \norm{I_Nu_1}_{X_{1,\frac{1}{2}+\epsilon,\lambda}} \norm{I_Nu_2}_{X_{1,\frac{1}{2}+\epsilon,\lambda}}, \]
and all of this works provided that $0<\epsilon \ll 1$ is chosen sufficiently small. \\
(ii.2.2) \underline{$\abs{(\xi_1,q_1)} \sim \abs{(\xi_0,q_0)}$}: \\
We are now in the situation $\abs{(\xi_1,q_1)} \sim \abs{(\xi_2,q_2)} \sim \abs{(\xi_0,q_0)}$, so that we can conclude
\[ Diff_{\text{ZK}_1} \les \frac{1}{m(\xi_2,q_2)} \sim N^{s-1} \abs{(\xi_2,q_2)}^{1-s}, \]
which in turn allows us to infer the bound
\[ \abs{\xi_0} \langle (\xi_0,q_0) \rangle Diff_{\text{ZK}_1} \les N^{s-1} \abs{(\xi_2,q_2)}^{\frac{3}{4}-s+} \abs{\xi_0} \langle (\xi_0,q_0) \rangle^{\frac{3}{4}+} \langle (\xi_1,q_1) \rangle^{\frac{1}{4}-} \langle (\xi_2,q_2) \rangle^{\frac{1}{4}-}. \]
For every $s > \frac{3}{4}$, we can further pass to $N^{s-1} \abs{(\xi_2,q_2)}^{\frac{3}{4}-s+} \les N^{-\frac{1}{4}+}$, and after applying Parseval's identity, we obtain
\[ \sup_{\norm{f}_{X_{0,\frac{1}{2}-2\epsilon,\lambda}} \leq 1} I_f \les N^{-\frac{1}{4}+} \norm{I_x^1(J^{\frac{1}{4}-}I_Nu_1 J^{\frac{1}{4}-}I_Nu_2)}_{X_{\frac{3}{4}+,-\frac{1}{2}+2\epsilon,\lambda}}, \]
which, using Proposition \ref{bilinestimateZKlambda} for sufficiently small $\epsilon > 0$, can be further estimated, leading to
\[ ... \les N^{-\frac{1}{4}+} \norm{I_Nu_1}_{X_{1,\frac{1}{2}+\epsilon,\lambda}} \norm{I_Nu_2}_{X_{1,\frac{1}{2}+\epsilon,\lambda}}. \] \\ \\
Collecting all partial results from this case-by-case analysis, we arrive at
\begin{equation} \label{nonlinearsmoothingZKrough}
\norm{\partial_x(I_N(u_1u_2) - I_Nu_1I_Nu_2)}_{X_{1,-\frac{1}{2}+2\epsilon,\lambda}} \les N^{-\frac{1}{4}+} \norm{I_Nu_1}_{X_{1,\frac{1}{2}+\epsilon,\lambda}} \norm{I_Nu_1}_{X_{1,\frac{1}{2}+\epsilon,\lambda}},
\end{equation}
and thus the proof of \eqref{variantbilinZK} is complete.
\end{proof}

\begin{rem}
A careful examination of the proof of \eqref{variantbilinZK} shows that we in fact have \eqref{nonlinearsmoothingZKrough} in the following form at our disposal: Let $\lambda \in \intco{1,\infty}$ and $s \in \intoo{\frac{3}{4},1}$ be arbitrary. Then, for every $\widetilde{\epsilon} > 0$, there exists $\epsilon > 0$ such that
\begin{equation} \label{nonlinearsmoothingZKprecise}
\norm{\partial_x(I_N(u_1u_2) - I_Nu_1I_Nu_2)}_{X_{1,-\frac{1}{2}+2\epsilon',\lambda}} \les_{s, \widetilde{\epsilon},\epsilon'} N^{-\frac{1}{4}+\widetilde{\epsilon}} \prod_{i=1}^{2} \norm{I_Nu_i}_{X_{1,\frac{1}{2}+\epsilon',\lambda}}
\end{equation}
holds for all $0<\epsilon' \leq \epsilon$ and $u_1,u_2$ with $I_Nu_1, I_Nu_2 \in X_{1,\frac{1}{2}+\epsilon',\lambda}$.
\end{rem}

The estimate \eqref{nonlinearsmoothingZKprecise}, obtained as a byproduct in the proof of Proposition \ref{variantLWPZK}, can now be used to further bound the cubic term that appears in the difference of the modified energy.

\begin{lemma}

Let $\lambda \in \intco{1,\infty}$ be arbitrary. Given $\delta \in \R$, $s \in \intoo{\frac{3}{4},1}$, $\widetilde{\epsilon} > 0$, and $\epsilon' > 0$, one has

\begin{equation} \label{cubictermZK} \begin{aligned} &\left| \int_{0}^{\delta} \int_{\R \times \T_\lambda} \Delta I_Nu_1 \partial_x(I_N(u_2 u_3) - I_Nu_2 I_Nu_3) \ \mathrm{d}(x,y) \mathrm{d}t \right| \les_{s, \widetilde{\epsilon}, \epsilon ', \delta} \\ & N^{-\frac{1}{4}+\widetilde{\epsilon}} \prod_{i=1}^{3} \norm{I_Nu_i}_{X_{1,\frac{1}{2}+\epsilon ',\lambda}^{\abs{\delta}}} \end{aligned} \end{equation}

for all $u_1,u_2,u_3$ with $I_Nu_i \in X_{1,\frac{1}{2}+\epsilon ',\lambda}^{\abs{\delta}}$ ($i \in \set{1,2,3}$).

\end{lemma}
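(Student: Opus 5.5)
The plan is to reduce the space-time integral to a duality pairing and then apply the nonlinear smoothing estimate \eqref{nonlinearsmoothingZKprecise}.

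\textbf{Step 1: extensions.} Fix extensions $\widetilde{u}_i$ with $I_N\widetilde{u}_i \in X_{1,\frac{1}{2}+\epsilon',\lambda}$ and $I_N\widetilde u_i = I_Nu_i$ on $\intcc{-\abs{\delta},\abs{\delta}}$. The norms of these extensions should be at most twice the restriction norms $\norm{I_Nu_i}_{X_{1,\frac{1}{2}+\epsilon',\lambda}^{\abs{\delta}}}$.

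\textbf{Step 2: time cutoff and transfer of derivatives.} Write the integral as $\int_\R \chi_{[0,\delta]}(t)\int \ldots$. By Parseval, move the Laplacian as a multiplier $\abs{(\xi,q)}_2^2 \les \langle(\xi,q)\rangle^{2}$. The integral is then bounded by
\[ \norm{\chi_{[0,\delta]} I_N\widetilde u_1}_{X_{1,\frac{1}{2}-2\epsilon'',\lambda}} \, \norm{\partial_x(I_N(\widetilde u_2\widetilde u_3) - I_N\widetilde u_2 I_N\widetilde u_3)}_{X_{1,-\frac{1}{2}+2\epsilon'',\lambda}}. \]
Here one full derivative is placed on each side, and $0<\epsilon''\le\epsilon'$ is small. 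Since $\langle\tau-\phi\rangle$ is invariant under the reflection $(\tau,\xi,q)\mapsto(-\tau,-\xi,-q)$ and the functions are real, the pairing is legitimate.

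\textbf{Step 3: bound each factor.} For the second factor, apply \eqref{nonlinearsmoothingZKprecise} with $\epsilon''$ in place of $\epsilon'$. This is allowed because that estimate holds for all sufficiently small $\epsilon'$, possibly after shrinking. It gives the bound $N^{-\frac14+\widetilde\epsilon}\norm{I_N\widetilde u_2}_{X_{1,\frac12+\epsilon'',\lambda}}\norm{I_N\widetilde u_3}_{X_{1,\frac12+\epsilon'',\lambda}}$, which is controlled by the corresponding $\frac12+\epsilon'$ norms. For the first factor, use the standard fact that multiplication by a sharp time cutoff $\chi_{[0,\delta]}$ is bounded on $X_{s,b,\lambda}$ for $-\frac12<b<\frac12$. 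The constant depends on $\delta$ (for $\abs{\delta}\ge 1$, write $\chi$ as a smooth cutoff times the indicator of an interval). This yields
\[ \norm{\chi_{[0,\delta]} I_N\widetilde u_1}_{X_{1,\frac12-2\epsilon'',\lambda}} \les_\delta \norm{I_N\widetilde u_1}_{X_{1,\frac12+\epsilon',\lambda}}. \]
The constant is independent of $\lambda$, since the argument runs fiberwise in $(\xi,q)$ after conjugating by the free group.

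\textbf{Step 4: conclude.} Take the infimum over extensions to obtain \eqref{cubictermZK}.

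The main obstacle is Step 2/3: one must make sure the $b$-indices match. The cutoff forces $b<\frac12$ on $u_1$, so the other factor must sit in $X_{1,-\frac12+}$, and this is exactly the index \eqref{nonlinearsmoothingZKprecise} provides. One must also check that the derivative count closes: $\Delta$ contributes two derivatives, split as one on $u_1$ and one inside the $X_{1,\cdot}$ norm of the bilinear term, and nothing is lost.
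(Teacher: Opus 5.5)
Your proposal is correct and follows essentially the same route as the paper. The paper also extends each $I_Nu_i$ to a global $X_{1,\frac12+\epsilon',\lambda}$ function, absorbs the sharp time cutoff using its boundedness on $H_t^{\frac12-}$, pairs with the weights $\langle(\xi,q)\rangle^{\pm 1}\langle\tau-\phi(\xi,q)\rangle^{\pm(\frac12-)}$ via Cauchy--Schwarz, and concludes with \eqref{nonlinearsmoothingZKprecise}. Your explicit passage to a smaller $\epsilon''\le\epsilon'$ before invoking that estimate is a welcome bit of extra care, since \eqref{nonlinearsmoothingZKprecise} is only guaranteed for $\epsilon'$ below a threshold depending on $\widetilde{\epsilon}$.
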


\begin{proof}

Since we are working in a Hilbert space framework, for each $i \in \set{1,2,3}$ there exists an extension $v_i \in X_{1,\frac{1}{2}+\epsilon',\lambda}$ of $I_Nu_i$, such that $v_i|_{\intcc{-\abs{\delta},\abs{\delta}} \times \R \times \T_\lambda} = I_Nu_i$ and $\norm{v_i}_{X_{1,\frac{1}{2}+\epsilon',\lambda}} = \norm{I_Nu_i}_{X_{1,\frac{1}{2}+\epsilon',\lambda}^{\abs{\delta}}}$. Substituting $u_i$ by $I_N^{-1}v_i$ on the left-hand side of the estimate in question, the desired bound follows from \eqref{nonlinearsmoothingZKprecise}, upon applying the Cauchy-Schwarz inequality and using the fact that 
\[ 1 = \langle (\xi,q) \rangle^{-1} \langle (\xi,q) \rangle \langle \tau - \phi(\xi,q) \rangle^{\frac{1}{2}-\epsilon'} \langle \tau- \phi(\xi,q) \rangle^{-\frac{1}{2}+\epsilon'} \]
and that
\[ M: H_t^{\frac{1}{2}-\epsilon''} \longrightarrow H_t^{\frac{1}{2}-\epsilon''}, \quad f \mapsto M(f) \coloneqq \chi_{\intcc{\min({0,\delta}),\max({0,\delta})}} f \]
is a continuous linear operator for every $\epsilon'' \in \intoo{0,1}$.
\end{proof}

The final remaining task is to control the quartic contribution to the difference of the modified energy in an analogous manner. This estimate is carried out in the subsequent lemma, and it completes the preparations required for the iteration argument leading to global well-posedness.

\begin{lemma} Let $\lambda \in \intco{1,\infty}$, $\delta \in \R$, $s \in \intoo{\frac{1}{2},1}$, $\widetilde{\epsilon} > 0$, and $\epsilon' > 0$ be given. Then the estimate

\begin{equation} \label{quartictermZK}
\begin{aligned}
&\left| \int_{0}^{\delta} \int_{\R \times \T_\lambda} I_N(u_1 u_2) \partial_x(I_N(u_3 u_4) - I_Nu_3 I_Nu_4) \ \mathrm{d}(x,y) \mathrm{d}t \right| \les_{s, \widetilde{\epsilon}, \epsilon', \delta} \\ & N^{-1+\widetilde{\epsilon}} \prod_{i=1}^{4} \norm{I_Nu_i}_{X_{1,\frac{1}{2}+\epsilon', \lambda}^{\abs{\delta}}}
\end{aligned}
\end{equation}
holds for all $u_1,u_2,u_3,u_4$ with $I_Nu_i \in X_{1,\frac{1}{2}+\epsilon',\lambda}^{\abs{\delta}}$  ($i \in \set{1,2,3,4}$).
\end{lemma}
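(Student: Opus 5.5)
As in the proof of \eqref{cubictermZK}, I first pass to extensions $v_i \in X_{1,\frac{1}{2}+\epsilon',\lambda}$ of $I_Nu_i$ with $\norm{v_i}_{X_{1,\frac{1}{2}+\epsilon',\lambda}} = \norm{I_Nu_i}_{X^{\abs{\delta}}_{1,\frac{1}{2}+\epsilon',\lambda}}$. I replace $u_i$ by $I_N^{-1}v_i$ and insert the sharp time cutoff $\chi_{[\min(0,\delta),\max(0,\delta)]}$. The cutoff is harmless on $X_{s,\frac12-}$-type spaces, so the exponents $b=\frac12-$ below lose nothing.

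Next I write the integrand by Plancherel. On the Fourier side the multiplier is
\[ \xi_0 \, m(\xi_{12},q_{12}) \,\frac{m(\xi_0,q_0)-m(\xi_3,q_3)m(\xi_4,q_4)}{\prod_{i=1}^4 m(\xi_i,q_i)}, \]
where $(\xi_0,q_0)=(\xi_3+\xi_4,q_3+q_4)=-(\xi_1+\xi_2,q_1+q_2)$. The difference $m(\xi_0,q_0)-m(\xi_3,q_3)m(\xi_4,q_4)$ vanishes unless $\max(\abs{(\xi_3,q_3)},\abs{(\xi_4,q_4)}) \gtrsim N$. By symmetry I assume $\abs{(\xi_3,q_3)} \ge \abs{(\xi_4,q_4)}$, so that $\abs{(\xi_3,q_3)} \gtrsim N$.

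Then I bound the symbol pointwise using the same case distinctions as in the proof of Proposition \ref{variantLWPZK}: the mean value theorem when $\abs{(\xi_4,q_4)} \ll N$, and crude bounds via $1/m(\xi_4,q_4)$ or $1/(m(\xi_3,q_3)m(\xi_4,q_4))$ otherwise. In every case I expect to reach
\[ \abs{\xi_0}\,\frac{\abs{m_0-m_3m_4}}{m_3m_4} \lesssim \min\bigl(\abs{\xi_0},\,\abs{(\xi_3,q_3)}\bigr)\cdot(\text{factors absorbed by } s>\tfrac12), \]
together with the trivial bounds $m(\xi_{12},q_{12})/(m_1m_2) \lesssim \langle(\xi_1,q_1)\rangle^{1-s}\langle(\xi_2,q_2)\rangle^{1-s}N^{-2(1-s)}$-type quantities, which are at most $1$ on the relevant region. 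The point to exploit is that all four functions carry a full derivative in $X_{1,\frac12+}$ while the symbol costs at most one derivative. I therefore place the derivative $\abs{\xi_0}$ onto the factor with the highest frequency among $v_3,v_4$, and give that factor $J^{-1+}$ of spare regularity to gain $N^{-1+}$ from $\abs{(\xi_3,q_3)} \gtrsim N$. Concretely I aim for
\[ \text{symbol} \lesssim N^{-1+}\prod_{i=1}^4 \langle(\xi_i,q_i)\rangle^{1-}\cdot\langle(\xi_{\max},q_{\max})\rangle^{0-}. \]

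Finally I apply Hölder with four $L^4$-type norms. I use \eqref{L^4+lambda} for the factors measured in $X_{\cdot,\frac12+}$, and \eqref{L^4-lambda} to absorb the $b=\frac12-$ loss caused by the cutoff. Since these estimates only cost $0+$ derivatives and every factor has nearly a full derivative left over, this closes with room to spare for all $s>\frac12$.

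The main obstacle is the pointwise bound in the case where $u_1,u_2$ (inside $I_N(u_1u_2)$) are at high frequency $\gg \abs{(\xi_0,q_0)}$. There the outer $m(\xi_{12},q_{12})$ does not cancel $1/(m_1m_2)$, and one loses $N^{-2(1-s)}\abs{(\xi_1,q_1)}^{2(1-s)}$. To handle this I will take $\abs{(\xi_1,q_1)} \gtrsim \abs{(\xi_3,q_3)} \gtrsim N$ and spend the available derivatives: each factor has one derivative and the symbol needs only about $1+2(1-s)$ of them. This leaves $N^{-1+}$ provided $4-(1+2(1-s)) > 2$, which is exactly $s>\frac12$, matching the hypothesis.
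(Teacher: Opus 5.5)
Your proposal is correct and is essentially the paper's argument: Plancherel, a pointwise bound of the multiplier by $N^{-1+}\prod_{i=1}^{4}\langle(\xi_i,q_i)\rangle^{1-}$ valid for $s>\frac{1}{2}$, then four $L^4$-type Strichartz bounds (your sharp cutoff on one factor combined with \eqref{L^4-lambda} is a fine substitute for the paper's passage to all of $\R$). The paper reaches the pointwise bound more cheaply, using only the split $\abs{(\xi_1,q_1)}\ll N$ versus $\abs{(\xi_1,q_1)}\gtrsim N$, the crude bound $\abs{\xi_0}/\prod_{i=1}^{4}m(\xi_i,q_i)$, and $\abs{\xi_0}\les(\abs{(\xi_1,q_1)}\abs{(\xi_3,q_3)})^{1/2}$, which is exactly where $s>\frac{1}{2}$ enters; one slip in yours is that $m(\xi_{12},q_{12})/(m_1m_2)$ is not at most $1$ (it is $\gtrsim 1/m(\xi_2,q_2)\geq 1$), and what is true, and what your last paragraph in effect uses, is that it stays bounded after dividing by $\langle(\xi_1,q_1)\rangle^{1-}\langle(\xi_2,q_2)\rangle^{1-}$.
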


\begin{proof}
It suffices to verify the estimate
\begin{equation} \label{quartictermZKnodelta}
\begin{aligned}
&\left| \int_\R \int_{\R \times \T_\lambda} I_N(v_1 v_2) \partial_x(I_N(v_3 v_4) - I_Nv_3 I_Nv_4) \ \mathrm{d}(x,y) \mathrm{d}t \right| \les_{s, \widetilde{\epsilon}, \epsilon'} \\ & N^{-1+\widetilde{\epsilon}} \prod_{i=1}^{4} \norm{I_Nv_i}_{X_{1,\frac{1}{2}+\epsilon', \lambda}}
\end{aligned}
\end{equation}

for arbitrary functions $v_1,v_2,v_3,v_4$ with $I_Nv_i \in X_{1,\frac{1}{2}+\epsilon',\lambda}$ and $\widehat{\! v_i}^\lambda \geq 0$ for all $i \in \set{1,2,3,4}$.
The desired estimate \eqref{quartictermZK} then follows by proceeding, as in the proof of \eqref{cubictermZK}, to an appropriate extension for each $I_Nu_i$, incorporating the argument presented in \cite{CollianderSchroedi2001} to pass from the time interval $\intcc{\min(0,\delta),\max(0,\delta)}$ to all of $\R$, and finally applying \eqref{quartictermZKnodelta}.
We now turn to the proof of \eqref{quartictermZKnodelta}. An application of Parseval's identity transforms the left-hand side of \eqref{quartictermZKnodelta} into the expression

\begin{align*} & \frac{1}{\lambda^3} \left| \int_{\R^2} \sum_{q_0 \in \Z/\lambda} \int_{\R^4} \sum_{\substack{q_1,q_3 \in \Z/\lambda \\ \ast}} \xi_0 \frac{m(\xi_0,q_0)(m(\xi_0,q_0) - m(\xi_3,q_3)m(\xi_4,q_4))}{m(\xi_1,q_1)m(\xi_2,q_2)m(\xi_3,q_3)m(\xi_4,q_4)} \right. \\ & \cdot \left. \prod_{i=1}^{4} \widehat{\! I_Nv_i}^\lambda(\tau_i,\xi_i,q_i) \ \mathrm{d}(\tau_1,\tau_3,\xi_1,\xi_3) \mathrm{d}(\tau_0, \xi_0) \right| \eqqcolon Int \end{align*}

subject to the convolution constraint $(\tau_0,\xi_0,q_0) = (\tau_1+\tau_2,\xi_1+\xi_2,q_1+q_2) = (\tau_3+\tau_4,\xi_3+\xi_4,q_3+q_4)$, and it is this integral that we must estimate in a case-by-case analysis: Exploiting symmetry, we may assume without loss of generality that $\abs{(\xi_1,q_1)} \geq \abs{(\xi_2,q_2)}$ and $\abs{(\xi_3,q_3)} \geq \abs{(\xi_4,q_4)}$, and we proceed with case \\ \\
(i) \underline{$\abs{(\xi_3,q_3)} \ll N$}: \\
In this situation, we have
\[ m(\xi_0,q_0) - m(\xi_3,q_3)m(\xi_4,q_4) = 1- 1 \cdot 1 = 0, \]
so
\[ Int = 0, \]
and no further steps are required. \\
(ii) \underline{$\abs{(\xi_3,q_3)} \gtrsim N$}: \\
(ii.1) \underline{$\abs{(\xi_1,q_1)} \ll N$}: \\
In this case, the convolution constraint gives $\abs{\xi_0} \les \abs{(\xi_1,q_1)} \land \abs{(\xi_3,q_3)} = \abs{(\xi_1,q_1)}$, $\abs{(\xi_3,q_3)} \sim \abs{(\xi_4,q_4)}$, and thus
\[ Diff_{\text{ZK}_2} \coloneqq \frac{m(\xi_0,q_0)(m(\xi_0,q_0)-m(\xi_3,q_3)m(\xi_4,q_4))}{m(\xi_1,q_1)m(\xi_2,q_2)m(\xi_3,q_3)m(\xi_4,q_4)} \les N^{2s-2} \abs{(\xi_3,q_3)}^{2-2s}, \]
allowing us to pass pointwise to
\[ \abs{\xi_0} Diff_{\text{ZK}_2} \les N^{2s-2} \abs{(\xi_3,q_3)}^{-2s+} \langle (\xi_2,q_2) \rangle^{-1+} \prod_{i=1}^{4} \langle (\xi_i,q_i) \rangle^{1-}. \]
Taking into account that $N^{2s-2} \abs{(\xi_3,q_3)}^{-2s+} \les N^{-2+\widetilde{\epsilon}}$ holds for $s>0$, we then obtain, after undoing Plancherel and subsequently applying Hölder's inequality, that
\begin{align*}
Int &\les N^{-2+\widetilde{\epsilon}} \norm{J^{1-}I_Nv_1J^{1-}I_Nv_2}_{L_{txy,\lambda}^2} \norm{J^{1-}I_Nv_3J^{1-}I_Nv_4}_{L_{txy,\lambda}^2} \\ &\leq N^{-2+\widetilde{\eps}} \prod_{i=1}^{4} \norm{J^{1-}I_Nv_i}_{L_{txy,\lambda}^4} \\ & \les_{s,\widetilde{\epsilon},\epsilon'} N^{-2+\widetilde{\epsilon}} \prod_{i=1}^{4} \norm{I_Nv_i}_{X_{1,\frac{1}{2}+\epsilon',\lambda}},
\end{align*}
where in the last step we applied \eqref{L^4lambda} a total of four times.
This completes the discussion of this subcase. \\
(ii.2) \underline{$\abs{(\xi_1,q_1)} \gtrsim N$}: \\
In this case, we have
\begin{align*} Diff_{\text{ZK}_2} &\les \frac{1}{m(\xi_1,q_1)m(\xi_2,q_2)m(\xi_3,q_3)m(\xi_4,q_4)} \\ & \sim N^{2s-2} \abs{(\xi_1,q_1)}^{1-s} \abs{(\xi_3,q_3)}^{1-s} \frac{1}{m(\xi_2,q_2)m(\xi_4,q_4)}, \end{align*}
so that, taking into account $\abs{\xi_0} \les \abs{(\xi_1,q_1)} \land \abs{(\xi_3,q_3)}$, we may pass to
\begin{align*} \abs{\xi_0} Diff_{\text{ZK}_2} &\les N^{2s-2} \abs{(\xi_1,q_1)}^{\frac{1}{2}-s+} \abs{(\xi_3,q_3)}^{\frac{1}{2}-s+}  \frac{\langle (\xi_2,q_2) \rangle^{-1+} \langle (\xi_4,q_4) \rangle^{-1+}}{m(\xi_2,q_2)m(\xi_4,q_4)} \\ & \ \ \ \cdot \prod_{i=1}^{4} \langle (\xi_i,q_i) \rangle^{1-}.  \end{align*}
Since $s > \frac{1}{2}$ (and in particular $s > 0$), we can crudely estimate the contribution of
\[ \frac{\langle (\xi_2,q_2) \rangle^{-1+} \langle (\xi_4,q_4) \rangle^{-1+}}{m(\xi_2,q_2)m(\xi_4,q_4)} \]
from above - up to a constant - by $1$. Moreover, since $N^{2s-2} \abs{(\xi_1,q_1)}^{\frac{1}{2}-s+} \\ \abs{(\xi_3,q_3)}^{\frac{1}{2}-s+} \les N^{-1+\widetilde{\epsilon}}$ for $s > \frac{1}{2}$, we obtain, after undoing Plancherel and applying Hölder's inequality, that
\begin{align*}
Int &\les N^{-1+\widetilde{\epsilon}} \norm{J^{1-}I_Nv_1J^{1-}I_Nv_2}_{L_{txy,\lambda}^2} \norm{J^{1-}I_Nv_3J^{1-}I_Nv_4}_{L_{txy,\lambda}^2} \\ &\leq \prod_{i=1}^{4} \norm{J^{1-}I_Nv_i}_{L_{txy,\lambda}^4} \\ & \les_{s,\widetilde{\epsilon},\epsilon'} N^{-1+\widetilde{\epsilon}} \prod_{i=1}^{4} \norm{I_Nv_i}_{X_{1,\frac{1}{2}+\epsilon',\lambda}},
\end{align*}
with the last step being justified by a fourfold application of \eqref{L^4lambda}.
That concludes the treatment of this final subcase.
\end{proof}

With this, we now have all the requisite ingredients in place for the

\begin{proof}[Proof of Theorem \ref{GWPZK}]
Without loss of generality, we may assume $s \in \intoo{\frac{11}{13},1}$, i.e. $s = \frac{11}{13}+ \widetilde{\epsilon}$ for some $\widetilde{\epsilon} \in \intoo{0,\frac{2}{13}}$. 
Let $u_0 \in H^s(\R \times \T)$ and $T > 0$ be arbitrary. Our goal is to prove that the (local) solution $u$ of $\mathrm{(CP}_{1,\R \times \T} \mathrm{)}$ exists on the entire time interval $\intcc{-T,T}$. By the scaling symmetry considerations carried out in Section 4.1, this is equivalent to proving that the rescaled solution $u_\lambda$ of $\mathrm{(CP}_{1,\R \times \T_\lambda} \mathrm{)}$ exists on $\intcc{-\lambda^3T,\lambda^3T}$.
We now choose
\[ \lambda = \lambda_{\text{ZK}} = C_{\text{ZK}} N^{\frac{1-s}{1+s}} \]
as in Remark \ref{RemarklambdaZKandlambdamZK} (ii). Proposition \ref{variantLWPZK} then yields the existence of the associated solution $u_{\lambda_{\text{ZK}}}$ on the interval $\intcc{-\delta_0,\delta_0}$, where the initial lifespan $\delta_0 > 0$ satisfies
\[ \delta_0 \geq c \norm{I_Nu_{0,\lambda_{\text{ZK}}}}_{H^{1}_{\lambda_{\text{ZK}}}}^{-\gamma} \]
for some $\gamma > 0$. Combining \eqref{controloverH^1ZK}, Lemma \ref{modifiedenergylemma}, \eqref{cubictermZK}, \eqref{quartictermZK}, and the continuity estimate \eqref{contdepZKlambda} from Proposition \ref{variantLWPZK}, we obtain for this solution $u_{\lambda_{\text{ZK}}}$ the estimate
\begin{align*} \norm{I_Nu_{\lambda_{\text{ZK}}}(\delta_0)}_{H^1_{\lambda_{\text{ZK}}}}^2 &\leq C_0\left(\norm{u_{0,\lambda_{\text{ZK}}}}_{L_{\lambda_\text{ZK}}^2}, E_{\pm}[I_N u_{\lambda_{\text{ZK}}}](0)\right) \\ & \ \ \ + \widetilde{C} \left( N^{-\frac{1}{4}+\frac{\widetilde{\epsilon}}{2}} \norm{I_Nu_{0,\lambda_{\text{ZK}}}}_{H^1_{\lambda_{\text{ZK}}}}^3 + N^{-1+\frac{\widetilde{\epsilon}}{2}}  \norm{I_Nu_{0,\lambda_{\text{ZK}}}}_{H^1_{\lambda_{\text{ZK}}}}^4 \right),  \end{align*}
and we now wish to ensure that the right-hand side of this inequality remains small throughout as many forthcoming iterations of the fixed-point argument as needed. To this end, we choose $0 < \epsilon_0 \ll 1$ such that
\[ c \epsilon_0^{-\frac{\gamma}{2}} \geq 1, \]
and by enlarging the constant $C_{\text{ZK}}$ in the definition of $\lambda_{\text{ZK}}$, Lemma \ref{lemmarescaledH^1lambda} guarantees that we may enforce
\[ C_0\left(\norm{u_{0,\lambda_{\text{ZK}}}}_{L_{\lambda_\text{ZK}}^2}, E_{\pm}[I_N u_{\lambda_{\text{ZK}}}](0)\right) \leq \frac{\epsilon_0}{2}. \]
(see also Remark \ref{RemarkonmodifiedH^1normforZKandmZK} (ii)).
Next, choosing $N \gg 1$ sufficiently large so that
\begin{align*} &\widetilde{C} \left( N^{-\frac{1}{4}+\frac{\widetilde{\epsilon}}{2}} \left( N^{\frac{1}{4}-\widetilde{\epsilon}} + \norm{I_Nu_{0,\lambda_{\text{ZK}}}}_{H^1_{\lambda_{\text{ZK}}}}^3 \right) + N^{-1+\frac{\widetilde{\epsilon}}{2}} \left( N^{\frac{1}{4}-\widetilde{\epsilon}} + \norm{I_Nu_{0,\lambda_{\text{ZK}}}}_{H^1_{\lambda_{\text{ZK}}}}^4 \right) \right) \\ & \leq \frac{\epsilon_0}{2}, \end{align*}
the fixed-point argument from Proposition \ref{variantLWPZK} can be applied $N^{\frac{1}{4}-\widetilde{\epsilon}}$ times, with each iteration yielding an extension of the lifespan by at least $1$ (both forward and backward in time). Consequently, the solution $u_{\lambda_{\text{ZK}}}$ exists at least on the interval $\intcc{-N^{\frac{1}{4}-\widetilde{\epsilon}}, N^{\frac{1}{4}-\widetilde{\epsilon}}}$.
We now require
\[ N^{\frac{1}{4}-\widetilde{\epsilon}} > \lambda_{\text{ZK}}^3 T = C_{\text{ZK}}^3 N^{\frac{3(1-s)}{1+s}} T, \]
which holds for sufficiently large $N$, provided that
\[ \frac{1}{4}-\widetilde{\epsilon} - \frac{3(1-s)}{1+s} > 0 \Leftrightarrow s > \frac{11+4 \widetilde{\epsilon}}{13-4 \widetilde{\epsilon}}. \]
Since our fixed $s = \frac{11}{13}+\widetilde{\epsilon}$ satisfies this condition, the solution $u_{\lambda_{\text{ZK}}}$ persists on the time interval $\intcc{-\lambda_{\text{ZK}}^3T,\lambda_{\text{ZK}}^3T}$. Scaling back to the original problem then completes the argument. Moreover, the choice
\[ N \sim (1+T)^{\frac{4(1+s)}{13s-11}+}, \]
together with $\lambda_{\text{ZK}} = C_{\text{ZK}} N^{\frac{1-s}{1+s}}$ and the estimate
\[ \norm{u(\lambda_{\text{ZK}}^{-3}t)}_{H^s_1} \les \lambda_{\text{ZK}}^{1+s} \norm{I_Nu_{\lambda_{\text{ZK}}}(t)}_{H^1_{\lambda_{\text{ZK}}}}, \]
finally yields
\[ \norm{u(\lambda_{\text{ZK}}^{-3}t)}_{H^s_1} \les (1+T)^{\frac{4(1-s^2)}{13s-11}+} \]
for all $t \in \intcc{-\lambda_{\text{ZK}}^3T,\lambda_{\text{ZK}}^3T}$. Taking the supremum over all such $t$ then gives precisely the polynomial bound \eqref{polygrowthZKbelowH^1}. This completes the proof of Theorem \ref{GWPZK}.
\end{proof}

\begin{rem}
The proof reveals that the decay rate $N^{-\frac{1}{4}+}$ of the cubic term in the modified energy is the decisive factor governing the quality of the resulting threshold for global well-posedness. It is worth noting that the extracted exponent $-\frac{1}{4}+$ exactly reflects the full amount of nonlinear smoothing encoded in the bilinear estimate \eqref{lwpZKinH^slambda}, which is optimal up to the endpoint.
\end{rem}

\subsection{(Improved) GWP for mZK on $\mathbb{R}^2$ and $\mathbb{R} \times \mathbb{T}$}

Following the structure of the previous section, we start by transferring the most recent local well-posedness result for mZK on $\R \times \T$ to the rescaled setting.

\begin{prop} \label{trilinestimatemZKlambda} Let $\lambda \in \intco{1,\infty}$ and $\widetilde{\epsilon} > 0$ be given. Then, for every $s > \frac{11}{24}$, there exists an $\epsilon > 0$ such that
\begin{equation} \label{LWPmZKinH^slambda11/24}
\norm{\partial_x(u_1u_2u_3)}_{X_{s,-\frac{1}{2}+2\epsilon,\lambda}} \les_{\widetilde{\epsilon}} \lambda^{\widetilde{\epsilon}} \prod_{i=1}^{3}  \norm{u_i}_{X_{s,\frac{1}{2}+\epsilon,\lambda}}
\end{equation}
holds for all time-localized functions $u_1,u_2,u_3 \in X_{s,\frac{1}{2}+\epsilon,\lambda}$.
\end{prop}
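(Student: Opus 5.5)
The plan is to follow the proof of the trilinear estimate for $\lambda = 1$ in \cite{JNK2025} (the one giving $s > \frac{11}{24}$) and to check, case by case, that every Strichartz-type input used there has a rescaled counterpart from Section 3 with the right constant. The estimates \eqref{MPlambda}, \eqref{MPlambdadual}, \eqref{L^4lambda}, \eqref{L^4+lambda}, \eqref{L^4-lambda}, \eqref{Bilinreflambda}, \eqref{Bilinreflambdadual}, \eqref{AiryL^plambda}, \eqref{AiryL^6+lambda}, \eqref{AiryL^6-lambda} and \eqref{AiryL^4L^2} all hold uniformly in $\lambda \geq 1$. The only $\lambda$-dependent inputs are \eqref{optimizedL^plambda}, \eqref{optimizedL^6+} and \eqref{optimizedL^6-}, which carry a factor $\lambda^{0+}$ and require time localization. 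This is exactly why the statement asks for time-localized $u_i$ and allows the loss $\lambda^{\widetilde{\epsilon}}$.

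As in the proof of Proposition \ref{bilinestimateZKlambda}, I first reduce by duality and Parseval. The task becomes bounding
\[ \frac{1}{\lambda^3}\int \sum \abs{\xi_0}\langle(\xi_0,q_0)\rangle^s \widehat{f}^\lambda \prod_{i=1}^3 \widehat{u_i}^\lambda \]
by $\lambda^{\widetilde{\epsilon}}\norm{f}_{X_{0,\frac12-2\epsilon,\lambda}}\prod_{i=1}^3\norm{u_i}_{X_{s,\frac12+\epsilon,\lambda}}$. We may take nonnegative Fourier transforms and order the frequencies so that $\abs{(\xi_1,q_1)}\geq\abs{(\xi_2,q_2)}\geq\abs{(\xi_3,q_3)}$.

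The case decomposition is the one from \cite{JNK2025}. Each case is closed by pairing two factors into a bilinear $L^2$ expression and treating the remaining two factors with linear estimates via Hölder.
\begin{enumerate}
\item[(a)] If the highest frequency is well separated from another factor, I use $MP_\lambda$ through \eqref{MPlambda} or \eqref{MPlambdadual}. This gains $\frac12$ of a derivative, and the remaining factors go into $L^4_{txy,\lambda}$.
\item[(b)] If the frequencies are comparable but away from the degenerate cones $\abs{3\xi^2-q^2}\ll\abs{\xi}^\alpha$, I use the bilinear refinement \eqref{Bilinreflambda} or \eqref{Bilinreflambdadual} with a suitable $\alpha$.
\item[(c)] In the remaining degenerate configurations, I use the resonance function $R_{\text{mZK}}$ to gain a power of the largest modulation $\langle\sigma_i\rangle$. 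In this regime, or whenever \cite{JNK2025} places a factor in $L^6$ or $L^p$, I use the Airy estimates \eqref{AiryL^plambda} when a power of $\abs{\xi}$ is available. Otherwise I use the optimized estimates \eqref{optimizedL^plambda}, \eqref{optimizedL^6+} and \eqref{optimizedL^6-}; these produce the factor $\lambda^{0+}$, which I absorb into $\lambda^{\widetilde{\epsilon}}$ by choosing the $\epsilon$ in \eqref{optimizedL^plambda} small in terms of $\widetilde{\epsilon}$.
\end{enumerate}
Since each case in \cite{JNK2025} uses only finitely many such estimates, the total loss is $\lambda^{C\epsilon}$, and this can be made $\leq\lambda^{\widetilde{\epsilon}}$.

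The counting-type and integral-comparison steps in the proofs of the rescaled estimates must stay uniform in $\lambda$. That is guaranteed by $\lambda\geq1$, exactly as in Proposition \ref{PropMPlambda}: isolated summands cost at most $\frac1\lambda\le1$, and the substitution $\widetilde{y}=y/\lambda$ removes the remaining $\lambda$-dependence. The resonance identity for $R_{\text{mZK}}$ and all pointwise multiplier bounds are purely algebraic in $(\xi_i,q_i)\in\R\times\Z/\lambda$, so they carry over verbatim.

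The main obstacle is bookkeeping in the most delicate case of \cite{JNK2025}, the one that determines the threshold $\frac{11}{24}$. There the optimized $L^6$ and $L^p$ estimates with the $\frac{2}{9}$-derivative loss are combined with \eqref{AiryL^4L^2} and with modulation gains. I need to verify that the $b$-exponents $(\frac32-\frac3p)b$ still fit under $\frac12\pm$ after interpolation, so that $f$ can be placed in $X_{0,\frac12-2\epsilon,\lambda}$. I also need to check that no step uses a $\lambda$-dependent counting bound beyond Proposition \ref{Prop1drescaledSchrödingerL^6}. I would handle this by rechecking that case with the rescaled estimates explicitly, and by tracking that every $\lambda$-power arises only through \eqref{optimizedL^plambda}.
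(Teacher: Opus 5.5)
Your proposal is correct and follows essentially the same route as the paper: the paper also proves the estimate by transcribing the proof of Proposition 4.3 in \cite{JNK2025} using the $\lambda$-uniform rescaled estimates of Section 3, and it attributes the loss $\lambda^{\widetilde{\epsilon}}$ (together with the need for time localization) solely to the optimized $L^6$/$L^p$ estimates built on the rescaled periodic Schr\"odinger $L^6$ bound. Your explicit bookkeeping step is what the paper's one-line proof leaves implicit: you absorb the $\lambda^{0+}$ factors by taking the free parameter in \eqref{optimizedL^plambda} small relative to $\widetilde{\epsilon}$, at the cost of an arbitrarily small derivative loss that the strict inequality $s>\frac{11}{24}$ covers.
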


\begin{proof}
The proof of \eqref{LWPmZKinH^slambda11/24} proceeds in complete analogy with the proof of Proposition 4.3 in \cite{JNK2025}, taking into account the fact that all Strichartz-type estimates used there carry over to the rescaled framework; these are essentially \eqref{MPlambda}, \eqref{L^4lambda}, \eqref{Bilinreflambda}, \eqref{AiryL^6+lambda}, \eqref{AiryL^6-lambda}, \eqref{optimizedL^6+}, \eqref{optimizedL^6-}, and \eqref{AiryL^4L^2}, as established in Section 3. The appearance of the factor $\lambda^{\widetilde{\epsilon}}$ on the right-hand side of the inequality reflects the fact that the Schrödinger $L^6$-estimate \eqref{1drescaledSchrödingerL^6} cannot be transferred to the rescaled spaces in a lossless manner.
\end{proof}

\begin{rem} \label{RemarkonLWPformZK} \begin{enumerate} \item[(i)] If one restricts to $s > \frac{1}{2}$, estimates \eqref{MPlambda} and \eqref{L^4lambda} are sufficient to obtain
\begin{equation} \label{LWPmZKinH^slambda1/2}
\norm{\partial_x(u_1u_2u_3)}_{X_{s,-\frac{1}{2}+2\epsilon,\lambda}} \les \prod_{i=1}^{3}  \norm{u_i}_{X_{s,\frac{1}{2}+\epsilon,\lambda}}
\end{equation} (see the proof of Proposition 4.1 in \cite{JNK2025}), and the assumption of time localization of the functions $u_i$, $i \in \set{1,2,3}$ can be dropped. Moreover, a careful inspection of the underlying local well-posedness proofs in \cite{JNK2025} shows that in both \eqref{LWPmZKinH^slambda11/24} and \eqref{LWPmZKinH^slambda1/2} the parameter $\epsilon$ may be replaced by any $0<\epsilon' \leq \epsilon$, upon adjusting the implicit constants in "$\les$".

\item[(ii)] For the non-periodic case, we note that the local well-posedness result for mZK due to Ribaud and Vento \cite{Ribaud2012} can be easily reformulated in an $X_{s,b}$-framework, which is tied to the following trilinear estimate: for every $s > \frac{1}{4}$, there exists an $\epsilon > 0$ such that
\begin{equation} \label{LWPmZKinH^s1/4}
\norm{\partial_x(u_1u_2u_3)}_{X_{s,-\frac{1}{2}+2\epsilon,\infty}} \les \prod_{i=1}^{3}  \norm{u_i}_{X_{s,\frac{1}{2}+\epsilon,\infty}}
\end{equation}
holds for all time-localized functions $u_1,u_2,u_3 \in X_{s,\frac{1}{2}+\epsilon,\infty}$. Moreover, as in the previous cases, $\epsilon$ may be replaced by any $0<\epsilon' \leq \epsilon$, with a corresponding adjustment of the implicit constants.  \end{enumerate}
\end{rem}

We are now in a position to prove the following local well-posedness result for 
$\mathrm{(CP}_{2, \R \times \T_\lambda} \mathrm{)}$ in $I_N^{-1}H^1_{\lambda}$.

\begin{prop} \label{variantLWPmZK}
Let $\lambda \in \intcc{1,N^{100}} \cup \set{\infty}$. Then $\mathrm{(CP}_{2,\R \times \T_\lambda} \mathrm{)}$ is locally well-posed in $I_N^{-1}H^1_{\lambda} = \set{f \in H^s_\lambda \ | \ \norm{I_Nf}_{H^1_\lambda} < \infty}$ for every $s \in \intoo{\frac{1}{2},1}$. Moreover, for the lifespan $\delta > 0$ of the solution $u_\lambda$ with initial data $u_{0,\lambda}$\footnote{We stress that the subscript $\infty$ in $u_\infty$ and $u_{0,\infty}$ does not refer to a rescaling of the type introduced in Section 4.1.}, one has
\[ \delta \gts \norm{I_Nu_{0,\lambda}}_{H^1_\lambda}^{-\gamma} \]
for some $\gamma > 0$, and the estimate
\begin{equation} \label{contdepmZKlambda}
\norm{I_Nu_\lambda}_{X_{1,\frac{1}{2}+,\lambda}^\delta} \les \norm{I_Nu_{0,\lambda}}_{H^1_\lambda}
\end{equation}
holds.
\end{prop}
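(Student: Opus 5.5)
Following the proof of Proposition \ref{variantLWPZK}, the standard contraction argument in $X^\delta_{1,\frac12+\epsilon,\lambda}$ for $I_Nu_\lambda$ reduces the claim to a trilinear estimate. We need to find $0<\epsilon\ll1$ such that
\begin{equation*}
\norm{\partial_xI_N(u_1u_2u_3)}_{X_{1,-\frac{1}{2}+2\epsilon,\lambda}} \les \prod_{i=1}^{3}\norm{I_Nu_i}_{X_{1,\frac{1}{2}+\epsilon,\lambda}}
\end{equation*}
with a constant independent of $N$ and of $\lambda\in\intcc{1,N^{100}}\cup\set{\infty}$. Once this holds, the time-localization factor $\delta^{\epsilon}$ coming from the gap between $-\frac12+2\epsilon$ and $-\frac12+\epsilon$ gives the lifespan bound $\delta\gtrsim\norm{I_Nu_{0,\lambda}}_{H^1_\lambda}^{-\gamma}$ with $\gamma=2/\epsilon$, and the fixed-point bound gives \eqref{contdepmZKlambda}.

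As before, split
\[ \partial_xI_N(u_1u_2u_3)=\partial_x(I_Nu_1I_Nu_2I_Nu_3)+\partial_x\bigl(I_N(u_1u_2u_3)-I_Nu_1I_Nu_2I_Nu_3\bigr). \]
For $\lambda<\infty$, the first term is bounded directly by \eqref{LWPmZKinH^slambda1/2} at regularity $1$, which costs no power of $\lambda$ and needs no time localization. For $\lambda=\infty$ we use \eqref{LWPmZKinH^s1/4} at regularity $1$ together with time-localized extensions. If one preferred \eqref{LWPmZKinH^slambda11/24}, the loss $\lambda^{\widetilde{\epsilon}}\le N^{100\widetilde{\epsilon}}$ could be absorbed in the commutator part, and this is exactly why the restriction $\lambda\le N^{100}$ appears. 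For the commutator term, we dualize against $f\in X_{0,\frac12-2\epsilon,\lambda}$ with nonnegative Fourier transforms, order $\abs{(\xi_1,q_1)}\ge\abs{(\xi_2,q_2)}\ge\abs{(\xi_3,q_3)}$, and study the multiplier
\[ \abs{\xi_0}\langle(\xi_0,q_0)\rangle\frac{\abs{m(\xi_0,q_0)-\prod_{i}m(\xi_i,q_i)}}{\prod_i m(\xi_i,q_i)}. \]
It vanishes if $\abs{(\xi_1,q_1)}\ll N$. Otherwise:

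\textbf{Case $\abs{(\xi_1,q_1)}\gg\abs{(\xi_2,q_2)}$, both lower frequencies $\ll N$.} The mean value theorem gains $\abs{(\xi_2,q_2)}/\abs{(\xi_1,q_1)}$.

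\textbf{Other cases.} We bound the quotient by $1/\prod_{i\ge2}m(\xi_i,q_i)$ or by $1/\prod_i m$. Since $s>\frac12$, we have $N^{s-1}\abs{(\xi,q)}^{1-s}\les \abs{(\xi,q)}^{\frac12-}$ for $\abs{(\xi,q)}\gtrsim N$. The weights $1/m$ can therefore be traded for half-derivatives, leaving pointwise bounds of the type that already appear in the $s>\frac12$ proof of \eqref{LWPmZKinH^slambda1/2}.

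In each case we then close with \eqref{MPlambda} or \eqref{MPlambdadual} on a widely separated pair. On the remaining factor we use $L^4$ via \eqref{L^4lambda} and \eqref{L^4+lambda}, \eqref{L^4-lambda}, or alternatively Sobolev \eqref{trivialLinfty}. These estimates are uniform in $\lambda\ge1$; on $\R^2$ their non-periodic versions are used. We aim for a bound of the form $N^{-\beta}\prod_i\norm{I_Nu_i}_{X_{1,\frac12+\epsilon,\lambda}}$ with some $\beta>0$, which is in particular $\les1$.

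The main obstacle is the high-high-high interaction $\abs{(\xi_1,q_1)}\sim\abs{(\xi_2,q_2)}\sim\abs{(\xi_3,q_3)}\gtrsim N$. There the bilinear smoothing gives no gain unless the dilated frequencies are separated, and the multiplier grows like $\prod_i(\abs{(\xi_i,q_i)}/N)^{1-s}$. The plan is not to redo the case analysis but to dominate the weighted multiplier by
\[ \abs{\xi_0}\langle(\xi_0,q_0)\rangle^{\sigma}\prod_i\langle(\xi_i,q_i)\rangle^{1-\sigma}\cdot\bigl(\text{factors}\les1\bigr), \]
with $\sigma\in\intoo{\frac12,1}$ chosen so that the $N$-powers cancel. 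We then invoke \eqref{LWPmZKinH^slambda1/2} (on $\R^2$, \eqref{LWPmZKinH^s1/4}) at regularity $\sigma$ applied to $J^{1-\sigma}I_Nu_i$, exactly as case (ii.2.2) of Proposition \ref{variantLWPZK} reused Proposition \ref{bilinestimateZKlambda}. The only delicate point is checking that, when $\abs{(\xi_0,q_0)}\ll\abs{(\xi_1,q_1)}$, the extra loss can be absorbed by assigning more derivatives to the output through duality with \eqref{MPlambdadual}.
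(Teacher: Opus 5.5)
Your proposal is correct in outline and follows the paper's skeleton: you reduce to \eqref{varianttrilinmZK}, handle the product term with \eqref{LWPmZKinH^slambda1/2} resp.\ \eqref{LWPmZKinH^s1/4}, and do a case analysis of the commutator. The real difference is the semiperiodic regime where all four frequencies are comparable and $\gtrsim N$. There the paper uses \eqref{LWPmZKinH^slambda11/24} and pays $\lambda^{\widetilde{\epsilon}}$, which is the only reason for the restriction $\lambda\le N^{100}$, in order to get $N^{-\frac{13}{12}+}$. You instead use the lossless $s>\frac12$ estimate at a regularity $\sigma\in(\frac12,s]$, which yields only about $N^{2\sigma-2}\approx N^{-1+}$. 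On $\R^2$ the two routes coincide.

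For the proposition your bound is enough, and your route even gives local well-posedness uniformly in every $\lambda\ge1$. What the paper's choice buys is the sharper commutator bound \eqref{nonlinearsmoothingmZKprecise}, which feeds \eqref{quartictermmZK}. With only $N^{-1+}$, the iteration in the proof of Theorem \ref{GWPmZK} would reach $s>\frac34$ rather than $s>\frac{36}{49}$.

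One caution, which also shows that most of your sketch can be dispensed with. In the comparable regime you must not use the crude bound $\prod_i(\abs{(\xi_i,q_i)}/N)^{1-s}$ for the multiplier. It forces $\sigma\le\frac{3s-1}{2}$, which is compatible with $\sigma>\frac12$ only when $s>\frac23$.

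Keep $m(\xi_0,q_0)$ instead. For $\sigma\le s$, the weight $m(\xi,q)\langle(\xi,q)\rangle^{1-\sigma}$ is essentially nondecreasing in $\abs{(\xi,q)}$, and $1/m(\xi,q)\lesssim\langle(\xi,q)\rangle^{1-\sigma}$ uniformly in $N$. Hence $\langle(\xi_0,q_0)\rangle m(\xi_0,q_0)\prod_i m(\xi_i,q_i)^{-1}\lesssim\langle(\xi_0,q_0)\rangle^{\sigma}\prod_i\langle(\xi_i,q_i)\rangle^{1-\sigma}$ in every frequency configuration.

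This disposes of your ``delicate'' case $\abs{(\xi_0,q_0)}\ll\abs{(\xi_1,q_1)}$ without any appeal to \eqref{MPlambdadual}. Moreover, with $\sigma=s$ and without splitting off the commutator at all, it proves \eqref{varianttrilinmZK} in one line from \eqref{LWPmZKinH^slambda1/2} and \eqref{LWPmZKinH^s1/4}; this is the usual interpolation lemma of the $I$-method. Your ``Other cases'' paragraph, which as written is only a heuristic, is therefore not needed for this proposition. The paper's long case analysis is there for the decay rate, not for well-posedness.
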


\begin{proof}
As in the proof of Proposition \ref{variantLWPZK}, we restrict ourselves to proving the following statement: For every $s \in \intoo{\frac{1}{2},1}$, there exists an $\epsilon > 0$ such that the trilinear estimate
\begin{equation} \label{varianttrilinmZK}
\norm{\partial_xI_N(u_1u_2u_3)}_{X_{1,-\frac{1}{2}+2\epsilon,\lambda}} \les \prod_{i=1}^{3} \norm{I_Nu_i}_{X_{1,\frac{1}{2}+\epsilon,\lambda}}
\end{equation}
holds for all time-localized functions $u_1,u_2,u_3$ with $I_Nu_i \in X_{1,\frac{1}{2}+\epsilon,\lambda}$, $i \in \set{1,2,3}$. The claimed well-posedness result then follows from a fixed-point argument that is standard in the literature. \\
We begin by applying the triangle-inequality to split the left-hand side of \eqref{varianttrilinmZK} into two contributions, yielding
\begin{align*} \norm{\partial_xI_N(u_1u_2u_3)}_{X_{1,-\frac{1}{2}+2\epsilon,\lambda}} &\leq \norm{\partial_x(I_Nu_1I_Nu_2I_Nu_3)}_{X_{1,-\frac{1}{2}+2\epsilon,\lambda}} \\ & \ \ \ + \norm{\partial_x(I_N(u_1u_2u_3) - I_Nu_1I_Nu_2I_Nu_3)}_{X_{1,-\frac{1}{2}+2\epsilon,\lambda}}.  \end{align*}
For the first term, we can now immediately apply \eqref{LWPmZKinH^slambda1/2} in the case $\lambda \neq \infty$ or \eqref{LWPmZKinH^s1/4} in the case $\lambda = \infty$, which, for a sufficiently small choice of $0<\epsilon \ll 1$, leads to the desired estimate
\[ \norm{\partial_x(I_Nu_1I_Nu_2I_Nu_3)}_{X_{1,-\frac{1}{2}+2\epsilon,\lambda}} \les \prod_{i=1}^{3} \norm{I_Nu_i}_{X_{1,\frac{1}{2}+\epsilon,\lambda}}  \]
for every $s<1$.
In order to estimate the second term, we proceed by duality and invoke Parseval's identity to obtain
\begin{align*}
&\norm{\partial_x(I_N(u_1u_2u_3) - I_Nu_1I_Nu_2I_Nu_3)}_{X_{1,-\frac{1}{2}+2\epsilon,\lambda}} \sim \\ & \sup_{\norm{f}_{X_{0,\frac{1}{2}-2\epsilon,\lambda}}\leq 1} \frac{1}{\lambda^3} \left|\int_{\R^2} \sum_{q_0 \in \Z/\lambda} \int_{\R^4} \sum_{\substack{q_1,q_2 \in \Z/\lambda \\ \ast}} \widehat{\! f}^\lambda(\tau_0,\xi_0,q_0) \prod_{i=1}^{3} \widehat{\! I_Nu_i}^\lambda(\tau_i,\xi_i,q_i) \right. \\ & \cdot \left. \xi_0 \langle (\xi_0,q_0) \rangle \frac{m(\xi_0,q_0)-m(\xi_1,q_1)m(\xi_2,q_2)m(\xi_3,q_3)}{m(\xi_1,q_1)m(\xi_2,q_2)m(\xi_3,q_3)} \ \mathrm{d}(\tau_1,\tau_2,\xi_1,\xi_2) \mathrm{d}(\tau_0,\xi_0) \right|
\end{align*}
for $\lambda \in \intcc{1,N^{100}}$ and
\begin{align*}
&\norm{\partial_x(I_N(u_1u_2u_3) - I_Nu_1I_Nu_2I_Nu_3)}_{X_{1,-\frac{1}{2}+2\epsilon,\infty}} \sim \\ & \sup_{\norm{f}_{X_{0,\frac{1}{2}-2\epsilon,\infty}}\leq 1} \left|\int_{\R^3} \int_{\substack{\R^6 \\ \ast}} \widehat{\! f}^\lambda(\tau_0,\xi_0,\eta_0) \left( \prod_{i=1}^{3} \widehat{\! I_Nu_i}^\lambda(\tau_i,\xi_i,\eta_i) \right) \xi_0 \langle (\xi_0,\eta_0) \rangle \right. \\ & \cdot \left.  \frac{m(\xi_0,\eta_0)-m(\xi_1,\eta_1)m(\xi_2,\eta_2)m(\xi_3,\eta_3)}{m(\xi_1,\eta_1)m(\xi_2,\eta_2)m(\xi_3,\eta_3)} \ \mathrm{d}(\tau_1,\tau_2,\xi_1,\xi_2,\eta_1,\eta_2) \mathrm{d}(\tau_0,\xi_0,\eta_0) \right|
\end{align*}
for $\lambda = \infty$. Here, $\ast$ denotes the convolution constraints $(\tau_0,\xi_0,q_0) = (\tau_1+\tau_2+\tau_3,\xi_1+\xi_2+\xi_3,q_1+q_2+q_3)$ and $(\tau_0,\xi_0,\eta_0) = (\tau_1+\tau_2+\tau_3,\xi_1+\xi_2+\xi_3,\eta_1+\eta_2+\eta_3)$, and we denote the corresponding integral expressions by $I_f$, thereby suppressing the $\lambda$-dependence in the notation.
The further estimation of $I_f$ will now be the subject of a detailed case-by-case analysis, which we aim to carry out simultaneously for the rescaled semiperiodic and the non-periodic cases as far as it is possible. For simplicity, we write $q_i$ in place of $\eta_i$, $i \in \set{0,1,2,3}$, reverting to the correct variable names only in the case-specific arguments. Moreover, for everything that follows, we may, by the definition of the $X_{s,b,\lambda}$-spaces and by symmetry, assume that $\widehat{\! f}^\lambda,\widehat{\! u_i}^\lambda \geq 0$, $i \in \set{1,2,3}$, and restrict our considerations to the frequency configuration $\abs{(\xi_1,q_1)} \geq \abs{(\xi_2,q_2)} \geq \abs{(\xi_3,q_3)}$. This concludes the preliminary remarks, and we now begin with the discussion of case \\ \\
(i) \underline{$\abs{(\xi_1,q_1)} \ll N$}: \\
In this case, we have
\[ m(\xi_0,q_0) - m(\xi_1,q_1)m(\xi_2,q_2)m(\xi_3,q_3) = 1- 1\cdot 1 \cdot 1 = 0, \]
and therefore
\[ I_f = 0, \]
so that there is nothing more to prove. \\
(ii) \underline{$\abs{(\xi_1,q_1)} \gtrsim N$}: \\
(ii.1) \underline{$\abs{(\xi_1,q_1)} \gg \abs{(\xi_3,q_3)}$}: \\
(ii.1.1) \underline{$\abs{\abs{(\xi_0,q_0)}^2 - \abs{(\xi_2,q_2)}^2} \gtrsim \abs{\xi_0}^2$}: \\
(ii.1.1.1) \underline{$\abs{(\xi_2,q_2)} \ll N$}: \\
In this situation, the mean value theorem is applicable and yields
\begin{align*}
Diff_{\text{mZK}_1} &\coloneqq \frac{\abs{m(\xi_0,q_0) - m(\xi_1,q_1)m(\xi_2,q_2)m(\xi_3,q_3)}}{m(\xi_1,q_1)m(\xi_2,q_2)m(\xi_3,q_3)} = \frac{\abs{m(\xi_0,q_0) - m(\xi_1,q_1)}}{m(\xi_1,q_1)} \\ &\les \abs{(\xi_1,q_1)}^{-1} \abs{(\xi_2,q_2)}.
\end{align*}
We thus obtain the pointwise estimate
\begin{align*}
\abs{\xi_0} \langle (\xi_0,q_0) \rangle Diff_{\text{mZK}_1} &\les \abs{(\xi_1,q_1)}^{-\frac{3}{2}+} \langle (\xi_3,q_3) \rangle^{-\frac{1}{2}+} \\ & \ \ \ \cdot \abs{\abs{(\xi_1,q_1)}^2 - \abs{(\xi_3,q_3)}^2}^\frac{1}{2} \langle (\xi_1,q_1) \rangle \langle (\xi_3,q_3) \rangle^{\frac{1}{2}-} \\ & \ \ \ \cdot \abs{\abs{(\xi_0,q_0)}^2-\abs{(\xi_2,q_2)}^2}^\frac{1}{2} \langle (\xi_0,q_0) \rangle^{0-} \langle (\xi_2,q_2) \rangle^{\frac{1}{2}-},
\end{align*}
and since $\abs{(\xi_1,q_1)} \gtrsim N$, we may pass to $\abs{(\xi_1,q_1)}^{-\frac{3}{2}+} \langle (\xi_3,q_3) \rangle^{-\frac{1}{2}+} \les N^{-\frac{3}{2}+}$. Undoing Plancherel, followed by an application of Hölder's inequality, then gives
\[ I_f \les N^{-\frac{3}{2}+} \norm{MP_\lambda(J^1I_Nu_1,J^{\frac{1}{2}-}I_Nu_3)}_{L_{txy,\lambda}^2} \norm{MP_\lambda(J^{0-}f,J^{\frac{1}{2}-}I_N\widetilde{u}_2)}_{L_{txy,\lambda}^2},  \]
and for the first factor, we may now apply \eqref{MPlambda}, while the second factor can be estimated using \eqref{MPlambdadual}. This leads us to
\begin{equation*}
...\les N^{-\frac{3}{2}+} \norm{f}_{X_{0,\frac{1}{2}-2\epsilon,\lambda}} \prod_{i=1}^{3} \norm{I_Nu_i}_{X_{1,\frac{1}{2}+\epsilon,\lambda}},
\end{equation*}
which is valid as long as $0<\epsilon \ll 1 $ is chosen sufficiently small. \\
(ii.1.1.2) \underline{$\abs{(\xi_2,q_2)} \gtrsim N$}: \\
Under the given assumptions, we can estimate
\[ Diff_{\text{mZK}_1} \les N^{2s-2} \abs{(\xi_1,q_1)}^{1-s} \abs{(\xi_2,q_2)}^{1-s} \frac{1}{m(\xi_3,q_3)}, \]
which yields the pointwise bound
\begin{align*}
\abs{\xi_0} \langle (\xi_0,q_0) \rangle Diff_{\text{mZK}_1} &\les N^{2s-2} \abs{(\xi_1,q_1)}^{\frac{1}{4}-s+} \abs{(\xi_2,q_2)}^{\frac{1}{4}-s+} \frac{\langle (\xi_3,q_3) \rangle^{-\frac{1}{2}+}}{m(\xi_3,q_3)} \\ & \ \ \ \cdot \abs{\abs{(\xi_1,q_1)}^2 - \abs{(\xi_3,q_3)}^2}^\frac{1}{2} \langle (\xi_1,q_1) \rangle \langle (\xi_3,q_3) \rangle^{\frac{1}{2}-} \\ & \ \ \ \cdot \abs{\abs{(\xi_0,q_0)}^2-\abs{(\xi_2,q_2)}^2}^\frac{1}{2} \langle (\xi_0,q_0) \rangle^{0-} \langle (\xi_2,q_2) \rangle^{\frac{1}{2}-}.
\end{align*}
Now, taking into account that $N^{2s-2} \abs{(\xi_1,q_1)}^{\frac{1}{4}-s+} \abs{(\xi_2,q_2)}^{\frac{1}{4}-s+} \les N^{-\frac{3}{2}+}$ for $s > \frac{1}{4}$ and that $ \frac{\langle (\xi_3,q_3) \rangle^{-\frac{1}{2}+}}{m(\xi_3,q_3)} \les 1$ for $s > \frac{1}{2}$, we may proceed exactly as in case (ii.1.1.1) to obtain
\[ I_f \les N^{-\frac{3}{2}+} \norm{f}_{X_{0,\frac{1}{2}-2\epsilon,\lambda}} \prod_{i=1}^{3} \norm{I_Nu_i}_{X_{1,\frac{1}{2}+\epsilon,\lambda}},  \]
which holds provided $0<\epsilon \ll 1$ is chosen sufficiently small. \\
(ii.1.2) \underline{$\abs{\abs{(\xi_0,q_0)}^2 - \abs{(\xi_2,q_2)}^2} \ll \abs{\xi_0}^2$}: \\
(ii.1.2.1) \underline{$\abs{\abs{(\xi_i,q_i)}^2 - \abs{(\xi_j,q_j)}^2} \gtrsim \abs{\xi_0}^2$ for some tuple $(i,j) \in \set{(0,1),(1,2)}$}: \\
We begin with the case $(i,j) = (0,1)$. By the active assumptions (ii.1) and (ii.1.2), we have
\[ \abs{(\xi_0,q_0)} \sim \abs{(\xi_1,q_1)} \sim \abs{(\xi_2,q_2)} \gg \abs{(\xi_3,q_3)}, \]
which allows us to conclude
\[ Diff_{\text{mZK}_1} \les N^{s-1} \abs{(\xi_1,q_1)}^{1-s} \frac{1}{m(\xi_3,q_3)} \]
and hence
\begin{align*}
\abs{\xi_0} \langle (\xi_0,q_0) \rangle Diff_{\text{mZK}_1} &\les N^{s-1} \abs{(\xi_1,q_1)}^{-\frac{1}{2}-s+} \frac{\langle (\xi_3,q_3) \rangle^{-\frac{1}{2}+}}{m(\xi_3,q_3)} \\ & \ \ \ \cdot \abs{\abs{(\xi_2,q_2)}^2 - \abs{(\xi_3,q_3)}^2}^\frac{1}{2} \langle (\xi_2,q_2) \rangle \langle (\xi_3,q_3) \rangle^{\frac{1}{2}-} \\ & \ \ \ \cdot \abs{\abs{(\xi_0,q_0)}^2-\abs{(\xi_1,q_1)}^2}^\frac{1}{2} \langle (\xi_0,q_0) \rangle^{0-} \langle (\xi_1,q_1) \rangle^{\frac{1}{2}-}.
\end{align*}
We further note that $N^{s-1} \abs{(\xi_1,q_1)}^{-\frac{1}{2}-s+} \les N^{-\frac{3}{2}+}$ for $s > -\frac{1}{2}$ and that $\frac{\langle (\xi_3,q_3) \rangle^{-\frac{1}{2}+}}{m(\xi_3,q_3)} \\ \les 1$ for $s > \frac{1}{2}$, so that undoing Plancherel and applying Hölder's inequality leads us to
\[ I_f \les N^{-\frac{3}{2}+} \norm{MP_\lambda(J^1I_Nu_2,J^{\frac{1}{2}-}I_Nu_3)}_{L_{txy,\lambda}^2} \norm{MP_\lambda(J^{0-}f,J^{\frac{1}{2}-}I_N\widetilde{u}_2)}_{L_{txy,\lambda}^2}. \]
By applying \eqref{MPlambda} to the first and \eqref{MPlambdadual} to the second factor, we finally arrive at
\[...\les N^{-\frac{3}{2}+} \norm{f}_{X_{0,\frac{1}{2}-2\epsilon,\lambda}} \prod_{i=1}^{3} \norm{I_Nu_i}_{X_{1,\frac{1}{2}+\epsilon,\lambda}}, \]
which is valid as long as $0<\epsilon \ll 1$ is chosen small enough.
The case $(i,j) = (1,2)$ is treated in a completely analogous manner, with \eqref{MPlambda} and \eqref{MPlambdadual} applied to the pairings $(u_1,u_2)$ and $(f,\widetilde{u}_3)$, respectively. This concludes the discussion of this subcase. \\
(ii.1.2.2) \underline{$\abs{\abs{(\xi_i,q_i)}^2-\abs{(\xi_j,q_j)}^2} \ll \abs{\xi_0}^2$ for all tuples $(i,j) \in \set{(0,1),(1,2)}$}: \\
From this point on, the arguments become case-specific, and we begin with the treatment of the non-periodic case. \\
(ii.1.2.2.1)$_{\R^2}$ \underline{$\abs{3\xi_i^2-\eta_i^2} \gtrsim \abs{\xi_0}^2$ and $\abs{3\xi_j^2-\eta_j^2} \gtrsim \abs{\xi_0}^2$ for some tuple} \\ \underline{$(i,j) \in \set{(0,1),(0,2),(1,2)}$}: \\
Under the active assumptions (ii), (ii.1.2), and (ii.1.2.2), we have
\[ Diff_{\text{mZK}_1} \les N^{s-1} \abs{(\xi_1,\eta_1)}^{1-s} \frac{1}{m(\xi_3,\eta_3)}, \]
and we now first turn our attention to the case $(i,j) = (0,1)$. In this situation, we may pass pointwise to
\begin{align*}
\abs{\xi_0} \langle (\xi_0,\eta_0) \rangle Diff_{\text{mZK}_1} &\les N^{s-1} \abs{(\xi_1,\eta_1)}^{-\frac{1}{2}-s+} \frac{\langle (\xi_3,\eta_3) \rangle^{-\frac{1}{2}+}}{m(\xi_3,\eta_3)} \\ & \ \ \ \cdot \abs{\abs{(\xi_2,\eta_2)}^2-\abs{(\xi_3,\eta_3)}^2}^\frac{1}{2} \langle (\xi_2,\eta_2) \rangle \langle (\xi_3,\eta_3) \rangle^{\frac{1}{2}-} \\ & \ \ \ \cdot \abs{3\xi_0^2-\eta_0^2}^\frac{1}{8} \langle (\xi_0,\eta_0) \rangle^{0-} \abs{3\xi_1^2-\eta_1^2}^\frac{1}{8} \langle (\xi_1,\eta_1) \rangle^{1-}.
\end{align*}
Taking into account that for $s > -\frac{1}{2}$ we have $N^{s-1}  \abs{(\xi_1,\eta_1)}^{-\frac{1}{2}-s+} \les N^{-\frac{3}{2}+}$, and that the quantity $\frac{\langle (\xi_3,\eta_3) \rangle^{-\frac{1}{2}+}}{m(\xi_3,\eta_3)}$ is essentially bounded by $1$ for $s>\frac{1}{2}$, we obtain, after applying Parseval's identity and Hölder's inequality, that
\begin{align*}
I_f &\les N^{-\frac{3}{2}+} \norm{MP_\infty(J^1I_Nu_2,J^{\frac{1}{2}-}I_Nu_3)}_{L_{txy,\infty}^2} \norm{\abs{K(D)}^\frac{1}{8}J^{0-}f \abs{K(D)}^\frac{1}{8}J^{1-}I_N \widetilde{u}_1}_{L_{txy,\infty}^2} \\ & \leq N^{-\frac{3}{2}+}  \norm{MP_\infty(J^1I_Nu_2,J^{\frac{1}{2}-}I_Nu_3)}_{L_{txy,\infty}^2} \norm{\abs{K(D)}^\frac{1}{8}J^{0-}f}_{L_{txy,\infty}^{4-}} \\ & \ \ \ \cdot \norm{\abs{K(D)}^\frac{1}{8}J^{1-}I_N \widetilde{u}_1}_{L_{txy,\infty}^{4+}}.
\end{align*}
The first factor can now be estimated by \eqref{MPlambda}, while the second and third factors are handled by \eqref{L^4derivativegain-} and \eqref{L^4derivativegain+}, respectively.
Consequently, we obtain
\[... \les N^{-\frac{3}{2}+} \norm{f}_{X_{0,\frac{1}{2}-2\epsilon,\infty}} \prod_{i=1}^{3} \norm{I_Nu_i}_{X_{1,\frac{1}{2}+\epsilon,\infty}},  \]
and this argument works provided that $\epsilon > 0$ is chosen sufficiently small.
The case $(i,j) = (0,2)$ now follows completely analogously by symmetry. Likewise, in the case $(i,j) = (1,2)$, the estimate
\[ I_f \les  N^{-\frac{3}{2}+} \norm{f}_{X_{0,\frac{1}{2}-2\epsilon,\infty}} \prod_{i=1}^{3} \norm{I_Nu_i}_{X_{1,\frac{1}{2}+\epsilon,\infty}} \] can be obtained in a similar manner, by applying \eqref{MPlambdadual} to the pair $(f,\widetilde{u}_3)$ and estimating the factors corresponding to $u_1$ and $u_2$ using \eqref{L^4derivativegain}, respectively. \\
(ii.1.2.2.2)$_{\R^2}$ \underline{$\abs{3\xi_i^2-\eta_i^2} \ll \abs{\xi_0}^2$ or $\abs{3\xi_j^2-\eta_j^2} \ll \abs{\xi_0}^2$ for all tuples} \\ \underline{$(i,j) \in \set{(0,1),(0,2),(1,2)}$}: \\
In this situation, the following three cases must be distinguished:
\[ \abs{3\xi_1^2-\eta_1^2} \ll \abs{\xi_0}^2 \quad \text{and} \quad \abs{3\xi_2^2-\eta_2^2} \ll \abs{\xi_0}^2, \]
or
\[ \abs{3\xi_0^2-\eta_0^2} \ll \abs{\xi_0}^2 \quad \text{and} \quad \abs{3\xi_1^2-\eta_1^2} \ll \abs{\xi_0}^2, \]
or
\[ \abs{3\xi_0^2-\eta_0^2} \ll \abs{\xi_0}^2 \quad \text{and} \quad \abs{3\xi_2^2-\eta_2^2} \ll \abs{\xi_0}^2. \]
We first turn to the first case. Assume that $\abs{3\xi_1^2-\eta_1^2} \ll \abs{\xi_0}^2$ and $\abs{3\xi_2^2-\eta_2^2} \ll \abs{\xi_0}^2$. Then, taking into account assumptions (ii.1.2) and (ii.1.2.2), it follows that
\[ \abs{\xi_1} \sim \abs{\xi_2} \sim \abs{\eta_1} \sim \abs{\eta_2}, \quad \text{with} \quad \abs{\abs{\xi_1}-\abs{\xi_2}} \ll \abs{\xi_0} \quad \text{and} \quad \abs{\abs{\eta_1}-\abs{\eta_2}} \ll \abs{\xi_0}. \]
Depending on the signs of the wave numbers $\xi_1,\xi_2,\eta_1$, and $\eta_2$ (note that $\abs{(\xi_1,\eta_1)} \gg \abs{(\xi_3,\eta_3)}$ by assumption (ii.1)), this then leads to the following three possible scenarios
\[ \abs{\underbrace{3(\xi_1+\xi_2+\xi_3)^2+(\eta_1+\eta_2+\eta_3)^2}_{= 3\xi_0^2+\eta_0^2} - \begin{cases} 0 & \\ 4\eta_1^2 & \\ 8\eta_1^2 \end{cases}} \ll \abs{(\xi_1,\eta_1)}^2, \]
all of which contradict
\[\abs{3\xi_0^2+\eta_0^2 -2\eta_1^2} \leq \abs{\abs{(\xi_0,\eta_0)}^2-\abs{(\xi_2,\eta_2)}^2} + \abs{3\xi_1^2-\eta_1^2} \ll \abs{\xi_0}^2. \]
Hence, this case is ruled out.
On the other hand, suppose (without loss of generality) that $\abs{3\xi_0^2-\eta_0^2} \ll \abs{\xi_0}^2$ and $\abs{3\xi_1^2-\eta_1^2} \ll \abs{\xi_0}^2$. Then, again by (ii.1.2) and (ii.1.2.2), we obtain the relations
\[ \abs{\xi_0} \sim \abs{\xi_1} \sim \abs{\eta_0} \sim \abs{\eta_1}, \quad \text{with} \quad \abs{\abs{\xi_0}-\abs{\xi_1}} \ll \abs{\xi_0} \quad \text{and} \quad \abs{\abs{\eta_0}-\abs{\eta_1}} \ll \abs{\xi_0}. \]
Taking into account that $\abs{(\xi_1,\eta_1)} \gg \abs{(\xi_3,\eta_3)}$, the different sign configurations of the wave numbers $\xi_0, \xi_1, \eta_0$, and $\eta_1$ then give rise to the following three possible cases
\[ \abs{\underbrace{3(\xi_0-\xi_1-\xi_3)^2+(\eta_0-\eta_1-\eta_3)^2}_{= 3\xi_2^2+\eta_2^2} - \begin{cases} 0 & \\ 4\eta_1^2 & \\ 8\eta_1^2 \end{cases}} \ll \abs{(\xi_1,\eta_1)}^2. \]
As all of these cases are incompatible with
\[ \abs{3\xi_2^2+\eta_2^2-2\eta_1^2} \leq \abs{\abs{(\xi_2,\eta_2)}^2-\abs{(\xi_1,\eta_1)}^2} + \abs{3\xi_1^2-\eta_1^2} \ll \abs{\xi_0}^2, \]
this subcase is resolved as well. \\
(ii.2)$_{\R^2}$ \underline{$\abs{(\xi_1,\eta_1)} \sim \abs{(\xi_3,\eta_3)}$}: \\
In this final subcase, we can roughly estimate $Diff_{\text{mZK}_1}$ in the following way:
\[ Diff_{\text{mZK}_1} \les N^{3s-3} \abs{(\xi_1,\eta_1)}^{3-3s}. \]
Moreover, since $\abs{(\xi_1,\eta_1)} \sim \abs{(\xi_2,\eta_2)} \sim \abs{(\xi_3,\eta_3)}$, we may pass pointwise to
\[ \abs{\xi_0} \langle (\xi_0,\eta_0) \rangle Diff_{\text{mZK}_1} \les N^{3s-3} \abs{(\xi_1,\eta_1)}^{\frac{3}{2}-3s+} \abs{\xi_0} \langle (\xi_0,\eta_0) \rangle^{\frac{1}{4}+} \prod_{i=1}^{3} \langle (\xi_i,\eta_i) \rangle^{\frac{3}{4}-}, \] and since we also have $N^{3s-3} \abs{(\xi_1,\eta_1)}^{\frac{3}{2}-3s+} \les N^{-\frac{3}{2}+}$ ($s > \frac{1}{2}$), this allows us to infer the bound
\[ I_f \les N^{-\frac{3}{2}+} \norm{I_x^1(J^{\frac{3}{4}-}I_Nu_1J^{\frac{3}{4}-}I_Nu_2J^{\frac{3}{4}-}I_Nu_3)}_{X_{\frac{1}{4}+,-\frac{1}{2}+2\epsilon,\infty}}. \]
For sufficiently small $\epsilon > 0$, we may now apply the trilinear local well-posedness estimate \eqref{LWPmZKinH^s1/4} to further deduce
\[... \les N^{-\frac{3}{2}+} \prod_{i=1}^{3} \norm{I_Nu_i}_{X_{1,\frac{1}{2}+\epsilon,\infty}}, \]
which is exactly what we wanted to show. \\
Thus, in the non-periodic setting, the case-by-case analysis is complete and results in the commutator estimate
\begin{equation} \label{nonlinearsmoothingmZKinftyrough}
\norm{\partial_x(I_N(u_1u_2u_3) - I_Nu_1I_Nu_2I_Nu_3)}_{X_{1,-\frac{1}{2}+2\epsilon,\infty}} \les N^{-\frac{3}{2}+} \prod_{i=1}^{3} \norm{I_Nu_i}_{X_{1,\frac{1}{2}+\epsilon,\infty}}.
\end{equation}
In view of the preceding discussion, this proves \eqref{varianttrilinmZK} for $\lambda = \infty$.
We now address the remaining subcases arising in the semiperiodic setting: \\
(ii.1.2.2.1)$_{\R \times \T_\lambda}$ \underline{$\max_{i=1}^{2} \abs{\xi_i} \ll \abs{(\xi_1,q_1)}$}: \\
In this situation, taking assumption (ii.1) into account, we obtain
\[ \abs{\xi_0},\abs{\xi_1},\abs{\xi_2},\abs{\xi_3} \ll \abs{(\xi_1,q_1)}. \]
Together with (ii.1.2) and (ii.1.2.2), we can thus conclude that
\[ \abs{q_0} \sim \abs{q_1} \sim \abs{q_2} \sim \abs{(\xi_1,q_1)} \quad \text{with} \quad \abs{\abs{q_i} - \abs{q_j}} \ll \abs{(\xi_1,q_1)} \quad \text{for all} \quad i,j \in \set{0,1,2}, \]
which is incompatible with $\abs{q_3} \ll \abs{(\xi_1,q_1)}$ and the convolution constraint $q_0 = q_1+q_2+q_3$. Hence, this case is excluded. \\
(ii.1.2.2.2)$_{\R \times \T_\lambda}$ \underline{$\max_{i=1}^{2} \abs{\xi_i} \sim \abs{(\xi_1,q_1)}$}: \\
(ii.1.2.2.2.1)$_{\R \times \T_\lambda}$ \underline{$\min_{i=1}^{2} \abs{\xi_i} \ll \abs{(\xi_1,q_1)}$}: \\
We first fix, for the remainder of the argument, the bound
\[ Diff_{\text{mZK}_1} \les N^{s-1} \abs{(\xi_1,q_1)}^{1-s} \frac{1}{m(\xi_3,q_3)}. \]
Without loss of generality, let $\max_{i=1}^{2} \abs{\xi_i} = \abs{\xi_1}$. On the one hand, assumption (ii.1.2.2) then immediately implies
\begin{equation} \label{relations1}
\abs{q_2} \sim \abs{(\xi_1,q_1)} \quad \text{with} \quad \abs{\abs{q_2} - \abs{(\xi_1,q_1)}} \ll \abs{(\xi_1,q_1)},
\end{equation}
and on the other hand, we also have the pointwise estimates $\abs{\xi_0} \les \abs{\xi_1+\xi_2}$ and $\abs{\xi_0} \les \abs{\xi_1+\xi_3}$ at our disposal, which pave the way for an application of \eqref{Bilinreflambda}:
If one of the relations
\[ \abs{3(\xi_1+\xi_2)^2 - (q_1+q_2)^2} \gtrsim \abs{\xi_1+\xi_2} \]
or
\[ \abs{3(\xi_1+\xi_3)^2-(q_1+q_3)^2} \gtrsim \abs{\xi_1+\xi_3} \]
holds - say, the former - then we may pass pointwise to
\begin{align*}
\abs{\xi_0} \langle (\xi_0,q_0) \rangle Diff_{\text{mZK}_1} &\les N^{s-1} \abs{(\xi_1,q_1)}^{-\frac{1}{4}-s+} \frac{\langle (\xi_3,q_3) \rangle^{-\frac{1}{2}+}}{m(\xi_3,q_3)} \\ & \ \ \ \cdot \abs{\abs{(\xi_0,q_0)}^2 - \abs{(\xi_3,q_3)}^2}^\frac{1}{2} \langle (\xi_0,q_0) \rangle^{0-} \langle (\xi_3,q_3) \rangle^{\frac{1}{2}-} \\ & \ \ \ \cdot \abs{\xi_1+\xi_2}^\frac{1}{4} \langle (\xi_1,q_1) \rangle^{1-} \langle (\xi_2,q_2) \rangle^{1-}.
\end{align*}
Taking into account that $N^{s-1} \abs{(\xi_1,q_1)}^{-\frac{1}{4}-s+} \les N^{-\frac{5}{4}+}$ for $s > -\frac{1}{4}$ and \\ $\frac{\langle (\xi_3,q_3) \rangle^{-\frac{1}{2}+}}{m(\xi_3,q_3)} \les 1$ for $s > \frac{1}{2}$, it follows, after undoing Plancherel and applying Hölder's inequality, that
\[ I_f \les N^{-\frac{5}{4}+} \norm{MP_\lambda(J^{0-}f,J^{\frac{1}{2}-}I_N\widetilde{u}_3)}_{L_{txy,\lambda}^2} \norm{I_x^\frac{1}{4} P_\lambda^1(J^{1-}I_Nu_1 J^{1-}I_Nu_2)}_{L_{txy,\lambda}^2}, \]
and the first factor can be further estimated using \eqref{MPlambdadual}, while for the second we may appeal to \eqref{Bilinreflambda}. This then leads us to
\[ ... \les N^{-\frac{5}{4}+} \norm{f}_{X_{0,\frac{1}{2}-2\epsilon,\lambda}} \prod_{i=1}^{3} \norm{I_Nu_i}_{X_{1,\frac{1}{2}+\epsilon,\lambda}}, \]
and this argument closes provided $\epsilon > 0$ is chosen sufficiently small.
The case $\abs{3(\xi_1+\xi_3)^2-(q_1+q_3)^2} \gtrsim \abs{\xi_1+\xi_3}$ is treated analogously and yields the same conclusion, with \eqref{Bilinreflambdadual} applied to the pair $(f,\widetilde{u}_2)$ (note that $\abs{\xi_1+\xi_3} = \abs{\xi_0 + (-\xi_2)}$) and \eqref{MPlambda} applied to $(u_1,u_3)$.
It remains to show that the case
\begin{equation} \label{relations2}
\begin{aligned}
\abs{3(\xi_1+\xi_2)^2 - (q_1+q_2)^2} &\ll \abs{\xi_1+\xi_2} \\
\text{and} \quad  \abs{3(\xi_1+\xi_3)^2-(q_1+q_3)^2} &\ll \abs{\xi_1+\xi_3}. 
\end{aligned}
\end{equation}
cannot occur: From the relations in \eqref{relations1} and \eqref{relations2}, and additionally taking into account \\ $\abs{\xi_2},\abs{\xi_3},\abs{q_3} \ll \abs{(\xi_1,q_1)}$ and $\abs{\xi_1} \sim \abs{(\xi_1,q_1)}$, it can be concluded that $q_1$ and $q_2$ must have opposite signs, with
\[ \abs{\sqrt{3}\abs{\xi_1} - \abs{q_1}} \ll \abs{(\xi_1,q_1)} \quad \text{and} \quad \abs{2q_1 + q_2} \ll \abs{(\xi_1,q_1)}. \]
However, this would imply
\[ \abs{\abs{(\xi_1,q_1)}^2-\abs{(\xi_2,q_2)}^2} \sim \abs{q_1}^2 \sim \abs{(\xi_1,q_1)}^2, \]
which contradicts the active assumption (ii.1.2.2). Hence, this subcase is settled. \\
(ii.1.2.2.2.2)$_{\R \times \T_\lambda}$ \underline{$\min_{i=1}^{2} \abs{\xi_i} \sim \abs{(\xi_1,q_1)}$}: \\
(ii.1.2.2.2.2.1)$_{\R \times \T_\lambda}$ \underline{$\abs{\xi_3} \gtrsim \abs{\xi_0}$}: \\
In this case, we estimate the quantity $Diff_{\text{mZK}_1}$ by
\[ Diff_{\text{mZK}_1} \les N^{2s-2} \abs{(\xi_1,q_1)}^{2-2s}. \]
Since $\abs{\xi_0} \les \abs{\xi_3}$, we can shift almost half a derivative from the product onto the low-frequency factor $u_3$, which allows us to infer the pointwise bound
\begin{align*}
\abs{\xi_0} \langle (\xi_0,q_0) \rangle Diff_{\text{mZK}_1} &\les N^{2s-2} \abs{(\xi_1,q_1)}^{\frac{1}{2}-2s+} \langle (\xi_0,q_0) \rangle^{0-} \langle (\xi_2,q_2) \rangle^{1-} \\ & \ \ \ \cdot \abs{\abs{(\xi_1,q_1)}^2-\abs{(\xi_3,q_3)}^2}^\frac{1}{2} \langle (\xi_1,q_1) \rangle \langle (\xi_3,q_3) \rangle^{\frac{1}{2}-}.
\end{align*}
Now, since we have $N^{2s-2} \abs{(\xi_1,q_1)}^{\frac{1}{2}-2s+} \les N^{-\frac{3}{2}+}$ for $s>\frac{1}{4}$, an application of Parseval's identity, followed by Hölder's inequality yields
\begin{align*}
I_f &\les N^{-\frac{3}{2}+} \norm{MP_\lambda(J^1I_Nu_1,J^{\frac{1}{2}-}I_Nu_3)}_{L_{txy,\lambda}^2} \norm{J^{0-}fJ^{1-}I_N\widetilde{u}_2}_{L_{txy,\lambda}^2} \\ & \leq N^{-\frac{3}{2}+}\norm{MP_\lambda(J^1I_Nu_1,J^{\frac{1}{2}-}I_Nu_3)}_{L_{txy,\lambda}^2} \norm{J^{0-}f}_{L_{txy,\lambda}^{4-}} \norm{J^{1-}I_N\widetilde{u}_2}_{L_{txy,\lambda}^{4+}},
\end{align*}
and the first factor can now be estimated by means of \eqref{MPlambda}, while the second and third factors are controlled by \eqref{L^4-lambda} and \eqref{L^4+lambda}, respectively.
Altogether, this gives
\[ ... \les N^{-\frac{3}{2}+} \norm{f}_{X_{0,\frac{1}{2}-2\epsilon,\lambda}} \prod_{i=1}^{3} \norm{I_Nu_i}_{X_{1,\frac{1}{2}+\epsilon,\lambda}}, \]
as long as $0<\epsilon \ll 1$ is chosen sufficiently small. \\
(ii.1.2.2.2.2.2)$_{\R \times \T_\lambda}$ \underline{$\abs{\xi_3} \ll \abs{\xi_0}$}: \\
We again fix the estimate
\[ Diff_{\text{mZK}_1} \les N^{2s-2} \abs{(\xi_1,q_1)}^{2-2s} \]
and now make use of the resonance function $R_{\text{mZK}}$:
A straightforward computation shows that
\[ R_{\text{mZK}} = -6(\xi_1+\xi_2)(\xi_1+\xi_3)(\xi_2+\xi_3) + \sum_{i=1}^{3} \xi_i (\abs{(\xi_0,q_0)}^2 - \abs{(\xi_i,q_i)}^2), \]
and taking into account $\abs{\xi_1+\xi_2} \geq \abs{\xi_0} - \abs{\xi_3} \geq \frac{1}{2} \abs{\xi_0}$, $\abs{\xi_1+\xi_3},\abs{\xi_2+\xi_3} \sim \abs{(\xi_1,q_1)}$, the active assumptions (ii.1.2) and (ii.1.2.2), and the fact that $\abs{\xi_3} \ll \abs{\xi_0}$, the inverse triangle inequality implies
\[ \abs{R_{\text{mZK}}} \gtrsim \abs{\xi_0} \abs{(\xi_1,q_1)}^2, \]
and hence also
\[ \max_{i=0}^{3} \langle \sigma_i \rangle \coloneqq \max_{i=0}^{3} \langle \tau_i - \phi(\xi_i,q_i) \rangle \gtrsim \abs{R_{\text{mZK}}} \gtrsim \abs{\xi_0} \abs{(\xi_1,q_1)}^2.  \]
We are now led to distinguish between two cases.
In the case $\max_{i=0}^{3} \langle \sigma_i \rangle = \langle \sigma_3 \rangle$, we obtain the pointwise bound
\begin{align*}
\abs{\xi_0} \langle (\xi_0,q_0) \rangle Diff_{\text{mZK}_1} \langle \sigma_3 \rangle^{-\frac{1}{2}-\epsilon} &\les N^{2s-2} \abs{(\xi_1,q_1)}^{\frac{1}{2}-2s+} \langle (\xi_3,q_3) \rangle^{0-} \langle (\xi_0,q_0) \rangle^{0-} \\ & \ \ \ \cdot \langle (\xi_1,q_1) \rangle \langle (\xi_2,q_2) \rangle^{1-},
\end{align*}
so that an application of Plancherel's theorem and Hölder's inequality yields
\begin{align*}
I_f &\les N^{-\frac{3}{2}+} \norm{I_Nu_3}_{X_{1,\frac{1}{2}+\epsilon,\lambda}} \norm{J^1I_N\widetilde{u}_1J^{1-}I_N\widetilde{u}_2J^{0-}f}_{L_{t}^2L_{xy,\lambda}^1} \\ &\leq  N^{-\frac{3}{2}+} \norm{I_Nu_3}_{X_{1,\frac{1}{2}+\epsilon,\lambda}} \norm{J^1I_N\widetilde{u}_1}_{L_t^\infty L_{xy,\lambda}^2} \norm{J^{1-}I_N\widetilde{u}_2}_{L_{txy,\lambda}^{4+}} \norm{J^{0-}f}_{L_{txy,\lambda}^{4-}}
\end{align*}
($N^{2s-2} \abs{(\xi_1,q_1)}^{\frac{1}{2}-2s+} \les N^{-\frac{3}{2}+}$ for $s > \frac{1}{4}$). By employing the Sobolev embedding theorem in the $t$-variable for the second factor and applying estimates \eqref{L^4+lambda} and \eqref{L^4-lambda} to the third and fourth factors, respectively, we further obtain
\[ ... \les N^{-\frac{3}{2}+} \norm{f}_{X_{0,\frac{1}{2}-2\epsilon,\lambda}} \prod_{i=1}^{3} \norm{I_Nu_i}_{X_{1,\frac{1}{2}+\epsilon,\lambda}}, \]
which holds for sufficiently small $\epsilon > 0$. It thus remains to consider the case $\max_{i=0}^{3} \langle \sigma_i \rangle \neq \langle \sigma_3 \rangle$, for instance $\max_{i=0}^{3} \langle \sigma_i \rangle = \langle \sigma_0 \rangle$ (the other cases are treated analogously): In this situation, we can infer the pointwise bound
\begin{align*}
\abs{\xi_0} \langle (\xi_0,q_0) \rangle Diff_{\text{mZK}_1} \langle \sigma_0 \rangle^{-\frac{1}{2}+2\epsilon} &\les N^{2s-2} \abs{(\xi_1,q_1)}^{\frac{1}{2}-2s+} \langle (\xi_3,q_3) \rangle^{0-} \\ & \ \ \ \cdot \langle (\xi_1,q_1) \rangle^{1-} \langle (\xi_2,q_2) \rangle^{1-},
\end{align*}
which, taking into account that $N^{2s-2} \abs{(\xi_1,q_1)}^{\frac{1}{2}-2s+} \les N^{-\frac{3}{2}-}$ for $s > \frac{1}{4}$, leads us to
\begin{align*}
I_f &\les N^{-\frac{3}{2}+} \norm{f}_{X_{0,\frac{1}{2}-2\epsilon,\lambda}} \norm{J^{1-}I_Nu_1J^{1-}I_Nu_2J^{0-}I_Nu_3}_{L_{txy,\lambda}^2} \\ &\leq  N^{-\frac{3}{2}+} \norm{f}_{X_{0,\frac{1}{2}-2\epsilon,\lambda}} \norm{J^{1-}I_Nu_1}_{L_{txy,\lambda}^4} \norm{J^{1-}I_Nu_2}_{L_{txy,\lambda}^4} \norm{J^{0-}I_Nu_3}_{L_{t}^\infty L_{xy,\lambda}^\infty}.
\end{align*}
We then apply the Sobolev embedding theorem in the $x,y$-variables to the low-frequency factor $u_3$, while the remaining two factors can be handled using estimate \eqref{L^4lambda}. This ultimately yields
\[ ... \les N^{-\frac{3}{2}+} \norm{f}_{X_{0,\frac{1}{2}-2\epsilon,\lambda}} \prod_{i=1}^{3} \norm{I_Nu_i}_{X_{1,\frac{1}{2}+\epsilon,\lambda}}, \]
which again works for sufficiently small $\epsilon > 0$, thereby completing the discussion of this subcase as well. \\
(ii.2)$_{\R \times \T_\lambda}$ \underline{$\abs{(\xi_1,q_1)} \sim \abs{(\xi_3,q_3)}$}: \\
(ii.2.1)$_{\R \times \T_\lambda}$ \underline{$\abs{(\xi_1,q_1)} \gg \abs{(\xi_0,q_0)}$}: \\
In this situation, we write
\[ Diff_{\text{mZK}_1} \les N^{3s-3} \abs{(\xi_1,q_1)}^{3-3s}, \]
which allows us to deduce the pointwise bound
\begin{align*}
\abs{\xi_0} \langle (\xi_0,q_0) \rangle Diff_{\text{mZK}_1} &\les N^{3s-3} \abs{(\xi_1,q_1)}^{\frac{3}{2}-3s+} \langle (\xi_2,q_2) \rangle^{1-} \langle (\xi_3,q_3) \rangle^{1-} \\ & \ \ \ \cdot \abs{\abs{(\xi_1,q_1)}^2-\abs{(\xi_0,q_0)}^2}^\frac{1}{2} \langle (\xi_0,q_0) \rangle^{0-} \langle (\xi_1,q_1) \rangle^{\frac{1}{2}-}.
\end{align*}
Taking into account that $N^{3s-3} \abs{(\xi_1,q_1)}^{\frac{3}{2}-3s+} \les N^{-\frac{3}{2}+}$ holds for $s> \frac{1}{2}$, it follows - after undoing Plancherel and applying Hölder's inequality - that
\begin{align*}
I_f &\les N^{-\frac{3}{2}+} \norm{MP_\lambda(J^{0-}f,J^{\frac{1}{2}-}I_N\widetilde{u}_1)}_{L_{txy,\lambda}^2} \norm{J^{1-}I_Nu_2J^{1-}I_Nu_3}_{L_{txy,\lambda}^2} \\ & \leq N^{-\frac{3}{2}+}\norm{MP_\lambda(J^{0-}f,J^{\frac{1}{2}-}I_N\widetilde{u}_1)}_{L_{txy,\lambda}^2} \norm{J^{1-}I_Nu_2}_{L_{txy,\lambda}^4} \norm{J^{1-}I_Nu_3}_{L_{txy,\lambda}^4},
\end{align*}
and the first factor can now be treated using \eqref{MPlambdadual}, while the second and fourth factors can be handled using \eqref{L^4lambda}. Altogether, this yields
\[ ... \les N^{-\frac{3}{2}+} \norm{f}_{X_{0,\frac{1}{2}-2\epsilon,\lambda}} \prod_{i=1}^{3} \norm{I_Nu_i}_{X_{1,\frac{1}{2}+\epsilon,\lambda}}, \]
which holds provided that $\epsilon > 0$ is chosen sufficiently small. \\
(ii.2.2)$_{\R \times \T_\lambda}$ \underline{$\abs{(\xi_1,q_1)} \sim \abs{(\xi_0,q_0)}$}: \\
In this final subcase, we are in the regime
\[ \abs{(\xi_0,q_0)} \sim \abs{(\xi_1,q_1)} \sim \abs{(\xi_2,q_2)} \sim \abs{(\xi_3,q_3)} \gtrsim N, \]
which implies
\[ Diff_{\text{mZK}_1} \les N^{2s-2} \abs{(\xi_1,q_1)}^{2-2s}. \]
As a consequence, we obtain the pointwise bound
\[ \abs{\xi_0} \langle (\xi_0,q_0) \rangle Diff_{\text{mZK}_1} \les N^{2s-2} \abs{(\xi_1,q_1)}^{\frac{11}{12}-2s+} \abs{\xi_0} \langle (\xi_0,q_0) \rangle^{\frac{11}{24}+} \prod_{i=1}^{3} \langle (\xi_i,q_i) \rangle^{\frac{13}{24}-}, \]
and since $N^{2s-2} \abs{(\xi_1,q_1)}^{\frac{11}{12}-2s+} \les N^{-\frac{13}{12}+}$ holds for $s > \frac{11}{24}$, undoing Plancherel and appealing to duality yields
\[ I_f \les N^{-\frac{13}{12}+} \norm{I_x^1(J^{\frac{13}{24}-}I_Nu_1J^{\frac{13}{24}-}I_Nu_2J^{\frac{13}{24}-}I_Nu_3)}_{X_{\frac{11}{24}+,-\frac{1}{2}+2\epsilon,\lambda}}. \]
For this remaining term, we may now invoke the trilinear estimate \eqref{LWPmZKinH^slambda11/24} for some small number $0<\epsilon \ll 1$, to further obtain
\[ ... \les N^{-\frac{13}{12}+} \prod_{i=1}^{3} \norm{I_Nu_i}_{X_{1,\frac{1}{2}+\epsilon,\lambda}}, \]
wherein we harmlessly absorbed the resulting loss of $\lambda^{\widetilde{\epsilon}} \les N^{100\widetilde{\epsilon}} \sim N^{0+}$ into the factor $N^{-\frac{13}{12}+}$. \\
Gathering together all intermediate bounds in the semiperiodic setting, we conclude that
\begin{equation} \label{nonlinearsmoothingmZKlambdarough}
\norm{\partial_x(I_N(u_1u_2u_3) - I_Nu_1I_Nu_2I_Nu_3)}_{X_{1,-\frac{1}{2}+2\epsilon,\lambda}} \les N^{-\frac{13}{12}+} \prod_{i=1}^{3} \norm{I_Nu_i}_{X_{1,\frac{1}{2}+\epsilon,\lambda}},
\end{equation}
which, combined with the preliminary discussion preceding this case-by-case analysis, yields the desired estimate \eqref{varianttrilinmZK}.
\end{proof}

\begin{rem}
Upon careful inspection, one can extract from the proof of \eqref{varianttrilinmZK} that we have in fact established the two commutator estimates \eqref{nonlinearsmoothingmZKinftyrough} and \eqref{nonlinearsmoothingmZKlambdarough} in the following precise form: Let $\lambda \in \intcc{1,N^{100}} \cup \set{\infty}$ and $s \in \intoo{\frac{1}{2},1}$ be arbitrary. Then, for every $\widetilde{\epsilon} > 0$, there exists $\epsilon > 0$ such that
\begin{equation} \label{nonlinearsmoothingmZKprecise}
\begin{aligned}
&\norm{\partial_x(I_N(u_1u_2u_3)-I_Nu_1I_Nu_2I_Nu_3)}_{X_{1,-\frac{1}{2}+2\epsilon',\lambda}} \les_{s,\widetilde{\epsilon},\epsilon'} \\ & \begin{cases} N^{-\frac{3}{2}+\widetilde{\epsilon}} \prod_{i=1}^{3} \norm{I_Nu_i}_{X_{1,\frac{1}{2}+\epsilon',\infty}}, & \text{if} \quad \lambda = \infty, \\ N^{-\frac{13}{12}+\widetilde{\epsilon}} \prod_{i=1}^{3} \norm{I_Nu_i}_{X_{1,\frac{1}{2}+\epsilon',\lambda}}, & \text{if} \quad \lambda \in \intcc{1,N^{100}} \end{cases}
\end{aligned}
\end{equation}
holds for all $0<\epsilon'\leq \epsilon$ and time-localized functions $u_1,u_2,u_3$ with \\ $I_Nu_1,I_Nu_2,I_Nu_3 \in X_{1,\frac{1}{2}+\epsilon',\lambda}$.
\end{rem}

Exactly as described in the proof of \eqref{cubictermZK}, we now obtain, using \eqref{nonlinearsmoothingmZKprecise}, the following two estimates for the quartic term in the modified energy of mZK.

\begin{lemma}
Let $\lambda \in \intcc{1,N^{100}} \cup \set{\infty}$ be arbitrary. Given $\delta \in \R$, $s \in \intoo{\frac{1}{2},1}$, $\widetilde{\epsilon} > 0$, and $\epsilon' > 0$, one has
\begin{equation} \label{quartictermmZK}
\begin{aligned}
&\left| \int_{0}^{\delta} \int_{\R \times \T_\lambda} \Delta I_Nu_1 \partial_x(I_N(u_2 u_3 u_4) - I_Nu_2 I_Nu_3 I_Nu_4) \ \mathrm{d}(x,y) \mathrm{d}t \right| \les_{s, \widetilde{\epsilon}, \epsilon ', \delta} \\ & \begin{cases} N^{-\frac{3}{2}+\widetilde{\epsilon}} \prod_{i=1}^{4} \norm{I_Nu_i}_{X_{1,\frac{1}{2}+\epsilon ',\infty}^{\abs{\delta}}}, & \text{if} \quad \lambda = \infty, \\ N^{-\frac{13}{12}+\widetilde{\epsilon}} \prod_{i=1}^{4} \norm{I_Nu_i}_{X_{1,\frac{1}{2}+\epsilon ',\lambda}^{\abs{\delta}}}, & \text{if} \quad \lambda \in \intcc{1,N^{100}} \end{cases}
\end{aligned}
\end{equation}
for all $u_1,u_2,u_3,u_4$ with $I_Nu_i \in X_{1,\frac{1}{2}+\epsilon',\lambda}^{\abs{\delta}}$ ($i \in \set{1,2,3,4}$).
\end{lemma}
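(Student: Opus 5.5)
The plan mirrors the proof of \eqref{cubictermZK}, with \eqref{nonlinearsmoothingmZKprecise} in place of \eqref{nonlinearsmoothingZKprecise}. Fix $\widetilde{\epsilon}>0$ and let $\epsilon>0$ be the number provided by \eqref{nonlinearsmoothingmZKprecise}. Since the estimate with larger $\epsilon'$ follows from the one with smaller $\epsilon'$ (the restriction norms are monotone in $b$), we may assume $0<\epsilon'\leq\epsilon$.

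\textbf{Extensions and time cutoff.} Because $X_{1,\frac12+\epsilon',\lambda}$ is a Hilbert space, each $I_Nu_i$ has an extension $v_i\in X_{1,\frac12+\epsilon',\lambda}$ with $v_i|_{[-|\delta|,|\delta|]\times\widetilde X}=I_Nu_i$ and $\norm{v_i}_{X_{1,\frac12+\epsilon',\lambda}}=\norm{I_Nu_i}_{X^{|\delta|}_{1,\frac12+\epsilon',\lambda}}$. We multiply $v_2,v_3,v_4$ by a smooth cutoff $\psi$ equal to $1$ on $[-|\delta|,|\delta|]$. This makes them time-localized, as \eqref{nonlinearsmoothingmZKprecise} requires (the hypothesis enters through \eqref{LWPmZKinH^s1/4} and \eqref{LWPmZKinH^slambda11/24}). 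The multiplication costs only a constant depending on $\delta$, and it does not change the integrand on $[0,\delta]$. Replacing $u_i$ by $I_N^{-1}v_i$, the time integral over $[0,\delta]$ becomes $\int_\R\int_{\widetilde X}$ of $\chi_{[\min(0,\delta),\max(0,\delta)]}\,\Delta v_1$ against $\partial_x(I_N(w_2w_3w_4)-v_2v_3v_4)$, where $w_i=I_N^{-1}v_i$.

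\textbf{Duality.} By Parseval and Cauchy--Schwarz, after inserting
\[ 1=\langle(\xi,q)\rangle^{-1}\langle(\xi,q)\rangle\langle\tau-\phi\rangle^{\frac12-2\epsilon'}\langle\tau-\phi\rangle^{-\frac12+2\epsilon'}, \]
the integral is bounded by
\[ \norm{\chi_{[\min(0,\delta),\max(0,\delta)]}\Delta v_1}_{X_{-1,\frac12-2\epsilon',\lambda}}\cdot\norm{\partial_x(I_N(w_2w_3w_4)-v_2v_3v_4)}_{X_{1,-\frac12+2\epsilon',\lambda}}. \]
For the first factor, sharp time cutoffs are bounded on $H_t^{b}$ for $b<\frac12$. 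This holds uniformly after conjugation with the free group, so the factor is $\les\norm{\Delta v_1}_{X_{-1,\frac12-2\epsilon',\lambda}}\les\norm{v_1}_{X_{1,\frac12+\epsilon',\lambda}}$. The second factor is controlled by \eqref{nonlinearsmoothingmZKprecise}, which gives $N^{-\frac32+\widetilde{\epsilon}}$ when $\lambda=\infty$ and $N^{-\frac{13}{12}+\widetilde{\epsilon}}$ when $\lambda\in[1,N^{100}]$, times $\prod_{i=2}^4\norm{v_i}_{X_{1,\frac12+\epsilon',\lambda}}$. Substituting the extension identities yields \eqref{quartictermmZK}.

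\textbf{Main point requiring care.} The substantive work is already contained in \eqref{nonlinearsmoothingmZKprecise}, so the only delicate step is the compatibility of parameters. The dual exponent $\frac12-2\epsilon'$ must stay strictly below $\frac12$, which is needed for the sharp cutoff to be bounded. This is exactly why \eqref{nonlinearsmoothingmZKprecise} is stated with $X_{1,-\frac12+2\epsilon'}$ on the left, so the two factors match. The remaining routine checks are that the time localization does not worsen any constant beyond a $\delta$-dependent factor, and that in the semiperiodic case the $\lambda^{0+}$ loss is absorbed as before using $\lambda\le N^{100}$.
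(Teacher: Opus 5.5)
Your proposal is correct and follows the paper's route. The paper proves this lemma exactly as it proved \eqref{cubictermZK}: it passes to minimal-norm extensions, applies \eqref{nonlinearsmoothingmZKprecise}, and uses Cauchy--Schwarz together with the boundedness of the sharp time cutoff on $H_t^{\frac12-}$. You also add a smooth time cutoff on $v_2,v_3,v_4$ to meet the time-localization hypothesis of \eqref{nonlinearsmoothingmZKprecise}, and you reduce to $\epsilon'\leq\epsilon$ by monotonicity in $b$; the paper leaves both points implicit, and you handle both correctly.
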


To set the stage for the proof of Theorem \ref{GWPmZK}, it remains to obtain a suitable estimate for the sextic term appearing in the modified energy of mZK. We carry this out in a single argument that  applies simultaneously to both the non-periodic case and the rescaled semiperiodic case.

\begin{lemma} Let $\lambda \in \intcc{1,\infty}$, $\delta \in \R$, $s\in \intoo{\frac{1}{2},1}$, $\widetilde{\epsilon} > 0$, and $\epsilon' > 0$ be given. Then the estimate
\begin{equation} \label{sextictermmZK}
\begin{aligned}
&\left| \int_{0}^{\delta} \int_{\R \times \T_\lambda} I_N(u_1 u_2 u_3) \partial_x(I_N(u_4 u_5 u_6) - I_Nu_4 I_Nu_5 I_Nu_6) \ \mathrm{d}(x,y) \mathrm{d}t \right| \les_{s, \widetilde{\epsilon}, \epsilon', \delta} \\ & N^{-2+\widetilde{\epsilon}} \prod_{i=1}^{6} \norm{I_Nu_i}_{X_{1,\frac{1}{2}+\epsilon', \lambda}^{\abs{\delta}}}
\end{aligned}
\end{equation}
holds for all $u_1,u_2,u_3,u_4,u_5,u_6$ with $I_Nu_i \in X_{1,\frac{1}{2}+\epsilon',\lambda}^{\abs{\delta}}$ ($i\in \set{1,2,3,4,5,6}$).
\end{lemma}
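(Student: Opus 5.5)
We follow the proof of \eqref{quartictermZK}. By choosing extensions of the $I_Nu_i$ and using the argument of \cite{CollianderSchroedi2001} to pass from $\intcc{\min(0,\delta),\max(0,\delta)}$ to $\R$, it suffices to bound the global-in-time expression
\[ Int \coloneqq \left|\int_\R\int I_N(v_1v_2v_3)\,\partial_x\big(I_N(v_4v_5v_6)-I_Nv_4I_Nv_5I_Nv_6\big)\,\mathrm{d}(x,y)\,\mathrm{d}t\right| \]
by $N^{-2+\widetilde\epsilon}\prod_{i=1}^6\norm{I_Nv_i}_{X_{1,\frac12+\epsilon',\lambda}}$, for $\widehat{v_i}^\lambda\ge0$. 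After Parseval, the integrand carries the multiplier
\[ \xi_0\,\frac{m(\xi_0,q_0)\big(m(\xi_0,q_0)-m_4m_5m_6\big)}{m_1\cdots m_6}, \qquad m_i \coloneqq m(\xi_i,q_i). \]
By symmetry we order $\abs{(\xi_1,q_1)}\ge\abs{(\xi_2,q_2)}\ge\abs{(\xi_3,q_3)}$ and $\abs{(\xi_4,q_4)}\ge\abs{(\xi_5,q_5)}\ge\abs{(\xi_6,q_6)}$. We use only the $L^4$ estimate \eqref{L^4lambda}, the $\lambda$-independent Sobolev embedding \eqref{trivialLinfty}, and Hölder's inequality. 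No bilinear smoothing is needed, because the six factors carry plenty of regularity.

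The cases are as follows.

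\emph{(i) $\abs{(\xi_4,q_4)}\ll N$.} The multiplier difference vanishes, so $Int=0$.

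\emph{(ii) $\abs{(\xi_4,q_4)}\gtrsim N$.} We bound the difference crudely by $m_4m_5m_6$ plus $m_0$, so that
\[ \abs{\xi_0}\,\text{Diff}\lesssim \abs{\xi_0}\,(m_1m_2m_3)^{-1}\,(m_5m_6)^{-1}. \]
We also use $\abs{\xi_0}\lesssim\abs{(\xi_1,q_1)}\wedge\abs{(\xi_4,q_4)}$. In every case Hölder's inequality places two of the factors in $L^4$, with at most $J^{1-}$ on them, and the remaining factors in $L^\infty_{txy}$. The latter cost $\norm{J^{0+}\cdot}$ beyond $H^1$ via \eqref{trivialLinfty}, which is affordable since we need $J^{1+}$ on $I_Nv_i$ and can take it from the $\xi$-decay. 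Concretely, the estimate has the form
\[ Int\lesssim \sup(\text{weight})\;\norm{J^{1-}I_Nv_1}_{L^4}\norm{J^{1-}I_Nv_4}_{L^4}\prod_{i\in\{2,3,5,6\}}\norm{J^{-\theta}I_Nv_i}_{L^\infty}, \]
where the $L^\infty$ factors lose $1+$ derivatives, compensated by a gain of $\theta$. It is cleaner to place all factors in $L^6_{txy}$-type norms, but since only \eqref{L^4lambda} is available uniformly in $\lambda$, we instead pair the factors as $(1,4)$ in $L^4$ and the others in $L^\infty$.

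The weight is then
\[ N^{2s-2}\,\abs{(\xi_1,q_1)}^{\frac12-s+}\,\abs{(\xi_4,q_4)}^{\frac12-s+}\times\prod_{i\in\{2,3,5,6\}}\frac{\langle(\xi_i,q_i)\rangle^{-1+\dots}}{m_i}, \]
with the $-2$ splitting as one derivative of $\xi_0$ spread across the two top frequencies. Each factor $\langle\cdot\rangle^{-(1-s)}/m_i$ is bounded, and the first part is $\lesssim N^{-1+}$.

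\emph{Main obstacle.} This crude scheme only yields $N^{-1+}$, whereas the target is $N^{-2+}$. The additional $N^{-1}$ must come from the low-regularity factors $2,3,5,6$, which need not be high frequency. The plan is therefore to subdivide according to how many of $\abs{(\xi_i,q_i)}$, $i=2,3,5,6$, are $\gtrsim N$.

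If $\abs{(\xi_1,q_1)}\ll N$, then $\abs{\xi_0}\lesssim\abs{(\xi_1,q_1)}$ and $\abs{(\xi_4,q_4)}\sim\abs{(\xi_5,q_5)}$ are both $\gtrsim N$, which gives a $N^{-2+}$ gain as in case (ii.1) of the ZK quartic term.

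Otherwise, at least two factors ($1$ and $4$) are at or above $N$. In that case we take the full $\langle\cdot\rangle^{1}$ regularity on those factors, placing them in $L^\infty_tL^2$ or in $L^4$, so that the surplus regularity $\abs{(\xi_1,q_1)}^{-s}\abs{(\xi_4,q_4)}^{-s}\cdot N^{2s-2}\abs{(\xi_1,q_1)}^{1-s}\abs{(\xi_4,q_4)}^{1-s}\cdot\abs{\xi_0}\cdot\abs{(\xi_1,q_1)}^{-1}\abs{(\xi_4,q_4)}^{-1}\cdots$ delivers $N^{-2+}$ for $s>\frac12$. This bookkeeping step is where I expect the difficulty to lie.
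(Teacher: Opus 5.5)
Your reduction to the global-in-time expression, the Parseval step, the two orderings and case (i) agree with the paper. The estimation scheme you propose for case (ii) has a genuine gap, for three reasons.

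\begin{itemize}
\item The displayed bound with $u_1,u_4$ in $L^4_{txy}$ and $u_2,u_3,u_5,u_6$ in $L^\infty_{txy}$ is not a valid Hölder inequality. The exponents add up to $\frac12$, not $1$, so you would need the product of the remaining four factors in $L^2_{txy}$, and $L^\infty$ norms do not control that on a domain of infinite measure.
\item Your weight credits those four $L^\infty$ factors with $\langle(\xi_i,q_i)\rangle^{-1+}$. But \eqref{trivialLinfty} consumes the whole $H^1$ regularity, so an $L^\infty$ factor only carries $J^{0-}$.
\item The crude multiplier bound is false. From $\abs{m_0-m_4m_5m_6}\le m_0+m_4m_5m_6$ you only get $\text{Diff}\le m_0^2(m_1\cdots m_6)^{-1}+m_0(m_1m_2m_3)^{-1}$. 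When $\abs{(\xi_0,q_0)}\le N\ll\abs{(\xi_4,q_4)}\sim\abs{(\xi_5,q_5)}$, the first term exceeds your bound by $m_4^{-1}\sim(\abs{(\xi_4,q_4)}/N)^{1-s}$.
\end{itemize}

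More fundamentally, any scheme in which only two factors gain derivatives is capped at $N^{-1+}$. This covers two factors in $L^4$, in $L^2_{txy}$ or in $L^\infty_tL^2_{xy}$, with the rest in $L^\infty$.
\begin{itemize}
\item Suppose only $u_1,u_4$ are above $N$. Then $\abs{(\xi_0,q_0)}\sim\abs{(\xi_1,q_1)}\sim\abs{(\xi_4,q_4)}\eqqcolon M$, and the mean value theorem gives $\abs{\xi_0}\text{Diff}\lesssim\abs{\xi_0}\abs{(\xi_5,q_5)}/M$. Take $\abs{\xi_0}\sim M\sim N$ and $\abs{(\xi_5,q_5)}\sim cN$. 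Then every choice of two gaining factors among $u_1,u_4,u_5$ leaves $N^{-1}$.
\item Your case $\abs{(\xi_1,q_1)}\ll N$ has the same problem. There $\abs{\xi_0}$ (up to $cN$) and $m_6^{-1}$ must be absorbed on top of $u_4,u_5$, and two gaining factors again give at most $N^{-1}$.
\end{itemize}

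The missing idea, which the paper uses, is to put \emph{four} factors in $L^4_{txy}$ and only the two remaining, smallest-frequency factors in $L^\infty$ with $J^{0-}$. Use \eqref{L^4lambda}, and \eqref{L^4non-periodic} for $\lambda=\infty$ (which the lemma includes). Each costs only $0+$ derivatives, so each of the four gains $\langle\cdot\rangle^{-1+}$. Concretely:
\begin{itemize}
\item If $\abs{(\xi_1,q_1)}\ll N$, put $u_1,u_4,u_5,u_6$ in $L^4$.
\item Otherwise, order all six frequencies and ask whether the fourth largest is $\ll N$. If it is, use the mean value theorem when $\abs{(\xi_5,q_5)}\ll N$, and the bound $N^{2s-2}\abs{(\xi_4,q_4)}^{1-s}\abs{(\xi_5,q_5)}^{1-s}$ when $\abs{(\xi_5,q_5)}\gtrsim N$. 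In both subcases put $u_1,u_2,u_4,u_5$ in $L^4$.
\item If the fourth largest is $\gtrsim N$, put the four largest in $L^4$. Then $N^{4s-4}\prod_{j=1}^4\abs{(\xi_{i_j},q_{i_j})}^{\frac12-s+}\lesssim N^{-2+}$, and the two $L^\infty$ factors are handled by $\langle\cdot\rangle^{-\frac12+}/m\lesssim 1$. Both of these use exactly $s>\frac12$.
\end{itemize}
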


\begin{proof}
As in the proof of \eqref{quartictermZK}, we can restrict ourselves to verifying the estimate
\begin{equation} \label{sextictermmZKnodelta}
\begin{aligned}
&\left| \int_{\R} \int_{\R \times \T_\lambda} I_N(v_1 v_2 v_3) \partial_x(I_N(v_4 v_5 v_6) - I_Nv_4 I_Nv_5 I_Nv_6) \ \mathrm{d}(x,y) \mathrm{d}t \right| \les_{s, \widetilde{\epsilon}, \epsilon'} \\ & N^{-2+\widetilde{\epsilon}} \prod_{i=1}^{6} \norm{I_Nv_i}_{X_{1,\frac{1}{2}+\epsilon', \lambda}}
\end{aligned}
\end{equation}
for all functions $v_i$ with $I_Nv_i \in  X_{1,\frac{1}{2}+\epsilon',\lambda}$ and $\widehat{\! v_i}^\lambda \geq 0$, where $i \in \set{1,2,3,4,5,6}$. To make the left-hand side of \eqref{sextictermmZKnodelta} more amenable to a case-by-case analysis in Fourier space, we use Parseval's identity to pass to
\begin{align*}
&\frac{1}{\lambda^5} \left| \int_{\R^2} \sum_{q_0 \in \Z/\lambda} \int_{\R^8} \sum_{\substack{q_1,q_2,q_4,q_5 \in \Z/\lambda \\ \ast}} \xi_0 \frac{m(\xi_0,q_0)(m(\xi_0,q_0)-m(\xi_4,q_4)m(\xi_5,q_5)m(\xi_6,q_6))}{\prod_{i=1}^{6}m(\xi_i,q_i)} \right. \\
& \cdot \left. \prod_{i=1}^{6} \widehat{I_Nv_i}^\lambda(\tau_i,\xi_i,q_i) \ \mathrm{d}(\tau_1,\tau_2,\tau_4,\tau_5,\xi_1,\xi_2,\xi_4,\xi_5) \mathrm{d}(\tau_0,\xi_0) \right| \eqqcolon Int
\end{align*}
subject to the convolution constraint $(\tau_0,\xi_0,q_0) = (\tau_1+\tau_2+\tau_3,\xi_1+\xi_2+\xi_3,q_1+q_2+q_3) = (\tau_4+\tau_5+\tau_6, \xi_4+\xi_5+\xi_6, q_4+q_5+q_6)$, and with the obvious modification in the case $\lambda = \infty$. Moreover, by symmetry, we may assume without loss of generality that $\abs{(\xi_1,q_1)} \geq \abs{(\xi_2,q_2)} \geq \abs{(\xi_3,q_3)}$ and $\abs{(\xi_4,q_4)} \geq \abs{(\xi_5,q_5)} \geq \abs{(\xi_6,q_6)}$, and we begin with case \\ \\
(i) \underline{$\abs{(\xi_4,q_4)} \ll N$}: \\
This case is trivial, since the identity
\[ m(\xi_0,q_0) - m(\xi_4,q_4)m(\xi_5,q_5)m(\xi_6,q_6) = 1 - 1 \cdot 1 \cdot 1 = 0 \]
forces $Int$ to be zero. \\
(ii) \underline{$\abs{(\xi_4,q_4)} \gtrsim N$}: \\
(ii.1) \underline{$\abs{(\xi_1,q_1)} \ll N$}: \\
In this situation, the convolution constraint forces $\abs{(\xi_4,q_4)} \sim \abs{(\xi_5,q_5)} \gtrsim N$, so that we may pass to
\begin{align*} Diff_{\text{mZK}_2} &\coloneqq \frac{m(\xi_0,q_0)(m(\xi_0,q_0)-m(\xi_4,q_4)m(\xi_5,q_5)m(\xi_6,q_6))}{\prod_{i=1}^{6}m(\xi_i,q_i)} \\ &\les N^{2s-2} \abs{(\xi_4,q_4)}^{2-2s} \frac{1}{m(\xi_6,q_6)},
\end{align*}
and hence
\begin{align*}
\abs{\xi_0} Diff_{\text{mZK}_2} &\les N^{2s-2} \abs{(\xi_4,q_4)}^{-2s+} \frac{\langle (\xi_6,q_6) \rangle^{-1+}}{m(\xi_6,q_6)} \langle (\xi_1,q_1) \rangle^{1-} \langle (\xi_4,q_4) \rangle^{1-} \\ & \ \ \ \cdot \langle (\xi_5,q_5) \rangle^{1-} \langle (\xi_6,q_6) \rangle^{1-} \langle (\xi_2,q_2) \rangle^{0-} \langle (\xi_3,q_3) \rangle^{0-}, 
\end{align*}
where we have used that $\abs{\xi_0} \les \abs{(\xi_1,q_1)} \land \abs{(\xi_4,q_4)} = \abs{(\xi_1,q_1)}$.
Now, taking into account that $N^{2s-2} \abs{(\xi_4,q_4)}^{-2s+} \les N^{-2+\widetilde{\epsilon}}$ for $s>0$, we undo the application of Parseval's indentity and apply Hölder's inequality to obtain
\begin{align*}
Int & \les N^{-2+\widetilde{\epsilon}} \left( \prod_{\substack{i=1 \\ i \neq 2,3}}^{6} \norm{J^{1-}I_Nv_i}_{L^4_{txy,\lambda}} \right) \norm{J^{0-}I_Nv_2}_{L_{t}^\infty L_{xy,\lambda}^\infty}  \norm{J^{0-}I_Nv_3}_{L_{t}^\infty L_{xy,\lambda}^\infty}.
\end{align*}
By applying the estimate \eqref{L^4lambda} or \eqref{L^4non-periodic} four times and the Sobolev embedding theorem twice, it then follows that
\[... \les_{s,\widetilde{\epsilon},\epsilon'} N^{-2+\widetilde{\epsilon}} \prod_{i=1}^{6} \norm{I_Nv_i}_{X_{1,\frac{1}{2}+\epsilon',\lambda}}, \]
which is exactly the bound we aimed to obtain. \\
(ii.2) \underline{$\abs{(\xi_1,q_1)} \gtrsim N$}: \\
We globally order the quantities $\abs{(\xi_i,q_i)}$ by size, denote them by $\abs{(\xi_{i_1},q_{i_1})} \geq \abs{(\xi_{i_2},q_{i_2})} \geq \abs{(\xi_{i_3},q_{i_3})} \geq \abs{(\xi_{i_4},q_{i_4})} \geq \abs{(\xi_{i_5},q_{i_5})} \geq \abs{(\xi_{i_6},q_{i_6})}$, and consider two subcases: \\
(ii.2.1) \underline{$\abs{(\xi_{i_4},q_{i_4})} \ll N$}: \\
This case splits into two parts. If $\abs{(\xi_5,q_5)} \ll N$, then $\abs{(\xi_3,q_3)}, \abs{(\xi_5,q_5)},\abs{(\xi_6,q_6)} \ll N$ and $\abs{(\xi_0,q_0)} \sim \abs{(\xi_4,q_4)}$, so that the mean value theorem yields
\[ Diff_{\text{mZK}_2} \les \abs{(\xi_1,q_1)}^{1-s} \abs{(\xi_4,q_4)}^{s-2+} \frac{1}{m(\xi_2,q_2)} \abs{(\xi_5,q_5)}^{1-}. \]
Since $\abs{\xi_0} \les \abs{(\xi_1,q_1)} \land \abs{(\xi_4,q_4)}$, this further implies
\begin{align*}
\abs{\xi_0} Diff_{\text{mZK}_2} &\les \abs{(\xi_1,q_1)}^{-s+} \abs{(\xi_4,q_4)}^{s-2+} \frac{\langle (\xi_2,q_2) \rangle^{-1+}}{m(\xi_2,q_2)} \langle (\xi_1,q_1) \rangle^{1-} \\ & \ \ \ \cdot \langle (\xi_4,q_4) \rangle^{1-} \langle (\xi_2,q_2) \rangle^{1-} \langle (\xi_5,q_5) \rangle^{1-} \langle (\xi_3,q_3) \rangle^{0-} \langle (\xi_6,q_6) \rangle^{0-} \\ & \les N^{-2+\widetilde{\epsilon}} \left( \prod_{\substack{i=1 \\ i \neq 3,6}}^{6} \langle (\xi_i,q_i) \rangle^{1-} \right) \langle (\xi_3,q_3) \rangle^{0-} \langle (\xi_6,q_6) \rangle^{0-},
\end{align*}
where in the last step we have used that $\abs{(\xi_1,q_1)}^{-s+} \abs{(\xi_4,q_4)}^{s-2+} \frac{\langle (\xi_2,q_2) \rangle^{-1+}}{m(\xi_2,q_2)} \les N^{-2+\widetilde{\epsilon}}$ for $s \in \intoo{0,2}$. \\ If, on the other hand, $\abs{(\xi_5,q_5)} \gtrsim N$, it follows that $\abs{(\xi_2,q_2)},\abs{(\xi_3,q_3)},\abs{(\xi_6,q_6)} \ll N$, and thus also $\abs{(\xi_0,q_0)} \sim \abs{(\xi_1,q_1)}$, which allows us to infer the bound
\[ Diff_{\text{mZK}_2} \les N^{2s-2} \abs{(\xi_4,q_4)}^{1-s} \abs{(\xi_5,q_5)}^{1-s}. \]
This then leads us to
\begin{align*}
\abs{\xi_0} Diff_{\text{mZK}_2} &\les N^{2s-2} \abs{(\xi_4,q_4)}^{-s+} \abs{(\xi_5,q_5)}^{-s+} \langle (\xi_2,q_2) \rangle^{-1+} \langle (\xi_1,q_1) \rangle^{1-} \\ & \ \ \ \cdot \langle (\xi_2,q_2) \rangle^{1-} \langle (\xi_4,q_4) \rangle^{1-} \langle (\xi_5,q_5) \rangle^{1-} \langle (\xi_3,q_3) \rangle^{0-} \langle (\xi_6,q_6) \rangle^{0-} \\ & \les N^{-2+\widetilde{\epsilon}} \left( \prod_{\substack{i=1 \\ i \neq 3,6}}^{6} \langle (\xi_i,q_i) \rangle^{1-} \right) \langle (\xi_3,q_3) \rangle^{0-} \langle (\xi_6,q_6) \rangle^{0-}
\end{align*}
(note that $\abs{\xi_0} \les \abs{(\xi_1,q_1)} \land \abs{(\xi_4,q_4)}$ and $ N^{2s-2} \abs{(\xi_4,q_4)}^{-s+} \abs{(\xi_5,q_5)}^{-s+} \langle (\xi_2,q_2) \rangle^{-1+} \\ \les N^{-2+\widetilde{\epsilon}}$ for $s > 0$), and we therefore arrive at the same pointwise estimate in both cases. By undoing Plancherel and a subsequent application of Hölder's inequality, we may therefore deduce
\begin{align*}
Int &\les N^{-2+\widetilde{\epsilon}} \left( \prod_{\substack{i=1 \\ i \neq 3,6}}^{6} \norm{J^{1-}I_Nv_i}_{L_{txy,\lambda}^4} \right) \norm{J^{0-}I_Nv_3}_{L_{t}^\infty L_{xy,\lambda}^\infty} \norm{J^{0-}I_Nv_6}_{L_{t}^\infty L_{xy,\lambda}^\infty},
\end{align*}
which can be further estimated by means of \eqref{L^4lambda} or \eqref{L^4non-periodic} and the Sobolev embedding theorem to finally obtain
\[ ...\les_{s,\widetilde{\epsilon},\epsilon'} N^{-2+\widetilde{\epsilon}} \prod_{i=1}^{6} \norm{I_Nv_i}_{X_{1,\frac{1}{2}+\epsilon',\lambda}}. \]
This completes the discussion of this subcase. \\
(ii.2.2) \underline{$\abs{(\xi_{i_4},q_{i_4})} \gtrsim N$}: \\
In this final scenario we have
\[ Diff_{\text{mZK}_2} \les N^{4s-4} \left( \prod_{j=1}^{4} \abs{(\xi_{i_j},q_{i_j})}^{1-s} \right) \frac{1}{m(\xi_{i_5},q_{i_5})m(\xi_{i_6},q_{i_6})}, \]
and this, together with $\abs{\xi_0} \les \abs{(\xi_1,q_1)} \land \abs{(\xi_4,q_4)}$ and the fact that $1,4 \in \set{i_1,i_2,i_3,i_4}$, allows us to pass to
\begin{align*}
\abs{\xi_0} Diff_{\text{mZK}_2} &\les N^{4s-4} \left( \prod_{j=1}^{4} \abs{(\xi_{i_j},q_{i_j})}^{\frac{1}{2}-s+} \right) \frac{\langle (\xi_{i_5},q_{i_5}) \rangle^{-\frac{1}{2}+} \langle (\xi_{i_6},q_{i_6}) \rangle^{-\frac{1}{2}+}}{m(\xi_{i_5},q_{i_5})m(\xi_{i_6},q_{i_6})} \\ & \ \ \ \cdot \left( \prod_{j=1}^{4} \langle (\xi_{i_j},q_{i_j}) \rangle^{1-} \right) \langle (\xi_{i_5},q_{i_5}) \rangle^{0-} \langle (\xi_{i_6},q_{i_6}) \rangle^{0-}.
\end{align*}
Now, for $s > \frac{1}{2}$ we have
\[ N^{4s-4} \left( \prod_{j=1}^{4} \abs{(\xi_{i_j},q_{i_j})}^{\frac{1}{2}-s+} \right) \frac{\langle (\xi_{i_5},q_{i_5}) \rangle^{-\frac{1}{2}+} \langle (\xi_{i_6},q_{i_6}) \rangle^{-\frac{1}{2}+}}{m(\xi_{i_5},q_{i_5})m(\xi_{i_6},q_{i_6})} \les N^{-2+\widetilde{\epsilon}}, \]
so that undoing Plancherel and applying Hölder's inequality gives
\[ Int \les N^{-2+\widetilde{\epsilon}} \left( \prod_{j=1}^{4} \norm{J^{1-}I_Nv_{i_j}}_{L_{txy,\lambda}^4} \right) \norm{J^{0-}I_Nv_{i_5}}_{L_{t}^\infty L_{xy,\lambda}^\infty} \norm{J^{0-}I_Nv_{i_6}}_{L_{t}^\infty L_{xy,\lambda}^\infty}.  \]
As discussed earlier, applying \eqref{L^4lambda} or \eqref{L^4non-periodic} four times and the Sobolev embedding theorem twice then yields
\[ ... \les_{s,\widetilde{\epsilon},\epsilon'} N^{-2+\widetilde{\epsilon}} \prod_{i=1}^{6} \norm{I_Nv_i}_{X_{1,\frac{1}{2}+\epsilon',\lambda}}, \]
which is exactly what we wanted to establish.
\end{proof}

\begin{rem} Bhattacharya, Farah, and Roudenko obtained the same $N^{-2+}$ bound in their analysis of the non-periodic case \cite{Bhattacharya2020} for the sextic term of the modified energy, which arises when the mZK equation is symmetrized in the manner of Grünrock and Herr \cite{GrünrockHerr2014}.
\end{rem}

Having established all the required preliminaries for mZK, we now turn to the

\begin{proof}[Proof of Theorem \ref{GWPmZK}]
We first focus on the semiperiodic case. Without loss of generality, let $s \in \intoo{\frac{36}{49},1}$, that is, $s = \frac{36}{49}+\widetilde{\epsilon}$ for some $\widetilde{\epsilon} \in \intoo{0,\frac{13}{49}}$. Moreover, let $u_0 \in H^s(\R \times \T)$ (satisfying the smallness condition specified in Theorem \ref{GWPmZK} in the focusing case of mZK) and fix an arbitrary $T>0$.
Our objective is to show that the (local) solution $u$ of $\mathrm{(CP}_{2,\R \times \T} \mathrm{)}$ exists on the entire time interval $\intcc{-T,T}$. As already used in the proof of Theorem \ref{GWPZK}, this is equivalent to showing that the rescaled solution $u_\lambda$ of $\mathrm{(CP}_{2,\R \times \T_\lambda} \mathrm{)}$ persists on the interval $\intcc{-\lambda^3T,\lambda^3T}$.
To this end, we choose
\[ \lambda = \lambda_{\text{mZK}} = C_{\text{mZK}} N^{\frac{1-s}{s}} \]
as in Remark \ref{RemarklambdaZKandlambdamZK}, and since $\lambda_{\text{mZK}} \les N^\frac{13}{36} \leq N^{100}$, Proposition \ref{variantLWPmZK} yields the lower bound
\[ \delta_0 \geq c\norm{I_Nu_{0,\lambda_{\text{mZK}}}}_{H^1_{\lambda{\text{mZK}}}}^{-\gamma} \]
for the initial lifespan $\delta_0 > 0$ of the rescaled solution $u_{\lambda_{\text{mZK}}}$, where $\gamma$ is some fixed positive real number. By combining \eqref{controloverH^1mZKsp}, Lemma \ref{modifiedenergylemma}, \eqref{quartictermmZK}, \eqref{sextictermmZK}, and the continuity estimate \eqref{contdepmZKlambda}, we obtain
\begin{align*}
&\norm{I_Nu_{\lambda_{\text{mZK}}}(\delta_0)}_{H^1_{\lambda_{\text{mZK}}}}^2  \leq C_0\left( \norm{u_0}_{L_1^2}, E_\pm[I_Nu_{\lambda_{\text{mZK}}}](0) \right) \\ & + \widetilde{C} \left( N^{-\frac{13}{12}+\frac{\widetilde{\epsilon}}{2}} \norm{I_Nu_{0,\lambda_{\text{mZK}}}}_{H^1_{\lambda_{\text{mZK}}}}^4 + N^{-2+\frac{\widetilde{\epsilon}}{2}} \norm{I_Nu_{0,\lambda_{\text{mZK}}}}_{H^1_{\lambda_{\text{mZK}}}}^6 \right),
\end{align*}
and our choice of $\lambda_{\text{mZK}}$ (see Remark \ref{RemarklambdaZKandlambdamZK} (ii)) allows us to further infer the bound
\[ C_0\left( \norm{u_0}_{L_1^2}, E_\pm[I_Nu_{\lambda_{\text{mZK}}}](0) \right) \leq \widetilde{C}_0 = \widetilde{C}_0 \left( \norm{u_0}_{L_1^2} \right). \]
Choosing $N \gg 1$ sufficiently large such that
\begin{align*}
&\widetilde{C} \left( N^{-\frac{13}{12}+\frac{\widetilde{\epsilon}}{2}} \left( N^{\frac{13}{12}-\widetilde{\epsilon}}4\widetilde{C}_0^2 + \norm{I_Nu_{0,\lambda_{\text{mZK}}}}_{H^1_{\lambda_{\text{mZK}}}}^4 \right) \right. \\ & + \left. N^{-2+\frac{\widetilde{\epsilon}}{2}} \left( N^{\frac{13}{12}-\widetilde{\epsilon}}8\widetilde{C}_0^3 + \norm{I_Nu_{0,\lambda_{\text{mZK}}}}_{H^1_{\lambda_{\text{mZK}}}}^6 \right) \right) \leq \widetilde{C}_0, 
\end{align*}
the fixed point argument underlying Proposition \ref{variantLWPmZK} can then be iterated $N^{\frac{13}{12}-\widetilde{\epsilon}}$ times, with the lifespan being extended by at least
\[ c (2\widetilde{C}_0)^{-\frac{\gamma}{2}} \eqqcolon \delta^* \]
both forward and backward in time at each iteration step.
In order to reach the desired lifespan through these iterations, we now require
\[ N^{\frac{13}{12}-\widetilde{\epsilon}} \delta^* > \lambda_{\text{mZK}}^3T = C_{\text{mZK}}^3 N^{\frac{3(1-s)}{s}} T, \]
which is satisfied for large $N$, provided that
\[ \frac{13}{12}-\widetilde{\epsilon} - \frac{3(1-s)}{s} > 0 \Leftrightarrow s > \frac{3}{\frac{49}{12}-\widetilde{\epsilon}}. \]
A closer inspection now shows that our initially fixed choice of $s$ satisfies this constraint. Consequently, after scaling back to the original problem, we conclude that the solution $u$ exists on the entire interval $\intcc{-T,T}$, thereby closing the argument.
Moreover, in the preceding argument we may choose
\[ N \sim (1+T)^{\frac{12s}{49s-36}+}, \]
which, in conjunction with $\lambda_{\text{mZK}} \sim N^{\frac{1-s}{s}}$ and the bound
\[\norm{u(\lambda_{\text{mZK}}^{-3}t)}_{H^s_1} \les \lambda_{\text{mZK}}^s \norm{I_Nu_{\lambda_{\text{mZK}}}(t)}_{H^1_{\lambda_{\text{mZK}}}}, \]
leads us to
\[\norm{u(\lambda_{\text{mZK}}^{-3}t)}_{H^s_1} \les (1+T)^{\frac{12s(1-s)}{49s-36}+} \]
for all $t\in \intcc{-\lambda_{\text{mZK}}^3T,\lambda_{\text{mZK}}^3T}$.
Taking the supremum over this interval then finally yields the polynomial bound asserted in \eqref{polygrowthmZKbelowH^1} for the semiperiodic case. \\
The non-periodic case is treated in a completely analogous manner: In this setting, the inequality \eqref{controloverH^1mZKnp} is used in place of \eqref{controloverH^1mZKsp}, and the improved bound $N^{-\frac{3}{2}+\frac{\widetilde{\epsilon}}{2}}$ from \eqref{quartictermmZK} is employed. This allows for $N^{\frac{3}{2}-\widetilde{\epsilon}}$ iterations of the fixed point argument, ultimately leading to the condition
\[ N^{\frac{3}{2}-\widetilde{\epsilon}} > CN^{\frac{3(1-s)}{s}}T, \]
and hence to the constraint
\[ \frac{3}{2}-\widetilde{\epsilon} - \frac{3(1-s)}{s} > 0 \Leftrightarrow s > \frac{3}{\frac{9}{2}-\widetilde{\epsilon}} \]
required to close the argument. By choosing $\widetilde{\epsilon} > 0$ sufficiently small, one sees that any prescribed $s>\frac{2}{3}$ satisfies this constraint, thereby proving the asserted global well-posedness result.
Moreover, one can read off that the initial inequality is satisfied for the choice
\[ N \sim (1+T)^{\frac{2s}{3(3s-2)}+}. \]
Combining this once again with the relation $\lambda_{\text{mZK}} \sim N^{\frac{1-s}{s}}$ and the estimate
\[\norm{u(\lambda_{\text{mZK}}^{-3}t)}_{H^s_\infty} \les \lambda_{\text{mZK}}^s \norm{I_Nu_{\lambda_{\text{mZK}}}(t)}_{H^1_\infty}, \]
we obtain
\[ \norm{u(\lambda_{\text{mZK}}^{-3}t)}_{H^s_\infty} \les (1+T)^{\frac{2s(1-s)}{3(3s-2)}+} \]
for all $t\in \intcc{-\lambda_{\text{mZK}}^3T,\lambda_{\text{mZK}}^3T}$. Since the right-hand side of this inequality is independent of the time variable $t$, we obtain the polynomial bound asserted in \eqref{polygrowthmZKbelowH^1} for the non-periodic case. This completes the proof of Theorem \ref{GWPmZK}.
\end{proof}

\begin{rem}
One can see from the proof that the decay rates $N^{-\frac{13}{12}+}$ and $N^{-\frac{3}{2}+}$ of the respective quartic terms in the modified energy determine the strength of the resulting global results. Similar to what we achieved in the case of ZK, the extracted exponents $-\frac{13}{12}+ = -(1+\frac{1}{12})+$ and $-\frac{3}{2}+=-(1+\frac{1}{2})+$ realize the full strength of the nonlinear smoothing effect, as manifested in the trilinear estimates \eqref{LWPmZKinH^slambda11/24} and \eqref{LWPmZKinH^s1/4}, respectively.
\end{rem}

\section{Polynomial growth bounds for generalized ZK on $\mathbb{R} \times \mathbb{T}$}

\subsection{General arguments}

We devote this section to the proof of Theorem \ref{GrowthgZK}, which, as previously mentioned, is methodologically inspired by the ideas of Bourgain \cite{Bourgain1993, Bourgain19932} and Staffilani \cite{Staffilani1997}. In addition to the linear and bilinear estimates fixed in Section 3, we require one more technical ingredient, which is reminiscient of Grönwall's integral inequality. We state it here for later use (see, e.g., Lemma 13 in \cite{Valet2021}):

\begin{lemma} \label{Gronwall}
 Let $K_{1} > 0$, $ \epsilon \in (0,1) $, and let $(a_{m})_{m \in \mathbb{N}_{0}} $ be a sequence of nonnegative real numbers with 
\[ a_{m+1} \leq a_{m} + K_{1} \left( 1+a_{m}^{1- \epsilon} \right) \qquad \forall  m \in \mathbb{N}_{0}. \]
Then for every $d > \frac{1}{\epsilon} $, there exists a constant $K_{2} = K_{2}(K_{1},d) $, such that 
\[ a_{m+1} \leq K_{2}(1+m)^{d} (1+a_{0}) \qquad \forall  m \in \mathbb{N}_{0} \]
holds.
\end{lemma}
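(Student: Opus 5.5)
We will compare $(a_m)$ with a continuous majorant $b(t)=A(1+t)^d$, with $A$ proportional to $1+a_0$, and prove $a_m\le b(m)$ by induction on $m$. The mechanism is the discrete version of the ODE $b'=K_1(1+b^{1-\epsilon})$, whose solutions grow like $t^{1/\epsilon}$. Hence any $d>\frac1\epsilon$ leaves room to absorb the constants.

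\textbf{Step 1: reduction.} Set $A:=C(1+a_0)$ with $C\ge1$ to be chosen depending only on $K_1,d$. We claim that $a_m\le A(1+m)^d$ for all $m\in\N_0$. The case $m=0$ is immediate since $a_0\le A$. The claimed bound for $a_{m+1}$ then follows because $(2+m)^d\le 2^d(1+m)^d$, so $K_2=2^dC$ works.

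\textbf{Step 2: induction step.} Assume $a_m\le A(1+m)^d$. Since $x\mapsto x+K_1(1+x^{1-\epsilon})$ is increasing, we get
\[ a_{m+1}\le A(1+m)^d+K_1+K_1A^{1-\epsilon}(1+m)^{d(1-\epsilon)}. \]
By the mean value theorem, $A(2+m)^d-A(1+m)^d\ge dA(1+m)^{d-1}$. It therefore suffices to show
\[ K_1+K_1A^{1-\epsilon}(1+m)^{d(1-\epsilon)}\le dA(1+m)^{d-1}. \]
Because $d>\frac1\epsilon$, we have $d(1-\epsilon)=d-d\epsilon<d-1$. So $(1+m)^{d(1-\epsilon)}\le(1+m)^{d-1}$, and the second term is bounded by $K_1A^{1-\epsilon}(1+m)^{d-1}\le\frac d2A(1+m)^{d-1}$ as soon as $A^{\epsilon}\ge 2K_1/d$. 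Since $d>1$, the first term satisfies $K_1\le\frac d2 A\le\frac d2A(1+m)^{d-1}$ once $A\ge 2K_1/d$. Both conditions hold for $C:=\max\{1,2K_1/d,(2K_1/d)^{1/\epsilon}\}$, because $A\ge C$.

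\textbf{Main obstacle.} The only delicate point is the exponent comparison $d(1-\epsilon)\le d-1$. This is exactly where the hypothesis $d>\frac1\epsilon$ is used, and it would fail for smaller $d$. The other thing to check is that $C$ depends only on $K_1$, $d$ (and $\epsilon$, which is fixed). Then $K_2=2^dC$ gives $a_{m+1}\le K_2(1+m)^d(1+a_0)$ for all $m$, as claimed.
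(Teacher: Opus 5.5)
The paper gives no proof of this lemma; it is quoted from C\^ote--Valet (Lemma 13 in \cite{Valet2021}), so there is no in-paper argument to compare with. Your induction against the majorant $A(1+m)^d$ is correct and complete. Each step checks out:
\begin{itemize}
\item $x\mapsto x+K_1(1+x^{1-\epsilon})$ is increasing on $[0,\infty)$, which justifies inserting the induction hypothesis;
\item the mean value bound $A(2+m)^d-A(1+m)^d\ge dA(1+m)^{d-1}$ holds because $d>1$;
\item the exponent comparison $d(1-\epsilon)<d-1$ holds because $d\epsilon>1$;
\item the two conditions $A\ge 2K_1/d$ and $A^\epsilon\ge 2K_1/d$ follow from $A\ge C$;
\item the final step $(2+m)^d\le 2^d(1+m)^d$ gives $K_2=2^dC$.
\end{itemize}

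There are two small remarks. First, the statement asks for $K_2=K_2(K_1,d)$, whereas your $C$ contains $(2K_1/d)^{1/\epsilon}$. Since $1/\epsilon<d$, this quantity is at most $\max\{1,(2K_1/d)^d\}$. So you may take $C:=\max\{1,2K_1/d,(2K_1/d)^d\}$ instead, which removes the $\epsilon$-dependence. This still gives $C^\epsilon\ge 2K_1/d$. If $2K_1/d\le 1$, that follows from $C\ge1$. If $2K_1/d>1$, it follows from $C^\epsilon\ge(2K_1/d)^{d\epsilon}\ge 2K_1/d$, using $d\epsilon>1$.

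Second, your argument only uses $d(1-\epsilon)\le d-1$, so it also covers the endpoint $d=1/\epsilon$. That endpoint is sharp in view of the model ODE $b'=K_1b^{1-\epsilon}$. This is immaterial for the application in the proof of Theorem~\ref{GrowthgZK}, where the exponent is taken strictly larger anyway.
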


For the remainder of this section, let $k \in \N$, $s \in 2\N$, and let $u_0 \in H^s(\R \times \T)$ be a (small) real-valued initial datum. Moreover, we denote the associated unique global real-valued solution of  $\mathrm{(CP}_{k, \R \times \T} \mathrm{)}$ by $u \in C(\R,H^s(\R \times \T)) \cap L^\infty(\R, H^1(\R \times \T)) \cap \left( \bigcap_{T > 0} X_{s,\frac{1}{2}+}^T \right)$, and we note that the existence of such a solution was established by Molinet and Pilod in the case $k=1$ (see Theorem 1.3 in \cite{Pilod2015}), while the cases $k\geq 2$ were treated by Farah and Molinet (see Theorem 1.2 in \cite{Farah2024} - in general, one has to impose a smallness condition on the $L^2$- or $H^1$-norm of the initial data). \\
We now outline the strategy of the proof: Let $\delta_k > 0$ denote the lifespan of the solution $u$ obtained after a single iteration of the fixed-point argument. For $t \in \intcc{0,\frac{\delta_k}{2}}$, we consider differences
\[ \norm{u(t)}_{\dot{H}^s}^2 - \norm{u(0)}_{\dot{H}^s}^2, \]
and derive an estimate of the form
\[ \norm{u(t)}_{H^s} \leq \norm{u(0)}_{H^s} + C\norm{u(0)}_{H^s}^{1-\epsilon}, \]
by strongly relying on the Strichartz-type estimates collected in Section 3 and on the continuous dependence of the solution on the initial data. Having accomplished this, Theorem \ref{GrowthgZK} can then be proved with the help of Lemma \ref{Gronwall} in the context of a simple iteration argument. \\
We start with

\begin{lemma} \label{equalityfordifferencek>1}
For $k \in \N^{\geq 2}$ and $t \in \intcc{0,\frac{\delta_k}{2}}$, it holds that
\begin{align*}
&\norm{u(t)}_{\dot{H}^{s}}^{2} -  \norm{u(0)}_{\dot{H}^{s}}^{2} = \\ & \mp  \sum_{\substack{ 0 \leq \abs{\alpha_{i}} \leq s \\ \abs{\alpha_{1}} + ...+ \abs{\alpha_{k+1}} = s+1}} C(\alpha_{1},... , \alpha_{k+1}) \int_{0}^{t} \langle \prod_{i=1}^{k+1} D^{\alpha_{i}}u(t'), I^{s}u(t') \rangle_{L^{2}_{xy}} \ dt'.
\end{align*}
\end{lemma}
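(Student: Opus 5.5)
We argue first for smooth, rapidly decaying solutions and then pass to the general case by approximation. Since $s \in 2\N$, the operator $I^s = (-\Delta_{xy})^{s/2}$ is a genuine differential operator. We will therefore write $\norm{u(t)}_{\dot{H}^s}^2 = \langle I^s u(t), u(t)\rangle_{L^2_{xy}}$ and work entirely in physical space, avoiding any Fourier multipliers with singular symbols. By the fundamental theorem of calculus,
\[ \norm{u(t)}_{\dot{H}^s}^2 - \norm{u(0)}_{\dot{H}^s}^2 = \int_0^t \frac{\mathrm{d}}{\mathrm{d}t'} \langle I^{s} u, u \rangle_{L^2_{xy}} \ \mathrm{d}t' = 2\int_0^t \langle \partial_t u(t'), I^s u(t') \rangle_{L^2_{xy}} \ \mathrm{d}t', \]
where we used that $I^s$ is self-adjoint and $u$ is real-valued.

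Next we insert the equation, $\partial_t u = -\partial_x\Delta_{xy}u \pm \partial_x(u^{k+1})$. The linear contribution is
\[ \langle \partial_x \Delta u, I^s u\rangle = \langle \partial_x I^{s/2}\Delta u, I^{s/2}u\rangle, \]
which equals $-\langle \partial_x I^{s/2}\Delta u, I^{s/2}u\rangle$ after moving $\Delta$ and $\partial_x$ across. Equivalently, in Fourier variables the multiplier $i\xi(\xi^2+q^2)^{1+s}$ is odd and purely imaginary, so the real pairing vanishes. What remains is
\[ \pm 2\int_0^t \langle \partial_x(u^{k+1}), I^s u\rangle \ \mathrm{d}t'. \]

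Now write $I^s = (-1)^{s/2}(\partial_x^2+\partial_y^2)^{s/2}$ and expand it via the binomial formula as a finite sum of monomials $D^\beta$ with $\abs{\beta} = s$. Apply each $D^\beta\partial_x$ to $u^{k+1}$ and distribute the derivatives with the multivariate Leibniz rule. This produces a linear combination, with combinatorial constants $C(\alpha_1,\dots,\alpha_{k+1})$, of products $\prod_i D^{\alpha_i}u$ with $\sum\abs{\alpha_i} = s+1$. Moving $D^\beta$ from $I^s u$ onto the nonlinearity costs only a sign $(-1)^s = 1$. The terms in which a single factor carries all $s+1$ derivatives have the form $(k+1)u^k\,\partial_x D^\beta u$. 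These are exactly the terms excluded by the constraint $\abs{\alpha_i}\le s$ in the statement, so they must be shown to be harmless.

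\textbf{This is the main obstacle.} The natural way to dispose of $\sum_\beta c_\beta \langle u^k \partial_x D^\beta u, D^\beta u\rangle$ is the identity
\[ \langle u^k \partial_x w, w\rangle = -\tfrac12 \langle \partial_x(u^k) w, w\rangle, \]
valid for real $w$, which trades the top-order term for lower-order products. However, this identity yields terms in which one $D^\beta u$ is paired against $\partial_x(u^k)D^\beta u$, rather than terms of the form $\langle \prod D^{\alpha_i}u, I^s u\rangle$. To reach exactly the stated form, we instead keep $I^s u$ intact as the test function and only commute inside, i.e.\ we write
\[ \partial_x I^s(u^{k+1}) = (k+1)\,u^k\,\partial_x I^s u + \big(\text{Leibniz terms with all } \abs{\alpha_i}\le s\big), \]
and then treat the remaining term $\langle u^k \partial_x I^s u, I^s u\rangle = -\tfrac12\langle \partial_x(u^k)\, I^s u, I^s u\rangle$. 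Expanding $\partial_x(u^k) = k u^{k-1}\partial_x u$ and writing one copy of $I^s u$ as a sum of $D^\beta u$ with $\abs{\beta}=s$ produces products of $k+1$ factors of the form $u^{k-1}\cdot\partial_x u\cdot D^\beta u$. These have derivative orders $0,1,s$, totalling $s+1$ with each order at most $s$, paired against $I^s u$. They are therefore absorbed into the sum with modified constants. Note that $k\ge2$ ensures at least one undifferentiated factor, so the count of $k+1$ factors comes out right.

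Finally, the restriction $t\in[0,\delta_k/2]$ and the continuous dependence in $X_{s,\frac12+}^{\delta_k}$ allow us to approximate $u_0$ by Schwartz-class data. Both sides then converge: the right-hand side is controlled by the estimates of Section 3 on the local-solution interval. This passes the identity to general $u_0\in H^s$.
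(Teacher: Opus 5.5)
\textbf{Verdict: the key step matches the paper, but the setup is wrong, so as written the proof never reaches the expression that step acts on.}

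Your decisive step is exactly the paper's. You split off $(k+1)u^k\,\partial_x I^s u$ from $\partial_x I^s(u^{k+1})$, leaving only terms with all $\abs{\alpha_i}\le s$. You then symmetrize via $\langle u^k\partial_x w,w\rangle=-\tfrac12\langle\partial_x(u^k)w,w\rangle$ with $w=I^su$.

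\textbf{The error in the setup.} You write $\norm{u}_{\dot{H}^s}^2=\langle I^su,u\rangle_{L^2_{xy}}$. With the paper's convention $\norm{u}_{\dot{H}^s}=\norm{I^su}_{L^2}$, one has instead
\[
\norm{u}_{\dot{H}^s}^2=\langle I^su,I^su\rangle=\langle I^{2s}u,u\rangle.
\]
The quantity $\langle I^su,u\rangle$ is $\norm{u}_{\dot{H}^{s/2}}^2$.

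\textbf{How it propagates.}
\begin{itemize}
\item The expression you derive, $\pm2\int_0^t\langle\partial_x(u^{k+1}),I^su\rangle\,\mathrm{d}t'$, carries $s+1$ derivatives in total. The stated right-hand side carries $2s+1$.
\item Your third paragraph expands $I^s$ in the test function and moves each $D^\beta$ onto the nonlinearity. This produces products $\prod D^{\alpha_i}u$ paired against $u$, not against $I^su$.
\item In your last paragraph you want to ``keep $I^su$ intact as the test function'' while also having $I^s$ act on $u^{k+1}$. Your expression contains only one $I^s$, so both cannot hold at once.
\item The term $\sum_\beta c_\beta\langle u^k\partial_xD^\beta u,D^\beta u\rangle$ in your ``main obstacle'' paragraph likewise does not follow from what you derived.
\end{itemize}

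\textbf{The repair is immediate.} Differentiate $\langle I^su,I^su\rangle$ to get $2\langle I^s\partial_tu,I^su\rangle$. The dispersive part vanishes as you argue. The nonlinear part is $\pm2\langle\partial_xI^s(u^{k+1}),I^su\rangle$, and your final paragraph applies to it verbatim. The third paragraph should simply be dropped. With this correction your proof coincides with the paper's.

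A minor point: the hypothesis $k\ge2$ plays no role in the algebra. For $k=1$ the terms $\partial_xu\,D^\beta u$ still have $k+1=2$ factors, and the paper notes the identity remains valid there. The separate treatment of $k=1$ is motivated only by the later estimates.
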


\begin{proof}
Since $u$ is a real-valued solution of $k$-gZK, we have that
\begin{align*} 
\norm{u(t)}_{\dot{H}^{s}}^{2} - \norm{u(0)}_{\dot{H}^{s}}^{2} &= \int_{0}^{t} \frac{\partial}{\partial t'} \lVert u(t') \rVert_{\dot{H}^{s}}^{2} \ \mathrm{d}t' \\ & = \int_{0}^{t} \frac{\partial}{\partial t'} \langle I^{s}u(t'),I^{s}u(t') \rangle_{L_{xy}^{2}} \ \mathrm{d}t' \\ &= 2 \int_{0}^{t} \langle I^{s} \frac{\partial u}{\partial t'} \left(t' \right),I^{s}u(t') \rangle_{L_{xy}^{2}} \ \mathrm{d}t' \\ &= \underbrace{-2 \int_{0}^{t} \langle I^{s} \frac{\partial}{\partial x} \Delta u(t'), I^{s}u(t') \rangle_{L_{xy}^{2}} \ \mathrm{d}t'}_{\eqqcolon \ (I)} \\ & \ \ \ \ \underbrace{\mp 2 \int_{0}^{t} \langle I^{s} \frac{\partial}{\partial x} \left( u^{k+1} \right) (t'),I^{s}u(t') \rangle_{L_{xy}^{2}} \ \mathrm{d}t'}_{\eqqcolon \ (II)}. \end{align*}

Integration by parts at the $L^2_{xy}$ level then shows that $(I)$ vanishes, and to further treat $(II)$, we can write
\begin{align*} I^{s} \left( u^{k+1} \right) (t') &= (k+1) I^{s} u(t') \cdot (u(t'))^{k} \\ & \ \ \  + \sum_{\substack{0 \leq \lvert \alpha_{i} \rvert < s \\ \lvert \alpha_{1} \rvert + ... + \lvert \alpha_{k+1} \rvert = s}} \widetilde{C}(\alpha_{1},\dots, \alpha_{k+1}) \prod_{i=1}^{k+1} D^{\alpha_{i}}u(t')  
\end{align*}
with suitable real numbers $\widetilde{C}(\alpha_{1},\dots, \alpha_{k+1})$, using the assumption that $s \in 2\N$. This leads us to
\begin{align*}
&(II) = \mp 2 (k+1) \int_{0}^{t} \langle \frac{\partial}{\partial x} \left( I^{s}u(t') \cdot (u(t'))^{k} \right), I^{s}u(t') \rangle_{L_{xy}^{2}} \ \mathrm{d}t' \\ & \mp 2 \sum_{\substack{0 \leq \abs{\alpha_{i}} < s \\ \abs{\alpha_{1}} +...+ \abs{\alpha_{k+1}} =s}} \widetilde{C}(\alpha_{1},...,\alpha_{k+1}) \int_{0}^{t} \langle \frac{\partial}{\partial x} \left( \prod_{i=1}^{k+1} D^{\alpha_{i}}u(t') \right), I^{s}u(t') \rangle_{L_{xy}^{2}} \ \mathrm{d}t',
\end{align*}
and in the case where all $s$ derivatives fall on a single factor, we see that
\begin{align*} \langle \frac{\partial}{\partial x} \left( I^{s}u(t') \cdot (u(t'))^{k} \right), I^{s}u(t') \rangle_{L_{xy}^{2}} &=  \langle \frac{\partial}{\partial x} I^{s}u(t') \cdot (u(t'))^{k}, I^{s}u(t') \rangle_{L_{xy}^{2}} \\ & \ \ \ + k \cdot \langle  I^{s}u(t')\cdot (u(t'))^{k-1} \frac{\partial u}{\partial x} \left(t' \right), I^{s}u(t') \rangle_{L_{xy}^{2}} \\ &= \langle \frac{\partial}{\partial x}I^{s}u(t'),I^{s}u(t') \cdot (u(t'))^{k} \rangle_{L_{xy}^{2}} + ...  \\ &= - \langle I^{s}u(t'), \frac{\partial}{\partial x} \left( I^{s}u(t') \cdot (u(t'))^{k} \right) \rangle_{L_{xy}^{2}} + ... \\ &= - \langle \frac{\partial}{\partial x} \left( I^{s}u(t') \cdot (u(t'))^{k} \right), I^{s}u(t') \rangle_{L_{xy}^{2}} + .... 
\end{align*}
Hence, we have
\[ \langle \frac{\partial}{\partial x} \left( I^{s}u(t') \cdot (u(t'))^{k} \right), I^{s}u(t') \rangle_{L_{xy}^{2}} = \frac{k}{2} \cdot \langle I^{s}u(t') \cdot (u(t'))^{k-1} \frac{\partial u}{\partial x} \left( t' \right), I^{s}u(t') \rangle_{L_{xy}^{2}}, \]
and on the right-hand side there is now at least one derivative acting on another factor. Since all remaining contributions of $(II)$ are of a similar form, we may summarize and finally obtain
\[ (II) = \mp  \sum_{\substack{ 0 \leq \lvert \alpha_{i} \rvert \leq s \\ \lvert \alpha_{1} \rvert + ...+ \lvert \alpha_{k+1} \rvert = s+1}} C(\alpha_{1},\dots , \alpha_{k+1}) \int_{0}^{t} \langle \prod_{i=1}^{k+1} D^{\alpha_{i}}u(t'), I^{s}u(t') \rangle_{L_{xy}^{2}} \ \mathrm{d}t'. \]
This completes the proof.
\end{proof}

While the computations in Lemma \ref{equalityfordifferencek>1} remain valid for $k=1$, they are not well suited to this case, as it will become important, for certain frequency configurations, to have the $x$-derivative acting on the product. We therefore present an alternative approach for ZK in the following

\begin{lemma} \label{equalityfordifferencek=1}
Let $k=1$ and $t \in \intcc{0,\frac{\delta_1}{2}}$. Moreover, we define the three Fourier projectors $Pr_{\set{\abs{(\xi_1,q_1)} \gg \abs{(\xi_2,q_2)}}}$, $Pr_2$, and $Pr_3$ by means of their respective Fourier transforms:
\begin{align*} 
&\Fcal_{xy}(Pr_{\set{\abs{(\xi_1,q_1)} \gg \abs{(\xi_2,q_2)}}}(v_1v_2))(\xi,q) \coloneqq \\ & \frac{1}{(2\pi)^2} \int_{\R} \sum_{q_1 \in \Z} \chi_{\set{\abs{(\xi_1,q_1)} \geq 100 \abs{(\xi-\xi_1,q-q_1)}}} \Fcal_{xy}v_1(\xi_1,q_1) \Fcal_{xy}v_2(\xi-\xi_1,q-q_1) \ \mathrm{d}\xi_1,
\end{align*}

\begin{align*}
&\Fcal_{xy}(Pr_{2}(v_1v_2))(\xi,q) \coloneqq \\ & \frac{1}{(2\pi)^2} \int_{\R} \sum_{q_1 \in \Z} 2\chi_{\set{\abs{(\xi_1,q_1)} < 100 \abs{(\xi-\xi_1,q - q_1)} \ \text{and} \ \abs{(\xi,q)} < 100 \abs{(\xi - \xi_1, q-q_1)}}} \\ & \cdot \Fcal_{xy}v_1(\xi_1,q_1) \Fcal_{xy}v_2(\xi-\xi_1,q-q_1) \ \mathrm{d} \xi_1,
\end{align*}
and

\begin{align*}
&\Fcal_{xy}(Pr_{3}(v_1v_2))(\xi,q) \coloneqq \\ & \frac{1}{(2\pi)^2} \int_{\R} \sum_{q_1 \in \Z} \chi_{\set{ \text{either} \ \abs{(\xi_1,q_1)} < 100 \abs{(\xi-\xi_1,q-q_1)} \ \text{or} \ \abs{(\xi,q)} < 100 \abs{(\xi-\xi_1,q-q_1)}}} \\ & \cdot \Fcal_{xy}v_1(\xi_1,q_1) \Fcal_{xy}v_2(\xi-\xi_1,q-q_1) \ \mathrm{d}\xi_1.
\end{align*}

Then, it holds that

\begin{align*}
& \norm{u(t)}_{\dot{H}^s}^2 - \norm{u(0)}_{\dot{H}^s}^2 = \mp 2 \left( \int_{0}^{t} \langle Pr_{\set{\abs{(\xi_1,q_1)} \gg \abs{(\xi_2,q_2)}}}(I^su(t') \frac{\partial u}{\partial x} (t')),I^su(t') \rangle_{L_{xy}^2} \right. \\ & + \int_{0}^{t} \langle  \frac{\partial}{\partial x}Pr_{2}(I^su(t') u(t')), I^su(t') \rangle_{L_{xy}^2} \ \mathrm{d}t' \\ & + \int_{0}^{t} \langle \frac{\partial}{\partial x}Pr_{3}(I^su(t') u(t')), I^su(t') \rangle_{L_{xy}^2} \ \mathrm{d}t' \\ & \left. + \sum_{\substack{0 \leq \abs{\alpha_{1,2}} < s \\ \abs{\alpha_{1}} + \abs{\alpha_{2}} =s}} C(\alpha_{1},\alpha_{2}) \int_{0}^{t} \langle \frac{\partial}{\partial x} (D^{\alpha_{1}}u(t') D^{\alpha_{2}}u(t') ), I^{s}u(t') \rangle_{L_{xy}^{2}} \ \mathrm{d}t'  \right).
\end{align*}
\end{lemma}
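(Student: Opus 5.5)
The plan is to follow the proof of Lemma \ref{equalityfordifferencek>1} with $k=1$ up to the Leibniz expansion, and then to decompose the single term carrying all $s$ derivatives by means of the three projectors instead of symmetrizing it fully.

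\textbf{Step 1 (energy identity).} As before, I would work with smooth, rapidly decaying solutions and recover the general case by approximation and continuous dependence. Differentiating $\norm{u(t')}_{\dot{H}^s}^2$ in $t'$ and inserting the equation gives the dispersive term $(I)$, which vanishes by antisymmetry of $\partial_x\Delta$. The remaining term is
\[ (II)=\mp 2\int_0^t \langle I^s\partial_x(u^2),I^su\rangle\,\mathrm{d}t'. \]

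\textbf{Step 2 (Leibniz expansion).} Since $s\in 2\N$, the operator $I^s=(-\Delta)^{s/2}$ is a differential operator. Hence
\[ I^s(u^2)=2I^su\cdot u+\sum_{0\le|\alpha_{1,2}|<s,\ |\alpha_1|+|\alpha_2|=s}C(\alpha_1,\alpha_2)D^{\alpha_1}u\,D^{\alpha_2}u. \]
The sum directly produces the last line of the claimed identity; $\partial_x$ commutes with $I^s$, so it stays on the product.

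\textbf{Step 3 (splitting the top-order term).} The top-order term is $2\langle\partial_x(I^su\cdot u),I^su\rangle$. On the Fourier side, write $(\xi_1,q_1)$ for the frequency of $I^su$ and $(\xi-\xi_1,q-q_1)$ for that of $u$. I would insert the partition of unity in the convolution variable given by
\[ 1=\chi_{\{|(\xi_1,q_1)|\ge100|(\xi-\xi_1,q-q_1)|\}}+\chi_{\{|(\xi_1,q_1)|<100|\cdot|\}}. \]
The complementary region is then written as the sum of the characteristic functions of its "both" and "either" subregions, as encoded by $Pr_2$ and $Pr_3$. The factor $2$ in the definition of $Pr_2$ indicates that the "both" part is counted with double weight relative to the others. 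I would therefore carry out the decomposition carefully so that the weights match: $\chi_{Pr_2}+\chi_{Pr_3}$ must equal, up to this bookkeeping, twice the complement indicator.

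On the high–low piece I would use $\partial_x(I^su\cdot u)=\partial_xI^su\cdot u+I^su\,\partial_xu$. The part $\langle Pr_{\gg}(\partial_xI^su\cdot u),I^su\rangle$ is then handled by the commutator/symmetrization argument of Lemma \ref{equalityfordifferencek>1}. Concretely, I would integrate by parts in $x$ and exploit the symmetry of the restricted bilinear form under exchanging the two $I^su$ frequencies $(\xi_1,q_1)\leftrightarrow(-\xi,-q)$. Because of the convolution constraint, the cutoff $\{|(\xi_1,q_1)|\ge100|(\xi-\xi_1,q-q_1)|\}$ is preserved under this exchange. This converts the derivative falling on $I^su$ into one landing on the low-frequency factor, yielding $\langle Pr_{\gg}(I^su\,\partial_xu),I^su\rangle$ with the correct coefficient.

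\textbf{Main obstacle.} The delicate step is Step 3: verifying that the high–low cutoff is invariant under the symmetry swap, so that the half-coefficient cancellation survives the projection. A second delicate point is reconciling the factor $2$ in $Pr_2$ with the overlap structure between the "both" and "either" regions so that the coefficients add up exactly. I would check both directly on the Fourier multiplier level, writing the trilinear form as a sum over $(\xi_1,q_1),(\xi_2,q_2),(\xi_3,q_3)$ with $\sum_i(\xi_i,q_i)=0$ and real-valuedness $\widehat u(-\zeta)=\overline{\widehat u(\zeta)}$. The goal is to confirm that the symmetrized multiplier $i\xi_1+i\xi_3=-i\xi_2$ applies on the invariant region, while the remaining regions are left unsymmetrized with $\partial_x$ on the product.
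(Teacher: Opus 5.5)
Your Step 3 rests on a false claim. Write $\zeta_1=(\xi_1,q_1)$, $\zeta=(\xi,q)$ and $\zeta_2=\zeta-\zeta_1$. The swap $\zeta_1\leftrightarrow-\zeta$ fixes the low frequency $\zeta_2$. However, it turns the condition $|\zeta_1|\ge 100|\zeta_2|$ on the \emph{input} frequency into the condition $|\zeta|\ge 100|\zeta_2|$ on the \emph{output} frequency. These conditions differ: take $\zeta_2=(0,1)$, $\zeta_1=(0,-100)$, $\zeta=(0,-99)$. Denote by $P_{\gg}$ the projector $Pr_{\{|(\xi_1,q_1)|\gg|(\xi_2,q_2)|\}}$ and by $\widetilde P_{\gg}$ the one with cutoff $\chi_{\{|\zeta|\ge 100|\zeta_2|\}}$. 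The symmetrization then only gives
\[
\langle P_{\gg}(\partial_x I^s u\cdot u),I^s u\rangle=-\langle \partial_x \widetilde P_{\gg}(I^s u\, u),I^s u\rangle .
\]
Hence
\[
2\langle\partial_x P_{\gg}(I^s u\, u),I^s u\rangle=\langle P_{\gg}(I^s u\,\partial_x u),I^s u\rangle+\langle\partial_x(P_{\gg}-\widetilde P_{\gg})(I^s u\, u),I^s u\rangle,
\]
and the last term does not vanish in general.

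Your weight bookkeeping fails for the same reason. Let $\chi_{\mathrm{both}}$ be the indicator of $\{|\zeta_1|<100|\zeta_2|,\ |\zeta|<100|\zeta_2|\}$ and $\chi_{\mathrm{either}}$ that of the set where exactly one of these strict inequalities holds. The multiplier of $Pr_2+Pr_3$ is $2\chi_{\mathrm{both}}+\chi_{\mathrm{either}}$, and the ``either'' region straddles both sides of your partition. It contains the nonempty set $\{|\zeta_1|\ge 100|\zeta_2|,\ |\zeta|<100|\zeta_2|\}$ inside the high--low region. On $\{|\zeta_1|<100|\zeta_2|,\ |\zeta|\ge 100|\zeta_2|\}$ it carries weight $1$, not $2$. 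In fact $Pr_2+Pr_3=2(1-P_{\gg})+(P_{\gg}-\widetilde P_{\gg})$.

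So both of your steps are off by the same term with opposite signs. The final formula holds only because these errors cancel. Your region-by-region argument, which symmetrizes the high--low piece on its own and splits the complement on its own, cannot establish either step.

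The missing idea is the one the paper uses: keep both cutoffs and combine them through the pointwise identity
\[
\chi_{\{|\zeta_1|\ge 100|\zeta_2|\}}+\chi_{\{|\zeta|\ge 100|\zeta_2|\}}=2\chi_{\{|\zeta_1|\ge 100|\zeta_2|\ \text{or}\ |\zeta|\ge 100|\zeta_2|\}}-\chi_{\mathrm{either}},
\]
and define $Pr_1$ with the right-hand side as its multiplier. By the Leibniz rule and the swap above, $\langle\partial_x Pr_1(I^s u\, u),I^s u\rangle=\langle P_{\gg}(I^s u\,\partial_x u),I^s u\rangle$. Moreover $Pr_1+Pr_2+Pr_3=2$ pointwise, so the top-order term $2\langle\partial_x(I^s u\, u),I^s u\rangle$ splits exactly as claimed. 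Equivalently, you could symmetrize all three multipliers simultaneously under $\zeta_1\leftrightarrow-\zeta$ and check that both sides reduce to $i\xi_2$. That works precisely because the asymmetric parts cancel globally, not region by region.
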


\begin{proof}
We first proceed excactly as in Lemma \ref{equalityfordifferencek>1} and thereby arrive at
\begin{align*}
&\norm{u(t)}_{\dot{H}^s}^2 - \norm{u(0)}_{\dot{H}^s}^2 = \mp 2 \left( 2 \int_{0}^{t} \langle \frac{\partial}{\partial x} (I^su(t')u(t')),I^su(t') \rangle_{L_{xy}^2} \ \mathrm{d}t' \right. \\ & + \left. \sum_{\substack{0 \leq \abs{\alpha_{1,2}} < s \\ \abs{\alpha_{1}} + \abs{\alpha_{2}} =s}} C(\alpha_{1},\alpha_{2}) \int_{0}^{t} \langle \frac{\partial}{\partial x} (D^{\alpha_{1}}u(t') D^{\alpha_{2}}u(t')), I^{s}u(t') \rangle_{L_{xy}^{2}} \ \mathrm{d}t'  \right).
\end{align*}
We observe that the last term is already of the desired form, and we therefore turn our attention to the first integral, in which all $s$ derivatives fall on the first factor of the product:
We readily see that
\begin{align*}
&\langle \frac{\partial}{\partial x} Pr_{\set{\abs{(\xi_1,q_1)} \gg \abs{(\xi_2,q_2)}}}(I^su(t')u(t')), I^su(t') \rangle_{L_{xy}^2}  = \\ & \langle Pr_{\set{\abs{(\xi_1,q_1)} \gg \abs{(\xi_2,q_2)}}}(\frac{\partial}{\partial x}I^su(t') u(t')), I^su(t') \rangle_{L_{xy}^2} \\ & + \langle Pr_{\set{\abs{(\xi_1,q_1)} \gg \abs{(\xi_2,q_2)}}}(I^su(t') \frac{\partial u}{\partial x} (t')),I^su(t') \rangle_{L^2_{xy}},
\end{align*}
and for the first summand, we may employ Parseval's identity, followed by a rearrangement of the factors in Fourier space ($u$ is real-valued), to further deduce that
\begin{align*}
&\langle Pr_{\set{\abs{(\xi_1,q_1)} \gg \abs{(\xi_2,q_2)}}}(\frac{\partial}{\partial x}I^su(t') u(t')), I^su(t') \rangle_{L_{xy}^2} = \\ & - \langle \frac{\partial}{\partial x} Pr_{\set{\abs{(\xi,q)} \gg \abs{(\xi_2,q_2)}}}(I^su(t') u(t')), I^su(t') \rangle_{L_{xy}^2},
\end{align*}
where $Pr_{\set{\abs{(\xi,q)} \gg \abs{(\xi_2,q_2)}}}$ is defined analogously to $Pr_{\set{\abs{(\xi_1,q_1)} \gg \abs{(\xi_2,q_2)}}}$.
Furthermore, we have the pointwise identity
\begin{align*} &\chi_{\set{\abs{(\xi_1,q_1)} \geq 100 \abs{(\xi-\xi_1,q-q_1)}}} + \chi_{\set{\abs{(\xi,q)} \geq 100 \abs{(\xi-\xi_1,q-q_1)}}} \\ & = 2 \chi_{\set{\abs{(\xi_1,q_1)} \geq 100 \abs{(\xi-\xi_1,q-q_1)} \ \text{or} \ \abs{(\xi,q)} \geq 100 \abs{(\xi-\xi_1,q-q_1)}}} \\ & \ \ \ - \chi_{\set{ \text{either} \ \abs{(\xi_1,q_1)} < 100 \abs{(\xi-\xi_1,q-q_1)} \ \text{or} \ \abs{(\xi,q)} < 100 \abs{(\xi-\xi_1,q-q_1)}}},
\end{align*}
so that by defining
\begin{align*}
&\Fcal_{xy}(Pr_1(v_1v_2))(\xi,q) \coloneqq \\ & \frac{1}{(2\pi)^2} \int_{\R} \sum_{q_1 \in \Z} (2 \chi_{\set{\abs{(\xi_1,q_1)} \geq 100 \abs{(\xi-\xi_1,q-q_1)} \ \text{or} \ \abs{(\xi,q)} \geq 100 \abs{(\xi-\xi_1,q-q_1)}}} \\ & - \chi_{\set{ \text{either} \ \abs{(\xi_1,q_1)} < 100 \abs{(\xi-\xi_1,q-q_1)} \ \text{or} \ \abs{(\xi,q)} < 100 \abs{(\xi-\xi_1,q-q_1)}}}) \\ & \cdot \Fcal_{xy}v_1(\xi_1,q_1) \Fcal_{xy}v_2(\xi-\xi_1,q-q_1) \ \mathrm{d}\xi_1,
\end{align*}
we obtain that
\begin{align*}
& \langle \frac{\partial}{\partial x}Pr_1(I^su(t') u(t')), I^su(t') \rangle_{L_{xy}^2} = \\ & \langle Pr_{\set{\abs{(\xi_1,q_1)} \gg \abs{(\xi_2,q_2)}}}(I^su(t') \frac{\partial u}{\partial x} (t')),I^su(t') \rangle_{L^2_{xy}}.
\end{align*}
The desired identity now follows from the fact that
\begin{align*} 2(I^su(t') u(t')) &= (Pr_1+Pr_2+Pr_3)(I^su(t')u(t')) \\ &= Pr_1(I^su(t')u(t')) + Pr_2(I^su(t')u(t')) + Pr_3(I^su(t')u(t')),  
\end{align*}
and this concludes the proof.
\end{proof}

Now, we replace each $u$ in the integrals occuring in Lemma \ref{equalityfordifferencek>1} and Lemma \ref{equalityfordifferencek=1} with an extension $v \in X_{s,\frac{1}{2}+}$ such that
\[ v|_{\intcc{-\frac{\delta_k}{2},\frac{\delta_k}{2}} \times \R \times \T} = u, \quad \text{supp}(v) \subseteq \intcc{-\delta_k,\delta_k} \times \R \times \T, \]
and 
\[ \norm{v}_{X_{\sigma,\frac{1}{2}+}} \les \norm{u}_{X_{\sigma, \frac{1}{2}+}^{\frac{\delta_k}{2}}} \quad \forall \sigma \leq s \]
hold, and we show

\begin{lemma} \label{trivialintestimatek>1}
Let $k \in \N^{\geq 2}$, $t \in \intcc{0,\frac{\delta_k}{2}}$, and let $\alpha_1,...,\alpha_{k+1}$ be multi-indices as in Lemma \ref{equalityfordifferencek>1}. Then
\[ \left\lvert \int_{0}^{t} \langle \prod_{i=1}^{k+1} D^{\alpha_{i}}v(t'), I^{s} v(t') \rangle_{L^{2}_{xy}} \ \mathrm{d}t' \right\rvert \les_{\epsilon', \delta_{k}} \norm{\prod_{i=1}^{k+1}D^{\alpha_{i}}v}_{X_{0,-\frac{1}{2}+\epsilon'}} \norm{u(0)}_{H^s} \]
holds true for every $\epsilon' > 0$.
\end{lemma}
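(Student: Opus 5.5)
The plan is to write the time integral as a space-time pairing with a sharp time cutoff, and then use $X_{s,b}$-duality. First I rewrite the left-hand side as
\[ \int_{\R} \langle \chi_{\intcc{0,t}}(t') \prod_{i=1}^{k+1} D^{\alpha_i}v(t'), I^s v(t') \rangle_{L^2_{xy}} \ \mathrm{d}t' = \langle \prod_{i=1}^{k+1} D^{\alpha_i} v, \chi_{\intcc{0,t}} I^s v \rangle_{L^2_{txy}}. \]
By Parseval's identity and the Cauchy--Schwarz inequality, after inserting the weights $\langle \tau-\phi(\xi,q)\rangle^{\mp(\frac12-\epsilon')}$, this is bounded by
\[ \norm{\prod_{i=1}^{k+1}D^{\alpha_i}v}_{X_{0,-\frac12+\epsilon'}} \norm{\chi_{\intcc{0,t}} I^s v}_{X_{0,\frac12-\epsilon'}}. \]

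It remains to control the second factor by $\norm{u(0)}_{H^s}$. For fixed $(\xi,q)$, the $X_{0,b}$-norm equals the $H^b_t$-norm of $e^{-it\phi(\xi,q)}\Fcal_{xy}(\cdot)$. Multiplication by $\chi_{\intcc{0,t}}$ commutes with this modulation and is bounded on $H_t^{b}$ for $0\le b<\frac12$, uniformly in the endpoint $t$; this is the same fact already used in the proof of \eqref{cubictermZK}. Hence
\[ \norm{\chi_{\intcc{0,t}} I^s v}_{X_{0,\frac12-\epsilon'}} \les_{\epsilon'} \norm{I^s v}_{X_{0,\frac12-\epsilon'}} \le \norm{v}_{X_{s,\frac12+}}. \]

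By the choice of the extension $v$, we have $\norm{v}_{X_{s,\frac12+}} \les \norm{u}_{X^{\delta_k/2}_{s,\frac12+}}$. Finally, I invoke the local well-posedness theory used to define $\delta_k$, namely the fixed-point argument of \cite{Farah2024, JNK2025, JNK2026} at regularity $s$, with persistence of regularity and the lifespan $\delta_k$ depending only on the $H^1$-type norm controlled globally. This gives the continuity estimate $\norm{u}_{X^{\delta_k}_{s,\frac12+}} \les_{\delta_k} \norm{u(0)}_{H^s}$, and chaining the three bounds yields the claim.

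The main obstacle is this last step. One must make sure that the $X_{s,\frac12+}$ bound on $[-\delta_k,\delta_k]$ is linear in $\norm{u(0)}_{H^s}$, with a constant depending only on $\delta_k$ (and hence on the low norm), and not on $\norm{u(0)}_{H^s}$ itself.

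I would obtain this from the tame structure of the multilinear estimate. Placing $s$ derivatives on one factor and estimating the others at low regularity gives $\norm{\partial_x(u^{k+1})}_{X_{s,-\frac12+}} \les \norm{u}_{X_{s,\frac12+}}\norm{u}_{X_{s_0,\frac12+}}^{k}$. The contraction on the interval of length $\delta_k$ then absorbs the nonlinear term, yielding $\norm{u}_{X^{\delta_k}_{s,\frac12+}} \le 2C\norm{u(0)}_{H^s}$.

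The remaining steps, namely the duality step and the cutoff bound, are routine.
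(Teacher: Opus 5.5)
Your proposal is correct and follows the paper's proof. Like the paper, you apply Cauchy--Schwarz in $X_{0,\pm(\frac12-\epsilon')}$, handle the sharp time cutoff by the boundedness of multiplication by $\chi_{[0,t]}$ on $H_t^{\frac12-\epsilon'}$, and then chain $\norm{v}_{X_{s,\frac12+}} \les \norm{u}_{X^{\delta_k/2}_{s,\frac12+}} \les \norm{u(0)}_{H^s}$. Your tame-estimate argument for the linear dependence on $\norm{u(0)}_{H^s}$ spells out what the paper compresses into ``continuous dependence of the solution on the initial data.''
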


\begin{proof}
By proceeding as described in the proof of \eqref{cubictermZK} (using the Cauchy-Schwarz inequality and the continuity of the multiplication operator $M$), we obtain
\[ \left\lvert \int_{0}^{t} \langle \prod_{i=1}^{k+1} D^{\alpha_{i}}v(t'), I^{s} v(t') \rangle_{L^{2}_{xy}} \ \mathrm{d}t' \right\rvert \les_{\epsilon', \delta_{k}} \norm{\prod_{i=1}^{k+1}D^{\alpha_{i}}v}_{X_{0,-\frac{1}{2}+\epsilon'}} \norm{v}_{X_{s,\frac{1}{2}-}}. \]
Moreover, by the preliminary remark, we may further conclude that
\[ \norm{v}_{X_{s,\frac{1}{2}-}} \leq \norm{v}_{X_{s,\frac{1}{2}+}} \les \norm{u}_{X_{s,\frac{1}{2}+}^\frac{\delta_k}{2}} \les \norm{u(0)}_{H^s},  \]
where in the final step we made use of the continuous dependence of the solution on the initial data. This completes the proof.
\end{proof}

\begin{rem} \label{trivialintestimatesk=1}
The same argument can be applied in a completely analogous way to further estimate the integral expressions arising in Lemma \ref{equalityfordifferencek=1}. In this way, we obtain
\begin{align*} 
&\left\lvert \int_{0}^{t} \langle Pr_{\set{\abs{(\xi_1,q_1)} \gg \abs{(\xi_2,q_2)}}}(I^sv(t') \frac{\partial v}{\partial x} (t')), I^sv(t') \rangle_{L_{xy}^2} \ \mathrm{d}t' \right\rvert \les_{\epsilon',\delta_1}  \\ & \norm{Pr_{\set{\abs{(\xi_1,q_1)} \gg \abs{(\xi_2,q_2)}}}(I^sv \frac{\partial v}{\partial x})}_{X_{0,-\frac{1}{2}+\epsilon'}} \norm{u(0)}_{H^s},
\end{align*}

\begin{align*}
&\left\lvert \int_{0}^{t} \langle  \frac{\partial}{\partial x}Pr_{2}(I^sv(t') v(t')), I^sv(t') \rangle_{L_{xy}^2} \ \mathrm{d}t' \right\rvert \les_{\epsilon',\delta_1} \\ & \norm{\frac{\partial}{\partial x}Pr_{2}(I^sv v)}_{X_{0,-\frac{1}{2}+\epsilon'}} \norm{u(0)}_{H^s},
\end{align*}

\begin{align*}
&\left\lvert \int_{0}^{t} \langle  \frac{\partial}{\partial x}Pr_{3}(I^sv(t') v(t')), I^sv(t') \rangle_{L_{xy}^2} \ \mathrm{d}t' \right\rvert \les_{\epsilon',\delta_1} \\ & \norm{\frac{\partial}{\partial x}Pr_{3}(I^sv v)}_{X_{0,-\frac{1}{2}+\epsilon'}} \norm{u(0)}_{H^s},
\end{align*}

and

\begin{align*}
&\left\lvert \int_{0}^{t} \langle \frac{\partial}{\partial x} (D^{\alpha_{1}}v(t') D^{\alpha_{2}}v(t')), I^{s}v(t') \rangle_{L_{xy}^{2}} \ \mathrm{d}t'  \right\rvert \les_{\epsilon', \delta_1} \\ & \norm{\frac{\partial}{\partial x} (D^{\alpha_{1}}v D^{\alpha_{2}}v)}_{X_{0,-\frac{1}{2}+\epsilon'}} \norm{u(0)}_{H^s}
\end{align*}

for every $\epsilon' > 0$.
\end{rem}

As an immediate consequence of Lemma \ref{equalityfordifferencek>1} and Lemma \ref{trivialintestimatek>1}, we obtain the following result

\begin{cor} \label{corogrowthk>1}
Let $k \in \N^{\geq2}$ and $t \in \intcc{0,\frac{\delta_{k}}{2}}$. Furthermore, let $\epsilon' > 0$ be arbitrary. Then, there exists a constant $C=C(k,\epsilon',\delta_{k}) > 0$ such that
\[ \vertiii{u(t)}_{H^{s}} \leq \vertiii{u(0)}_{H^{s}} + C \sum_{\substack{\abs{\alpha_{1}} +...+\abs{\alpha_{k+1}} = s+1 \\ \abs{\alpha_{1}} \geq ... \geq \abs{\alpha_{k+1}} \\ \abs{ \alpha_{2}} \geq 1}} \norm{\prod_{i=1}^{k+1}D^{\alpha_{i}} v}_{X_{0,-\frac{1}{2}+\epsilon'}}. \]
Here, $\vertiii{\cdot}_{H^s}$ denotes the norm defined by $\vertiii{f}_{H^s}^2 \coloneqq \norm{f}_{L^2_{xy}}^2 + \norm{f}_{\dot{H}^s}^2$.
\end{cor}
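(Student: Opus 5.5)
The plan is to combine the identity of Lemma \ref{equalityfordifferencek>1} with the bound of Lemma \ref{trivialintestimatek>1}, and to use the conservation of mass for the $L^2_{xy}$ part of $\vertiii{\cdot}_{H^s}$. Since $u$ is real-valued, $\norm{u(t)}_{L^2_{xy}} = \norm{u(0)}_{L^2_{xy}}$. Hence
\[ \vertiii{u(t)}_{H^s}^2 - \vertiii{u(0)}_{H^s}^2 = \norm{u(t)}_{\dot{H}^s}^2 - \norm{u(0)}_{\dot{H}^s}^2 . \]

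For the right-hand side we use Lemma \ref{equalityfordifferencek>1}. On $\intcc{0,\frac{\delta_k}{2}}$ we have $v = u$, so $u$ may be replaced by $v$ in every integral. Each integral is then bounded by Lemma \ref{trivialintestimatek>1}. This gives
\[ \vertiii{u(t)}_{H^s}^2 \leq \vertiii{u(0)}_{H^s}^2 + C \norm{u(0)}_{H^s} \sum_{\alpha} \norm{\prod_{i=1}^{k+1} D^{\alpha_i} v}_{X_{0,-\frac{1}{2}+\epsilon'}} , \]
where the sum runs over the finitely many multi-indices of Lemma \ref{equalityfordifferencek>1}. The constants $C(\alpha_1,\dots,\alpha_{k+1})$ depend only on $k$ and $s$, so they are absorbed into $C$.

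Next we reduce the index set. The product is symmetric in its factors, so we may reorder so that $\abs{\alpha_1}\geq\dots\geq\abs{\alpha_{k+1}}$. We also need $\abs{\alpha_2}\geq 1$. If instead $\abs{\alpha_2}=0$, then $\abs{\alpha_1}=s+1$, which is excluded by the constraint $\abs{\alpha_i}\leq s$. Finally, $\norm{u(0)}_{H^s}\sim\vertiii{u(0)}_{H^s}$, with $\norm{\cdot}_{H^s}$ taken to be the equivalent norm.

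To pass from the squared inequality to the stated linear one, set $a=\vertiii{u(0)}_{H^s}$ and let $B$ be the sum above. The bound then reads $\vertiii{u(t)}_{H^s}^2\leq a^2+CaB\leq (a+\tfrac{C}{2}B)^2$. Taking square roots gives the claim, with $C$ replaced by $C/2$.

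The only point needing care is the bookkeeping of constants. The constant from Lemma \ref{trivialintestimatek>1} depends on $\epsilon'$ and $\delta_k$. The norm equivalence $\norm{\cdot}_{H^s}\sim\vertiii{\cdot}_{H^s}$ depends on $s$, which is fixed. All of these are absorbed into $C(k,\epsilon',\delta_k)$, and no genuine obstacle arises.
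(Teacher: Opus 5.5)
Your proposal is correct and follows the paper's proof: you combine Lemma~\ref{equalityfordifferencek>1} with Lemma~\ref{trivialintestimatek>1}, use mass conservation to pass to $\vertiii{\cdot}_{H^s}$, and reorder the multi-indices (your justification of $\abs{\alpha_2}\geq 1$ from $\abs{\alpha_i}\leq s$ makes explicit what the paper calls ``rearranging the sum''). The only difference is cosmetic. You take square roots via $a^2+CaB\leq (a+\tfrac{C}{2}B)^2$, whereas the paper divides by $\vertiii{u(t)}_{H^s}+\vertiii{u(0)}_{H^s}$, and the two steps are equivalent.
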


\begin{proof}
The combination of Lemma \ref{equalityfordifferencek>1} and Lemma \ref{trivialintestimatek>1} immediately leads us to 
\[ \norm{u(t)}_{\dot{H}^{s}}^{2} - \norm{ u(0)}_{\dot{H}^{s}}^{2} \leq \widetilde{C}(k,\epsilon',\delta_{k}) \norm{u(0)}_{H^{s}} \sum_{\substack{ 0 \leq \lvert \alpha_{i} \rvert \leq s \\ \lvert \alpha_{1} \rvert + ...+ \lvert \alpha_{k+1} \rvert = s+1}} \norm{\prod_{i=1}^{k+1} D^{\alpha_{i}} v }_{X_{0,-\frac{1}{2}+\epsilon'}}. \] 
Now, taking into account 
\[ \norm{ u(0) }_{H^{s}} \lesssim \vertiii{u(0)}_{H^{s}} \]
and the fact that the $L_{xy}^{2}$-norm is a conserved quantity, we further obtain
\begin{align*} 
&\underbrace{\norm{u(t)}_{\dot{H}^{s}}^{2} + \norm{u(t)}_{L_{xy}^{2}}^{2}}_{= \ \vertiii{u(t)}_{H^{s}}^{2}} - \underbrace{\left( \norm{u(0)}_{L_{xy}^{2}}^{2} + \norm{u(0)}_{\dot{H}^{s}}^{2} \right)}_{= \ \vertiii{u(0)}_{H^{s}}^{2}} \leq \\ & C(k,\epsilon',\delta_k) \vertiii{u(0)}_{H^{s}} \sum_{\substack{ 0 \leq \lvert \alpha_{i} \rvert \leq s \\ \lvert \alpha_{1} \rvert + ...+ \lvert \alpha_{k+1} \rvert = s+1}} \norm{\prod_{i=1}^{k+1}D^{\alpha_{i}} v }_{X_{0,-\frac{1}{2}+\epsilon'}},
\end{align*} 
and the desired estimate follows after rearranging the sum and dividing by \\ $\vertiii{u(t)}_{H^s} + \vertiii{u(0)}_{H^s}$.
\end{proof}

\begin{rem} \label{corogrowthk=1}
Using Lemma \ref{equalityfordifferencek=1} together with Remark \ref{trivialintestimatesk=1}, one arrives analogously at
\begin{align*}
&\vertiii{u(t)}_{H^s} \leq \vertiii{u(0)}_{H^s} + C(\epsilon',\delta_1) \left(\norm{Pr_{\set{\abs{(\xi_1,q_1)} \gg \abs{(\xi_2,q_2)}}}(I^sv \frac{\partial v}{\partial x})}_{X_{0,-\frac{1}{2}+\epsilon'}} \right. \\ &+ \left. \sum_{j=2}^{3} \norm{\frac{\partial}{\partial x}Pr_{j}(I^sv v)}_{X_{0,-\frac{1}{2}+\epsilon'}} + \sum_{\substack{\abs{\alpha_1}+\abs{\alpha_2}=s \\ \abs{\alpha_1} \geq \abs{\alpha_2} \geq 1}} \norm{\frac{\partial}{\partial x} (D^{\alpha_{1}}v D^{\alpha_{2}}v)}_{X_{0,-\frac{1}{2}+\epsilon'}} \right)
\end{align*}
when $k=1$.
\end{rem}

It remains to control the $X_{0,-\frac{1}{2}+}$-norms appearing in Corollary \ref{corogrowthk>1} and Remark \ref{corogrowthk=1} appropriately by powers of $\norm{u(0)}_{H^s}$. Since the arguments for the case $k=1$ and the higher-order nonlinearities differ substantially, we consider them separately.

\subsection{Key estimate in the case $k=1$}

\begin{prop} \label{keypropgrowthk=1}
For every $0<\epsilon \ll 1$, there exist $0<\epsilon' \ll 1$ and a constant $C=C(\epsilon,\sup_{t\in \R} \norm{u(t)}_{H^1}) > 0$ such that the inequality
\begin{align*}
&\norm{Pr_{\set{\abs{(\xi_1,q_1)} \gg \abs{(\xi_2,q_2)}}}(I^sv \frac{\partial v}{\partial x})}_{X_{0,-\frac{1}{2}+\epsilon'}} + \sum_{j=2}^{3} \norm{\frac{\partial}{\partial x}Pr_{j}(I^sv v)}_{X_{0,-\frac{1}{2}+\epsilon'}} \\ & + \sum_{\substack{\abs{\alpha_1}+\abs{\alpha_2}=s \\ \abs{\alpha_1} \geq \abs{\alpha_2} \geq 1}} \norm{\frac{\partial}{\partial x} (D^{\alpha_{1}}v D^{\alpha_{2}}v)}_{X_{0,-\frac{1}{2}+\epsilon'}} \leq C\norm{u(0)}_{H^s}^{1-\frac{1}{4(s-1)}+\epsilon}
\end{align*}
holds.
\end{prop}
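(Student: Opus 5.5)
The plan is to estimate each $X_{0,-\frac12+\epsilon'}$-norm by duality, exactly as in the proof of Proposition \ref{bilinestimateZKlambda} with $\lambda=1$. We pair against $f\in X_{0,\frac12-\epsilon'}$ with $\norm{f}\le 1$ and $\widehat f\ge0$. In each frequency configuration the goal is a bound of the form $\norm{v}_{X_{\sigma_1,\frac12+}}\norm{v}_{X_{\sigma_2,\frac12+}}$ with $\sigma_1+\sigma_2 = s+\frac34+$ and $\sigma_1,\sigma_2\ge 1$. We then use the extension property $\norm{v}_{X_{\sigma,\frac12+}}\les\norm{u}_{X^{\delta_1/2}_{\sigma,\frac12+}}$. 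A key point is that $\delta_1$ depends only on $\sup_t\norm{u(t)}_{H^1}$, since the local theory is available at regularity $\frac34<\sigma\le 1$ (Cao-Labora, Proposition \ref{bilinestimateZKlambda}). Persistence of regularity then gives $\norm{u}_{X^{\delta_1/2}_{\sigma,\frac12+}}\les C(\norm{u}_{L^\infty H^1})\norm{u(0)}_{H^\sigma}$ for all $\sigma\le s$. Interpolating between $H^1$ and $H^s$,
\[ \norm{u(0)}_{H^{\sigma_1}}\norm{u(0)}_{H^{\sigma_2}}\les \norm{u(0)}_{H^1}^{2-\frac{\sigma_1+\sigma_2-2}{s-1}}\norm{u(0)}_{H^s}^{\frac{\sigma_1+\sigma_2-2}{s-1}}, \]
and $\sigma_1+\sigma_2 = s+\frac34+$ yields precisely the exponent $1-\frac{1}{4(s-1)}+\epsilon$. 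Everything therefore reduces to bilinear $X_{s,b}$ estimates for $\partial_x(\cdot)$ that place a total of $s+\frac34+$ derivatives on the two factors, with neither factor carrying fewer than one.

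The mixed terms $\partial_x(D^{\alpha_1}vD^{\alpha_2}v)$ with $|\alpha_1|\ge|\alpha_2|\ge1$ are handled directly by Proposition \ref{bilinestimateZKlambda} ($\lambda=1$, $\frac34+$ regularity), which gives the bound $\norm{v}_{X_{|\alpha_1|+\frac34+,\frac12+}}\norm{v}_{X_{|\alpha_2|+\frac34+,\frac12+}}$. The regularity sum here is $s+\frac32+$, which is too much. I will therefore revisit the case analysis of that proof, exploiting the fact that the derivative $\partial_x$ (one full derivative, with weight $|\xi_0|$) can be split between the factors. In the nonresonant separated case (i), the bound \eqref{MPlambda} gains half a derivative, and Remark \ref{RemPropMP} (ii) moves the $\frac12+$ loss onto the low factor. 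In the comparable cases, the $L^4$ estimate, the bilinear refinement \eqref{Bilinreflambda}, and the resonance function in (ii.2.2.2) each yield a total cost of $\frac34+$ beyond the $s$ derivatives. Distributing it so that the lower factor carries at least $1$ is possible because $|\alpha_2|\ge1$ already.

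The term $Pr_{\{|(\xi_1,q_1)|\gg|(\xi_2,q_2)|\}}(I^sv\,\partial_xv)$ is the one for which Lemma \ref{equalityfordifferencek=1} was designed. It involves high$\times$low frequencies with no outer derivative. The trivial bound, placing $s$ derivatives on the high factor and $1+$ on the low one, is insufficient. Instead I pair against $f$ and use \eqref{MPlambdadual} on the pair $(f,\text{high})$, since $|(\xi_0,q_0)|\sim|(\xi_1,q_1)|$. This works in the nonresonant region $\bigl||(\xi_0,q_0)|^2-|(\xi_1,q_1)|^2\bigr|\gtrsim |\xi_2||(\xi_1,q_1)|$, which gains $\frac12$ of a derivative on the high factor. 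In the complementary region, the derivative $|\xi_2|$ is small relative to the resonance, and the $L^4$ bounds \eqref{L^4lambda} suffice. The terms $\partial_xPr_2$ and $\partial_xPr_3$ involve high$\times$high$\to$low or comparable configurations. There the outer $|\xi_0|$ is at most the frequency of the second factor and can be partly absorbed by \eqref{MPlambdadual} applied to $(f,\cdot)$. In the fully comparable case, the argument follows (ii.2) of Proposition \ref{bilinestimateZKlambda}.

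I expect the main obstacle to be the high$\times$low contribution with the low frequency $\gtrsim1$ and nearly resonant, that is, near the lines $3\xi^2=q^2$ where \eqref{MPlambda} degenerates. There I would try the bilinear refinement \eqref{Bilinreflambda} with $\alpha=1$, which gains $|\xi_0|^{1/4}$. If it is insufficient, I would fall back on the resonance identity for $R_{\text{ZK}}$ as in case (ii.2.2.2). The threshold $\frac34$ entering the exponent $\frac{1}{4(s-1)}$ strongly suggests that this $\frac14$-gain is exactly what closes the argument.
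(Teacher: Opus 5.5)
Your bookkeeping is right: a bound $\|v\|_{X_{\sigma_1,\frac12+}}\|v\|_{X_{\sigma_2,\frac12+}}$ with $\sigma_1+\sigma_2=s+\frac34+$, followed by interpolation between $H^1$ and $H^s$, gives exactly the exponent $1-\frac{1}{4(s-1)}+\epsilon$. Your plan for the mixed terms and for $\partial_x Pr_2$, $\partial_x Pr_3$ also essentially matches the paper: \eqref{MPlambda} on separated pairs with the loss on the low factor, \eqref{MPlambdadual} on $(f,\text{high})$ when the output is low, and \eqref{lwpZKinH^slambda} after redistributing derivatives when all frequencies are comparable.

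The gap is the first term $Pr_{\{|(\xi_1,q_1)|\gg|(\xi_2,q_2)|\}}(I^sv\,\partial_xv)$. Pairing $f$ with the high factor gains nothing there. The $\frac12+$ loss in \eqref{MPlambda} and \eqref{MPlambdadual} must sit on one of the two arguments of $MP$, and here both $f$ and $u_1$ live at the high frequency $N\sim|(\xi_1,q_1)|$. At the same time $|(\xi_0,q_0)|^2-|(\xi_1,q_1)|^2=6\xi_1\xi_2+2q_1q_2+|(\xi_2,q_2)|^2$ is $\les N|(\xi_2,q_2)|$. So even in your ``nonresonant'' region the multiplier yields at most $N^{\frac12}|(\xi_2,q_2)|^{\frac12}$. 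After paying $N^{\frac12+}$ (whenever $|q_1|\sim N$) you are back at $N^{s+}$ on the high factor.

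The complementary region is not handled either. \eqref{L^4lambda} gains no derivatives, and the resonance can vanish there: for $q_1=\sqrt3\,\xi_1$, $q_2=-\sqrt3\,\xi_2$ one has $R_{\text{ZK}}=0$ and $|(\xi_0,q_0)|^2-|(\xi_1,q_1)|^2=6\xi_2^2$. So neither your $L^4$ argument nor the fallback via the resonance identity closes this case.

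The missing observation is that no resonance splitting is needed: apply \eqref{MPlambda} to the two input factors themselves. On the support of the projector $|(\xi_1,q_1)|\ge 100|(\xi_2,q_2)|$, so $\big||(\xi_1,q_1)|^2-|(\xi_2,q_2)|^2\big|^{\frac12}\sim|(\xi_1,q_1)|$ and
\[ |(\xi_1,q_1)|^s|\xi_2|\les \big||(\xi_1,q_1)|^2-|(\xi_2,q_2)|^2\big|^{\frac12}\langle(\xi_1,q_1)\rangle^{s-\frac12+}\langle(\xi_2,q_2)\rangle^{\frac12-}. \]
Hence $I_{f,1}\les\|MP_1(J^{s-\frac12+}v,J^{\frac12-}v)\|_{L^2_{txy}}\|f\|_{L^2_{txy}}\les\|v\|_{X_{s-\frac12+,\frac12+}}\|v\|_{X_{1,\frac12+}}$, with the $J_y^{\frac12+}$ loss placed on the low factor (Remark \ref{RemPropMP} (ii)). This gives $\|u(0)\|_{H^{s-\frac12+}}$, better than required. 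The absence of an outer $\partial_x$ plays no role.

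Relatedly, \eqref{MPlambda} does not degenerate near the lines $3\xi^2=q^2$. Its symbol is a difference of the dilated quantities $3\xi^2+q^2$, which is large whenever the two frequencies are separated; it is \eqref{Bilinreflambda} and \eqref{L^4derivativegain} that are sensitive to those lines. So the obstacle you anticipate does not arise, while the route you propose around it does not work.
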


\begin{proof}
In the following computations, we may assume without loss of generality that $\widehat{v},\widehat{f} \geq 0$ ($f\in X_{0,\frac{1}{2}-}$ with $\norm{f}_{X_{0,\frac{1}{2}-}} \leq 1$ for subsequent estimates by duality). Moreover, throughout the sequel, the symbol $\ast$ will refer to the convolution constraint $(\tau_0,\xi_0,q_0) = (\tau_1+\tau_2,\xi_1+\xi_2,q_1+q_2)$, and we proceed term by term. \\
For the first term, duality and Parseval's identity yield
\begin{align*}
&\norm{Pr_{\set{\abs{(\xi_1,q_1)} \gg \abs{(\xi_2,q_2)}}}(I^sv \frac{\partial v}{\partial x})}_{X_{0,-\frac{1}{2}+}} \les \\ & \sup_{\norm{f}_{X_{0,\frac{1}{2}-}} \leq 1} \int_{\R^2} \sum_{q_0 \in \Z} \int_{\R^2} \sum_{\substack{q_1 \in \Z \\ \ast}} \chi_{\set{\abs{(\xi_1,q_1)} \geq 100\abs{(\xi_2,q_2)}}} \abs{(\xi_1,q_1)}^s \abs{\xi_2} \\ & \cdot \widehat{v}(\tau_1,\xi_1,q_1) \widehat{v}(\tau_2,\xi_2,q_2) \widehat{f}(\tau_0,\xi_0,q_0) \ \mathrm{d}(\tau_1,\xi_1) \mathrm{d}(\tau_0,\xi_0) \eqqcolon \sup_{\norm{f}_{X_{0,\frac{1}{2}-}} \leq 1} I_{f,1},
\end{align*}
and since $\abs{(\xi_1,q_1)} \gg \abs{(\xi_2,q_2)}$, we may pass pointwise to
\[ \abs{(\xi_1,q_1)}^s \abs{\xi_2} \les \abs{\abs{(\xi_1,q_1)}^2 - \abs{(\xi_2,q_2)}^2}^\frac{1}{2} \langle (\xi_1,q_1) \rangle^{s-\frac{1}{2}+} \langle (\xi_2,q_2) \rangle^{\frac{1}{2}-}. \]
Undoing Plancherel, followed by Hölder's inequality then leads us to
\begin{align*}
I_{f,1} & \les \norm{MP_1(J^{s-\frac{1}{2}+}v,J^{\frac{1}{2}-}v)}_{L_{txy}^2} \norm{f}_{L_{txy}^2} \\ & \les \norm{v}_{X_{s-\frac{1}{2}+,\frac{1}{2}+}} \norm{v}_{X_{1,\frac{1}{2}+}} \norm{f}_{X_{0,\frac{1}{2}-}},
\end{align*}
where in the last step we made use of \eqref{MPlambda}.
By the choice of $v$ and the continuous dependence of the solution on the initial data, it further follows that
\begin{align*}
...&\les \norm{u}_{X_{s-\frac{1}{2}+,\frac{1}{2}+}^\frac{\delta_1}{2}} \norm{u}_{X_{1,\frac{1}{2}+}^\frac{\delta_1}{2}} \\ & \les \norm{u(0)}_{H^{s-\frac{1}{2}+}} \norm{u(0)}_{H^1},
\end{align*}
and since $\sup_{t \in \R} \norm{u(t)}_{H^1} < \infty$, we conclude that, overall
\[ \norm{Pr_{\set{\abs{(\xi_1,q_1)} \gg \abs{(\xi_2,q_2)}}}(I^sv \frac{\partial v}{\partial x})}_{X_{0,-\frac{1}{2}+}} \les \norm{u(0)}_{H^{s-\frac{1}{2}+}}.  \]
For the second term, we again use Parseval's identity and duality to write
\begin{align*}
&\norm{\frac{\partial}{\partial x}Pr_{2}(I^sv v)}_{X_{0,-\frac{1}{2}+}} \les \\ & \sup_{\norm{f}_{X_{0,\frac{1}{2}-}} \leq 1} \int_{\R^2} \sum_{q_0 \in \Z} \int_{\R^2} \sum_{\substack{q_1 \in \Z \\ \ast}} \chi_{\set{\abs{(\xi_1,q_1)} < 100 \abs{(\xi_2,q_2)} \ \text{and} \ \abs{(\xi_0,q_0)} < 100 \abs{(\xi_2, q_2)}}} \abs{\xi_0} \abs{(\xi_1,q_1)}^s \\ & \cdot \widehat{v}(\tau_1,\xi_1,q_1) \widehat{v}(\tau_2,\xi_2,q_2) \widehat{f}(\tau_0,\xi_0,q_0) \ \mathrm{d}(\tau_1,\xi_1)  \mathrm{d}(\tau_0,\xi_0) \eqqcolon \sup_{\norm{f}_{X_{0,\frac{1}{2}-}} \leq 1} I_{f,2},
\end{align*}
and we now distinguish two cases: \\
(i) \underline{$\abs{(\xi_1,q_1)} \gg \abs{(\xi_0,q_0)}$}: \\
In this situation, we may infer the pointwise bound
\[ \abs{\xi_0} \abs{(\xi_1,q_1)}^s \les \abs{\abs{(\xi_1,q_1)}^2 - \abs{(\xi_0,q_0)}^2}^\frac{1}{2} \langle (\xi_1,q_1) \rangle^{s-\frac{1}{2}+} \langle (\xi_0,q_0) \rangle^{-\frac{1}{2}-} \langle (\xi_2,q_2) \rangle \]
due to the active constraint $\abs{(\xi_0,q_0)} \les \abs{(\xi_2,q_2)}$.
An application of Parseval's identity, followed by Hölder's inequality then yields
\begin{align*}
I_{f,2} & \les \norm{MP_1(J^{s-\frac{1}{2}+}\widetilde{v},J^{-\frac{1}{2}-}f)}_{L_{txy}^2} \norm{J^1v}_{L_{txy}^2} \\ & \les \norm{v}_{X_{s-\frac{1}{2}+,\frac{1}{2}+}} \norm{f}_{X_{0,\frac{1}{2}-}} \norm{v}_{X_{1,\frac{1}{2}+}},
\end{align*}
where in the final step we used \eqref{MPlambdadual}. Now, taking into account the choice of $v$, the continuous dependence of the solution on the initial data, and the fact that $\sup_{t \in \R} \norm{u(t)}_{H^1} < \infty$, we can continue the calculation to obtain
\[...\les \norm{u(0)}_{H^{s-\frac{1}{2}+}}, \]
and this concludes the analysis of this subcase. \\
(ii) \underline{$\abs{(\xi_1,q_1)} \les \abs{(\xi_0,q_0)}$}: \\
In this case, taking into account the active assumption $\abs{(\xi_1,q_1)} \les \abs{(\xi_2,q_2)}$, we obtain
\[ \abs{\xi_0} \abs{(\xi_1,q_1)}^s \les \abs{\xi_0} \langle (\xi_0,q_0) \rangle^{\frac{3}{4}+} \langle (\xi_1,q_1) \rangle^{s-1} \langle (\xi_2,q_2) \rangle^{\frac{1}{4}-}. \]
Reversing the duality argument then gives
\[ \sup_{\norm{f}_{X_{0,\frac{1}{2}-}} \leq 1} I_{f,2} \les \norm{I_x^1(J^{s-1}v J^{\frac{1}{4}-}v)}_{X_{\frac{3}{4}+,-\frac{1}{2}+}}, \]
and this quantity can now be estimated by means of the bilinear estimate \eqref{lwpZKinH^slambda} to further obtain
\[... \les \norm{v}_{X_{s-\frac{1}{4}+,\frac{1}{2}+}} \norm{v}_{X_{1,\frac{1}{2}+}}. \]
Proceeding as before, we finally arrive at
\[... \les \norm{u(0)}_{H^{s-\frac{1}{4}+}}, \]
and this completes the analysis of this subcase as well. \\
Hence, for the second term, we have shown that
\[ \norm{\frac{\partial}{\partial x}Pr_{2}(I^sv v)}_{X_{0,-\frac{1}{2}+}} \les \norm{u(0)}_{H^{s-\frac{1}{4}+}} \]
overall. \\
We now turn to the third term. By duality and Plancherel, we again obtain
\begin{align*}
&\norm{\frac{\partial}{\partial x}Pr_{3}(I^sv v)}_{X_{0,-\frac{1}{2}+}} \les \\ & \sup_{\norm{f}_{X_{0,\frac{1}{2}-}} \leq 1} \int_{\R^2} \sum_{q_0 \in \Z} \int_{\R^2} \sum_{\substack{q_1 \in \Z \\ \ast}} \chi_{\set{\text{either} \ \abs{(\xi_1,q_1)} < 100 \abs{(\xi_2,q_2)} \ \text{or} \ \abs{(\xi_0,q_0)} < 100 \abs{(\xi_2, q_2)}}}  \abs{\xi_0} \\ & \cdot \abs{(\xi_1,q_1)}^s \widehat{v}(\tau_1,\xi_1,q_1) \widehat{v}(\tau_2,\xi_2,q_2) \widehat{f}(\tau_0,\xi_0,q_0) \ \mathrm{d}(\tau_1,\xi_1)  \mathrm{d}(\tau_0,\xi_0) \eqqcolon \sup_{\norm{f}_{X_{0,\frac{1}{2}-}} \leq 1} I_{f,3},
\end{align*}
and the frequency constraint imposed by the projector requires us to consider two cases: \\
(i) \underline{$\abs{(\xi_1,q_1)} < 100 \abs{(\xi_2,q_2)}$ and $\abs{(\xi_0,q_0)} \geq 100 \abs{(\xi_2,q_2)}$}: \\
In this setting, we have $\abs{(\xi_1,q_1)} \les \abs{(\xi_0,q_0)}$, which implies pointwise that
\[ \abs{\xi_0} \abs{(\xi_1,q_1)}^s \les \abs{\xi_0} \langle (\xi_0,q_0) \rangle^{\frac{3}{4}+} \langle (\xi_1,q_1) \rangle^{s-1} \langle (\xi_2,q_2) \rangle^{\frac{1}{4}-}.  \]
Using \eqref{lwpZKinH^slambda}, exactly as in the discussion of case (ii) for the second term, we then obtain
\begin{align*}
\sup_{\norm{f}_{X_{0,\frac{1}{2}-}} \leq 1} I_{f,3} &\les \norm{I_x^1(J^{s-1}vJ^{\frac{1}{4}-}v)}_{X_{\frac{3}{4}+,-\frac{1}{2}+}} \\ & \les \norm{v}_{X_{s-\frac{1}{4}+,\frac{1}{2}+}} \norm{v}_{X_{1,\frac{1}{2}+}},
\end{align*}
and the choice of $v$, in conjunction with the continuous dependence of the solution on the initial data, allows us to further conclude
\[ ... \les \norm{u(0)}_{H^{s-\frac{1}{4}+}}. \]
(ii) \underline{$\abs{(\xi_1,q_1)} \geq 100 \abs{(\xi_2,q_2)}$ and $\abs{(\xi_0,q_0)} < 100 \abs{(\xi_2,q_2)}$}: \\
In this case, we also have $\abs{(\xi_0,q_0)} \les \abs{(\xi_1,q_1)}$, and hence we may pass to
\[ \abs{\xi_0} \abs{(\xi_1,q_1)}^s \les \abs{\abs{(\xi_1,q_1)}^2-\abs{(\xi_2,q_2)}^2}^\frac{1}{2} \langle (\xi_1,q_1) \rangle^{s-\frac{1}{2}+} \langle (\xi_2,q_2) \rangle^{\frac{1}{2}-}. \]
We are therefore in the same situation as in the discussion of the first term and obtain, in exactly the same way that
\begin{align*}
I_{f,3} & \les \norm{MP_1(J^{s-\frac{1}{2}+}v,J^{\frac{1}{2}-}v)}_{L_{txy}^2} \norm{f}_{L_{txy}^2} \\ & \les \norm{v}_{X_{s-\frac{1}{2}+,\frac{1}{2}+}} \norm{v}_{X_{1,\frac{1}{2}+}} \norm{f}_{X_{0,\frac{1}{2}-}} \\ & \les \norm{u(0)}_{H^{s-\frac{1}{2}+}}.
\end{align*}
Altogether, we have thus shown for the third term that
\[ \norm{\frac{\partial}{\partial x}Pr_{3}(I^sv v)}_{X_{0,-\frac{1}{2}+}} \les \norm{u(0)}_{H^{s-\frac{1}{4}+}}. \]
It remains to analyze the terms appearing in the sum over $\alpha_1,\alpha_2$. To this end, we may assume without loss of generality that $\abs{(\xi_1,q_1)} \geq \abs{(\xi_2,q_2)}$ and restrict our attention to the contribution of
\begin{align*}
&\norm{I_x^1(I^{s-1}vI^1v)}_{X_{0,-\frac{1}{2}+}} \sim \sup_{\norm{f}_{X_{0,\frac{1}{2}-}} \leq 1} \int_{\R^2} \sum_{q_0 \in \Z} \int_{\R^2} \sum_{\substack{q_1 \in \Z \\ \ast}}  \abs{\xi_0} \abs{(\xi_1,q_1)}^{s-1} \abs{(\xi_2,q_2)} \\ & \cdot \widehat{v}(\tau_1,\xi_1,q_1) \widehat{v}(\tau_2,\xi_2,q_2) \widehat{f}(\tau_0,\xi_0,q_0) \ \mathrm{d}(\tau_1,\xi_1)  \mathrm{d}(\tau_0,\xi_0) \eqqcolon \sup_{\norm{f}_{X_{0,\frac{1}{2}-}} \leq 1} I_{f}, 
\end{align*}
as all remaining terms can be reduced to this case. \\
(i) \underline{$\abs{(\xi_1,q_1)} \gg \abs{(\xi_2,q_2)}$}: \\
In this case, it holds that
\[ \abs{\xi_0} \abs{(\xi_1,q_1)}^{s-1} \abs{(\xi_2,q_2)} \les \abs{\abs{(\xi_1,q_1)}^2 - \abs{(\xi_2,q_2)}^2}^\frac{1}{2} \langle (\xi_1,q_1) \rangle^{s-\frac{1}{2}+} \langle (\xi_2,q_2) \rangle^{\frac{1}{2}-}, \]
so that we can employ \eqref{MPlambda} exactly as in the discussion for the first term to obtain
\[ I_{f} \les \norm{u(0)}_{H^{s-\frac{1}{2}+}}. \]
(ii) \underline{$\abs{(\xi_1,q_1)} \sim \abs{(\xi_2,q_2)} \gg \abs{(\xi_0,q_0)}$}: \\
In this situation, we have
\begin{align*}
 \abs{\xi_0} \abs{(\xi_1,q_1)}^{s-1} \abs{(\xi_2,q_2)} &\les \abs{\abs{(\xi_1,q_1)}^2-\abs{(\xi_0,q_0)}^2}^\frac{1}{2} \langle (\xi_1,q_1) \rangle^{s-\frac{1}{2}+} \langle (\xi_0,q_0) \rangle^{-\frac{1}{2}-} \\ & \ \ \ \cdot \langle (\xi_2,q_2) \rangle, \end{align*}
which allows us to argue exactly as in the discussion of case (i) for the second term. By means of \eqref{MPlambdadual}, we thus obtain
\[ I_f \les \norm{u(0)}_{H^{s-\frac{1}{2}+}}, \]
and this concludes the treatment of this subcase. \\
(iii) \underline{$\abs{(\xi_1,q_1)} \sim \abs{(\xi_2,q_2)} \sim \abs{(\xi_0,q_0)}$}: \\
In this final subcase, the derivatives can be distributed arbitrarily among the factors, yielding
\[ \abs{\xi_0} \abs{(\xi_1,q_1)}^{s-1} \abs{(\xi_2,q_2)} \les \abs{\xi_0} \langle (\xi_0,q_0) \rangle^{\frac{3}{4}+} \langle (\xi_1,q_1) \rangle^{s-1} \langle (\xi_2,q_2) \rangle^{\frac{1}{4}-}. \]
We therefore find ourselves precisely in the setting of case (ii) (concerning the treatment of $Pr_2$), and, using \eqref{lwpZKinH^slambda}, we analogously obtain
\[ I_f \les \norm{u(0)}_{H^{s-\frac{1}{4}+}}. \] 
Hence, all terms to be examined have been bounded by $\norm{u(0)}_{H^{s-\frac{1}{4}+}}$,
and since
\[ \norm{u(0)}_{H^{s-\frac{1}{4}+}} \les \norm{u(0)}_{H^s}^{1-\theta} \norm{u(0)}_{H^1}^{\theta} \les \norm{u(0)}_{H^s}^{1-\theta} \]
holds with
\[s(1-\theta) + \theta = s-\frac{1}{4}+ \Leftrightarrow \theta = \frac{1}{4(s-1)}-, \]
the proof is complete.
\end{proof}

\subsection{Key estimate in the case $k\geq2$}

\begin{prop} \label{keypropgrowthk>1}
Let $k \in \N^{\geq2}$ and let $\alpha_1,...,\alpha_{k+1} \in \N^2_0$ be multi-indices as in Corollary \ref{corogrowthk>1}. Then, for every $0<\epsilon \ll 1$, there exist $0<\epsilon' \ll 1$ and a constant $C=C(k, \epsilon, \sup_{t \in \R} \norm{u(t)}_{H^1}) > 0$ such that the estimate
\[ \norm{\prod_{i=1}^{k+1}D^{\alpha_{i}} v}_{X_{0,-\frac{1}{2}+\epsilon'}} \leq C \norm{u(0)}_{H^s}^{1-\frac{1}{s-1}+\epsilon} \]
holds.
\end{prop}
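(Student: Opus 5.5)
Since $\abs{\alpha_2}\geq 1$ and $\abs{\alpha_1}+\dots+\abs{\alpha_{k+1}}=s+1$, we have $\abs{\alpha_1}\leq s$. The plan is to prove
\[ \norm{\prod_{i=1}^{k+1}D^{\alpha_{i}} v}_{X_{0,-\frac{1}{2}+\epsilon'}} \les \norm{v}_{X_{s-1+\eta,\frac12+}}\,\norm{v}_{X_{1,\frac12+}}^{k}\quad\text{for some small }\eta\ge 0. \]
Given this, the choice of the extension $v$, continuous dependence and $\sup_t\norm{u(t)}_{H^1}<\infty$ give the bound $\norm{u(0)}_{H^{s-1+\eta}}$. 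Interpolating between $H^1$ and $H^s$ as at the end of Proposition \ref{keypropgrowthk=1} yields $\norm{u(0)}_{H^s}^{1-\theta}$ with $s(1-\theta)+\theta=s-1+\eta$, that is $\theta=\frac{1-\eta}{s-1}$. This is the claimed exponent once $\eta=0+$.

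We argue by duality with $f\in X_{0,\frac12-}$, $\norm{f}_{X_{0,\frac12-}}\le1$, and $\widehat v,\widehat f\ge0$. Let $(\xi_i,q_i)$ be the frequency of the $i$-th factor. Distribute the multiplier $\prod_i\abs{(\xi_i,q_i)}^{\abs{\alpha_i}}$ as follows. If the largest frequency, say $(\xi_j,q_j)$, dominates, we put all derivatives on it, which costs at most $s+1-(\text{derivatives already on the low factors})$. Otherwise there are two comparable top frequencies, and we place $s-1+\eta$ derivatives on one of them and about one derivative on the other. The base estimate is Hölder in the following form:
\[ I_f\le \norm{f}_{L^{4-}_{txy}}\norm{J^{s-1+}v}_{L^{4+}_{txy}}\norm{J^{1-}v}_{L^{4+}_{txy}}\prod_{i\ge 3}\norm{J^{0-}v}_{L^\infty_{txy}\text{ or }L^{\infty-}}, \]
using \eqref{L^4-lambda} and \eqref{L^4+lambda} with $\lambda=1$. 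For $k\ge2$ the remaining factors, which carry essentially no derivatives beyond the one already accounted for, are controlled in $L^\infty_tL^\infty_{xy}$ or $L^{\infty-}$ by $\norm{v}_{X_{1+,\frac12+}}$ via Sobolev embedding. Here we lose only an $\epsilon$ of a derivative, which is absorbed since $H^1$ is controlled. In configurations where derivatives are spread out (two or more factors each carrying derivatives, with $\abs{\alpha_2}\ge1$), this gives the bound directly, because the total number of derivatives $s+1$ splits as at most $s-1+\eta$ on one factor plus at most $2-\eta$ on the rest. The latter is within budget whenever the remaining derivatives can be shared among two factors each bounded in $X_{1,\frac12+}$.

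The main obstacle is the extremal configuration $\abs{\alpha_1}=s$, $\abs{\alpha_2}=1$, where the highest-frequency factor carries $s$ derivatives and we must gain a full derivative. I would split this case as follows.
\begin{enumerate}
\item If $\abs{(\xi_1,q_1)}\gg$ all other frequencies, then $\abs{(\xi_0,q_0)}\sim\abs{(\xi_1,q_1)}$. I would apply the bilinear estimate \eqref{MPlambda} to the pair $(D^{\alpha_1}v, D^{\alpha_2}v)$. This gains half a derivative through $\abs{\abs{(\xi_1,q_1)}^2-\abs{(\xi_2,q_2)}^2}^{1/2}\gtrsim\abs{(\xi_1,q_1)}$ at the price of $\frac12+$ derivatives on the low factor, which is affordable since that factor sits in $H^1$. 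A second half derivative comes from pairing $f$ with another low factor through \eqref{MPlambdadual}, using $\abs{\abs{(\xi_0,q_0)}^2-\abs{(\xi_3,q_3)}^2}^{1/2}\sim\abs{(\xi_0,q_0)}$. This is possible because $k\ge2$ guarantees a third factor; this is exactly why the exponent here is better than the $\frac{1}{4(s-1)}$ obtained for $k=1$.
\item If instead a second frequency is comparable to $\abs{(\xi_1,q_1)}$, we move derivatives from $D^{\alpha_1}v$ to that factor and fall back to the Hölder argument above.
\end{enumerate}
If the double smoothing in step 1 fails in some subconfiguration, for instance when $f$ and the third factor are of comparable size, I would first try to use the resonance function $R_{\text{mZK}}$ together with the time-weight $\langle\sigma_i\rangle$, as in the proof of Proposition \ref{variantLWPmZK}, to recover the missing derivative. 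I expect this bookkeeping to be the delicate part, and it is where the $\epsilon$-losses (the $+\epsilon$ in the exponent and the choice of $\epsilon'$) enter.
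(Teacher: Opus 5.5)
Your proof has a genuine gap in case 2. The trouble is the configuration where exactly the top two frequencies are comparable.

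Take $k=2$, $\abs{\alpha_1}=s$, $\abs{\alpha_2}=1$ and $\abs{(\xi_1,q_1)}\sim\abs{(\xi_2,q_2)}\gg\abs{(\xi_3,q_3)}$. The multiplier is then $\sim\abs{(\xi_1,q_1)}^{s+1}$. In your H\"older scheme only two factors can absorb derivatives: $v_1$ takes at most $s-1+$ (via \eqref{L^4+lambda}) and $v_2$ at most $1-$. The function $f$ only lies in $X_{0,\frac12-}$. Derivatives cannot be moved onto the low-frequency factor $v_3$, and in $L^\infty$ it can carry essentially none anyway. So you are one full derivative short. Your "remaining $2-\eta$ derivatives shared among two $X_{1,\frac12+}$-factors" only works when a third frequency is comparable to $\abs{(\xi_1,q_1)}$.

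Your double smoothing does not rescue this configuration either. Pairing $v_1$ with $v_2$ gains nothing when their frequencies are comparable. Since $\xi_0$ may be small here, pairing $f$ with $v_3$ gains nothing as well.

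The missing idea is to pair the top factor with a factor that carries no derivatives. Whenever $\abs{(\xi_1,q_1)}\gg\abs{(\xi_3,q_3)}$, regardless of $(\xi_2,q_2)$ and $(\xi_0,q_0)$, one has $\abs{\abs{(\xi_1,q_1)}^2-\abs{(\xi_3,q_3)}^2}^{1/2}\sim\abs{(\xi_1,q_1)}$. This is a full derivative, not half. The gain is only a net half derivative when the $J_y^{\frac12+}$ loss in \eqref{MPlambda} lands on a factor that already carries a derivative, which is what happens when you pair with $D^{\alpha_2}v$.

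Pairing with $v_3$ instead, the loss falls on $v_3$, and $\norm{MP_1(J^{s-1+}v,v)}_{L^2_{txy}}\les\norm{v}_{X_{s-1+,\frac12+}}\norm{v}_{X_{\frac12+,\frac12+}}$. The factors $J^{1-}v_2$ and $J^{0-}f$ go into $L^{4+}$ and $L^{4-}$ by \eqref{L^4+lambda} and \eqref{L^4-lambda}. Any further factors go into $L^\infty$ by Sobolev embedding.

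This is the paper's case (i); for $k\geq4$ the paper pairs with the fifth factor, but the third suffices for the exponent $s-1$. It contains your step 1, so neither the double smoothing nor any resonance argument is needed. Your worry about $f$ and $v_3$ being comparable is vacuous there anyway, since $\abs{(\xi_0,q_0)}\sim\abs{(\xi_1,q_1)}\gg\abs{(\xi_3,q_3)}$.

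The complementary case $\abs{(\xi_1,q_1)}\sim\abs{(\xi_2,q_2)}\sim\abs{(\xi_3,q_3)}$ is then handled by H\"older alone. However, $v_3$ must be placed in $L^4$ via \eqref{L^4lambda} carrying $1-$ derivatives, not in $L^\infty$ as in your displayed base estimate, where Sobolev would cost $X_{2+,\frac12+}$. Your reduction to the worst case and the final interpolation step are fine.
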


\begin{proof}
In the following, we denote by $\ast$ the convolution constraint $(\tau_0,\xi_0,q_0) = (\tau_1+...+\tau_{k+1},\xi_1+...+\xi_{k+1},q_1+...+q_{k+1})$, and we may assume without loss of generality that all time-space Fourier transforms of the functions involved are nonnegative. By symmetry, we may further restrict our attention to the frequency configuration $\abs{(\xi_1,q_1)} \geq \abs{(\xi_2,q_2)} \geq...\geq\abs{(\xi_{k+1},q_{k+1})}$, which moreover allows us to identify the terms with $\abs{\alpha_1} = s$, $\abs{\alpha_2} = 1$, and $\abs{\alpha_i} = 0$ for all $ i \in \set{3,4,5,...}$ as the worst cases, to which all other terms can be reduced. \\
By duality and an application of Parseval's identity, we now write
\begin{align*}
&\norm{I^svI^1vv^{k-1}}_{X_{0,-\frac{1}{2}+}} \sim \sup_{\norm{f}_{X_{0,\frac{1}{2}-}} \leq 1} \int_{\R^2} \sum_{q_0 \in \Z} \int_{\R^{2k}} \sum_{\substack{q_1,...,q_k \in \Z \\ \ast}} \abs{(\xi_1,q_1)}^s \abs{(\xi_2,q_2)} \\ & \cdot \widehat{f}(\tau_0,\xi_0,q_0) \prod_{i=1}^{k+1} \widehat{v}(\tau_i,\xi_i,q_i) \ \mathrm{d}(\tau_1,...,\tau_k,\xi_1,...,\xi_k) \mathrm{d}(\tau_0,\xi_0) \eqqcolon \sup_{\norm{f}_{X_{0,\frac{1}{2}-}} \leq 1} I_f,
\end{align*}
and proceed with a case-by-case analysis in order to bound this expression: \\ \\
\underline{$k=2$}: \\
(i) \underline{$\abs{(\xi_1,q_1)} \gg \abs{(\xi_3,q_3)}$}: \\
In this case, we have that
\begin{align*}
\abs{(\xi_1,q_1)}^s \abs{(\xi_2,q_2)} &\les \abs{\abs{(\xi_1,q_1)}^2-\abs{(\xi_3,q_3)}^2}^\frac{1}{2} \langle (\xi_1,q_1) \rangle^{s-1+} \langle (\xi_2,q_2) \rangle^{1-} \langle (\xi_0,q_0) \rangle^{0-},
\end{align*}
and undoing Plancherel, followed by an application of Hölder's inequality leads us to
\begin{align*}
I_f & \les \norm{MP_1(J^{s-1+}v,v)}_{L_{txy}^2} \norm{J^{1-} \widetilde{v} J^{0-}f}_{L_{txy}^2} \\ & \leq \norm{MP_1(J^{s-1+}v,v)}_{L_{txy}^2} \norm{J^{1-}\widetilde{v}}_{L_{txy}^{4+}} \norm{J^{0-}f}_{L_{txy}^{4-}}.
\end{align*}
By applying \eqref{MPlambda} to the first factor and \eqref{L^4+lambda} and \eqref{L^4-lambda} to the second and third factors, respectively, it then follows that
\begin{align*}
...&\les \norm{v}_{X_{s-1+,\frac{1}{2}+}} \norm{v}_{X_{\frac{1}{2}+,\frac{1}{2}+}} \norm{v}_{X_{1,\frac{1}{2}+}} \norm{f}_{X_{0,\frac{1}{2}-}} \\ & \les \norm{v}_{X_{s-1+,\frac{1}{2}+}} \norm{v}_{X_{1,\frac{1}{2}+}}^2,
\end{align*}
and the choice of $v$, together with the continuous dependence of the solution on the initial data and the fact that $\sup_{t \in \R} \norm{u(t)}_{H^1} < \infty$, allows us to further conclude
\[... \les \norm{u(0)}_{H^{s-1+}}. \]
This completes the discussion of this subcase. \\
(ii) \underline{$\abs{(\xi_1,q_1)} \sim \abs{(\xi_2,q_2)} \sim \abs{(\xi_3,q_3)}$}: \\
In this situation, we may pass pointwise to
\[ \abs{(\xi_1,q_1)}^s \abs{(\xi_2,q_2)} \les \langle (\xi_1,q_1) \rangle^{s-1+} \langle (\xi_2,q_2) \rangle^{1-} \langle (\xi_3,q_3) \rangle^{1-} \langle (\xi_0,q_0) \rangle^{0-}, \]
and, upon applying Parseval's identity to return to physical space, we employ Hölder's inequality to obtain
\begin{align*}
I_f &\les \norm{J^{s-1+}vJ^{1-}v}_{L_{txy}^2} \norm{J^{1-}\widetilde{v}J^{0-}f}_{L_{txy}^2} \\ & \leq \norm{J^{s-1+}v}_{L_{txy}^4} \norm{J^{1-}v}_{L_{txy}^4} \norm{J^{1-}\widetilde{v}}_{L_{txy}^{4+}} \norm{J^{0-}f}_{L_{txy}^{4-}}.
\end{align*}
We now apply \eqref{L^4lambda} to the first two factors and \eqref{L^4+lambda} and \eqref{L^4-lambda} to the third and fourth factors, respectively. It then follows that
\begin{align*}
...&\les \norm{v}_{X_{s-1+,\frac{1}{2}+}} \norm{v}_{X_{1,\frac{1}{2}+}}^2 \norm{f}_{X_{0,\frac{1}{2}-}} \\ & \leq \norm{v}_{X_{s-1+,\frac{1}{2}+}} \norm{v}_{X_{1,\frac{1}{2}+}}^2,
\end{align*}
and, as in case (i), we can further conclude
\[...\les \norm{u(0)}_{H^{s-1+}}. \]
With this, the discussion for the case $k=2$ is completed. \\ \\
\underline{$k=3$}: \\
At this point, we recall that each factor $v$ has compact support in time. This allows us to use \eqref{optimizedL^plambda}, \eqref{optimizedL^6+}, and \eqref{optimizedL^6-} in the computations that follow. \\
(i) \underline{$\abs{(\xi_1,q_1)} \gg \abs{(\xi_4,q_4)}$}: \\
The active assumption leads us to the pointwise estimate
\begin{align*}
\abs{(\xi_1,q_1)}^s \abs{(\xi_2,q_2)} & \les \abs{\abs{(\xi_1,q_1)}^2-\abs{(\xi_4,q_4)}^2}^\frac{1}{2} \langle (\xi_1,q_1) \rangle^{s-1+} \langle (\xi_2,q_2) \rangle^{1-} \langle (\xi_0,q_0) \rangle^{0-} \\ & \ \ \ \cdot \langle (\xi_3,q_3) \rangle^{0-},
\end{align*}
and returning to physical space via Plancherel, followed by Hölder's inequality, gives
\begin{align*}
I_f &\les \norm{MP_1(J^{s-1+}v,v)}_{L_{txy}^2} \norm{J^{1-}\widetilde{v}J^{0-}fJ^{0-}\widetilde{v}}_{L_{txy}^2} \\ & \les \norm{MP_1(J^{s-1+}v,v)}_{L_{txy}^2} \norm{J^{1-}\widetilde{v}}_{L_{txy}^{4+}} \norm{J^{0-}f}_{L_{txy}^{4-}} \norm{J^{0-}\widetilde{v}}_{L_t^\infty L_{xy}^\infty}.
\end{align*}
We now apply \eqref{MPlambda} to the first factor and \eqref{L^4+lambda} and \eqref{L^4-lambda} to the second and third factors, respectively, while the fourth factor is handled by the Sobolev embedding theorem. This leads us to
\begin{align*}
...&\les \norm{v}_{X_{s-1+,\frac{1}{2}+}} \norm{v}_{X_{\frac{1}{2}+,\frac{1}{2}+}} \norm{v}_{X_{1,\frac{1}{2}+}}^2 \norm{f}_{X_{0,\frac{1}{2}-}} \\ & \les \norm{v}_{X_{s-1+,\frac{1}{2}+}} \norm{v}_{X_{1,\frac{1}{2}+}}^3,
\end{align*}
and the argument then proceeds as discussed in the case $k=2$. Hence, we ultimately obtain
\[...\les \norm{u(0)}_{H^{s-1+}}, \]
and this concludes the treatment of this subcase. \\
(ii) \underline{$\abs{(\xi_1,q_1)} \sim \abs{(\xi_2,q_2)} \sim \abs{(\xi_3,q_3)} \sim \abs{(\xi_4,q_4)}$}: \\
In this situation, we may now distribute the factors arbitrarily among the factors, which yields
\[ \abs{(\xi_1,q_1)}^s \abs{(\xi_2,q_2)} \les \langle (\xi_1,q_1) \rangle^{s-\frac{4}{3}+} \langle (\xi_0,q_0) \rangle^{0-} \langle (\xi_2,q_2) \rangle^{\frac{7}{9}-} \langle (\xi_3,q_3) \rangle^{\frac{7}{9}-} \langle (\xi_4,q_4) \rangle^{\frac{7}{9}-}.  \]
Applying Parseval's identity and Hölder's inequality then gives
\begin{align*}
I_f &\les \norm{J^{s-\frac{4}{3}+}\widetilde{v}J^{0-}f}_{L_{txy}^2} \norm{J^{\frac{7}{9}-}v J^{\frac{7}{9}-}v J^{\frac{7}{9}-}v}_{L_{txy}^2} \\ & \leq \norm{J^{s-\frac{4}{3}+}\widetilde{v}}_{L_{txy}^{4+}} \norm{J^{0-}f}_{L_{txy}^{4-}} \norm{J^{\frac{7}{9}-}v}_{L_{Txy}^6} \norm{J^{\frac{7}{9}-}v}_{L_{Txy}^6} \norm{J^{\frac{7}{9}-}v}_{L_{Txy}^6},
\end{align*}
and for the first two factors, we use \eqref{L^4+lambda} and \eqref{L^4-lambda}, respectively, while for the remaining three factors, we rely on \eqref{optimizedL^plambda} ($p=6$).
It thus follows that
\begin{align*}
...&\les \norm{v}_{X_{s-\frac{4}{3}+,\frac{1}{2}+}} \norm{v}_{X_{1,\frac{1}{2}+}}^3 \norm{f}_{X_{0,\frac{1}{2}-}} \\ & \les \norm{v}_{X_{s-\frac{4}{3}+,\frac{1}{2}+}} \norm{v}_{X_{1,\frac{1}{2}+}}^3,
\end{align*}
and finally
\[...\les \norm{u(0)}_{H^{s-\frac{4}{3}+}}, \]
thereby completing the discussion of the intermediate case $k=3$. \\ \\
\underline{$k\geq 4$}: \\
(i) \underline{$\abs{(\xi_1,q_1)} \gg \abs{(\xi_5,q_5)}$}: \\
For the frequency configuration under consideration, we obtain
\begin{align*}
 \abs{(\xi_1,q_1)}^s \abs{(\xi_2,q_2)} &\les \abs{\abs{(\xi_1,q_1)}^2-\abs{(\xi_5,q_5)}^2}^\frac{1}{2} \langle (\xi_1,q_1) \rangle^{s-1+} \langle (\xi_2,q_2) \rangle^{1-} \langle (\xi_0,q_0) \rangle^{0-} \\ & \ \ \ \cdot \prod_{\substack{i=3 \\ i \neq 5}}^{k+1} \langle (\xi_i,q_i) \rangle^{0-}, 
\end{align*}
which, after Plancherel and Hölder's inequality, gives
\begin{align*}
I_f & \les \norm{MP_1(J^{s-1+}v,v)}_{L_{txy}^2} \norm{J^{1-}\widetilde{v} J^{0-}f \prod_{\substack{i=3 \\ i \neq 5}}^{k+1} J^{0-}\widetilde{v}}_{L_{txy}^2} \\ & \leq \norm{MP_1(J^{s-1+}v,v)}_{L_{txy}^2} \norm{J^{1-}\widetilde{v}}_{L_{txy}^{4+}} \norm{J^{0-}f}_{L_{txy}^{4-}} \prod_{\substack{i=3 \\ i \neq 5}}^{k+1} \norm{J^{0-}\widetilde{v}}_{L_t^\infty L_{xy}^\infty}.
\end{align*}
We now apply \eqref{MPlambda} to the first factor, \eqref{L^4+lambda} and \eqref{L^4-lambda} to the second and third factors, respectively, and the Sobolev embedding theorem to all remaining factors, yielding
\begin{align*}
...&\les \norm{v}_{X_{s-1+,\frac{1}{2}+}} \norm{v}_{X_{\frac{1}{2}+,\frac{1}{2}+}} \norm{v}_{X_{1,\frac{1}{2}+}}^{k-1} \norm{f}_{X_{0,\frac{1}{2}-}} \\ & \les \norm{v}_{X_{s-1+,\frac{1}{2}+}} \norm{v}_{X_{1,\frac{1}{2}+}}^{k}. 
\end{align*}
Now, taking into account the choice of $v$, the continuous dependence of the solution on the initial data, and the uniform boundedness of $\norm{u(t)}_{H^1}$, we finally obtain
\[...\les \norm{u(0)}_{H^{s-1+}}, \]
and this completes the discussion of this subcase. \\
(ii) \underline{$\abs{(\xi_1,q_1)} \sim \abs{(\xi_2,q_2)} \sim \abs{(\xi_3,q_3)} \sim \abs{(\xi_4,q_4)} \sim \abs{(\xi_5,q_5)}$}: \\
In this final subcase, we may infer the pointwise bound
\[ \abs{(\xi_1,q_1)}^s \abs{(\xi_2,q_2)} \les \langle (\xi_1,q_1) \rangle^{s-\frac{17}{9}+} \langle (\xi_0,q_0) \rangle^{-\frac{2}{9}-} \left( \prod_{i=2}^{5} \langle (\xi_i,q_i) \rangle^{\frac{7}{9}-} \right) \prod_{j=6}^{k+1} \langle (\xi_j,q_j) \rangle^{0-}, \]
which, after undoing Plancherel and an application of Hölder's inequality, leads us to
\begin{align*}
I_f &\les \norm{J^{s-\frac{17}{9}+}v}_{L_{Txy}^{6+}} \norm{J^{-\frac{2}{9}-}f}_{L_{Txy}^{6-}} \left( \prod_{i=2}^{5} \norm{J^{\frac{7}{9}-}v}_{L_{Txy}^6} \right) \prod_{j=6}^{k+1} \norm{J^{0-}v}_{L_t^\infty L_{xy}^\infty}.
\end{align*}
The first and second factors are now treated using \eqref{optimizedL^6+} and \eqref{optimizedL^6-}, respectively, while the third, fourth, fifth, and sixth factors can be estimated using \eqref{optimizedL^plambda} ($p=6$). All remaining factors are again handled by means of the Sobolev embedding theorem, yielding
\begin{align*}
...&\les \norm{v}_{X_{s-\frac{5}{3}+,\frac{1}{2}+}} \norm{v}_{X_{1,\frac{1}{2}+}}^k \norm{f}_{X_{0,\frac{1}{2}-}} \\ & \leq \norm{v}_{X_{s-\frac{5}{3}+,\frac{1}{2}+}} \norm{v}_{X_{1,\frac{1}{2}+}}^k
\end{align*}
and, subsequently,
\[...\les \norm{u(0)}_{H^{s-\frac{5}{3}+}}. \]
Gathering all the intermediate results, we see that for each $k \in \N^{\geq 2}$, the corresponding worst-case term is bounded by $\norm{u(0)}_{H^{s-1+}}$. Since
\[ \norm{u(0)}_{H^{s-1+}} \les \norm{u(0)}_{H^s}^{1-\theta} \norm{u(0)}_{H^1}^{\theta} \les \norm{u(0)}_{H^s}^{1-\theta}  \]
with
\[ s(1-\theta) + \theta = s-1+ \Leftrightarrow \theta = \frac{1}{s-1}-, \]
the assertion follows.
\end{proof}

We are now prepared to give the

\begin{proof}[Proof of Theorem \ref{GrowthgZK}]
We begin with the case $k=1$. To this end, let $\alpha = 4(s-1)+\widetilde{\epsilon}$ for an arbitrarily prescribed $0<\widetilde{\epsilon} \ll 1$. By combining Remark \ref{corogrowthk=1} and Proposition \ref{keypropgrowthk=1}, we obtain, for any $0<\epsilon \ll 1$, that
\[ \vertiii{u(t)}_{H^{s}} \leq \vertiii{u(0)}_{H^{s}} + \widetilde{C}(\eps, \sup_{t \in \mathbb{R}} \lVert u(t) \rVert_{H^{1}}) \vertiii{u(0)}_{H^{s}}^{1-\frac{1}{4(s-1)}+\epsilon} \]
holds for all $t \in \intcc{0,\frac{\delta_1(\norm{u_0}_{H^1})}{2}}$. Similarly, working through Lemma \ref{equalityfordifferencek=1}, Remark \ref{trivialintestimatesk=1}, Remark \ref{corogrowthk=1}, and Proposition \ref{keypropgrowthk=1} for an arbitrary $t_0 \in \R$ and fixed $t \in \intcc{t_0-\frac{\delta_1(\sup_{t\in \R} \norm{u(t)}_{H^1})}{2},t_0+\frac{\delta_1(\sup_{t\in \R} \norm{u(t)}_{H^1})}{2}}$ then leads us to
\[ \vertiii{u(t)}_{H^{s}} \leq \vertiii{u(t_{0})}_{H^{s}} + C(\eps, \sup_{t \in \mathbb{R}} \lVert u(t) \rVert_{H^{1}}) \vertiii{u(t_{0})}_{H^{s}}^{1-\frac{1}{4(s-1)}+\epsilon}, \]
with the constant $C$ being independent of $t_0$.
For $m \in \N_0$, we now set
\[ I_m \coloneqq \intcc{m \frac{\delta_{1}(\sup_{t \in \mathbb{R}} \lVert u(t) \rVert_{H^{1}})}{2}, (m+1) \frac{\delta_{1}(\sup_{t \in \mathbb{R}} \lVert u(t) \rVert_{H^{1}})}{2}} \]
and
\[ \widetilde{I}_m \coloneqq \intcc{-(m+1)\frac{\delta_{1}(\sup_{t \in \mathbb{R}} \lVert u(t) \rVert_{H^{1}})}{2}, -m \frac{\delta_{1}(\sup_{t \in \mathbb{R}} \lVert u(t) \rVert_{H^{1}})}{2}}, \]
and an application of the Grönwall-type Lemma \ref{Gronwall} to the sequences
\[ a_{m} \coloneqq \sup_{t \in I_m} \vertiii{u(t)}_{H^{s}}, \quad m \in \mathbb{N}_{0} \]
and
\[ \widetilde{a}_{m} \coloneqq \sup_{t \in \widetilde{I}_m} \vertiii{u(t)}_{H^{s}}, \quad m \in \mathbb{N}_{0} \]
finally yields
\[ \vertiii{u(t)}_{H^{s}} \leq C(\beta, \sup_{t \in \mathbb{R}} \lVert u(t) \rVert_{H^{1}}) (1+\lvert t \rvert)^{\beta} (1+ \vertiii{u_{0}}_{H^{s}}) \]
for all $t \in \R$, and in fact for every
\[ \beta > \frac{1}{\frac{1}{4(s-1)}-\eps}. \]
By choosing $0<\epsilon \ll 1$ sufficiently small, one can enforce
\[ \alpha > \frac{1}{\frac{1}{4(s-1)}-\eps}, \]
and taking into account the equivalence of the norms $\vertiii{\cdot}_{H^s}$ and $\norm{\cdot}_{H^s}$, this completes the argument in the case $k=1$. \\
For the cases $k\geq 2$, one proceeds in a completely analogous manner using the corresponding auxiliary results Lemma \ref{equalityfordifferencek>1}, Lemma \ref{trivialintestimatek>1}, Corollary \ref{corogrowthk>1}, and Proposition \ref{keypropgrowthk>1}, and ultimately obtains
\[ \norm{u(t)}_{H^{s}} \leq C(k, \beta,  \sup_{t \in \mathbb{R}} \lVert u(t) \rVert_{H^{1}}) (1+\lvert t \rvert)^{\beta} (1+ \norm{u_0}_{H^{s}})  \]
for all $t \in \R$ and every
\[ \beta > \frac{1}{\frac{1}{s-1}-\epsilon}. \]
By choosing $0<\epsilon \ll 1$ sufficiently small in this setting, any $\beta > s-1$ is admissible, and thus the proof is complete.
\end{proof}

By interpolation, the restriction $s \in 2\N$ in Theorem \ref{GrowthgZK} can be relaxed, provided the initial data are sufficiently regular. This simple argument is the content of the following

\begin{cor}
Let $k \in \N$,
\[ \alpha (k,s) \coloneqq \begin{cases} 4(s-1), & \text{if} \quad k=1, \\ s-1, & \text{if} \quad k\geq 2, \end{cases} \]
$s \in \intoo{1,2m}$ for some $m \in \mathbb{N}$, $u_{0} \in H^{2m}(\R \times \T)$, and let $u$ be the associated global solution of $\mathrm{(CP}_{k, \R \times \T} \mathrm{)}$, as stated in Theorem \ref{GrowthgZK}. Then, for every $\alpha > \alpha(k,s)$, there exists a constant $C=C(k,\alpha,\sup_{t\in \R} \norm{u(t)}_{H^1})>0$ such that
\[ \norm{u(t)}_{H^{s}(\mathbb{R} \times \T)} \lesssim C (1+\lvert t \rvert)^{\alpha} (1+ \norm{ u_{0} }_{H^{2m}(\mathbb{R} \times \T)}) \]
holds true for all $t \in \R$.
\end{cor}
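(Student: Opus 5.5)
The plan is to interpolate between the uniform $H^1$ bound and the $H^{2m}$ growth bound of Theorem \ref{GrowthgZK}. Since $u_0 \in H^{2m}(\R \times \T)$ with $2m \in 2\N$, Theorem \ref{GrowthgZK} applies with $s$ replaced by $2m$. For every $\beta > \alpha(k,2m)$ it yields
\[ \norm{u(t)}_{H^{2m}} \leq C(k,\beta,\sup_{t}\norm{u(t)}_{H^1})(1+\abs{t})^{\beta}(1+\norm{u_0}_{H^{2m}}). \]
On the other hand, $\sup_{t\in\R}\norm{u(t)}_{H^1} < \infty$ by the global theory recalled in Section 5.

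For $s \in \intoo{1,2m}$, write $s = (1-\theta)\cdot 1 + \theta\cdot 2m$, i.e.\ $\theta = \frac{s-1}{2m-1} \in \intoo{0,1}$. The standard Sobolev interpolation inequality $\norm{f}_{H^s} \le \norm{f}_{H^1}^{1-\theta}\norm{f}_{H^{2m}}^{\theta}$ is proved on the Fourier side by Hölder's inequality applied to $\langle(\xi,q)\rangle^{2s} = \langle(\xi,q)\rangle^{2(1-\theta)}\langle(\xi,q)\rangle^{4m\theta}$. It gives
\[ \norm{u(t)}_{H^s} \les C^{\theta}(1+\abs{t})^{\theta\beta}(1+\norm{u_0}_{H^{2m}})^{\theta}. \]
Because $\theta<1$, we can bound $(1+\norm{u_0}_{H^{2m}})^{\theta}$ by $1+\norm{u_0}_{H^{2m}}$, which gives the right-hand side the claimed form.

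It remains to check the exponent. We need $\theta\beta$ to be arbitrarily close to something at most $\alpha(k,s)$.
\begin{itemize}
\item For $k\ge2$: $\theta\,\alpha(k,2m) = \frac{s-1}{2m-1}(2m-1) = s-1 = \alpha(k,s)$. Hence choosing $\beta$ slightly above $2m-1$ makes $\theta\beta$ any prescribed number above $s-1$.
\item For $k=1$: $\theta\cdot 4(2m-1) = 4(s-1)$, and the same conclusion holds.
\end{itemize}
So for a given $\alpha > \alpha(k,s)$ we pick $\beta = \alpha/\theta > \alpha(k,2m)$. The constant then depends only on $k$, $\alpha$ (through $\beta$ and $\theta$; $m$ is fixed by the data) and $\sup_t\norm{u(t)}_{H^1}$.

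There is no real obstacle. The only points to watch are that the linearity of $\alpha(k,\cdot)$ in $s-1$ makes the exponents match exactly, and that the smallness hypothesis of Theorem \ref{GrowthgZK} is inherited, since the solution is the one given there.
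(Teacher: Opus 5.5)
Your proposal is correct and is essentially the paper's own argument. You interpolate the $H^s$ norm between the uniformly bounded $H^1$ norm and the $H^{2m}$ growth bound from Theorem~\ref{GrowthgZK}, with $\theta = \frac{s-1}{2m-1}$, and you use that $\alpha(k,\cdot)$ is linear in $s-1$, so that $\theta\,\alpha(k,2m) = \alpha(k,s)$ for both $k=1$ and $k\geq 2$.
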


\begin{proof}
We consider the case $k=1$. The cases $k\geq 2$ can be handled analogously.
Given $s \in \intoo{1,2m}$, there exists $\theta \in (0,1)$ such that 
\[ (1-\theta) + \theta 2m =s \Leftrightarrow \theta = \frac{s-1}{2m-1}. \]
Now, applying Theorem \ref{GrowthgZK} at the regularity level $2m$ and taking into account that $\sup_{t \in \R} \norm{u(t)}_{H^1} < \infty$ and $\frac{s-1}{2m-1} \leq 1$, we obtain 
\begin{align*} 
\norm{ u(t)}_{H^{s}} &\les \norm{ u(t)}_{H^{1}}^{1-\theta} \norm{ u(t)}_{H^{2m}}^{\theta} \\ &\les \norm{ u(t)}_{H^{1}}^{1-\theta} (1+\lvert t \rvert)^{\theta(4(2m-1) + \epsilon)} (1+ \norm{ u_{0}}_{H^{2m}})^{\theta} \\ &\les (1+ \lvert t \rvert)^{\theta(4(2m-1) + \epsilon)} (1+ \lVert u_{0} \rVert_{H^{2m}}) \\ &= (1+\lvert t \rvert)^{4(s-1) + \frac{s-1}{2m-1} \epsilon} (1+ \lVert u_{0} \rVert_{H^{2m}}) \\ &\leq (1+\lvert t \rvert)^{4(s-1) + \epsilon} (1+ \lVert u_{0} \rVert_{H^{2m}}) 
\end{align*}
for all $t \in \R$ and every $\epsilon > 0$, as required.
\end{proof}

\ew

\bibliographystyle{amsplain}

\bibliography{referencesGWP}

\end{document}